\documentclass[11pt]{article}
\usepackage[margin=1in]{geometry}

\usepackage{listings}
\setlength{\headheight}{15pt}
\usepackage{geometry}

\usepackage{amsmath,amsthm,amsfonts,amssymb,amscd}
\usepackage[colorlinks=true, pdfstartview=FitV, linkcolor=blue,citecolor=blue, urlcolor=blue]{hyperref}
\usepackage[abbrev,lite,nobysame]{amsrefs}
\usepackage[usenames,dvipsnames]{color}
\usepackage{times}

\usepackage{mathtools}
\usepackage{mathabx}
\usepackage{float}
\usepackage{makeidx}
\usepackage{stmaryrd}
\usepackage{mathrsfs}
\usepackage{emptypage}
\usepackage{amsrefs}
\usepackage{xfrac}
\usepackage{bbm}

\usepackage{upgreek}


\renewcommand{\epsilon}{\varepsilon}

\usepackage{graphicx}

\newcommand{\p}{\ensuremath{\partial}}
\newcommand{\n}{\ensuremath{\nonumber}}


\def\les{\lesssim}

\def\eps{\varepsilon}
\renewcommand*{\div}{\ensuremath{\mathrm{div\,}}}

\newcommand{\ud}{\,\mathrm{d}}

\newcommand{\norm}[1]{\left \| #1 \right\|} 
\newcommand{\abs}[1]{\left|#1\right|}

\newcommand*{\supp}{\ensuremath{\mathrm{supp\,}}}

\newcommand{\OO}{\mathcal O}

\renewcommand*{\tilde}{\widetilde}
\renewcommand*{\hat}{\widehat}
\renewcommand*{\bar}{\overline}

\newcommand{\T}{{\mathbb T}}

\newtheorem{theorem}{Theorem}[section]
\newtheorem{lemma}[theorem]{Lemma}

\newtheorem{proposition}[theorem]{Proposition}
\newtheorem{corollary}[theorem]{Corollary}
\theoremstyle{definition}
\newtheorem{definition}[theorem]{Definition}

\newtheorem{remark}[theorem]{Remark}



\numberwithin{equation}{section}

\def\p{\partial}

\def\f1r{{\frac{1}{r}}  }
\def\p{\partial}

\def\f1r{{\frac{1}{r}}  }


\usepackage{fancyhdr}
\pagestyle{fancy}
\lhead{Buckmaster,  Iyer}
\rhead{Construction of Unstable Shocks}



\title{{\bf Formation of unstable shocks  for 2D isentropic compressible Euler  }}

\author{
{ \small {\bf Tristan Buckmaster}\footnote{Department of Mathematics, Princeton University; email: \url{buckmaster@math.princeton.edu}; partially supported by NSF grant DMS-1900149 and a Simons Foundation Mathematical and Physical Sciences Collaborative Grant.  } }
\and  
{\small {\bf Sameer Iyer}\footnote{Department of Mathematics, University of California, Davis; email: \url{sameer@math.ucdavis.edu}; partially supported by NSF grant DMS-1802940.} }
}

\date{} %
\begin{document}

\maketitle

\begin{abstract}

In this paper we construct unstable shocks in the context of 2D isentropic compressible Euler in azimuthal symmetry. More specifically, we construct initial data that when viewed in self-similar coordinates, converges asymptotically to the unstable $C^{\frac15}$ self-similar solution to the Burgers' equation. Moreover, we show the behavior is stable in $C^8$ modulo a two dimensional linear subspace. Under the azimuthal symmetry assumption, one cannot impose additional symmetry assumptions in order to isolate the corresponding manifold of initial data leading to stability: rather, we rely on modulation variable techniques in conjunction with a Newton scheme.

\end{abstract}
\setcounter{tocdepth}{1}
\tableofcontents

\allowdisplaybreaks

\section{Introduction}

\subsection{Setup of Compressible Euler under azimuthal symmetry}

In this paper we study asymptotically self-similar formation of unstable shocks for the 2D isentropic compressible Euler equations under azimuthal symmetry.  The 2D isentropic compressible Euler equations take the form
\begin{subequations}
\label{eq:Euler}
\begin{align}
\partial_t (\rho u) + \div (\rho\, u \otimes u) + \nabla p(\rho) &= 0 \,,  \label{eq:momentum} \\
\partial_t \rho  + \div (\rho u)&=0 \,,  \label{eq:mass}
\end{align}
\end{subequations}
where $u :\mathbb{R}^2  \times \mathbb{R}  \to \mathbb{R}^2  $ is the velocity of the fluid, $\rho: \mathbb{R}^2  \times \mathbb{R}  \to \mathbb{R}  _+$ is the density, 
and $p: \mathbb{R}^2  \times \mathbb{R}  \to \mathbb{R}  _+$ is the pressure defined by the ideal gas law
$$
 p(\rho) := \tfrac{1}{\gamma} \rho^\gamma\,, \qquad \gamma >1 \,.
$$
The associated sound speed $\sigma$ is given by $\sigma=\rho^\lambda$ where $\lambda=\frac{\gamma-1}{2}$.

It was shown in \cite{BuShVi2019}, that if one imposes the following azimuthal symmetry 
\begin{equation}\label{e:symmetry}
u(x,t)\cdot \frac{x}{\abs{x}}= r a(\theta,t), \quad u(x,t)\cdot \frac{x^{\perp}}{\abs{x}}=r b(\theta,t),\quad \rho=r^{\frac2{\gamma-1}}  P(\theta,t),
\end{equation}
where $(r,\theta)$ are the usual polar coordinates,
then the equations \eqref{eq:Euler} reduce to the 1D system of equations
\begin{subequations}
\label{eq:Euler:polar3}
\begin{align} \label{g3_a_evo}
\left(\partial_t  + b\partial_{\theta}\right) a  + a^2-b^2+  \lambda^{-1} P^{2 \lambda }   &=0 \\ \label{g3_b_evo}
\left(\partial_t  + b\partial_{\theta}\right)b+2a b+  P^{ 2 \lambda -1}\partial_\theta P&=0 \\ \label{g3_P_evo}
\left(\partial_t  + b\partial_{\theta}\right) P+  \tfrac{\gamma}{ \lambda } a P+  P  \p_\theta b &=0 \, .
\end{align}
\end{subequations}

An important difference between Euler under azimuthal rather than radial symmetry is that azimuthal symmetry allows for the presence of non-trivial vorticity. We remark that it was shown in \cite{BuShVi2019}, that the system \eqref{eq:Euler:polar3} is locally well-posed in $C^n$ for any $n\geq 1$.

In order to avoid issues regarding the irregularity at the origin $r=0$ (created by the azimuthal symmetry), and in order to ensure finite kinetic-energy, following \cite{BuShVi2019}, we can exploit locality and restrict the solution \eqref{e:symmetry} to the push forward of an annulus under the flow induced by $u$. To be more precise, define $A _{\underline{r},\overline{r}}$ to be the annular region
\begin{align*}
A _{\underline{r},\overline{r}}= \{ x\in\mathbb R^2 \colon \underline{r} < \abs{x} < \overline{r} \} \,.
\end{align*}
Fixing $0<   r_0 < r_1$; then, if $\eta_u$ is the solution to 
$\p_t \eta_u = u \circ \eta_u$ for $t>t_0$ with $\eta_u(x,t_0)=x$, define the time dependent domain
\begin{equation}\label{Omega}
\Omega(t)= \eta_u(A_{r_0,r_1},t)\,.
\end{equation} 
Now set $0< R_0 < r_0 < r_1 <R_1$ and let $K>0$. Assuming that $\abs{u}\leq K$  for all $(x,t)\in A _{R_0,R_1}\times [t_0,T_*)$, then it follows that
\begin{equation*}
\Omega(t)\subset A_{R_0,R_1} \quad \text{ for } \quad t \in [t_0,T_*]\,,
\end{equation*} 
so long as $\abs{T_*-t_0}$ is assumed to be sufficiently small (depending or $r_0$, $r_1$, $R_0$, $R_1$ and $K$). Then given a solution $(a,b,P)$ to the system \eqref{eq:Euler:polar3}, we relate these to solutions to \eqref{eq:Euler} via the transformation \eqref{e:symmetry}, restricted to the domain $\Omega$ given in \eqref{Omega}.

\subsection{Brief historical overview}

The formation of shocks is a classical problem in hyperbolic PDE. The first rigorous proof of shock formation is due to the pioneering work of Lax \cite{Lax1964} that employed invariants devised by Riemann \cite{Ri1860} and the method of characteristics. The work of Lax was further generalized and refined by  John~\cite{John1974},  Liu~\cite{Li1979}, and  Majda~\cite{Ma1984} (cf.\ \cite{Da2010}).

In the multi-dimensional setting, Sideris in \cite{Si1997} demonstrated using a virial type argument the existence of solutions that form singularities in finite time. The method of proof does not however lead to a classification of the type of singularity produced. The first proof of shock formation in the multi-dimensional setting was given by Christodoulou \cite{Ch2007}, whereby he proved shock formation in the irrotational, relativistic setting. The work was later generalized to non-relativistic, irrotational setting \cite{ChMi2014}, and then further extended by Luk and Speck to the 2D setting with non-trivial vorticity \cite{LuSp2018}. It is important to note that while the cited work  are capable of proving shock formation (or simply singularity formation in the case of Sideris), the methods of proof are incapable of distinguishing precise information on the shock's profile.  For example, none of the cited work determine whether the shock occurs at one specific location or whether multiple shocks occur simultaneously. In the recent work by the first author, Shkoller and Vicol \cite{BuShVi2019}, it was shown than in 2D under the azimuthal symmetry \eqref{e:symmetry} one can prove the existence of stable shocks (stable with respect to perturbations that preserve the azimuthal symmetry) whose self-similar profile can be precisely described. This work in \cite{BuShVi2019b} was extended to 3D in the absence of any symmetry assumption, and further extended to the non-isentropic case in \cite{BuShVi2019c}. In a different direction, we would like to also bring to attention of the remarkable recent works of Merle, Raphael, Rodnianski, and Szeftel, \cite{MRRS1}, \cite{MRRS2}, which demonstrated the existence of radially symmetric imploding solutions to the isentropic Euler equation -- a completely new form of singularity for the Euler equations.

\subsection{Unstable shocks for the Burgers' equation}\label{s:burgers}

Before we state a rough version of the main theorem, let us first review the concept of an \textit{unstable shock} in the context of the 1D Burgers' equation:
\begin{equation}\label{eq:Burgers}
\p_t w+ w \p_y w = 0\,\quad\mbox{for }y\in\mathbb R\,.
\end{equation}
The Burgers' equation satisfies the following four invariances:
\begin{enumerate}
\item Galilean symmetry: If $w(y,t)$ is solves \eqref{eq:Burgers} then $w(y-v,t)+v$ solves \eqref{eq:Burgers} for any $v\in\mathbb R$.
\item Temporal rescaling: If $w(y,t)$ is solves \eqref{eq:Burgers} then $\lambda w(y,\lambda t)$ solves \eqref{eq:Burgers} for any $\lambda>0$.
\item Translation invariance: If $w(y,t)$ is solves \eqref{eq:Burgers} then $w(y-y_0, t)$ solves \eqref{eq:Burgers} for any $y_0\in\mathbb R$.
\item Spatial rescaling: If $w(y,t)$ is solves \eqref{eq:Burgers} then $\lambda^{-1} w(\lambda y,t)$ solves \eqref{eq:Burgers} for any $\lambda>0$.
\end{enumerate}
Any initial data $w_0$ with a negative slope at some point $y_0$ will shock in finite time. Let us assume that $w_0$ has a global minimum slope. By temporal rescaling and translation invariance, without loss of generality, we may assume the global minimum slope is $-1$, occurring at $y=0$. Let us take the initial time to be $t=-1$. By Galilean symmetry, without loss of generality, we may further assume $w_0(0)=0$, then by methods of characteristics that the solution $w$ will shock at $(y,t)=(0,0)$.

If in addition $w'''_0(0) =\nu> 0$, then the solution $w$ will converge asymptotically at the blow up to a self-similar profile $\bar W_1$; in particular, we have
\begin{align} \label{limit:1}
\lim_{t\rightarrow 0} (-t)^{\frac12}w(x(-t)^{-\frac32},t)= \left(\frac{\nu}{6}\right)^{-\frac12}\bar{W}_1\left(\left(\frac{\nu}{6}\right)^{\frac12}x\right)\,,
\end{align}
for any $x\in\mathbb R$, where
\begin{align} \label{explicit:1}
 \bar{W}_1(x) = \left(- \frac x2 + \left(\frac{1}{27} + \frac{x^2}{4}\right)^{\frac 12}\right)^{\frac 13} - \left( \frac x2 + \left(\frac{1}{27} + \frac{x^2}{4}\right)^{\frac 12} \right)^{\frac 13} \,.
\end{align}
\begin{remark}
Note one can fix $\nu$ by making use of the spacial rescaling invariance of Burgers' equation. 
\end{remark}
The shock profile is stable in the sense that given any initial data in a suitably small $C^4$ neighborhood of $w_0$, the resulting solution will satisfy \eqref{limit:1} modulo the invariances of Burgers' equation. The profile $\bar W_1$ (together its $\nu$ rescaling given on the right hand side of \eqref{limit:1}) satisfy the following self-similar Burgers' equation
\[
-\frac{1}{2}  \bar W_1 + \left( \frac{3}{2}x + \bar W_1 \right) \partial_x  \bar W_1=0\,.
\]
In addition to $\bar{W}_1$ defined above, the Burgers' equation admits a countable family of  smooth self-similar profiles \cite{Fontelos}. For each $i\in\mathbb N$, there exists a unique non-trivial analytic profile $W_i$ satisfying the ODE
\[ -\frac 1{2i}  \bar{W}_i + \left( \frac{(2i+1)x}{2i} + \bar{W}_i \right) \partial_x  \bar{W}_i = 0\,. 
\]
such that 
\[w_i(x,t)=(-t)^{\frac1{2i}}\bar{W}_i(x(-t)^{\frac{2i+1}{2i}})\,,\]
defines a self-similar solution to the Burgers' equation. Unlike $\bar{W}_1$, the solutions $\bar{W}_{i}$ for $i>1$ are unstable: generic small perturbations of initial data $w_i(\cdot,0)$ lead to singularities described by the stable self-similar profile $\bar W_1$. Indeed a  generic smooth perturbation of $w_i(x,0)$ leads to initial data with a global minimum at a point where the third derivative is positive, which by the discussion above leads to a shock with asymptotic profile $\bar W_1$.

The profiles $\bar{W}_{i}$ for $i>1$ are nevertheless stable modulo a finite co-dimension of initial data: Suppose we are given initial data $w_0$ with a global minimum, as a consequence of the invariances of Burgers' equation, we may further assume $w_0(0)=0$ and $w_0'(0)=-1$. If we further assume that $w_0^{(n)}(0)=0$ for $n=2,\dots,2i $ and that $w_0^{(2i+1)}(0)=\nu>0$, then
\begin{align} \label{limit:2}
\lim_{t\rightarrow 0} (-t)^{\frac1{2i}}w(x(-t)^{-\frac{2i+1}{2i}},t)= \left(\frac{\nu}{(2i+1)!}\right)^{-\frac1{2i}}\bar{W}_i\left(\left(\frac{\nu}{(2i+1)!}\right)^{\frac1{2i}}x\right)\,,
\end{align}
for all $x\in\mathbb R$. Thus the initial data leading to the unstable shock profiles $\bar W_i $ for $i>1$ are described by a  manifold of finite codimension.

We note that in the paper \cite{CoGhMa2018}, the authors study stable and unstable self-similar solutions to the Burgers equation in order to investigate the Burgers equation with transverse viscosity. 

Our main objective in this work is to identify an analogous manifold, $\mathcal{M}$, for the compressible Euler equations which lead to unstable blowup dynamics according to the profile $\bar{W}_2$. Unlike the case for Burgers described above, the specification of $\mathcal{M}$ is not as explicit as that described above, and must be found via very careful Newton scheme.  

\subsection{Rough statement of main theorem}
In this paper, we prove the existence of asymptotically self similar solutions to 2D isentropic compressible Euler equations under azimuthal symmetry that under the appropriate self-similar transformations are described by the self-similar Burgers' profile $\bar W_2$:
\begin{theorem}
There exists initial data $(a_0,b_0, P_0)$ in $C^8$ for which the corresponding solutions $(a,b,P)$ to \eqref{eq:Euler:polar3} develop a $C^{\frac15}$-cusp singularity in finite time.
 At blow-up, the solutions $(a,b,P)$ form singularity at a unique angle; moreover, the singularities may be described in terms of the self-similar Burgers' profile $\bar W_2$ in a manner made precise in Theorem \ref{thm:general}. The behavior described is stable in $C^8$ with regards to the initial data modulo a two dimensional linear subspace.
\end{theorem}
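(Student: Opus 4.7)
The plan is to recast \eqref{eq:Euler:polar3} in Riemann-type coordinates adapted to the shocking mode, rescale to self-similar variables with the exponents dictated by $\bar W_2$, absorb the finite-dimensional family of Burgers-type symmetries and Taylor normalization constraints into modulation ODEs, and then run a Newton/shooting argument on a two-dimensional parameter to select initial data whose evolution stays on the unstable manifold for which $\bar W_2$ is attracting. Concretely, I would first introduce Riemann-type variables
\[
w = b + \lambda^{-1} P^\lambda, \qquad z = b - \lambda^{-1} P^\lambda,
\]
together with $a$, so that \eqref{g3_a_evo}--\eqref{g3_P_evo} diagonalizes into a system in which $w$ carries the shock along a distinguished characteristic while $z$ and $a$ satisfy transport equations that remain $C^1$ up to blow-up time. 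This is analogous to the decomposition used in \cite{BuShVi2019}, except that $w$ now plays the role of a forced Burgers variable with $(z,a)$ contributing only lower-order source terms.

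Next, fixing a putative blow-up time $T_*$ and angle $\theta_*(t)$, I would pass to self-similar coordinates matched to the $\bar W_2$ scaling,
\[
s = -\log(T_* - t), \qquad y = \frac{\theta - \theta_*(t)}{(T_* - t)^{5/4}}, \qquad W(y,s) = (T_* - t)^{-1/4}\bigl(w - \kappa(t)\bigr),
\]
and use modulation parameters ($T_*$, $\theta_*$, $\kappa$, plus dynamic slope and higher-order rescalings) to enforce $W(0,s) = 0$, $\partial_y W(0,s) = -1$, and $\partial_y^k W(0,s) = 0$ for $k = 2, 3, 4$. Since $\bar W_2$ is odd at the origin with these normalizations, these constraints kill the four Burgers symmetries from Section \ref{s:burgers} together with the higher Taylor coefficients that would otherwise obstruct convergence to $\bar W_2$ rather than $\bar W_1$.

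The linearization of the self-similar Burgers operator at $\bar W_2$, restricted to the subspace orthogonal to the modulated directions and viewed on a suitable weighted $C^8$ space, carries exactly two unstable eigendirections. Heuristically these are the directions which either flip the sign of $\partial_y^5 W_0(0)$ or destroy the global-minimum structure — each of which collapses the dynamics onto $\bar W_1$. To select data on this codimension-two unstable manifold, I would introduce a two-parameter family of initial data indexed by the projection onto the unstable subspace and run a topological shooting argument: assume for contradiction that for every parameter choice the trajectory eventually exits a fixed $\bar W_2$-neighborhood along the unstable modes, and use the resulting continuous exit map to construct an impossible retraction of the two-disk onto its boundary; Brouwer's theorem then produces an interior parameter value whose trajectory remains trapped for all $s$.

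Finally, for that selected initial data, I would close a bootstrap of weighted $C^8$-type energy estimates in self-similar variables, together with $C^1$ transport estimates for $z$ and $a$, to obtain exponential decay in $s$ of $W - \bar W_2$ and of the modulation parameters toward their limiting values. Unwinding the self-similar transformation then yields a $C^{1/5}$ cusp at a unique angle $\theta_*$, the Hölder exponent being forced by the far-field behavior $\bar W_2(y) \sim |y|^{1/5}$. The principal obstacle is precisely the Newton step: without an extra symmetry to quotient out the unstable subspace, the topological selection must be performed simultaneously with the modulation ODEs, which requires showing that the modulation does not itself excite the unstable directions and that the $C^8$ bootstrap is uniform in the shooting parameter. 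Disentangling these two coupled sources of undetermined data, while maintaining the delicate sign conditions that distinguish $\bar W_2$ from $\bar W_1$, is the subtle core of the construction.
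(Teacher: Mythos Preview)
Your overall architecture matches the paper's: Riemann invariants $(w,z,a)$, self-similar coordinates with the $\bar W_2$ exponents, modulation to enforce finitely many Taylor constraints at the origin, and a two-parameter selection argument for the unstable directions. However, there is a genuine misidentification at the heart of your modulation step.

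You propose modulating so that $\partial_y^k W(0,s)=0$ for $k=2,3,4$, in addition to $W(0,s)=0$ and $\partial_y W(0,s)=-1$. That is five constraints, but the Euler system in these variables offers only \emph{three} usable dynamical modulations ($\tau$, $\xi$, $\kappa$); there are no ``higher-order rescalings'' available as genuine symmetries of the equation. The paper enforces exactly $W(0,s)=0$, $\partial_x W(0,s)=-1$, and $\partial_x^4 W(0,s)=0$ via modulation. The remaining conditions $\partial_x^2 W(0,s)\to 0$ and $\partial_x^3 W(0,s)\to 0$ \emph{cannot} be imposed by modulation --- and these are precisely the two unstable directions, governed by ODEs of the form $(\partial_s - \tfrac{3}{4})q^{(2)}=\cdots$ and $(\partial_s - \tfrac{1}{2})q^{(3)}=\cdots$ with negative damping. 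So the two-dimensional shooting parameter is not ``the sign of $\partial_y^5 W_0(0)$ or the global-minimum structure'' as you suggest; it is exactly the pair $(q^{(2)}(s_0),\,q^{(3)}(s_0))$, encoded in the paper through coefficients $(\alpha,\beta)$ multiplying $x^2\chi$ and $x^3\chi$ in the initial data.

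A secondary methodological difference: you propose a Brouwer-type topological shooting, whereas the paper runs a quantitative Newton iteration at discrete times $s_N = s_0 + N$, choosing $(\alpha_N,\beta_N)$ so that $q^{(2)}(s_N)=q^{(3)}(s_N)=0$, and proving $(\alpha_N,\beta_N)$ is Cauchy. This demands uniform $C^1$ and $C^2$ control of the solution in $(\alpha,\beta)$ (hence the $\nabla_{\alpha,\beta}$ and $\nabla_{\alpha,\beta}^2$ bootstraps throughout), but in return it gives explicit decay rates needed to close the bootstrap on $q^{(2)},q^{(3)}$ via backward integration from $s_N$. A pure Brouwer argument would need a different mechanism to propagate the decay of $q^{(2)},q^{(3)}$ inside the trapping region, since the modulation ODEs (notably the one for $\mu$, which involves $q^{(5)}$ and products $q^{(2)}q^{(3)}$) are coupled back to these quantities.
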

We note that analogous results exist for the Burger's equation with traversal viscosity \cite{CoGhMa2018}, the Prandtl equations \cites{CoGhIbMa18,CoGhMa19} and the Burgers-Hilbert equation \cite{yang2020shock}. We also note that the formation of \textit{unstable} shocks (defined and discussed below) in the context of Bourgain-Wang solutions to NLS was obtained in \cite{MR3086066} through virial type identities and backwards integration techniques. These papers however rely on a symmetry to constrain the position of the singularity which leads to a comparatively simple classification of initial data leading to unstable blow up profiles. Isentropic Euler does not satisfy analogous symmetries leading us to develop a new shooting method in order to describe initial data leading to unstable blowup. We believe that techniques developed are suitably malleable and could find potential use in proving the existence of unstable blowup for other PDE.

\section{Statement of main theorem}

\subsection{Riemann invariants}

Before we can state our main theorem, we must first introduce the concept of Riemann invariants, since it is our aim to show that we can prescribe initial data such that one of the Riemann invariants shocks according to the self-similar profile $\bar W_2$.

As was done in \cite{BuShVi2019}, in order to diagonalize the system \eqref{g3_a_evo}  - \eqref{g3_P_evo} and isolate the Burgers-like behavior of the shock development, we will rewrite  \eqref{g3_a_evo} - \eqref{g3_P_evo} in terms of the Riemann invariants
\[w= b+  {\frac{1}{\lambda }} P^\lambda \,, \qquad z= b- {\frac{1}{\lambda }}  P^\lambda \,,\]
and the wave speeds
\[\Lambda_1= b - P^\lambda= \frac{1- \lambda }{2} w +  \frac{1+ \lambda }{2} z \,, \qquad  \Lambda_2= b+ P^ \lambda =  \frac{1+ \lambda }{2} w +  \frac{1- \lambda }{2} z\,.\]
With these substitutions we obtain the following system of nonlinear transport equations
\begin{subequations}
\label{eq:euler:wza}
\begin{align}
\partial_t w +\left(w+\tfrac{1-\lambda}{1+\lambda}z\right)\partial_{\theta}w 
&= -a  \left(\tfrac{1-2\lambda}{1+\lambda} z+ \tfrac{3+2\lambda}{1+\lambda} w\right) 
\label{eq:w:evo} \,,\\
\partial_t z +\left(z+\tfrac{1-\lambda}{1+\lambda}w\right)\partial_{\theta}z 
&=  -a  \left( \tfrac{1-2\lambda}{1+\lambda} w+ \tfrac{3+2\lambda}{1+\lambda} z\right)
\label{eq:z:evo} \,, \\
\partial_t a +\tfrac{1}{1+\lambda} (w+z) \partial_{\theta}a  
&=-\tfrac{2}{1+\lambda}a^2+\tfrac{1}{2(1+\lambda)}(w+z)^2 - \tfrac{\lambda}{2(1+\lambda)}(w-z)^2   \,.
\label{eq:a:evo}
\end{align}
\end{subequations}

\subsection{Initial data assumptions}\label{ss:initial}

In this section we will describe the initial data used to construct unstable shock solutions. We introduce a large constant $M$ which will be used to bound certain implicit constants appearing in the paper. We also let $\eps>0$ be a small constant which will parameterize the slope of the initial data.

We will denote the initial data at initial time $t=-\eps$ by
\[w(\theta,-\eps)=w_0,\quad z(\theta,-\eps)=z_0, \quad a(\theta,-\eps)=a_0\,.\]
The initial will be assumed to satisfy the follow support assumptions
\[\supp(w_0-\kappa_0)\cup \supp (z_0)\cup\supp(a_0) \subset\left[ -\frac{M\eps}{2} ,\frac{M\eps}{2}\right]\,,
\]
where $\kappa_0>0$ will be a predetermined constant. 

We will further decompose $w_0$ as a sum
\begin{align} \label{datum:0}
w_0=\underbrace{\kappa_0+\eps^{\frac14} \bar{W_2}\left(\eps^{-\frac54}\theta\right) \chi(\eps^{-1}\theta) + \eps^{\frac 1 4} \hat{W}_0(\eps^{- \frac 5 4} \theta)}_{=: \check w_0(\theta)}+ \eps^{\frac 1 4} \Big(\alpha (\eps^{- \frac 5 4} \theta)^2+\beta (\eps^{- \frac 5 4} \theta)^3 \Big) \chi(\eps^{- \frac 5 4} \theta)\,. 
\end{align}
for some smooth fixed cut-off, $\chi$, satisfying $\chi(x) = 1$ for $|x| \le 1$ and is supported in a ball of radius $2$. Above the constants $\alpha, \beta$ are determined by $\hat W_0$ and are not free parameters that we choose as part of the data. The perturbation $\hat W_0$ will be assumed to satisfy the following
\begin{align}
\norm{\hat W_0}_{C^8\left(\left[ -\frac{M\eps}{2} ,\frac{M\eps}{2}\right]\right)} &\leq \eps^2\label{eq:C8:bnd}
\\
\hat W_0^{(n)}(0)&=0,\quad\mbox{for } n=0,1,4, 5\label{eq:W0:diff}\\ \label{est:hatW:in}
\abs{\hat W_0^{(n)}(0)}&\leq \eps,\quad\mbox{for } n=2, 3 \,.
\end{align}
We also assume the following bounds on $z_0$ and $a_0$
\begin{align*}
\norm{z_0}_{C^8} + \norm{a_0}_{C^8} &\leq \eps^2\,.
\end{align*}

\subsection{Main theorem}

We now state our main theorem:
\begin{theorem}\label{thm:general}
Let  $\gamma>1$ be given and set $ \lambda = {\tfrac{\gamma-1}{2}}$. Then there are values $\underline{\kappa_0}, \underline{M}, \underline{\eps}$, $0 < \underline{\eps} < 1$, such that for any $\kappa_0 \in [\underline{\kappa_0}(\lambda), \infty), M \in [\underline{M}(\lambda, \kappa_0), \infty),$ and $\eps \in (0, \underline{\eps}(\lambda, \kappa_0, M)]$, the following holds: 

Let $(w_0,z_0,a_0)$ be initial data satisfying the assumptions stipulated in Section \ref{ss:initial}, with the constants $\alpha$ and $\beta$ are left to be chosen. Then, there exists $\alpha$, $\beta$ satisfying $\abs{\alpha}+\abs{\beta}\leq \eps^{\frac{9}{10}}$ and a corresponding solution $(a,z,w) \in C([-\eps,T_*); C^8(\T))$ to  \eqref{eq:euler:wza}  satisfying the following properties:
\begin{itemize} 
\item The solution forms a singularity at a computable time $T_*$ and angle $\theta_*$.
\item $\sup_{t\in[-\eps, T_*)} \left( \| a\|_{ W^{1, \infty }(\T)} + \| z\|_{ W^{1, \infty} (\T)} + \| w\|_{ L^\infty(\T)} \right) \leq  C_M,$
\item $\lim_{t \to T_*}   \p_\theta w(\xi(t),t) = -\infty $ and $\frac{1}{2(T_*-t)} \leq  \norm{\p_\theta w(\cdot,t)}_{L^\infty} \leq \frac{2}{T_*-t}$ as  $t \to T_*$,
\item $w( \cdot , T_*)$ has a cusp singularity of H\"{o}lder $C^ {\sfrac{1}{5}} $ regularity
\end{itemize} 
Moreover, $w$ blows up in an asymptotically self-similar manner described by the profile $\bar{W}_2$. Specifically, there exist a $\nu>0$ and $\kappa_*$ such that
\begin{align} \label{thm:state:1}
\lim_{t\rightarrow T_{\ast}} (T_{\ast}-t)^{\frac1{4}}\left(w(x(T_{\ast}-t)^{\frac{5}{4}} + \xi_{\ast},t)-\kappa_*\right)= \left(\frac{\nu}{120}\right)^{-\frac14}\bar{W}_2\left(\left(\frac{\nu}{120}\right)^{\frac14}x\right)\,,
\end{align}
where $\nu, \kappa_*, \xi_{\ast}$ are explicitly computable, and satisfy the bounds $\abs{\nu-120}\leq \eps^{\frac 3 4}$, $\abs{\kappa_0-\kappa_*}\leq \eps$, and $|\xi_{\ast}| \le 4 \kappa_0 \eps$. The variable $x$ appearing in \eqref{thm:state:1} can be thought of as a self-similar spatial variable as $t$ approaches $T_{\ast}$. 
\end{theorem} 

As a corollary, we show that Theorem \ref{thm:general} is stable modulo a two dimensional linear subspace of initial data:
\begin{corollary}\label{c:open}
There exists an open set $\Xi$ of initial data $(\check w_0,z_0,a_0)$ in the $C^8$ for which we have the following: for every $(\check w_0,z_0,a_0)\in \Xi$ there exist $\alpha,\beta\in \mathbb R$ such that if we define $w_0$ by \eqref{datum:0} then the conclusion of Theorem \ref{thm:general} holds for initial data $(w_0,z_0,a_0)$.
\end{corollary}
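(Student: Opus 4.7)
The plan is to deduce Corollary \ref{c:open} from a continuity/robustness analysis of the shooting argument implicit in the proof of Theorem \ref{thm:general}. That theorem constructs, for each admissible triple $(\check w_0, z_0, a_0)$ satisfying Section \ref{ss:initial}, an obstruction map $\mathcal{F}:\mathbb{R}^2 \to \mathbb{R}^2$ whose input is the pair $(\alpha, \beta)$ and whose output measures the projection of the dynamically-evolved solution onto the two eigendirections transverse to the stable manifold $\mathcal{M}_U$ of $\bar W_2$. A topological (Brouwer/degree) argument applied on a box of size $\eps^{9/10}$ produces a zero of $\mathcal{F}$. The first step of my plan would be to verify that $\mathcal{F}$ depends continuously in $C^0$ on the triple $(\check w_0, z_0, a_0)$ endowed with the $C^8$ topology; this reduces to observing that every a~priori, modulation-variable, and bootstrap estimate appearing in the proof of Theorem \ref{thm:general} involves only open conditions in the $C^8$ norm, combined with local well-posedness up to the blowup time.

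Next, to upgrade the result to a genuine $C^8$-open neighborhood, I would absorb the codimension-four constraints $\hat W_0^{(n)}(0) = 0$ for $n \in \{0, 1, 4, 5\}$ using four degrees of freedom available in the problem: the angular translation $\theta \mapsto \theta - \theta_0$, adjustment of the constant $\kappa_0$ (a Galilean-type shift of the Burgers limit discussed in Section \ref{s:burgers}), and two further modulation parameters mimicking the temporal and spatial rescaling invariances of the Burgers profile. The $4 \times 4$ Jacobian relating the infinitesimal action of these four parameters to the values $\hat W_0^{(n)}(0)$ for $n \in \{0, 1, 4, 5\}$ is close, for small $\eps$, to the Jacobian of these operations acting on $\bar W_2$ itself, which is nondegenerate because $\bar W_2$ is an isolated self-similar Burgers profile. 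The implicit function theorem then provides, for each $(\check w_0, z_0, a_0)$ in a $C^8$-neighborhood of the reference data, a unique normalization that restores the derivative conditions of Section \ref{ss:initial}, after which Theorem \ref{thm:general} applies directly. Undoing the normalization yields the required $(\alpha, \beta)$ for the original data.

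The main obstacle I anticipate is ensuring that the composite map (first normalize via the implicit function theorem, then apply the shooting argument to obtain $(\alpha, \beta)$) is compatible with the precise smallness constraints of Section \ref{ss:initial}, in particular $\|\hat W_0\|_{C^8} \le \eps^2$, $|\hat W_0^{(n)}(0)| \le \eps$ for $n = 2, 3$, and $|\alpha| + |\beta| \le \eps^{9/10}$. This forces the $C^8$-radius of $\Xi$ about the reference data to be taken sufficiently small, quantitatively depending on $\eps$, $M$, $\lambda$, and the Lipschitz constant of the normalization map. Once this bookkeeping is carried out, the continuous dependence of the Brouwer-produced zero of $\mathcal{F}$ on its (now continuous) inputs shows that $\Xi$ is $C^8$-open and that valid parameters $(\alpha, \beta)$ exist throughout $\Xi$, completing the proof.
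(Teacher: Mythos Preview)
Your proposal is essentially correct and follows the same core strategy as the paper: absorb the four pointwise constraints $\hat W_0^{(n)}(0)=0$ for $n\in\{0,1,4,5\}$ using four available degrees of freedom, then invoke Theorem \ref{thm:general} on the normalized data. The paper carries this out constraint by constraint rather than via a single $4\times 4$ implicit-function step: it adjusts $\kappa_0$ and $\eps$ to handle $n=0,1$; uses a Taylor expansion and an intermediate-value argument to find a translation $\theta_0$ killing the $n=4$ condition; and applies a spatial rescaling $(a,w,z)\mapsto \mu^{-1}(a,w,z)(\mu\theta,t)$ to handle $n=5$. Finite speed of propagation is invoked to relax the strict support conditions. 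Your packaging via a Jacobian is equivalent in spirit, though your justification of nondegeneracy (``because $\bar W_2$ is an isolated self-similar profile'') is vaguer than the paper's explicit computations.

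Two places where your plan is more elaborate than necessary. First, you describe the mechanism in Theorem \ref{thm:general} as a Brouwer/degree argument on an obstruction map $\mathcal{F}$; the paper in fact runs a Newton iteration. This mischaracterization is harmless here because the corollary only needs the \emph{conclusion} of Theorem \ref{thm:general}. Second, and relatedly, the corollary asserts only \emph{existence} of $(\alpha,\beta)$ for each datum in $\Xi$, not continuous selection; hence your proposed continuity analysis of $\mathcal{F}$ and of ``the Brouwer-produced zero'' is not needed. Once a given $(\check w_0,z_0,a_0)\in\Xi$ has been normalized to satisfy the hypotheses of Section \ref{ss:initial}, Theorem \ref{thm:general} applies directly, and that is all the paper uses.
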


\subsection{Modulation variables and unstable ODEs at $x = 0$}

In order to isolate the self-similar profile, we will need to introduce modulated self-similar variables. These modulation variables allow one to control the time, location, and amplitude of the eventual shock. The idea of using modulation variables is by now classical (cf.\ \cite{Merle96, MeZa97, MeRa05}). We give the precise definitions of our self-similar variables and modulation variables in Section \ref{sec:var}, but to facilitate the forthcoming discussion, let us consider the self-similar quantities $(W, Z, A)$ defined through $w(\theta, t) = e^{- \frac{s}{4}} W(x, s) + \kappa(t),  z(\theta, t) = Z(x, s)$ and $a(\theta, t) = A(x, s)$, where we rescale time via $s =- \log( \tau - t)$ and space via $x = \frac{\theta - \xi(t)}{(\tau - t)^{\frac 5 4}}$.

In our case, we introduce the dynamical modulation variables $\tau, \xi, \kappa$ found in \eqref{sec3:1}, \eqref{sec3:2} to enable us to constrain 
\begin{align} \label{const:intro}
W(0, s) = 0, \qquad \p_x W(0, s) = -1, \qquad \p_x^4 W(0, s) = 0\,,  
\end{align}
where the final constraint is notably different than in the works \cite{BuShVi2019,BuShVi2019b,BuShVi2019c}, and reflects the different structure of the Taylor coefficients at $x = 0$ of the self-similar profile $\bar{W}_2$. 

In so doing, we obtain from \eqref{eq:w:evo} - \eqref{eq:a:evo} the system that we ultimately analyze, which 
\begin{align} \label{W:0:i}
(\p_ s- \frac 1 4) W + (g_W + \frac{5}{4}x ) \p_x W &= -e^{- \frac 3 4 s} \frac{\dot{\kappa}}{1 - \dot{\tau}} +  F_W, \\ \label{Z:0:i}
\p_s Z + (g_Z + \frac 5 4 x ) \p_x Z &= F_Z\,, \\ \label{A:0:i}
\p_s A + (g_A + \frac 5 4 x) \p_x A &= F_A\,. 
\end{align}
Above, the quantities $g_W, g_Z, g_A$ are transport speeds, and $F_W, F_Z, F_A$ are forcing terms that we also leave unspecified for the purposes of this discussion. The reader may find the precise definitions in \eqref{gw:def} - \eqref{ga:def} and \eqref{def:FW} - \eqref{def:FA}. 

In addition, we control the evolution of $\tau, \xi, \kappa$ through ODEs obtained by restricting to the constrains, \eqref{const:intro}. Importantly the three modulation variables enable us to constrain only the three quantities appearing in  \eqref{const:intro}. However, a feature of \eqref{limit:1} with $i \ge 2$ is that $W^{(2)}(0, s)$ and $W^{(3)}(0, s)$ need to be zero in the limit as $s \rightarrow \infty$. This in turn cannot be enforced by the introduction of further modulation variables due to the lack of further symmetries in the compressible Euler equations, and so must be enforced by the choosing initial data on a codimension two manifold, $\mathcal{M}$.  

The equations describing the second and third derivatives of $W$ at $x = 0$ are given by
\begin{align} \label{super:1}
&(\p_s - \frac 3 4) W^{(2)}(0, s) = \text{rapidly decaying forcing terms}\,, \\ \label{super:2}
&(\p_s - \frac{1}{2}) W^{(2)}(0, s) = \text{rapidly decaying forcing terms}\,.
\end{align} 
One can see the instability of the manifold due to the negative damping coefficients appearing on the left-hand side of \eqref{super:1} - \eqref{super:2}. Indeed, negatively damped ODEs such as \eqref{super:1} - \eqref{super:2} generically grow as $s \rightarrow \infty$, but certain data (as determined by the right-hand side) can lead to decaying solutions. 

In the context of the Euler equations, the right-hand sides above themselves depend on other elements of the system (such as the modulation variables, and other derivatives of $(W, Z, A)$). For this reason, we are led to develop a Newton scheme which identifies $\mathcal{M}$. 

\subsection{An iterative scheme to search for unstable solutions}

In this subsection, we briefly discuss the Newton scheme that can be used to identify a manifold of initial data which leads to a globally decaying solution to \eqref{super:1} - \eqref{super:2}. For the present discussion, we focus on a model ODE problem. We consider 
\begin{align} \label{ODE:model}
(\p_s - \frac 12) u_{\alpha} = g + \eps f(u_{\alpha}), \qquad u_\alpha(0) = \alpha\,. 
\end{align}

\noindent We assume for now that the forcing, $g$, has sufficiently strong decay and the nonlinearity, $f$, is an explicit quadratic nonlinearity via
\begin{align} \label{as:setup}
|g| \lesssim e^{- \gamma s}, \qquad f(u) = u^2, \qquad \gamma > 0\,. 
\end{align}

For general data, $\alpha$, writing the solution to \eqref{ODE:model} via the Duhamel formula yields 
\begin{align} \label{aloha:1}
u_\alpha(s) = e^{\frac s 2} \alpha + e^{\frac s 2} \int_0^s e^{- \frac{s'}{2}} g(s') \ud s' + \eps e^{\frac s 2} \int_0^s e^{- \frac{s'}{2}} u_\alpha(s')^2 \ud s'\,. 
\end{align}
From \eqref{aloha:1}, it is that even under the assumption of $g$ decaying exponentially one cannot expect the solution $u_\alpha$ to decay to zero as $s \rightarrow \infty$ for \textit{generic data}, $\alpha$. Thus, to obtain decaying solutions to \eqref{ODE:model}, one needs to find a manifold of data (in this example, a manifold of codimension one). In the case of this ODE, this amounts to finding a \textit{particular value} of $\alpha$ which ensures a globally decaying solution.   

To illustrate how to find this choice of $\alpha$, we now consider the linear version of \eqref{ODE:model} (setting $\eps = 0$ in \eqref{ODE:model}). Upon setting $\eps = 0$ in \eqref{aloha:1}, sending $s \rightarrow \infty$, and demanding the asymptotic behavior $u_\alpha(s) \rightarrow 0$ as $s \rightarrow \infty$, we obtain the following relation   
\begin{align*}
\alpha_0 + \int_0^\infty e^{- \frac{s'}{2}} g(s') \ud s' = 0\,,
\end{align*}
which links the choice of data, $\alpha_0$, to the forcing, $g$, and guarantees the solution $|u_\alpha(s)| \lesssim e^{- \gamma s}$ inherits the decay of $g$.  

We would now like to modify the choice of data, $\alpha_0$, by an $\eps$ perturbation in order to account for the nonlinear effects when $\eps > 0$ in \eqref{ODE:model}. The overall strategy will be to fix a sequence of times $\{s_n\}$ for $n \in \mathbb{N}$ with the property that $s_n \rightarrow \infty$ as $n \rightarrow \infty$, and a corresponding sequence of data choices $\{ \alpha_n\}$ for $n \in \mathbb{N}$ so that $u_{\alpha_n}(s_n) = 0$. With suitably strong estimates, we will show that $\alpha_n \rightarrow \alpha_\infty$ and correspondingly $u_{\alpha_\infty}(s) \rightarrow 0$ as $s \rightarrow \infty$. To compute the iterate of $\alpha_{n+1}$ requires an application of the Implicit Function Theorem, which in turn requires sufficiently strong estimates on the solution. 

Let us now take the particular selection of times, $s_n = n$. To initiate the induction, we will choose $\alpha_0 = 0$, and $u_0(s)$ the corresponding solution (clearly, $u_0(s_0) = u_0(0) = \alpha_0 = 0$). We describe now the $n \rightarrow n+1$ step of the iteration. We now assume inductively that there exists a choice of $\alpha_n$ so that $u_{\alpha_n}(s_{n}) =  0$ and describe the choice of $\alpha_{n+1}$, which is achieved through the Implicit Function Theorem.

We define now the map $\mathcal{T}_n$ given by $\mathcal{T}_n(\alpha) := u_{\alpha}(s_{n+1})$. We now seek an $\alpha_{n+1}$ in a small neighborhood, $\mathcal{B}_n$, of $\alpha_n$ so that $\mathcal{T}_{n}(\alpha_{n+1}) = 0$. According to a Taylor expansion of $\mathcal{T}_n$ in $\alpha$, we obtain for some $\alpha_\ast$ satisfying $|\alpha_\ast - \alpha_n| \le |\alpha_n - \alpha_{n+1}|$,   
\begin{align*}
\mathcal{T}_n(\alpha_{n+1}) = \mathcal{T}_n(\alpha_n) + (\alpha_{n+1} - \alpha_n) \frac{\p \mathcal{T}_n}{\p \alpha}(\alpha_n) + \frac 1 2 (\alpha_{n+1} - \alpha_n)^2 \frac{\p^2 \mathcal{T}_n}{\p \alpha^2}(\alpha_\ast)\,.
\end{align*}
Accordingly, we may apply the Implicit Function Theorem to identify a $\alpha_{n+1}$ so that the left-hand side is zero if we can obtain three estimates: an upper bound on  $| \mathcal{T}_n(\alpha_n)|$,  a lower bound on $ \frac{\p \mathcal{T}_n}{\p \alpha}(\alpha_n)$, and an upper bound over $\sup_{\alpha_\ast \in \mathcal{B}_n} |\frac{\p^2 \mathcal{T}_n}{\p \alpha^2}|$.

We thus define the \emph{error at the next time scale} created by this solution as $E_n := u_{\alpha_{n}}(s_{n+1})$, which the new choice of $\alpha_{n+1}$ must rectify in order to achieve the condition $u_{\alpha_{n+1}}(s_{n+1}) = 0$. The first main estimate in the scheme is thus careful control of this error, $E_n$, throughout the iteration. Specifically, using backwards integration from $s_n$, we may obtain the decay estimate 
\begin{align*}
|E_n| = |\mathcal{T}_n(\alpha_n)| \lesssim e^{- \gamma s_n}\,. 
\end{align*}

Lower bounds on $\frac{\p \mathcal{T}_n}{\p \alpha}$ are achieved by differentiating the forward integration formula, \eqref{aloha:1} in $\alpha$, as this formula importantly holds for all $\alpha$. A simple inspection shows that we may expect $\frac{\p \mathcal{T}_n}{\p \alpha} \sim e^{\frac s 2}$. Third, an upper bound of $\sup_{\alpha_\ast \in [\alpha_n, \alpha_{n+1}]} |\frac{\p^2 \mathcal{T}_n}{\p \alpha^2}|$ can also be computed by differentiating twice \eqref{aloha:1} in $\alpha$. 

\subsection{Notational Conventions}

We now discuss some notational conventions that we will be using throughout the analysis. First, for a function $f = f(x, s)$, we use $\| f \|_\infty = \sup_{x} |f(s, x)|$, that is $L^\infty$ refers to in the $x$ variable only. Next, we define the bracket notation $\langle x \rangle := \sqrt{1 + x^2}$. Lastly, we will often use $A \lesssim B$ to mean $A \le CB$, where $C$ is a universal constant independent of $M, \eps, \kappa_0$. We will use $A \lesssim_M B$ to mean $A \le CB$ where $C$ is a constant that can depend on $M$. 
 
 \section{Preliminaries to the analysis}

\subsection{Self-similar variables and derivation of equations} \label{sec:var}

We will employ the notation
\begin{align*}
\beta_{\tau}=\frac{1}{1-\dot\tau},\quad\beta_1=\frac{1}{1+\lambda},\quad\beta_2=\frac{1-\lambda}{1+\lambda},\quad\beta_3=\frac{1-2\lambda}{1+\lambda},\quad\beta_4=\frac{3+2\lambda}{1+\lambda},\quad\beta_5=\frac{\lambda}{2+2\lambda}\,.
\end{align*}

We now introduce the change of coordinates that we work in and the relevant modulation variables. We define our self-similar temporal and spacial variables as
\begin{align} \label{sec3:1}
s =- \log( \tau - t), \qquad x = \frac{\theta - \xi(t)}{(\tau - t)^{\frac 5 4}}\,.
\end{align}
We record the following identities
\begin{align*}
&\frac{\p s}{\p t} = (1 - \dot{\tau}) e^s, \qquad \frac{\p x}{\p t} = \frac{5}{4} (1 - \dot{\tau}) x e^s - \dot{\xi} e^{\frac 5 4 s}, \qquad \frac{\p x}{\p \theta} = e^{\frac 5 4 s}\,.
\end{align*}
We now introduce the new unknowns, $W, Z, A$ which are defined through the following relations
\begin{align} \label{sec3:2}
w(\theta, t) = e^{- \frac{s}{4}} W(x, s) + \kappa(t), \qquad z(\theta, t) = Z(x, s), \qquad a(\theta, t) = A(x, s)\,. 
\end{align}
In order to solve for the three modulation variables  $\kappa$, $\tau$ and $\xi$, we enforce the following constraints
\begin{align}\label{e:constraints}
W(0, s) = 0, \qquad \p_x W(0, s) = - 1, \qquad \p_x^4 W(0, s) = 0\,.
\end{align}

We now record the following calculations 
\begin{align} 
\p_t w= & - \frac{1-\dot{\tau}}{4} e^{\frac{3}{4}s} W + (1-\dot{\tau}) e^{\frac{3s}{4}} \p_s W + \dot{\kappa} \label{BUDS:1}
 + \frac{5}{4}(1 - \dot{\tau}) x e^{\frac{3}{4}s} \p_x W - \dot{\xi} e^{s} \p_x W\,, \\ \label{BUDS:2}
\p_\theta w = & e^{ s} \p_x W\,. 
\end{align}

\noindent Next, we record the calculations 
\begin{align} 
\p_t z 
=  (1 - \dot{\tau}) e^s \p_s Z + (\frac 5 4 (1 - \dot{\tau}) x e^s - \dot{\xi} e^{\frac 5 4 s}) \p_x Z\label{whilk:miskey:1} ,\quad
\p_\theta z=   e^{\frac 5 4 s} \p_x Z\,. 
\end{align}
and similarly, 
\begin{align} 
\p_t a = (1 - \dot{\tau}) e^s \p_s A + (\frac 5 4 (1 - \dot{\tau}) x e^s - \dot{\xi} e^{\frac 5 4 s}) \p_x A,  \label{whilk:miskey:4}\quad
\p_\theta a=  \ e^{\frac 5 4 s} \p_x A\,.
\end{align}

Then in self-similar variables \eqref{eq:w:evo} becomes 
\begin{align} \n
&(\p_s - \frac 1 4) W + \left(\frac 5 4 x - \beta_\tau (\dot{\xi} - \kappa) e^{\frac 1 4 s} +\beta_{\tau}(\beta_2 e^{\frac 1 4 s} Z+ W)\right)\p_x W  \\ 
&\quad= - \beta_\tau e^{- \frac 3 4 s}\dot{\kappa} -  \beta_\tau e^{- \frac 3 4 s} A \Big( \beta_3 Z + \beta_4 (e^{- \frac s 4}W + \kappa) \Big)\,.\label{eq:W:0}
\end{align}
Similarly, we rewrite \eqref{eq:z:evo} as
\begin{align} 
&\p_s Z + \left(\frac 5 4 x+ \beta_{\tau}(e^{\frac 1 4 s}(\beta_2\kappa-\dot{\xi} +Z)+\beta_2 W)\right) \p_x Z    \label{eq:Z:0}
=  -  \beta_\tau e^{-s} A \Big(\beta_3(e^{- \frac s 4}W+ \kappa) + \beta_4Z\Big)\,,
\end{align}
and \eqref{eq:a:evo} as
\begin{align} \n
&\p_s A + \left(\frac 5 4 x+ \beta_{\tau}(e^{\frac 1 4 s}(\beta_1\kappa-\dot{\xi} +\beta_1Z)+\beta_1W)\right) \p_x A \\ 
&\quad=  -  2\beta_\tau\beta_1e^{-s} A^2 + \frac{1}{2}\beta_\tau\beta_1  e^{-s} \Big( e^{- \frac s 4} W + \kappa + Z \Big)^2  \label{eq:A:0}
-  \beta_\tau\beta_5 e^{-s} \Big( e^{- \frac s 4} W + \kappa - Z \Big)^2\,.
\end{align}

We now compactify the above equations by introducing the following transport speeds
\begin{align} \label{gw:def}
&g_W :=\beta_{\tau}W- \beta_\tau (\dot{\xi} - \kappa) e^{\frac 1 4 s} + \beta_{\tau}\beta_2e^{\frac 1 4 s} Z
=:\beta_{\tau}W+G_W\,,\\ \label{gz:def}
&g_Z :=\beta_{\tau}\beta_2W+ \beta_{\tau}e^{\frac 1 4 s}(\beta_2\kappa-\dot{\xi} +Z)
=:\beta_{\tau}\beta_2W+G_Z \,,\\ \label{ga:def}
&g_A := \beta_{\tau}\beta_1W+ \beta_{\tau}e^{\frac 1 4 s}(\beta_1\kappa-\dot{\xi} +\beta_1Z)=:
\beta_{\tau}\beta_1W+G_A \,,  
\end{align}

\noindent and forcing terms
\begin{align} \label{def:FW}
F_W &:=-\beta_\tau e^{- \frac 3 4 s} A \Big( \beta_3 Z +\beta_4 (e^{- \frac s 4}W + \kappa) \Big)\,,\\ \label{def:FZ}
F_Z& := -  \beta_\tau e^{-s} A \Big(\beta_3(e^{- \frac s 4}W+ \kappa) + \beta_4Z\Big)\,,\\ \label{def:FA}
F_A&:=  -  2\beta_\tau\beta_1e^{-s} A^2 + \frac{1}{2}\beta_\tau\beta_1  e^{-s} \Big( e^{- \frac s 4} W + \kappa + Z \Big)^2-  \beta_\tau\beta_5 e^{-s} \Big( e^{- \frac s 4} W + \kappa - Z \Big)^2\,.
\end{align}
We note that the quantities $G_W, G_Z, G_A$ are defined through the second equalities in \eqref{gw:def} - \eqref{ga:def}.  

With these definitions, our equations become 
\begin{align} \label{W:0}
(\p_ s- \frac 1 4) W + (g_W + \frac{5}{4}x ) \p_x W &= -e^{- \frac 3 4 s} \frac{\dot{\kappa}}{1 - \dot{\tau}} +  F_W\,, \\ \label{Z:0}
\p_s Z + (g_Z + \frac 5 4 x ) \p_x Z &= F_Z\,, \\ \label{A:0}
\p_s A + (g_A + \frac 5 4 x) \p_x A &= F_A\,. 
\end{align}

Further, it will be convenient to introduce the notation
\begin{equation*}
\mathcal V_W:=g_W + \frac{5}{4}x,\qquad
\mathcal V_Z:=g_Z + \frac{5}{4}x,\qquad
\mathcal V_A:=g_A + \frac{5}{4}x\,.
\end{equation*}
so that we obtain
\begin{align} \label{basic:w}
(\p_ s- \frac 1 4) W + \mathcal V_W \p_x W &= -e^{- \frac 3 4 s} \frac{\dot{\kappa}}{1 - \dot{\tau}} +  F_W\,, \\ \label{basic:z}
\p_s Z + \mathcal V_Z \p_x Z &= F_Z\,, \\ \label{basic:a}
\p_s A +  \mathcal V_A \p_x A &= F_A\,. 
\end{align}

We define now the combination 
\begin{align} \label{def:mu}
\mu :=- \beta_{\tau} (\dot{\xi} - \kappa)e^{\frac s 4} +\beta_{\tau}\beta_2e^{\frac14 s}Z(0,s) = G_W(s, 0)\,. 
\end{align}

\subsection{An unstable self-similar solution to Burgers' equation} \label{s:burgers}

Here we develop properties of the self-similar Burgers profile, $\bar{W}:= \bar{W}_2$, which solves the equation 
\begin{align} \label{Burger:1}
- \frac 1 4 \bar{W} + (\bar{W} + \frac 5 4 x) \bar{W}_x = 0\,. 
\end{align}

According to \cite{Fontelos}, \eqref{Burger:1} has an implicit solution 
\begin{align}\label{e:implicit:eq}
x = - \bar{W} - \bar{W}^{5}\,.
\end{align} 
Differentiating yields
\begin{align}\label{e:w1:implicit}
 \bar{W}^{(1)}=-\frac{1}{1+5{\bar W}^4}\,.
\end{align} 
Hence $ \bar{W}^{(1)}\leq 0$ and thus since $\bar{W}(0)=0$ we attain that $\bar{W}\leq 0$ for $x\geq 0$. By Young's inequality and applied to \eqref{e:implicit:eq}, we have
\begin{align*}
x \leq - \bar{W}  -\bar{W}^{5} \leq -\frac{\bar{W}^{5} }{5x^4}+-{\bar W}^5+\frac{4x}{5}\,.
\end{align*}
Rearranging, we obtain
\[-{\bar W}^5\geq \frac{x^5}{5(5+x^4)}\,. \]
This lower bound combined with \eqref{e:w1:implicit} yields 
\begin{equation}\label{bound:bar:W:1}
\abs{ \bar{W}^{(1)}}\leq (1+x^4)^{-\frac15}\,.
\end{equation}
Similarly, using Young's inequality and \eqref{e:implicit:eq} we have
\begin{equation*}
 - \bar{W}^{5}\leq 5x+1\,,
\end{equation*}
from which we obtain the estimate
\begin{equation*}
\abs{\bar{W}}\leq \frac32 (1+x^4)^{\frac1{20}}\,.
\end{equation*}
Finally, differentiating \eqref{e:implicit:eq} $5$ times, we obtain
\begin{equation} \label{W5:non}
W^{(5)}(0)=120\,.
\end{equation}

We now define the weight function 
\begin{align} \label{weight:eta}
\eta_\gamma := (1 + x^4)^\gamma, \text{ for any } \gamma \in \mathbb{R}\,.
\end{align}

We now record the following lemma, which summarizes the properties of $\bar{W}$ that we will be using
\begin{lemma} Let $\ell$ be sufficiently small relative to universal constants. For $n = 2, 3, 4$ at $x=0$ we have
\begin{align}\label{fifth:deriv:bar:W}
&\bar{W}(0) = 0\,, \quad \bar{W}^{(1)}(0) = -1\,, \quad \bar{W}^{(n)}(0) = 0\,, \quad \bar{W}^{(5)}(0) = 120 \,.
\end{align}
Furthermore, for $n \ge 2$, $\bar W$ satisfies the estimates
\begin{gather}\label{decay:bar:2}
|\bar{W}| \le \frac 3 2 \eta_{\frac{1}{20}}\,,  \quad|\bar{W}^{(1)}| \le \eta_{- \frac 1 5}\,, \quad  |\bar{W}^{(n)}| \le C_k \eta_{- \frac 1 5 - \frac{n}{4}} \,,\\ \label{truth:1}
-1 + \frac{l^7}{50} \le \bar{W}^{(1)} \le 0 \quad\text{ for } |x| \ge \ell\,.
\end{gather}
\end{lemma}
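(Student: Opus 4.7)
The values of $\bar W$ and its derivatives at $x=0$ follow from inverting the implicit relation \eqref{e:implicit:eq} as a Taylor series near the origin. Writing $\bar W(x) = \sum_{k\ge 1} d_k x^k$ and substituting into $x = -\bar W - \bar W^5$ gives $x = -d_1 x - \sum_{k=2}^{4} d_k x^k - (d_5 + d_1^5)\, x^5 + O(x^6)$. Matching coefficients forces $d_1 = -1$, $d_2 = d_3 = d_4 = 0$, and $d_5 = 1$, yielding $\bar W^{(1)}(0) = -1$, $\bar W^{(n)}(0) = 0$ for $n \in \{2,3,4\}$, and $\bar W^{(5)}(0) = 5!\, d_5 = 120$, in agreement with \eqref{W5:non}.

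The pointwise bounds $|\bar W|\le \tfrac{3}{2}\eta_{1/20}$ and $|\bar W^{(1)}|\le \eta_{-1/5}$ are already in hand from the discussion preceding the lemma. For the higher-order bounds, the plan is induction on $n$. Differentiating \eqref{e:w1:implicit} once yields
\begin{align*}
\bar W^{(2)} = \frac{20\, \bar W^3\, \bar W^{(1)}}{(1+5\bar W^4)^2},
\end{align*}
and combining $|\bar W|\le \tfrac 3 2 \eta_{1/20}$, $|\bar W^{(1)}|\le \eta_{-1/5}$ with the observation $(1+5\bar W^4)^{-1} = |\bar W^{(1)}| \le \eta_{-1/5}$ establishes the base case. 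For the inductive step, successive differentiations of $\bar W^{(1)} = -(1+5\bar W^4)^{-1}$ via Leibniz and Fa\`a di Bruno express $\bar W^{(n)}$ as a finite sum of monomials
\begin{align*}
C\, \bar W^{a_0} \prod_{j\ge 1}\bigl(\bar W^{(j)}\bigr)^{a_j}\, (1+5\bar W^4)^{-m}, \qquad \sum_{j\ge 1} j\, a_j = n-1,
\end{align*}
to which the inductive hypothesis can be applied term by term.

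It remains to prove the lower bound $\bar W^{(1)} \ge -1 + \tfrac{\ell^7}{50}$ on $\{|x|\ge \ell\}$. The key identity is
\begin{align*}
\bar W^{(1)} = -\frac{1}{1+5\bar W^4} = -1 + \frac{5\bar W^4}{1+5\bar W^4},
\end{align*}
so it suffices to bound $|\bar W|$ from below on $\{|x|\ge \ell\}$. The implicit relation \eqref{e:implicit:eq} is antisymmetric under $(x,\bar W)\mapsto (-x,-\bar W)$, so $\bar W$ is odd; combined with $\bar W^{(1)} < 0$, this shows $|\bar W|$ is even and monotone increasing on $[0,\infty)$, so $\min_{|x|\ge \ell}|\bar W| = |\bar W(\ell)|$. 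From $|\bar W(\ell)| + |\bar W(\ell)|^5 = \ell$ and the trivial bound $|\bar W(\ell)|\le \ell$ one obtains $|\bar W(\ell)|\ge \ell - \ell^5 \ge \ell/2$ for $\ell$ sufficiently small; hence $\frac{5\bar W^4}{1+5\bar W^4}\ge \frac{5\ell^4}{4}$, which dwarfs $\ell^7/50$. The upper bound $\bar W^{(1)}\le 0$ is immediate from \eqref{e:w1:implicit}. The main routine obstacle in the whole proof is the combinatorial bookkeeping required in the inductive step to track the powers of $(1 + 5\bar W^4)^{-1}$ and arrive at the claimed exponent $-\tfrac 1 5 - \tfrac n 4$; everything else follows directly from the explicit formulas already in place.
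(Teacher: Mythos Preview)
Your arguments for the Taylor coefficients at $x=0$ and for the lower bound in \eqref{truth:1} are correct and are essentially what one would write; the paper states this lemma as a summary of the computations immediately preceding it and does not supply a separate proof, so there is no alternative argument to compare.

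There is, however, a genuine gap in the higher-derivative decay. Carry your base case to its conclusion: $|\bar W^{(2)}|\le 20\,(3/2)^3\,\eta_{3/20}\cdot\eta_{-1/5}\cdot\eta_{-2/5}=C\,\eta_{-9/20}$, which is \emph{not} the claimed $\eta_{-1/5-1/2}=\eta_{-7/10}$. This is not a bookkeeping slip to be fixed in the inductive step; the exponent $-\tfrac15-\tfrac n4$ in the lemma is simply too strong to be true. From $x\sim -\bar W^{5}$ one has $\bar W\sim -|x|^{1/5}$ and $\bar W^{(1)}\sim -\tfrac15|x|^{-4/5}$, hence $\bar W^{(2)}=20\,\bar W^{3}(\bar W^{(1)})^{3}\sim c\,|x|^{-9/5}\sim\eta_{-9/20}$, strictly slower than $\eta_{-7/10}\sim|x|^{-14/5}$. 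The correct exponent is $\tfrac1{20}-\tfrac n4$ (equivalently $-\tfrac15-\tfrac{n-1}{4}$), matching the scaling $\bar W^{(n)}\sim|x|^{1/5-n}$; and indeed the paper later uses $\|\bar W^{(2)}\eta_{9/20}\|_\infty<\infty$ in the $\tilde F_{W,1}$ estimate, which is the $-9/20$ bound rather than $-7/10$. Your inductive scheme closes cleanly once aimed at this corrected target: writing each monomial as $\bar W^{a}(1+5\bar W^{4})^{-b}$, the homogeneity $a-4b=1-5n$ is preserved under differentiation (since $\partial_x\bar W=-(1+5\bar W^4)^{-1}$ and $\partial_x(1+5\bar W^4)^{-1}=20\,\bar W^3(1+5\bar W^4)^{-3}$ both shift $a-4b$ by $-5$), and this yields exactly $\eta_{(1-5n)/20}$.
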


\subsection{Higher order $x$ derivatives}

In this section we list the higher order derivatives of $(W,Z,A)$. It will be convenient to introduce the notation: 
\begin{align*}
f^{(n)}(s, x) := \p_x^n f(s, x)\,. 
\end{align*}

We will derive now up to eight derivatives of the above system. 
\begin{align} \label{W:n}
\Big( \p_s + \frac 1 4 (- 1 + 5n) + \beta_{\tau}(n+1_{n> 1}) W^{(1)}  \Big) W^{(n)} +\mathcal V_W \p_x W^{(n)} &= F_{W,n}\,, \\ \label{Z:n}
(\p_s + \frac{5n}{4} +  n\beta_{\tau}\beta_2 W^{(1)}) Z^{(n)} + \mathcal V_Z \p_x Z^{(n)} &= F_{Z, n}\,, \\ \label{A:n}
(\p_s + \frac 5 4 n + n\beta_{\tau}\beta_1 W^{(1)} ) A^{(n)} +\mathcal V_A\p_x A^{(n)} &= F_{A,n}\,,
\end{align}
 where the forcings are defined by
\begin{align} \label{F.W.n}
&F_{W,n} := F_W^{(n)} - 1_{n \ge 3}\beta_\tau \sum_{j = 2}^{n-1} \binom{n}{j} W^{(j)} W^{(n+1 - j)} - \sum_{j = 1}^n \binom{n}{j} G_W^{(j)} W^{(n+1-j)}\,, \\ \label{F.Z.n}
&F_{Z, n} := F_Z^{(n)} - 1_{n \ge 2}\beta_\tau \beta_2 \sum_{j = 2}^{n} \binom{n}{j}  W^{(j)} Z^{(n+1 - j)} - \sum_{j = 1}^n \binom{n}{j} G_Z^{(j)} Z^{(n+1-j)}\,, \\ \label{F.A.n}
&F_{A,n} := F_A^{(n)} -1_{n \ge 2} \beta_\tau \beta_1\sum_{j = 2}^{n} \binom{n}{j}  W^{(j)} A^{(n+1 - j)}- \sum_{j = 1}^n \binom{n}{j} G_A^{(j)} A^{(n+1-j)}\,.
\end{align}

For repeated future reference, we record here the following expressions which are obtained by differentiating \eqref{def:FW} (for $n \ge 1$) 
\begin{align} \label{FW:to:the:n}
F_W^{(n)} = &- \beta_\tau e^{- \frac 3 4 s} \sum_{j = 0}^n \binom{n}{j} A^{(j)} \Big( \beta_3 Z^{(n-j)} + \beta_4 (e^{- \frac s 4} W + \kappa)^{(n-j)} \Big)\,, \\ 
F_{Z}^{(n)} = &- \beta_\tau e^{-s} \sum_{j = 0}^{n} \binom{n}{j}  A^{(j)} \Big(\beta_3 (e^{- \frac s 4}W+ \kappa)^{(n-j)} + \beta_4 Z^{(n-j)}\Big)  \label{def:FZn}
\,, \\ \n
F_A^{(n)} = & - 2 \beta_\tau \beta_1 e^{-s} \sum_{j = 0}^n \binom{n}{j} A^{(j)} A^{(n-j)} \\ \n
& + \frac 1 2 \beta_\tau \beta_1 e^{-s} \sum_{j = 0}^n \binom{n}{j} (e^{- \frac s 4}W + \kappa + Z)^{(j)} (e^{- \frac s 4}W + \kappa + Z)^{(n-j)} \\ \label{okey:1}
& - \beta_\tau \beta_1 e^{-s} \sum_{j = 0}^n \binom{n}{j} (e^{- \frac s 4}W + \kappa - Z)^{(j)} (e^{- \frac s 4}W + \kappa - Z)^{(n-j)}\,.
\end{align}

\noindent By combining \eqref{F.W.n} with \eqref{FW:to:the:n}, we obtain the expression 
\begin{align} \n
F_{W,n} = & - \beta_\tau e^{- \frac 3 4 s} \sum_{j = 0}^n \binom{n}{j} A^{(j)} \Big( \beta_3 Z^{(n-j)} + \beta_4 (e^{- \frac s 4} W + \kappa)^{(n-j)} \Big) \\ \label{F.W.n.bot}
& - 1_{n \ge 3}\beta_\tau \sum_{j = 2}^{n-1} \binom{n}{j} W^{(j)} W^{(n+1 - j)} - \sum_{j = 1}^n \binom{n}{j} G_W^{(j)} W^{(n+1-j)}\,.
\end{align}

\noindent By combining \eqref{F.Z.n} with \eqref{def:FZn}, we obtain the final expression
\begin{align} \n
F_{Z,n} = & - \beta_\tau e^{-s} \sum_{j = 0}^{n} \binom{n}{j}  A^{(j)} \Big(\beta_3 (e^{- \frac s 4}W+ \kappa)^{(n-j)} + \beta_4 Z^{(n-j)}\Big) \\ 
& - 1_{n \ge 2}\beta_\tau \beta_2 \sum_{j = 2}^{n} \binom{n}{j}  W^{(j)} Z^{(n+1 - j)}  \label{F.Z.n.bot}
-  \sum_{j = 1}^n \binom{n}{j} G_Z^{(j)} Z^{(n+1-j)}\,.
\end{align}

We now derive the first five constrained ODEs. First, we introduce an important piece of notation to describe the purely $s$-dependent quantities at $x = 0$, 
\begin{align} \label{q:def:q}
q^{(n)}(s) := W^{(n)}(0, s)\,. 
\end{align}
From the equations \eqref{W:0} and \eqref{W:n} , evaluating $W^{(n)}$, for $n=0,\dots,4$ at $x = 0$ and using the constraints \eqref{e:constraints}, we obtain the following system of five ODEs in the $s$ variable
\begin{align} \label{eq:ODE:1}
&- \frac{\mu}{\beta_\tau} + e^{- \frac 3 4 s} \dot{\kappa} = \frac{1}{\beta_\tau} F_W(0, s)\,, \\  \label{eq:ODE:2}
&\dot{\tau} -\frac{1}{\beta_\tau} G^{(1)}_W(0, s) + \frac{\mu}{\beta_\tau} q^{(2)}(s) =\frac{1}{\beta_\tau} F_{W}^{(1)}(0, s)\,, \\ \label{eq:ODE:3}
&(\p_s + \frac 9 4 ) q^{(2)} - 3 \beta_\tau q^{(2)} + \mu q^{(3)} + 2 G_W^{(1)}(0, s) q^{(2)} = F_W^{(2)}(0, s) + G_W^{(2)}(0, s)\,, \\  \label{eq:ODE:4}
&(\p_s + \frac{14}{4}) q^{(3)} - 4 \beta_\tau q^{(3)} + 3 G_W^{(1)}(0, s) q^{(3)} + 3 \beta_\tau |q^{(2)}|^2+ \sum_{j = 2}^3 \binom{3}{j} G_W^{(j)}(0, s) q^{(4-j)}= F_W^{(3)}(0, s)\,, \\  \label{eq:ODE:5}
&q^{(5)} \mu + 10 \beta_\tau q^{(2)} q^{(3)} + \sum_{j = 2}^4 \binom{4}{j} G_W^{(j)}(0, s) q^{(5-j)} = F_W^{(4)}(0, s)\,.
\end{align}
In addition, we will need the evolution equation of $W^{(5)}$ at $x=0$, given by
\begin{align}
\label{eq:ODE:6}
&\partial_s  q^{(5)}= - \mu q^{(6)} + (1 - \beta_\tau) q^{(5)} - 10 |q^{(3)}|^2  - \sum_{j = 1}^{5} \binom{5}{j} G_W^{(j)}(0, s) q^{(6-j)} + F_W^{(5)}(0, s)\,. 
\end{align} 

We also derive the following equation for the difference $\tilde W:=W-\bar W$:
\begin{align}
&(\p_ s- \frac 1 4+\beta_\tau   \bar W^{(1)}) \tilde W +\mathcal V_W \p_x \tilde W = -\beta_\tau e^{- \frac 3 4 s} \dot{\kappa} +  F_{ W}+((\beta_\tau -1)\bar W-G_W)\p_x \bar W:=\tilde F_W\label{diff:eq0}\,.
\end{align}
The equation for the higher order derivatives $W^{(n)}$ is given by
\begin{align} \n
&\p_s \tilde{W}^{(n)} + \Big( \frac 1 4 (-1 + 5n) + \beta_{\tau}\left(\bar W^{(1)}+ nW^{(1)} \right)  \Big) \tilde{W}^{(n)} + \mathcal V_W \p_x \tilde{W}^{(n)}   \\ \n
 &\qquad= F_{W}^{(n)} - 1_{n \ge 2}\beta_\tau \sum_{j = 2}^{n-1} \binom{n}{j}  W^{(j)} \tilde W^{(n+1 - j)}- \sum_{j = 1}^n \binom{n}{j} \left(\beta_\tau \bar W^{(j+1)}\tilde W ^{(n-j)}+G_W^{(j)}\tilde W ^{(n+1-j)}\right) \\ \n
& \qquad \qquad + (\beta_\tau - 1) \sum_{j = 0}^n \binom{n}{j} \bar{W}^{(j)} \bar{W}^{(n+1-j)} - \sum_{j = 0}^n \binom{n}{j} G_W^{(j)} \bar{W}^{(n+1-j)} \\ \label{diff:eq}
& \qquad =: \tilde{F}_{W,n}\,.
\end{align}

\subsection{$\nabla_{\alpha, \beta}$ derivatives}

We introduce the following notation to compactify the forthcoming equations
\begin{align} \label{c:deriv}
f_{c} := \p_{c} f, \qquad c \in \{ \alpha, \beta \}\,,
\end{align}
for any function $f$.

\subsubsection{$\nabla_{\alpha, \beta}$ derivatives of $Z$}

We first take $\p_c$ of equation \eqref{Z:0} which produces 
\begin{align} \label{every:time:1}
\p_s Z_c + \mathcal{V}_Z \p_x Z_c = \p_c F_Z - Z^{(1)} \Big( \dot{\tau}_c \beta_\tau^2 \beta_2 W + \beta_\tau \beta_2 W_{c} + \p_c G_Z \Big) =: F_{Z,0}^{c}\,.
\end{align} 
We now use \eqref{def:FZ} to evaluate the $\p_c F_Z$ term appearing above via 
\begin{align} \label{pcFz:0}
\p_c F_Z = \dot{\tau}_c \beta_\tau F_Z - \beta_\tau e^{-s} A_c ( \beta_3 (e^{- \frac s 4}W + \kappa) + \beta_4 Z) - \beta_\tau e^{-s} A(\beta_3 (e^{- \frac s 4} W_c + \kappa_c) + \beta_4 Z_c)
\end{align}

We next compute $\p_x^n$ of equation \eqref{every:time:1} to obtain 
\begin{align} \n
&(\p_s + \frac 5 4 n + n \beta_\tau \beta_2 W^{(1)}) Z_c^{(n)} + \mathcal{V}_Z \p_x Z_c^{(n)} \\  \n
&\quad= \p_c F_Z^{(n)}  - \sum_{j = 0}^n \binom{n}{j} \dot{\tau}_c \beta_\tau^2 \beta_2 Z^{(j+1)} W^{(n-j)} - \sum_{j = 0}^n \binom{n}{j} \beta_\tau \beta_2 Z^{(j+1)} W_c^{(n-j)}\\  \n
&\qquad - \sum_{j = 0}^n \binom{n}{j} Z^{(1+j)} \p_c G_Z^{(n-j)} - 1_{n \ge 1} \sum_{j = 1}^n \binom{n}{j} G_Z^{(j)} Z_c^{(n+1-j)}  \\ \label{Midterms:1}
&\qquad - 1_{n \ge 2} \sum_{j = 2}^n \binom{n}{j} \beta_\tau \beta_2 W^{(j)} Z^{(n-j+1)}_c =: F_{Z,n}^{c}\,. 
\end{align}

\noindent We now compute the expression for $\p_c F_Z^{(n)}$ by computing $\p_x^n$ of \eqref{pcFz:0} which yields
\begin{align} \n
\p_c F_Z^{(n)} = &  \dot{\tau}_c \beta_\tau F_Z^{(n)} - \beta_\tau e^{-s} \sum_{j = 0}^n \binom{n}{j} A_c^{(j)} \Big( \beta_3 ( e^{- \frac s 4} W + \kappa)^{(n-j)} + \beta_4 Z^{(n-j)} \Big) \\ \label{Midterms:2}
& - \beta_\tau e^{-s} \sum_{j =0}^n \binom{n}{j} A^{(j)} \Big(  \beta_3 ( e^{- \frac s 4} W_c + \kappa_c)^{(n-j)} + \beta_4 Z_c^{(n-j)} \Big)\,.
\end{align} 

\subsubsection{$\nabla_{\alpha, \beta}$ derivatives of $A$}

We compute $\p_c$ of the basic equation for $A$, \eqref{basic:a}, which yields 
\begin{align} \label{socialite:1}
\p_s A_c + \mathcal{V}_A \p_x A_c = \p_c F_A - \Big( \dot{\tau}_{c} \beta_\tau^2 \beta_1 W + \beta_\tau \beta_1 W_c + \p_c G_A \Big) A^{(1)} =: F_{A,0}^{c}\,. 
\end{align} 

\noindent Computing now the expression $\p_c F_A$ by differentiating \eqref{def:FA}, we obtain 
\begin{align} \n
\p_c F_A = & \dot{\tau}_{c} \beta_\tau F_A + \beta_\tau \beta_1 e^{-s} \Big( e^{- \frac s 4}W + \kappa + Z \Big) \Big( e^{- \frac s 4} W_c + \kappa_c + Z_c \Big) \\ \label{pcFa}
&- 2 \beta_\tau \beta_5 e^{-s} \Big( e^{- \frac s 4} W + \kappa - Z \Big) \Big( e^{- \frac s 4} W_c + \kappa_c - Z_c \Big)\,.
\end{align}

\noindent We now compute $\p_x^n$ of equation \eqref{socialite:1} which produces 
\begin{align} \n
&(\p_s + \frac{5n}{4} + n \beta_\tau \beta_1 W^{(1)} ) A_c^{(n)} + \mathcal{V}_{A} \p_x A_c^{(n)} \\ \n
= & \p_c F_A^{(n)}  - 1_{n \ge 1} \sum_{j = 1}^n \binom{n}{j} G_A^{(j)} A_c^{(n+1-j)} - 1_{n \ge 2} \sum_{j = 2}^n \binom{n}{j} \beta_\tau \beta_1 W^{(j)} A_c^{(n+1-j)} \\ \n
& - \sum_{j = 0}^n \binom{n}{j} \dot{\tau}_c \beta_\tau^2 \beta_1 W^{(j)} A^{(n+1-j)} - \sum_{j = 0}^n \binom{n}{j} \beta_\tau \beta_1 W_c^{(j)} A^{(n+1-j)} \\ \label{Midterms:3}
& - \sum_{j = 0}^n \binom{n}{j} \p_c G_A^{(j)} A^{(n+1-j)} =:  F_{A,n}^{c}\,. 
\end{align}

\noindent We now compute $\p_x^n$ of the expression for $\p_c F_A$ in \eqref{pcFa} which yields
\begin{align} \n
\p_c F_A^{(n)} =& \dot{\tau}_c \beta_\tau F_A^{(n)} + \beta_\tau \beta_1 e^{-s}\sum_{j = 0}^n \binom{n}{j} \Big( e^{- \frac s 4} W + \kappa + Z \Big)^{(j)} \Big( e^{- \frac s 4} W_c + \kappa_c + Z_c\Big)^{(n-j)} \\ \label{Midterms:4}
& - 2 \beta_\tau \beta_5 e^{-s} \sum_{j = 0}^n \binom{n}{j} \Big( e^{- \frac s 4} W + \kappa - Z \Big)^{(j)} \Big( e^{- \frac s 4} W_c+ \kappa_c - Z_c \Big)^{(n-j)}\,.
\end{align}

\subsubsection{$W$ Quantities}

 For the $W$ equations, we separately write down the $n = 0$ system. Differentiating \eqref{W:0} in $c$ yields 
 \begin{align} \n
 &(\p_s - \frac 1 4 + \beta_\tau W^{(1)}) \p_c W + \mathcal{V}_W \p_x \p_c W \\ 
 &\qquad= - e^{- \frac 3 4 s} \beta_\tau \p_c \dot{\kappa} - e^{- \frac 3 4 s} \dot{\kappa} \p_c \dot{\tau} \beta_\tau^2 - \p_c G_W W^{(1)} - W^{(1)} \dot{\tau}_{c} \beta_\tau^2 W + \p_c F_W\,. \label{eq.dcw.0}
 \end{align} 
 
 \noindent By differentiating \eqref{def:FW} in $\p_c$, we obtain 
 \begin{align} \n
 \p_c F_W = &- \p_c \dot{\tau} \beta_\tau^2 e^{- \frac 3 4 s} A \Big( \beta_3 Z + \beta_4 (e^{- \frac s 4}W + \kappa) \Big) - \beta_\tau e^{- \frac 3 4 s} \p_c A \Big( \beta_3 Z + \beta_4 (e^{- \frac s 4} W + \kappa) \Big) \\ \n
 & - \beta_\tau e^{- \frac 3 4 s} A \Big( \beta_3 \p_c Z + \beta_4 (e^{- \frac s 4} \p_c W + \p_c \kappa) \Big) \\ \label{dc.FW}
 = &  \dot{\tau}_c \beta_\tau F_W - \beta_\tau e^{- \frac 3 4 s} A_c \Big( \beta_3 Z + \beta_4 (e^{- \frac s 4}W + \kappa) \Big) - \beta_\tau e^{- \frac 3 4 s} A \Big( \beta_3 Z_c + \beta_4 (e^{- \frac s 4}W_c + \kappa_c) \Big).
 \end{align}
 
 \noindent We combine with \eqref{eq.dcw.0} to obtain 
\begin{align} \label{eq:C:W}
 (\p_s - \frac 1 4 + \beta_\tau W^{(1)}) \p_c W + \mathcal{V}_W \p_x \p_c W = F_{W,0}^{c}\,, 
\end{align}

\noindent where the forcing is given by
\begin{align} \n
 F_{W,0}^{c} := &  \dot{\tau}_c \beta_\tau F_W - \beta_\tau e^{- \frac 3 4 s} A_c \Big( \beta_3 Z + \beta_4 (e^{- \frac s 4} W + \kappa) \Big)  - \p_c G_W W^{(1)} - W^{(1)} \dot{\tau}_{c} \beta_\tau^2 W \\ \label{dc.FW:0}
 & - \beta_\tau e^{- \frac 3 4 s} A \Big( \beta_3 Z_c + \beta_4 (e^{- \frac s 4} W_c + \kappa_c) \Big)  - e^{- \frac 3 4 s} \beta_\tau \p_c \dot{\kappa}   - e^{- \frac 3 4 s} \dot{\kappa} \p_c \dot{\tau} \beta_\tau^2  \,.
 \end{align}
 
 We now take $\p_x^n$ of equation \eqref{eq.dcw.0}. This produces, for $n \ge 1$, 
 \begin{align} \n
 &(\p_s + \frac{5n-1}{4} +(n+1) \beta_\tau W^{(1)}) \p_c W^{(n)} + \mathcal{V}_W \p_x \p_c W^{(n)} \\ \n
 =& - 1_{n \ge 1} \sum_{j = 1}^n \binom{n}{j} \beta_\tau W^{(1+j)} \p_c W^{(n-j)} - 1_{n \ge 2} \sum_{j = 0}^{n-2} \binom{n}{j} \beta_\tau W^{(n-j)} \p_c W^{(j+1)} \\ \n
 & - 1_{n \ge 1} \sum_{j = 0}^{n-1} \binom{n}{j} G_W^{(n-j)} \p_c W^{(j+1)} - \sum_{j = 0}^n \binom{n}{j} \p_c G_W^{(j)} W^{(n-j+1)} \\ \label{trek:mix}
 & - \dot{\tau}_c \beta_\tau^2 \sum_{j = 0}^n \binom{n}{j} W^{(1+j)} W^{(n-j)} +  \p_c \p_x^n F_W  =: F_{W, n}^{c}\,.
 \end{align}

We now use the expression \eqref{dc.FW} compute 
\begin{align} \n
 \p_c F_W^{(n)} = & \dot{\tau}_c \beta_\tau F_W^{(n)} - \sum_{j = 0}^n \binom{n}{j} \beta_\tau e^{- \frac 3 4 s} \p_c A^{(j)} \Big( \beta_3 Z^{(n-j)} + \beta_4 (e^{- \frac s 4} W + \kappa)^{(n-j)} \Big) \\ \label{dcbcFwn}
& - \sum_{j = 0}^n \binom{n}{j} \beta_\tau e^{- \frac 3 4 s} A^{(j)} \Big( \beta_3 \p_c Z^{(n-j)} + \beta_4 (e^{- \frac s 4} \p_c W + \p_c \kappa)^{(n-j)} \Big)\,.
\end{align}

\noindent Combining now with the expression \eqref{trek:mix}, we obtain 
\begin{align} \n
F_{W,n}^{c} := &  \dot{\tau}_c \beta_\tau F_W^{(n)} - \sum_{j = 0}^n \binom{n}{j} \beta_\tau e^{- \frac 3 4 s} \p_c A^{(j)} \Big( \beta_3 Z^{(n-j)} + \beta_4 (e^{- \frac s 4} W + \kappa)^{(n-j)} \Big) \\ \n
& - \sum_{j = 0}^n \binom{n}{j} \beta_\tau e^{- \frac 3 4 s} A^{(j)} \Big( \beta_3 \p_c Z^{(n-j)} + \beta_4 (e^{- \frac s 4} \p_c W + \p_c \kappa)^{(n-j)} \Big) \\ \n
& - \sum_{j = 1}^n \binom{n}{j} \beta_\tau W^{(1+j)} \p_c W^{(n-j)} - 1_{n \ge 2} \sum_{j = 0}^{n-2} \binom{n}{j} \beta_\tau W^{(n-j)} \p_c W^{(j+1)} \\ \n
& - 1_{n \ge 1} \sum_{j = 0}^{n-1} \binom{n}{j} G_W^{(n-j)} \p_c W^{(j+1)} -  \sum_{j = 0}^n \binom{n}{j} \p_c G_W^{(j)} W^{(j+1)} \\ \label{Fncw.fin.2} 
&  - \dot{\tau}_c \beta_\tau^2 \sum_{j = 0}^n \binom{n}{j} W^{(1+j)} W^{(n-j)}\,.
\end{align}

\subsection{$\nabla_{\alpha, \beta}^2$ derivatives}

\subsubsection{$\nabla_{\alpha, \beta}^2$ derivatives of $W$}

We compute $\p_{c_2}$ of \eqref{eq:C:W} which results in 
\begin{align} \n
&(\p_s - \frac 1 4 + \beta_\tau W^{(1)}) W_{c_1 c_2} + \mathcal{V}_W \p_x W_{c_1 c_2} \\ \n
 =&  \p_{c_1 c_2} F_W - \beta_\tau W^{(1)}_{c_2} W_{c_1} - \beta_\tau^2 \dot{\tau}_{c_2} W^{(1)} W_{c_1} - \Big( \beta_\tau^2 \dot{\tau}_{c_2} W + \beta_\tau W_{c_2} + \p_{c_2} G_W \Big) W^{(1)}_{c_1} \\ \n
 & - \dot{\tau}_{c_1} \beta_\tau^2 W W^{(1)}_{c_2} - \dot{\tau}_{c_1 c_2} \beta_\tau^2 W W^{(1)} - 2 \beta_\tau^2 \dot{\tau}_{c_1} \dot{\tau}_{c_2} W W^{(1)} - \dot{\tau}_{c_1} \beta_\tau^2 W^{(1)} W_{c_2} - \mathcal{M}^{c_1, c_2} \\ \label{HAIM:1}
=:& F_{W,0}^{c_1, c_2}\,,
\end{align}
where the modulation terms have been grouped into 
\begin{align} \label{Mod:Mod:1}
\mathcal{M}^{c_1, c_2} := e^{- \frac 3 4 s}\Big( \beta_\tau \dot{\kappa}_{c_1 c_2} + \beta_\tau^2 (\dot{\tau}_{c_2} \dot{\kappa}_{c_1} + \dot{\kappa}_{c_2} \dot{\tau}_{c_1}) + \dot{\kappa} \dot{\tau}_{c_1 c_2} \beta_\tau^2 + 2 \beta_\tau^2 \dot{\kappa} \dot{\tau}_{c_1} \dot{\tau}_{c_2} \Big)\,.
\end{align}

\noindent Similarly we compute $\p_{x}^n$ of \eqref{HAIM:1} which results in the following system for $n \ge 1$
\begin{align} \n
&\Big( \p_s + \frac{5n-1}{4} + (n+1) \beta_\tau W^{(1)} \Big) W_{c_1, c_2}^{(n)} + \mathcal{V}_W \p_x W^{(n)}_{c_1, c_2} \\ \n
= & \p_{c_1 c_2} F_W^{(n)}   - \sum_{i \in \{1, 2\}} \sum_{j = 0}^n \binom{n}{j} \beta_\tau^2 \dot{\tau}_{c_i}  W^{(1+j)} W^{(n-j)}_{c_{i'}} - \sum_{j = 0}^n \binom{n}{j} \beta_\tau W^{(j)}_{c_1} W^{(n+1-j)}_{c_2} \\ \n
& - 1_{n \ge 1} \sum_{j = 1}^n \binom{n}{j} \beta_\tau W^{(1+j)} W^{(n-j)}_{c_1 c_2}  - \sum_{i = \{1, 2\}} \sum_{j = 0}^{n} \binom{n}{j} \beta_\tau^2 \dot{\tau}_{c_{i'}} W^{(j)} W^{(n+1-j)}_{c_i} \\  \n
&-  \sum_{j = 0}^{n} \binom{n}{j} \beta_\tau W^{(n+1-j)}_{c_1} W^{(j)}_{c_2}  - 1_{n \ge 2} \sum_{j = 2}^{n} \binom{n}{j} \beta_\tau W^{(j)} W^{(n+1-j)}_{c_1 c_2} \\ \n
& -  \sum_{j = 0}^{n} \binom{n}{j} \p_{c_2}G_W^{(j)} W^{(n+1-j)}_{c_1} - 1_{n \ge 1} \sum_{j = 1}^{n} \binom{n}{j} G_W^{(j)} W_{c_1 c_2}^{(n-j+1)} \\ 
& - \sum_{j = 0}^n \binom{n}{j} \left(\dot{\tau}_{c_1 c_2}+2\dot{\tau}_{c_1} \dot{\tau}_{c_2} \right) \beta_\tau^2 W^{(j)} W^{(n+1-j)}  =: F_{W,n}^{c_1, c_2}\,.  \label{Bernie:1}
\end{align} 

\noindent We shall now compute the following identity by differentiating \eqref{dc.FW}
\begin{align} \n
\p_{c_1 c_2} F_W = &- \beta_\tau e^{- \frac 3 4 s} \Big( A_{c_1 c_2} ( \beta_3 Z + \beta_4 (e^{- \frac s 4}W + \kappa) ) + A_{c_1} ( \beta_3 Z_{c_2} + \beta_4 (e^{- \frac s4} W_{c_2} + \kappa_{c_2}) ) \Big) \\  \n
& - \beta_\tau e^{- \frac 3 4 s} \Big( A_{c_2} (\beta_3 Z_{c_1} + \beta_4 (e^{- \frac s 4} W_{c_1} + \kappa_{c_1})) + A (\beta_3 Z_{c_1 c_2} + \beta_4 ( e^{- \frac s 4}W_{c_1 c_2} + \kappa_{c_1 c_2})) \Big) \\ \label{find:1}
& + \dot{\tau}_{c_2} \beta_\tau \p_{c_1} F_W + \dot{\tau}_{c_1 c_2} \beta_\tau F_W + \dot{\tau}_{c_1} \beta_\tau \p_{c_2} F_W\,. 
\end{align}

\noindent Similarly, computing $\p_x^n$ of the above expression, we record for $n \ge 1$, 
\begin{align} \n
\p_{c_1 c_2} F_W^{(n)} = & - \beta_\tau e^{- \frac 3 4 s} \sum_{j = 0}^n \binom{n}{j} \Big( A_{c_1 c_2}^{(j)} (\beta_3 Z^{(n-j)} + \beta_4 (e^{- \frac s 4}W+ \kappa)^{(n-j)}) \\ \n
& \qquad \qquad \qquad \qquad \qquad + A_{c_1}^{(j)} (\beta_3 Z_{c_2}^{(n-j)} + \beta_4 (e^{- \frac s 4} W_{c_2} + \kappa_{c_2}) ^{(n-j)}) \Big) \\ \n
& - \beta_\tau e^{- \frac 3 4 s} \sum_{j = 0}^n \binom{n}{j} \Big( A_{c_2}^{(j)} (\beta_3 Z_{c_1}^{(n-j)} + \beta_4 (e^{- \frac s 4}W_{c_1} + \kappa_{c_1})^{(n-j)}) \\  \n
& \qquad \qquad \qquad \qquad \qquad + A^{(j)} (\beta_3 Z_{c_1 c_2}^{(n-j)} + \beta_4 (e^{- \frac s 4} W_{c_1 c_2} + \kappa_{c_1 c_2})^{(n-j)} ) \Big) \\ \label{Fwn:c1:c2:exp}
& + \dot{\tau}_{c_2} \beta_\tau \p_{c_1} F_W^{(n)} + \dot{\tau}_{c_1 c_2} \beta_\tau F_W^{(n)} + \dot{\tau}_{c_1} \beta_\tau \p_{c_2} F_W^{(n)}\,.
\end{align}
 
\subsubsection{$\nabla_{\alpha, \beta}^2$ derivatives of $Z$}

A calculation of $\p_{c_2}$ of equation \eqref{every:time:1} results in 
\begin{align} \n
\p_s Z_{c_1 c_2} + \mathcal{V}_Z \p_x Z_{c_1 c_2} = & \p_{c_1 c_2} F_Z - \sum_{i \in \{1, 2 \}} Z_{c_i}^{(1)} \Big( \dot{\tau}_{c_{i'}} \beta_\tau^2 \beta_2 W + \beta_\tau \beta_2 W_{c_{i'}} + \p_{c_{i'}} G_Z \Big) \\ \n
& - Z^{(1)} \Big( \dot{\tau}_{c_1 c_2} \beta_\tau^2 \beta_2 W + 2 \dot{\tau}_{c_1} \dot{\tau}_{c_2} \beta_\tau^2 \beta_2 W + \sum_{i \in \{1, 2\}}\dot{\tau}_{c_i} \beta_\tau^2 \beta_2 W_{c_{i'}} \\ 
& + \beta_\tau \beta_2 W_{c_1 c_2} + \p_{c_1 c_2} G_Z \Big) =: F_{Z,0}^{c_1, c_2}\,.\label{Lauv:1}
\end{align}

\noindent Computing $\p_x^n$ we obtain 
\begin{align} \n
&\Big( \p_s + \frac 5 4 n + n \beta_\tau \beta_2 W^{(1)} \Big) Z_{c_1 c_2}^{(n)} + \mathcal{V}_Z \p_x Z_{c_1 c_2}^{(n)} \\ \n
= & - 1_{n \ge 2} \sum_{j = 2}^n \binom{n}{j} \beta_\tau \beta_2 W^{(j)} Z_{c_1 c_2}^{(n-j+1)} - 1_{n \ge 1} \sum_{j = 1}^n \binom{n}{j} G_Z^{(j)} Z_{c_1 c_2}^{(n-j+1)} \\ \n
& - \sum_{j = 0}^n \sum_{i \in \{1, 2 \}} \binom{n}{j} Z_{c_i}^{(j+1)} \Big( \dot{\tau}_{c_{i'}} \beta_\tau^2 \beta_2 W^{(n-j)} + \beta_\tau \beta_2 W_{c_{i'}}^{(n-j)} + \p_{c_{i'}}G_Z^{(n-j)} \Big) \\ \n
& - \sum_{j = 0}^n \binom{n}{j} Z^{(j+1)} \Big( \dot{\tau}_{c_1 c_2} \beta_\tau^2 \beta_2 W^{(n-j)} + 2 \dot{\tau}_{c_1} \dot{\tau}_{c_2} \beta_\tau^2 \beta_2 W^{(n-j)} + \beta_\tau \beta_2 W_{c_1 c_2}^{(n-j)} \\ \n
& \qquad \qquad \qquad \qquad + \sum_{i \in \{1, 2\}} \dot{\tau}_{c_i} \beta_\tau^2 \beta_2 W_{c_{i'}}^{(n-j)} + \p_{c_1 c_2}G_Z^{(n-j)} \Big) \\  
& + \p_{c_1 c_2} F_Z^{(n)} =: F_{Z,n}^{c_1, c_2}\,. \label{Lauv:2}
\end{align}

We now record the expression for 
\begin{align} \n
\p_{c_2 c_1} F_Z = &- \beta_\tau e^{-s}\Big( A ( \beta_3 ( e^{- \frac s 4} W_{c_1 c_2} + \kappa_{c_1 c_2} ) + \beta_4 Z_{c_1 c_2} ) + A_{c_1 c_2} (\beta_3 (e^{- \frac s 4}W + \kappa) + \beta_4 Z) \Big) \\ \n
& - \beta_\tau e^{-s} \sum_{i \in \{1, 2 \}} A_{c_i} \Big( \beta_3 (e^{- \frac s 4} W_{c_{i'}} + \kappa_{c_{i'}}) + \beta_4 Z_{c_{i'}} \Big) + \dot{\tau}_{c_1} \beta_\tau \p_{c_2} F_Z  + \dot{\tau}_{c_2} \beta_\tau \p_{c_1} F_Z \\ \label{Lauv:3}
& + \dot{\tau}_{c_1 c_2} \beta_\tau F_Z\,. 
\end{align}

\noindent Next, we compute $\p_x^n$ of the above expression to obtain 
\begin{align} \n
\p_{c_2 c_1} F_Z^{(n)} = & - \beta_\tau e^{-s} \sum_{j = 0}^n \binom{n}{j} A^{(j)} \Big(  \beta_3 (e^{- \frac s 4} W_{c_1 c_2} + \kappa_{c_1 c_2})^{(n-j)} + \beta_4 Z_{c_1 c_2}^{(n-j)} \Big) \\ \n
& - \beta_\tau e^{-s} \sum_{j = 0}^n \binom{n}{j} A_{c_1 c_2}^{(j)} \Big( \beta_3 (e^{- \frac s 4} W + \kappa)^{(n-j)} + \beta_4 Z^{(n-j)} \Big) \\ \n
& - \beta_\tau e^{-s} \sum_{j = 0}^n \sum_{i \in \{1, 2\}} \binom{n}{j} A_{c_i}^{(j)} \Big( \beta_3( e^{- \frac s 4} W_{c_{i'}} + \kappa_{c_{i'}})^{(n-j)} + \beta_4 Z_{c_{i'}}^{(n-j)}  \Big) \\ \label{Lauv:4}
&+ \dot{\tau}_{c_1} \beta_\tau \p_{c_2} F_Z^{(n)}  + \dot{\tau}_{c_2} \beta_\tau \p_{c_1} F_Z^{(n)}  + \dot{\tau}_{c_1 c_2} \beta_\tau F_Z^{(n)}\,.
\end{align}

\subsubsection{$\nabla_{\alpha, \beta}^2$ derivatives of $A$}

We compute $\p_{c_2}$ of equation \eqref{socialite:1} to obtain the equation to obtain 
\begin{align} \n
\p_s A_{c_1 c_2} + \mathcal{V}_A \p_x A_{c_1 c_2} = & \p_{c_1 c_2} F_A - \sum_{i = \{1, 2\}} A^{(1)}_{c_{i'}} \Big( \dot{\tau}_{c_i} \beta_\tau^2 \beta_1 W + \beta_\tau \beta_1 W_{c_i} + \p_{c_i} G_A \Big) \\ \n
& - A^{(1)} \Big( \dot{\tau}_{c_1 c_2} \beta_\tau^2 \beta_1 W + 2 \dot{\tau}_{c_1} \dot{\tau}_{c_2} \beta_1 \beta_\tau^3 W + \beta_\tau \beta_1 W_{c_1 c_2} + \p_{c_1 c_2}G_A \\ \n
& \qquad \qquad + \sum_{i = \{1, 2\}} \beta_\tau^2 \beta_1 \dot{\tau}_{c_i} W_{c_{i'}} \Big)\\ \label{Lauv:5}
 =: & F_{A,0}^{c_1, c_2}\,. 
\end{align}
By computing $\p_x^n$ of the above equation, we obtain 
\begin{align} \n
&\Big( \p_s + \frac 5 4 n + n \beta_\tau \beta_1 W^{(1)} \Big) A_{c_1 c_2}^{(n)} + \mathcal{V}_A \p_x A_{c_1 c_2}^{(n)} \\ \n
&\quad=  - 1_{n \ge 2} \sum_{j = 2}^n \binom{n}{j} \beta_\tau \beta_1 W^{(j)} A_{c_1 c_2}^{(n-j+1)} - 1_{n \ge 1} \sum_{j = 1}^n \binom{n}{j} G_A^{(j)} A_{c_1 c_2}^{(n-j+1)} \\ \n
& \qquad- \sum_{i = \{1, 2\}} \sum_{j = 0}^n \binom{n}{j} A^{(j+1)}_{c_{i'}} \Big( \dot{\tau}_{c_i} \beta_\tau^2 \beta_1 W^{(n-j)} + \beta_\tau \beta_1 W_{c_{i}}^{(n-j)} + \p_{c_i} G_A^{(n-j)} \Big) \\ \n
&\qquad - \sum_{i = \{1, 2 \}} \sum_{j = 0}^n \binom{n}{j} A^{(j+1)} \Big( \dot{\tau}_{c_1 c_2} \beta_\tau^2 \beta_1 W^{(n-j)} + 2 \dot{\tau}_{c_1} \dot{\tau}_{c_2} \beta_1 \beta_\tau^3 W^{(n-j)} + \beta_\tau \beta_1 W_{c_1 c_2}^{(n-j)} \\ \label{Lauv:6}
& \qquad \qquad \qquad \qquad + \sum_{i \in \{1, 2 \}} \beta_\tau^2 \beta_1 \dot{\tau}_{c_i} W_{c_{i'}}^{(n-j)} + \p_{c_1 c_2} G_A^{(n-j)}\Big) + \p_{c_1 c_2} F_A^{(n)} =: F_{A,n}^{(c_1, c_2)}\,. 
\end{align}

We next differentiate equation \eqref{pcFa} to obtain
\begin{align} \n
\p_{c_1 c_2} F_A = & \beta_\tau \beta_1 e^{-s} \Big( e^{- \frac s 4} W_{c_2} + \kappa_{c_2} + Z_{c_2} \Big) \Big( e^{- \frac s 4} W_{c_1} + \kappa_{c_1} + Z_{c_1} \Big) \\ \n
& + \beta_\tau \beta_1 e^{-s} \Big( e^{- \frac s 4} W + \kappa + Z \Big) \Big( e^{- \frac s 4} W_{c_1 c_2} + \kappa_{c_1 c_2} + Z_{c_1 c_2} \Big) \\ \n
& - 2 \beta_\tau \beta_5 e^{-s} \Big( e^{- \frac s 4} W_{c_2} + \kappa_{c_2} - Z_{c_2} \Big) \Big( e^{- \frac  s4} W_{c_1} + \kappa_{c_1} - Z_{c_1} \Big) \\ \n
& - 2 \beta_\tau \beta_5 e^{-s} \Big( e^{- \frac s 4} W + \kappa - Z \Big) \Big( e^{- \frac s 4} W_{c_1 c_2} + \kappa_{c_1 c_2} - Z_{c_1 c_2} \Big) \\ \label{Lauv:7}
&+  \dot{\tau}_{c_1 c_2} \beta_\tau F_A + \dot{\tau}_{c_2} \beta_\tau \p_{c_1}F_A + \dot{\tau}_{c_1} \beta_\tau \p_{c_2}F_A\,.
\end{align}

\noindent By computing $\p_x^n$ of the above, we obtain 
\begin{align} \n
\p_{c_1 c_2} F_A^{(n)} = & \beta_\tau \beta_1 e^{-s} \sum_{j = 0}^n \binom{n}{j} \Big( e^{- \frac s 4} W_{c_2} + \kappa_{c_2} + Z_{c_2} \Big)^{(j)} \Big( e^{- \frac s 4} W_{c_1} + \kappa_{c_1} + Z_{c_1} \Big)^{(n-j)} \\ \n
+& \beta_\tau \beta_1 e^{-s} \sum_{j = 0}^n \binom{n}{j} \Big( e^{- \frac s 4} W + \kappa + Z \Big)^{(j)} \Big( e^{- \frac s 4} W_{c_1 c_2} + \kappa_{c_1 c_2} + Z_{c_1 c_2} \Big)^{(n-j)} \\ \n
-& 2 \beta_\tau \beta_5 e^{-s} \sum_{j = 0}^n \binom{n}{j} \Big( e^{- \frac s 4} W_{c_2} + \kappa_{c_2} - Z_{c_2} \Big)^{(j)} \Big( e^{- \frac s 4} W_{c_1} + \kappa_{c_1} - Z_{c_1} \Big)^{(n-j)} \\ \n
-& 2 \beta_\tau \beta_5 e^{-s} \sum_{j = 0}^n \binom{n}{j} \Big( e^{- \frac s 4} W + \kappa - Z \Big)^{(j)} \Big( e^{- \frac s 4} W_{c_1 c_2} + \kappa_{c_1 c_2} - Z_{c_1 c_2} \Big)^{(n-j)}  \\ \label{Lauv:8}
 + &  \dot{\tau}_{c_1 c_2} \beta_\tau F_A^{(n)} + \dot{\tau}_{c_2} \beta_\tau \p_{c_1}F_A^{(n)} + \dot{\tau}_{c_1} \beta_\tau \p_{c_2} F_A^{(n)}\,. 
\end{align}

\section{Initial data}

We assume the data is of the form
\begin{equation} \label{guitar:or:band}
W_0=\bar{W} \chi(\eps^{\frac 1 4} x) + \hat W_0+\alpha x^2 \chi(x)+\beta x^3 \chi(x)\,,
\end{equation}
where $\chi$ is a smooth cut-off function satisfying $\chi(x)=1$ for $\abs{x}\leq 1$ and with support contained in the ball of radius $2$.

On the perturbation $\hat{W}_0$, we shall assume 
\begin{align} \label{assume:1}
\abs{\eta_{\frac 1 5} \hat{W}_0^{(n)}(x)} &\le \eps \,,&&\mbox{ for } \abs{x}\leq \eps^{-\frac14} \mbox{ and } n = 0,...,8\,, \\
|\hat{W}_0^{(n)}(0)| &\le \eps\,, &&\text{ for } n = 2, 3\,, \\
\hat{W}_0^{(n)}(0) &= 0 \,,&&\text{ for } n = 0,1, 4, 5\,. 
\end{align}

For $Z_0(x) := Z(s_0, x)$, and $A_0(x) = A(s_0, x)$, we assume 
\begin{align}
\| Z_0^{(n)} \|_\infty &\le \eps^{\frac 3 2}\,,\\
\| A_0^{(n)} \|_\infty &\le \eps^{\frac 3 2} \,.
\end{align}
for $n=0,\dots, 8$.

Furthermore, we will assume the following support assumption on the initial data $(W_0,Z_0,A_0)$
\begin{equation}\label{blue:grass}
\supp({W}_0)\cup \supp({Z}_0)\cup \supp({A}_0) \subset [- \frac M 2 \eps^{- \frac 1 4}, \frac M 2 \eps^{-\frac 1 4}] \,.
\end{equation}

We will now describe the iteration. 
\begin{definition} The quantities $W_{\alpha, \beta}, Z_{\alpha, \beta}, A_{\alpha, \beta}$ solve the system \eqref{W:0} - \eqref{A:0} with initial data $W_{0}$ given by \eqref{guitar:or:band} for $W_{\alpha, \beta}$.  
\end{definition}

We now describe the inductive hypotheses. First, we define the time step via 
\begin{align} \label{time:step}
s_N := - \log(\eps) + N, \qquad N  \in \mathbb{N}\,.  
\end{align}

\noindent The inductive hypotheses we make are the following: 
\begin{align} \label{induct:1}
W^{(2)}_{\alpha_N, \beta_N}(s_N) = 0, \qquad  W_{\alpha_N, \beta_N}^{(3)}(s_N) = 0\,,  
\end{align}
To initialize the induction, we take 
\begin{align} \label{zero:param}
\alpha_0 = - \frac 1 2 \hat{W}_0^{(2)}(0), \qquad \beta_0 = - \frac 1 6 \hat{W}_0^{(3)}(0)\,. 
\end{align}
Note that \eqref{induct:1} is satisfied for $N = 0$, which is the first step of the iteration, according to \eqref{zero:param}, due to \eqref{guitar:or:band} which implies that 
\begin{align*}
W^{(2)}_{0, 0}(0, s_0) &= \bar{W}^{(2)}(0) + \hat{W}_0^{(2)}(0) -  \hat{W}_0^{(2)}(0)  = 0 \,,\\
W^{(3)}_{0,0}(0, s_0) &=  \bar{W}^{(3)}(0) + \hat{W}_0^{(3)}(0) - \hat{W}_0^{(3)}(0) = 0\,. 
\end{align*}

\section{Bootstrap assumptions} \label{section:Bootstraps}

In this section we delineate all of our bootstrap assumptions. First, recall the weight function $\eta_\gamma$ defined in \eqref{weight:eta}. Let us also specify the hierarchy of three small parameters, where $\eps$ is significantly smaller than any power of $M^{-1}$, and in turn $M^{-1}$ is significantly smaller than any power of $\ell$. For the sake of precision, we make the following selections 
\begin{align} \label{choice:M}
\ell^{-1} = \log \log(M)\,. 
\end{align}

\subsection{Parameter assumptions}

We will first specify bootstrap assumptions on the parameters, $(\alpha, \beta)$, appearing in the specification of the initial data in \eqref{guitar:or:band}. Throughout the analysis, our parameters $(\alpha, \beta)$ will be contained in the rectangle set $\mathcal{B}_N$, which is defined via   
\begin{align} \label{def:BN}
\mathcal{B}_N  =  \left\{(\alpha, \beta) \in \mathbb{R}^2 : |\alpha -\alpha_{N} | \leq M^{30} \eps^{-\frac34}e^{-\frac74s_{N}}  + \eps^{-\frac 3 {10}} e^{- \frac32s_{N}} , 
\abs{\beta - \beta_{N}} \leq M^{30} \eps^{-\frac12}e^{- \frac32 s_{N}}  \right\}\,.
\end{align}
In particular, since $s_0=-\log\eps$ we have
\begin{align} \label{apple:1}
|\alpha | \leq 2 M^{30}\eps , \qquad \abs{\beta} \leq 2 M^{30}\eps\,. 
\end{align}

\noindent Note that the bootstrap in this parameter region will be verified in \eqref{crickets:2} - \eqref{crickets:3}. Moreover, notice that due to \eqref{est:hatW:in}, \eqref{apple:1} is valid for the initial choice of $(\alpha, \beta) = (\alpha_0, \beta_0)$, defined in \eqref{zero:param}.

\begin{remark}[Notation] We will now drop the subscript $W_{\alpha, \beta}$ as it is understood that $\alpha, \beta$ are fixed, and arbitrary elements of the set $\mathcal{B}_N(\alpha_N, \beta_N)$. 
\end{remark}
Note that we only assume (and therefore prove) the below bootstraps on the time interval $-\log \eps \le s \le s_{N+1}$. We now state the main inductive proposition we will be proving using these bootstrap estimates. The proof of this proposition will take place in Subsection \ref{subsection:proof}.

\begin{proposition} \label{induct:prop} Fix $N \in \mathbb{N}$, the parameters $(\eps, M, \ell)$ through \eqref{choice:M}. Let $s_N$ be given by \eqref{time:step}. Assume $(\alpha_N, \beta_N)$ are given so that \eqref{induct:1} is valid for choice of data \eqref{guitar:or:band}, satisfying conditions \eqref{assume:1} - \eqref{blue:grass}. Then there exists $(\alpha_{N+1}, \beta_{N+1})$ so that \eqref{induct:1} is valid for $s_{N+1}$ for data given again by \eqref{guitar:or:band}.  
\end{proposition}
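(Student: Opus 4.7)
The plan is to realize Proposition \ref{induct:prop} as a two-dimensional implicit function theorem / Newton step. Define the map
$$\mathcal{T}_N:\mathcal{B}_N\to\mathbb{R}^2,\qquad \mathcal{T}_N(\alpha,\beta):=\bigl(q^{(2)}_{\alpha,\beta}(s_{N+1}),\,q^{(3)}_{\alpha,\beta}(s_{N+1})\bigr),$$
where $q^{(n)}_{\alpha,\beta}(s):=W^{(n)}_{\alpha,\beta}(0,s)$ for the solution emanating from the data \eqref{guitar:or:band} parameterized by $(\alpha,\beta)$. The goal is to produce $(\alpha_{N+1},\beta_{N+1})\in\mathcal{B}_N$ with $\mathcal{T}_N(\alpha_{N+1},\beta_{N+1})=0$. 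Taylor expanding $\mathcal{T}_N$ about $(\alpha_N,\beta_N)$ reduces everything to three quantitative ingredients: an upper bound on the error $|\mathcal{T}_N(\alpha_N,\beta_N)|$; a lower bound on $|\det D\mathcal{T}_N(\alpha_N,\beta_N)|$; and an upper bound on $\sup_{\mathcal{B}_N}\|D^2\mathcal{T}_N\|$.

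For the error bound, the inductive hypothesis \eqref{induct:1} gives $q^{(2)}(s_N)=q^{(3)}(s_N)=0$. I would then integrate the unstable ODEs \eqref{eq:ODE:3}--\eqref{eq:ODE:4} forward over the unit-length interval $[s_N,s_{N+1}]$: since the unstable damping rates are of order $3/4$ and $1/2$, the homogeneous growth factor over unit time is $O(1)$, so $|q^{(2,3)}(s_{N+1})|$ inherits the decay of the forcings $F_W^{(n)}(0,s)$, $G_W^{(j)}(0,s)q^{(k)}$, $\mu q^{(3)}$, etc.\ evaluated near $s_N$. The bootstrap estimates of Section \ref{section:Bootstraps} should then yield $|\mathcal{T}_N(\alpha_N,\beta_N)|\lesssim e^{-\gamma s_N}$ for some explicit $\gamma>0$.

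For the Jacobian, differentiate the ODEs \eqref{eq:ODE:3}--\eqref{eq:ODE:4} in $c\in\{\alpha,\beta\}$. The initial data \eqref{guitar:or:band} give $\p_\alpha W_0^{(2)}(0)=2$, $\p_\alpha W_0^{(3)}(0)=0$, $\p_\beta W_0^{(2)}(0)=0$, $\p_\beta W_0^{(3)}(0)=6$, and the linearized ODEs for $\p_c q^{(2)}$, $\p_c q^{(3)}$ retain the same unstable damping structure as \eqref{eq:ODE:3}--\eqref{eq:ODE:4}, with additional coupling terms that will be perturbative under the bootstraps. Propagating from $s_0=-\log\eps$ to $s_{N+1}$ produces a near-diagonal Jacobian with entries of order $e^{\frac{3}{4}(s_{N+1}-s_0)}$ and $e^{\frac{1}{2}(s_{N+1}-s_0)}$, hence $|\det D\mathcal{T}_N(\alpha_N,\beta_N)|\gtrsim e^{\frac{5}{4}(s_{N+1}-s_0)}$. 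The Hessian bound comes from the systems \eqref{HAIM:1}, \eqref{Bernie:1} governing $W_{c_1c_2}$ evaluated at $x=0$, again combined with the bootstraps; being a second variation of the same unstable evolution, it contributes a higher power of $e^{s_{N+1}-s_0}$, but this is dwarfed by the determinant squared.

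Combining these three estimates with the standard IFT gives $(\alpha_{N+1},\beta_{N+1})$ with displacement $|\alpha_{N+1}-\alpha_N|,|\beta_{N+1}-\beta_N|\lesssim |\mathcal{T}_N(\alpha_N,\beta_N)|/|D\mathcal{T}_N(\alpha_N,\beta_N)|$, and one checks this displacement is compatible with the radii in the definition \eqref{def:BN} of $\mathcal{B}_N$. The main obstacle is the error estimate: the forcings of \eqref{eq:ODE:3}--\eqref{eq:ODE:4} are nonlinearly coupled to $\mu$, $G_W^{(j)}$, and to higher $q^{(k)}$, while the modulation variables $\tau,\kappa,\xi$ themselves evolve through \eqref{eq:ODE:1}, \eqref{eq:ODE:2}, \eqref{eq:ODE:5} with feedback from $(Z,A)$. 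Extracting from this system a decay of the form $e^{-\gamma s_N}$ (rather than mere boundedness) is the delicate point and is precisely what the elaborate bootstrap package of Section \ref{section:Bootstraps} is designed to supply.
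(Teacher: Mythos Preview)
Your proposal is correct and matches the paper's approach in Subsection~\ref{subsection:proof} essentially line for line: the same map $\mathcal{T}_N$, the same three ingredients (error $|\mathcal{T}_N(\alpha_N,\beta_N)|\lesssim e^{-s_N}$ via \eqref{eq:2:3:W0}, near-diagonal Jacobian via the bootstraps \eqref{mako:4}--\eqref{mako:5}, Hessian control via \eqref{buddy:1}), and the same IFT conclusion \eqref{crickets:2}--\eqref{crickets:3}. One small imprecision: the relevant quantitative check is not ``Hessian dwarfed by determinant squared'' but rather that $\|D^2\mathcal{T}_N\|\cdot|\Delta(\alpha,\beta)|\ll$ the diagonal Jacobian entries, which the paper verifies componentwise at the end of the proof.
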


\subsection{Bootstrap estimates on $(W^{(n)},Z^{(n)},A^{(n)})$ and modulation variables} \label{subsection:base}

We will assume the following bootstraps on the support of the solutions:
\begin{equation}\label{e:support}
\supp W(s) \cup \supp Z(s) \cup \supp A(s) \subset B(M \eps e^{\frac54 s} ) =: B_f\,,
\end{equation}
where $B(r)$ is the ball centered at the origin of radius $r$. We give the name $B_f$ to the above ball to compactify notation, as we will frequently write indicator functions on this ball. 

We will assume the following global in $x$ bootstrap assumptions on $W$:
\begin{align}
\abs{W}&\leq  \ell \log M \eta_{\frac1{20}}  \label{W:boot:0}\,,\\
|W^{(1)}| &\le \ell \log M \eta_{- \frac 15} \label{e:uniform:W1}\,,
 \\ \label{weds:1}
|W^{(n)}|&\leq M^{n^2} \eta_{- \frac 1 5} \quad\text{ for } n = 2,\dots,8 \,,
\end{align}
As a consequence of $\eqref{W:boot:0}$ and \eqref{e:support}, we have that 
\begin{align*}
|W| \le \ell \log(M) \eta_{\frac{1}{20}} \lesssim \ell \log(M) \langle x \rangle^{\frac 1 5} \lesssim \ell \log(M) \langle M \eps e^{\frac 5 4 s} \rangle^{\frac 1 5} \lesssim \ell \log(M) (1 + M^{\frac 1 5} \eps^{\frac 15} e^{\frac s 4})\,, 
\end{align*}
and thus, 
\begin{align} \label{est:W:good}
e^{- \frac s 4} |W| \le 1\,,
\end{align}
which we shall use repeatedly. 

On $Z$ and $A$ we will assume the following bootstraps:
\begin{align} \label{Z:boot:0}
 \| Z \|_\infty &\le \eps^{\frac 5 4}\,, &\| Z^{(n)} \|_\infty &\le M^{2n^2} e^{- \frac 5 4 s} \,, \\ \label{A:boot:9}
 \| A \|_\infty &\le M\eps\,, &\| A^{(n)} \|_\infty &\le M^{2n^2} e^{- \frac 5 4 s}\,, 
\end{align}
for $n=1,\dots 8$.

\noindent For the difference, $\tilde{W}$, we make the following bootstrap assumptions on $\tilde{W}$ and $\tilde{W}^{(1)}$ in the region $\abs{x}\leq \eps^{-\frac14} $
\begin{align} \label{thurs:1}
 |\tilde{W}| &\le \eps^{\frac{3}{20}} \eta_{\frac{1}{20}}\,,\\
 |\tilde{W}^{(1)}| &\le \eps^{\frac{1}{20}} \eta_{- \frac 1 5}\,.  \label{e:Wtilde:1:bootstrap} 
 \end{align}
 For the higher order derivatives of $\tilde{W}$, we will assume the following local in $x$ bootstraps in the region $|x| \le \ell$
 \begin{align}
  |\tilde{W}^{(n)}| &\le \abs{x}^{6-n}\eps^{\frac{1}{5}}+\eps^{\frac12}\leq 2\abs{\ell}^{6-n}\eps^{\frac{1}{5}},\quad \mbox{for } 0\leq k \leq 5  \label{e:Wtilde:bootstrap} \\
   |\tilde{W}^{(6)}| &\le \eps^{\frac{1}{5}},\label{e:W6:bootstrap}\\ \label{gerrard:1}
  |\tilde{W}^{(7)}| & \le M \eps^{\frac 1 5} \,, \\ \label{gerrard:2}
  |\tilde{W}^{(8)}| &\le M^3 \eps^{\frac 1 5} 
\end{align}

We now make the following crucial bootstrap assumptions, which display decay in $s$ for the unconstrained quantities $q^{(2)}, q^{(3)}$ (recall the notation defined in \eqref{q:def:q}),  
\begin{align} \label{boot:decay}
|q^{(2)}| \le  \eps^{\frac{1}{10}} e^{- \frac 3 4  s} , \qquad |q^{(3)}| \le M^{40} e^{-  s}\,,  
\end{align}
and the following smallness estimate
\begin{align} \label{boot:W:5:0}
|\tilde{W}^{(5)}(0,s)| \le \eps^{\frac12} \text{ for } -\log \eps \le s \le s_{n+1}\,,  
\end{align}
which in particular, when coupled with \eqref{W5:non}, ensures that 
\begin{align} \label{moon}
|q^{(5)}| \ge 120 - \eps^{\frac 1 2} \ge 100\,. 
\end{align}

We also have crucially the following estimate 
\begin{equation}
\abs{W^{(1)}}\leq 1+ e^{-\frac 3 4 s}\label{eq:W1:bnd:1}\,.
\end{equation}

Finally, we have the bootstraps on the modulation variables:
\begin{align}\label{e:GW0}
&|\mu| \le \eps^{\frac 1 6} e^{-\frac 3 4 s}\,, && |\dot{\tau}| \le \eps^{\frac 1 6} e^{- \frac 3 4 s}\,, && |\dot{\kappa}| \le \eps^{\frac 1 8}\,, \\ \label{mod:sub}
&|\kappa- \kappa_0| \le \eps \,, && |\dot{\xi}| \le 3 \kappa_0\,.
\end{align}
As a consequence we have
\begin{equation}
\abs{1-\beta_{\tau}}\leq 2 \eps^{\frac 1 6} e^{- \frac 3 4 s}\,,\label{e:1beta:bnd}
\end{equation}
which will be employed repeatedly in the forthcoming estimates.

\subsection{$\nabla_{\alpha, \beta}$ bootstraps}

We now provide the bootstrap assumptions we make on the $(\alpha, \beta)$ derivatives of the quantities appearing in Subsection \ref{subsection:base}. The first bootstraps we provide are for the modulation variables, for which we notably do not distinguish between $\alpha$ and $\beta$ derivative (recall $\p_c \in \{\p_\alpha, \p_\beta \}$ from \eqref{c:deriv}). 
\begin{align} \label{mako:1}
&|\p_c \mu| \le M^{33} \eps^{\frac 1 2} e^{- \frac s 4}\,, && |\p_c \dot{\tau}| \le \eps^{\frac 1 2}\,, && |\p_c \dot{\kappa}| \le \eps^{\frac 1 4} e^{\frac 1 2 s}\,, \\ \label{Baba:O:1}
&|\p_c \kappa| \le \eps^{\frac 1 2}\,, && |\p_c \dot{\xi}| \le M \eps^{\frac 1 2}  \,.
\end{align}

\noindent Next, we provide the bootstrap assumptions on $\p_\alpha Z, \p_\beta Z, \p_\alpha A, \p_\beta A$, and higher derivatives thereof. We again note that we do not distinguish between $\alpha$ and $\beta$ derivatives for these quantities. 
\begin{align} \label{mako:2}
\| \p_c Z \|_\infty &\le \eps^{\frac 1 2} \,, & \| \p_c A \|_\infty &\le \eps^{\frac 1 2}\,, \\ \label{mako:3}
\| \p_c Z^{(n)} \|_\infty &\le M^{2n^2} \eps^{\frac 1 2} e^{-\frac12 s}\,, & \|\p_c A^{(n)}\|_\infty &\le M^{2n^2}  \eps^{\frac 1 2} e^{-\frac12 s}\,,
\end{align}
for $n=1,\dots,7$.

\noindent Next, we provide the bootstrap assumptions for the elements of the $2 \times 2$ $s$-dependent matrix 
\begin{align*}
\begin{pmatrix} \p_\alpha q^{(2)}(s) && \p_\beta q^{(2)}(s) \\ \p_\alpha q^{(3)}(s) && \p_\beta q^{(3)}(s) \end{pmatrix}\,.
\end{align*}
For these quantities, we need to distinguish between $\alpha$ and $\beta$ derivatives carefully, which we do via 
\begin{align} \label{mako:4}
&\frac 1 2 \eps^{\frac34}e^{\frac 3 4 s} \le \p_\alpha q^{(2)} \le 4 \eps^{\frac34}e^{\frac 3 4 s }\,,  &&|\p_\alpha q^{(3)}| \le \eps e^{\frac s 2 }\,, \\ \label{mako:5}
&|\p_\beta q^{(2)}| \le \eps e^{\frac 3 4 s}\,,  &&\frac 1 2 \eps^{\frac12} e^{\frac 1 2 s} \le \p_\beta q^{(3)} \le 4 \eps^{\frac12}e^{\frac 1 2 s}\,.
\end{align}

In addition, we will need the enhanced constrained bootstrap 
\begin{align} \label{hotel:motel}
|\tilde{q}^{(5)}_c(s)| \le \eps^{\frac 3 8} e^{\frac 1 8 s}\,. 
\end{align}

Next, we will assume the following bootstrap bounds on $\p_c W$ and higher derivatives thereof. 
\begin{align} \label{pc:W0}
\|\p_c W  \|_\infty &\le M^{4} \eps^{\frac34} e^{\frac 3 4 s}\,, \\ \label{mako:6}
\|\p_c W^{(n)} \eta_{\frac{1}{20}} \|_\infty &\le M^{(n+2)^2}  \eps^{\frac34}e^{\frac 3 4 s} \,.
\end{align} 
for $n=1,\dots,7$.
Finally, we assume the following localized bounds on the region  $|x| \le \ell$ which are stronger than \eqref{pc:W0} - \eqref{mako:6}
\begin{align} \label{warrior:1}
|W_c^{(n)}| &\le \ell^{\frac 1 2} M  \eps^{\frac34}e^{\frac 3 4 s} \qquad\text{ for }0 \le n \le 6 \,,\\  \label{warrior:2}
|W_c^{(7)}| &\le M  \eps^{\frac34}e^{\frac 3 4 s}\,.
\end{align}


\subsection{$\nabla_{\alpha, \beta}^2$ bootstraps}

We now provide the bootstrap assumptions on two parameter ($\alpha, \beta$) derivatives of the quantities in Subsection \ref{subsection:base}. For these highest order bootstraps, we do not need to distinguish between $\alpha$ and $\beta$ derivatives. Recall that $\p_{c_1c_2}$ means $c_i \in \{\alpha, \beta\}$.  We impose the following bootstrap assumptions for $0\le n\le 6$
\begin{align} \label{posty:1}
\| \p_{c_1 c_2} Z^{(n)} \|_\infty& \le M^{2j^2} \eps^{\frac 5 8} e^{\frac s 4}  \,, \\ \label{posty:2}
\| \p_{c_1 c_2} A^{(n)} \|_\infty &\le M^{2j^2} \eps^{\frac 5 8} e^{\frac s 4}   \,, \\ \label{buddy:1}
\| \p_{c_1 c_2} W^{(n)} \|_\infty &\le M^{(k+5)^2} \eps^{\frac32 }e^{\frac 3 2 s}  \,.
\end{align}

%

\noindent We will also need bootstraps on the second derivative of the modulation variables
\begin{align} \label{group:1}
&|\mu_{c_1 c_2}| \le M \eps^{\frac54}e^{\frac 5 4 s}\,, && | \dot{\kappa}_{c_1 c_2}| \le M^2 \eps^{\frac54}e^{2s}\,, && |\dot{\tau}_{c_1 c_2}| \le  \eps e^{\frac 3 4 s}\,, \\ \label{group:2}
&|\kappa_{c_1 c_2}| \le M^3 \eps^{\frac 5 4} e^s\,, && |\dot{\xi}_{c_1 c_2}| \le M^4 \eps^{\frac 5 4} e^s\,.  
\end{align}

\section{Preliminary estimates}

In order to analyze the equations \eqref{basic:w} - \eqref{basic:a} and their higher order spatial derivative counterparts, \eqref{W:n} - \eqref{A:n}, as well as their higher order parameter derivative counterparts, we first provide estimates on the forcing terms appearing in \eqref{basic:w} - \eqref{basic:a}. These are performed in Subsection \ref{subsection:Forcing}. Controlling these forcing terms requires in turn controlling the transport speeds, $G_W, G_Z, G_A$, which is achieved in Subsection \ref{GW:control}. The final subsection in this section, Subsection \ref{subsect:trajectory}, collects estimates on the trajectories associated to the transport structure of equations \eqref{W:n} - \eqref{A:n}.

\subsection{Transport speed estimates} \label{GW:control}

We now provide estimates on the transport speeds, which are defined in \eqref{gw:def} - \eqref{ga:def}. We begin with the following estimates. 
\begin{lemma}\label{l:GW} Let $-1 \leq r \leq 0$, and $n \ge 1$. Then the following estimates are valid on the transport speeds, \eqref{gw:def} - \eqref{ga:def}. 
\begin{align} \label{GW:transport:est}
\| G_W \eta_{\frac r 4} \|_{\infty}& \lesssim  \eps^{\frac 1 6} e^{- \frac 3 4 s} + M^{3+r}\eps^{(1+r)} e^{\frac{1+5r}{4}s},  && \| G_W^{(n)} \|_\infty \les M^{2n^2} e^{-s}\,, \\ \label{Z:transport:1}
\| G_Z +(1-\beta_2)e^{\frac s4}\kappa_0\|_\infty &\lesssim e^{\frac s 4}\,, && \| G_Z^{(n)} \|_\infty \lesssim M^{2n^2} e^{-s}\,, && \\
\| G_A  +(1-\beta_1)e^{\frac s4}\kappa_0 \|_\infty &\lesssim e^{\frac s 4}\,, && \| G_A^{(n)} \|_\infty \lesssim M^{2n^2} e^{-s}\label{A:transport:1}\,.
\end{align}
\end{lemma}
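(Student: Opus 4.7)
The plan is to directly substitute the definitions \eqref{gw:def}--\eqref{ga:def} and read off the bounds using the base bootstraps from Subsection~\ref{subsection:base}, in particular \eqref{Z:boot:0}, \eqref{e:support}, \eqref{e:GW0}, \eqref{mod:sub}, and \eqref{e:1beta:bnd}. The three estimates are quite parallel, so I will treat them in order of difficulty.

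For the derivative bounds, observe that since $\kappa(t), \dot\xi(t)$ do not depend on $x$, for $n \ge 1$ one has the exact identities
\[
G_W^{(n)} = \beta_\tau \beta_2 e^{\frac s 4} Z^{(n)}, \qquad G_Z^{(n)} = \beta_\tau e^{\frac s 4} Z^{(n)}, \qquad G_A^{(n)} = \beta_\tau \beta_1 e^{\frac s 4} Z^{(n)}.
\]
Invoking $|1-\beta_\tau|\le 2\eps^{1/6} e^{-3s/4}$ from \eqref{e:1beta:bnd} and the $Z^{(n)}$ bootstrap $\|Z^{(n)}\|_\infty \le M^{2n^2} e^{-5s/4}$ in \eqref{Z:boot:0} immediately yields $\|G_\bullet^{(n)}\|_\infty \lesssim M^{2n^2} e^{-s}$, which is the stated bound in all three cases.

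The bounds on $G_Z$ and $G_A$ are nearly identical, so I focus on $G_Z$. Using $\mu$ from \eqref{def:mu} is not needed here; rather I rewrite
\[
G_Z + (1-\beta_2) e^{\frac s 4}\kappa_0 = e^{\frac s 4}\Bigl[ \beta_2(\beta_\tau \kappa - \kappa_0) + \kappa_0 - \beta_\tau \dot\xi + \beta_\tau Z\Bigr],
\]
where the shift precisely cancels the constant leading order in $\kappa_0$. Then, writing $\beta_\tau \kappa - \kappa_0 = (\beta_\tau-1)\kappa + (\kappa-\kappa_0)$ and using \eqref{e:1beta:bnd} together with $|\kappa-\kappa_0| \le \eps$ and $|\dot\xi| \le 3\kappa_0$ from \eqref{mod:sub}, along with $\|Z\|_\infty \le \eps^{5/4}$, gives the stated $\lesssim e^{s/4}$ bound. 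The case of $G_A$ is the same computation with $\beta_1$ replacing $\beta_2$ and an extra $\beta_1$ factor on $Z$.

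The main point is the weighted estimate on $G_W$. The key reformulation is to use the definition \eqref{def:mu} to write
\[
G_W(x,s) = \mu(s) + \beta_\tau \beta_2 e^{\frac s 4}\bigl(Z(x,s) - Z(0,s)\bigr).
\]
The first piece contributes $|\mu|\eta_{r/4} \le \eps^{1/6}e^{-3s/4}$ by \eqref{e:GW0} and $\eta_{r/4}\le 1$ for $r\le 0$. For the second piece, the fundamental theorem of calculus and $\|Z^{(1)}\|_\infty \le M^2 e^{-5s/4}$ give $|Z(x,s)-Z(0,s)| \lesssim |x| M^2 e^{-5s/4}$, so this piece is $\lesssim |x| M^2 e^{-s}$. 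The main manipulation is to interpolate this with the support bound $|x| \le M\eps e^{5s/4}$ from \eqref{e:support} and weigh by $\eta_{r/4}(x) \lesssim \langle x\rangle^r$: writing $|x|\,\eta_{r/4} \lesssim |x|^{1+r}$ (for $|x|\ge 1$; the $|x|\le 1$ case is trivial) and plugging in the support bound yields
\[
|x|^{1+r} M^2 e^{-s} \le (M\eps e^{\frac{5s}{4}})^{1+r} M^2 e^{-s} = M^{3+r}\eps^{1+r} e^{\frac{1+5r}{4}s},
\]
which is the second term on the right-hand side of \eqref{GW:transport:est}. The expected obstacle is simply bookkeeping on which pieces of $G_W$ require the support/weight interplay and which are handled by modulation bootstraps alone; using the $\mu$-decomposition cleanly separates these two mechanisms.
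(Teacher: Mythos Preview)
Your proposal is correct and follows essentially the same strategy as the paper: decompose $G_W = \mu + \beta_\tau\beta_2 e^{s/4}(Z - Z(0,\cdot))$ via \eqref{def:mu}, handle the derivatives by the identity $G_\bullet^{(n)} = c\,e^{s/4}Z^{(n)}$, and treat $G_Z,G_A$ directly from the modulation bootstraps. The one minor difference is that the paper bounds the oscillatory piece by a Hardy--type step, writing $\langle x\rangle^r\int_0^x \langle x'\rangle^{-1-r}\,\bigl(\langle x'\rangle^{1+r}G_W'\bigr)\,dx'$ and then invoking the support bound on $\|Z^{(1)}\langle x\rangle^{1+r}\|_\infty$; this forces a separate treatment of the endpoint $r=0$ because $\langle x'\rangle^{-1}$ is only logarithmically integrable. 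Your more direct route, $|Z(x,s)-Z(0,s)|\le |x|\,\|Z^{(1)}\|_\infty$ followed by $|x|\,\eta_{r/4}\lesssim |x|^{1+r}$ on $|x|\ge 1$ and the support bound, is slightly cleaner and handles all $-1\le r\le 0$ uniformly without the endpoint case split.
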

\begin{proof} We record the following identity: 
\begin{align}\label{def:GW:e}
G_W(x, s) = \mu(s) + G_{W,e}(x, s), \qquad G_{W,e}(x, s) := \beta_\tau \beta_2 e^{\frac s 4} \int_0^x Z^{(1)}(x', s) \ud x'\,,  
\end{align}
where we have invoked definition \eqref{gw:def} for $G_W$ and subsequently \eqref{def:mu} for the quantity $\mu(s)$. 
We estimate for $j \ge 1$, 
\begin{align} \label{Gw:j:est}
\| G_W^{(j)} \|_\infty = \| \beta_\tau \beta_2 e^{\frac 1 4 s} Z^{(j)} \|_\infty \le 2 e^{\frac 1 4 s} M^{2j^2} e^{- \frac 5 4 s}\,. 
\end{align}

\noindent Using \eqref{def:GW:e}, we estimate 
\begin{align} \n
\| G_W\eta_{\frac r 4} \|_\infty \lesssim &|\mu|  + \| G_{W,e} \eta_{\frac r 4} \|_\infty \\ \n
\lesssim &  \eps^{\frac 1 6} e^{- \frac 3 4 s}+ \| \langle x \rangle^r \int_0^{x} \p_x G_W(x') \ud x' \|_\infty \\ \n
\lesssim &  \eps^{\frac 1 6} e^{- \frac 3 4 s}+ \| \langle x \rangle^r \int_0^{x} \langle x' \rangle^{-1-r} \p_x G_W(x') \langle x' \rangle^{1+ r} \ud x' \|_\infty \\ \n
\lesssim & \eps^{\frac 1 6} e^{- \frac 3 4 s} + \| \p_x G_W \langle x \rangle^{1+r} \|_\infty \\\n\lesssim&  \eps^{\frac 1 6} e^{- \frac 3 4 s} + e^{\frac 1 4 s} \| Z^{(1)}  \langle x \rangle^{1+r} \|_\infty \\ \label{hardy:1}
\lesssim &  \eps^{\frac 1 6} e^{- \frac 3 4 s} + M^{3+r}\eps^{(1+r)} e^{\frac{1+5r}{4}s}\,.
\end{align}
\noindent Above, we have invoked estimate \eqref{e:GW0} for the estimate on $\mu$, the definition \eqref{gw:def} to calculate $\p_x G_W$, estimate \eqref{Z:boot:0} on $Z^{(1)}$, and the estimate \eqref{e:support} to translate spatial weights to growth in $s$.

The above calculation, \eqref{hardy:1}, works when $r < 0$, but at $r = 0$ does not quite work due to having to integrate $\langle x \rangle^{-1}$. However, in that case, we may estimate via 
\begin{align*}
\| G_W \|_\infty \lesssim |\mu| + \| G_{W,e} \|_\infty \lesssim \eps^{\frac 1 6} e^{- \frac 3 4 s} + \| \langle x \rangle G_W^{(1)} \|_\infty \lesssim \eps^{\frac 1 6} e^{- \frac 3 4 s} + M^2 e^{-s} (M \eps e^{\frac 5 4 s})\,, 
\end{align*}
where we have invoked \eqref{e:GW0} for the estimate on $\mu$, \eqref{Gw:j:est} with $j = 1$, and the estimate \eqref{e:support} on the support.  

We now move to the transport speed $G_Z$. First, for the lowest order quantity, we use the definition \eqref{gz:def} and the bootstrap assumptions \eqref{mod:sub} to estimate 
\begin{align*}
\| G_Z + (1-\beta_2)e^{\frac s4}\kappa_0 \|_\infty  \lesssim e^{\frac s 4} (1 + \eps + \eps^{\frac 5 4}) \lesssim \eps^{\frac s 4}\,. 
\end{align*}

 According to the definition \eqref{gz:def}, we estimate 
\begin{align*}
\| G_Z^{(n)} \|_\infty \lesssim e^{\frac 1 4 s} \| Z^{(n)} \|_\infty \lesssim M^{2n^2} e^{- s}\,, 
\end{align*}

\noindent where we have invoked the bootstrap, \eqref{Z:boot:0}. For the transport speed $G_A$, we invoke the definition \eqref{ga:def} to perform the exact same calculation.

\end{proof}

\begin{lemma} Let $c \in \{ \alpha, \beta \}$. For $0 < r \le 1$ and $1\leq n\leq 7$, the following estimates are valid on the transport speeds, \eqref{gw:def} - \eqref{ga:def} 
\begin{align} \label{r:est:1}
\| \p_c G_W \eta_{- \frac r 4} \|_\infty &\lesssim \eps^{\frac 1 2} +  M^{3-r} \eps^{\frac 3 2 - r} e^{\frac{4 - 5r}{4} s}\,, && \| \p_c G_W^{(n)} \|_\infty \lesssim M^{2n^2} \eps^{\frac 1 2} e^{- \frac s 4}\,, \\ \label{r:est:2}
\| \p_c G_Z \|_\infty &\lesssim \eps^{\frac 1 4} e^{\frac s 4}\,, && \| \p_c G_Z^{(n)} \|_\infty \lesssim M^{2n^2} \eps^{\frac 1 2} e^{- \frac s 4}\,,\\ \label{r:est:3}
\| \p_c G_A \|_\infty &\lesssim \eps^{\frac 1 4} e^{\frac s 4}\,,  && \| \p_c G_A^{(n)} \|_\infty \lesssim M^{2n^2} \eps^{\frac 1 2} e^{- \frac s 4} \,.
\end{align}
\end{lemma}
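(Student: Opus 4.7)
\medskip

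The plan is to mimic the structure of the proof of Lemma \ref{l:GW} that appears just above the statement, replacing each invocation of a bootstrap on $(W, Z, A)$ or modulation variables by its $\p_c$ counterpart from Section \ref{section:Bootstraps}. The two main ingredients are: (i) the explicit identities for $G_W, G_Z, G_A$ from \eqref{gw:def}--\eqref{ga:def}, and (ii) the $\nabla_{\alpha,\beta}$ bootstraps \eqref{mako:1}--\eqref{mako:3} together with the modulation bootstraps \eqref{Baba:O:1} and the support bound \eqref{e:support}.

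First I would dispense with the high derivative bounds. For $n \ge 1$, the terms involving $\dot{\xi}, \kappa$ in \eqref{gw:def}--\eqref{ga:def} are $x$-independent, so each $G_W^{(n)}, G_Z^{(n)}, G_A^{(n)}$ reduces to $\beta_\tau \beta_2 e^{\frac s 4} Z^{(n)}$ or $\beta_\tau e^{\frac s 4} Z^{(n)}$ or $\beta_\tau \beta_1 e^{\frac s 4} Z^{(n)}$ (up to symbol). Applying $\p_c$ produces two product-rule terms: one with $\dot{\tau}_c$ landing on $\beta_\tau$, controlled by $|\dot{\tau}_c| \le \eps^{\frac 1 2}$ from \eqref{mako:1} combined with $\|Z^{(n)}\|_\infty \le M^{2n^2} e^{- \frac 5 4 s}$ from \eqref{Z:boot:0}, yielding $\lesssim M^{2n^2} \eps^{\frac 1 2} e^{-s}$; and one with $\p_c Z^{(n)}$, controlled by $\|\p_c Z^{(n)}\|_\infty \le M^{2n^2} \eps^{\frac 1 2} e^{- \frac s 2}$ from \eqref{mako:3}, yielding $\lesssim M^{2n^2} \eps^{\frac 1 2} e^{-\frac s 4}$. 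The second dominates, giving the claimed bound in all three cases.

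Next I would turn to the weighted bound on $\p_c G_W$. Using the decomposition $G_W = \mu + G_{W,e}$ from \eqref{def:GW:e}, differentiate in $\p_c$ and write
\[
\p_c G_W(x, s) = \p_c \mu(s) + \int_0^x \p_c G_W^{(1)}(x', s)\, \ud x'.
\]
The modulation contribution is bounded uniformly by $|\p_c \mu| \le M^{33} \eps^{\frac 1 2} e^{- \frac s 4} \le M^{33} \eps^{\frac 3 4} \le \eps^{\frac 1 2}$ for $s \ge -\log\eps$ and $\eps$ small relative to $M$; this is absorbable into the first term on the right of \eqref{r:est:1}. For the integral, I would use the already-established bound $\|\p_c G_W^{(1)}\|_\infty \lesssim M^2 \eps^{\frac 1 2} e^{- \frac s 4}$ together with the support estimate \eqref{e:support}, namely $\langle x \rangle \lesssim M \eps e^{\frac 5 4 s}$, to estimate $\eta_{- \frac r 4}(x) |x| \lesssim \langle x \rangle^{1-r} \lesssim M^{1-r} \eps^{1-r} e^{\frac{5(1-r)}{4} s}$, producing the second piece $M^{3-r} \eps^{\frac 3 2 - r} e^{\frac{4-5r}{4} s}$. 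Since $0 < r \le 1$ the exponent $1 - r \ge 0$ so the support bound applies cleanly and the weight $\eta_{-r/4}$ is bounded.

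Finally, for the lowest-order bounds on $\p_c G_Z$ and $\p_c G_A$, I would just differentiate \eqref{gz:def} and \eqref{ga:def} in $\p_c$ directly. Each yields $\dot{\tau}_c$ times the original expression (controlled using $|\kappa| \le \kappa_0 + \eps$, $|\dot{\xi}| \le 3\kappa_0$, $|Z|, |A| \le M\eps$ from \eqref{Z:boot:0}, \eqref{A:boot:9}, \eqref{mod:sub}) plus $\beta_\tau e^{\frac s 4}$ times $(\beta_2 \p_c \kappa - \p_c \dot{\xi} + \p_c Z)$ (for $G_Z$; analogous for $G_A$). Using $|\p_c \kappa|, |\p_c Z|, |\p_c A| \le \eps^{\frac 1 2}$ and $|\p_c \dot{\xi}| \le M \eps^{\frac 1 2}$ from \eqref{mako:1}--\eqref{mako:2}, \eqref{Baba:O:1}, both pieces produce $O(M \eps^{\frac 1 2} e^{\frac s 4})$, which absorbs into $\eps^{\frac 1 4} e^{\frac s 4}$ by our smallness hierarchy \eqref{choice:M}. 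No step presents a genuine obstacle; the only nuance is carefully tracking the conversion of $\langle x \rangle^{1-r}$ into an $s$-growth via the support, exactly as in the source lemma, so that the universal constant in $\lesssim$ absorbs the $\kappa_0, M$ dependences via the smallness of $\eps$.
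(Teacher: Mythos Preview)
Your proposal is correct and follows essentially the same approach as the paper's proof. The only cosmetic difference is that you establish the $n\ge 1$ bound on $\p_c G_W^{(n)}$ first and then reuse the $n=1$ case to control the integral $\int_0^x \p_c G_W^{(1)}\,\ud x'$, whereas the paper unpacks the integrand directly into its $\dot{\tau}_c Z^{(1)}$ and $\p_c Z^{(1)}$ pieces before applying the same weight-to-support conversion; the resulting arithmetic is identical.
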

\begin{proof} 

We differentiate \eqref{def:GW:e} in $c$ to yield 
\begin{align} \label{coolie}
\p_c G_W = \p_c \mu + \p_c G_{W,e} = \p_c \mu + \p_c \dot{\tau} \beta_\tau^2 \beta_2 e^{\frac s 4} \int_0^x Z^{(1)}(x', s) \ud x' + \beta_\tau \beta_2 e^{\frac s 4} \int_0^x \p_c Z^{(1)} \ud x'\,.  
\end{align}

\noindent Multiplying now by a weight of $\eta_{- \frac r 4}$, we obtain for every $r > 0$, 
\begin{align} \n
\| \p_c G_W \eta_{- \frac r 4} \|_\infty \lesssim & |\p_c \mu| + |\p_c \dot{\tau}| e^{\frac s 4} \| Z^{(1)} \eta_{\frac{1-r}{4}} \|_\infty + e^{\frac s 4} \| \p_c Z^{(1)} \eta_{\frac{1-r}{4}} \|_\infty \\
\lesssim & M^{33} \eps^{\frac 1 2}e^{-\frac s4} + \eps^{\frac 1 2} e^{\frac s 4} (M^2 e^{- \frac 5 4 s}) (M\eps e^{\frac 5 4 s})^{1-r} + e^{\frac s 4} (M^2 \eps^{\frac 1 2} e^{- \frac s 2}) (M\eps e^{\frac 5 4 s} )^{1 - r}\n\\
\lesssim& \eps^{\frac 1 2} + M^{3-r} \eps^{\frac 3 2 - r} e^{\frac{4 - 5r}{4} s}\n\,,
\end{align}
where we have invoked \eqref{mako:1} for the modulation variables, \eqref{Z:boot:0} and \eqref{mako:3} for the $Z$ quantities, and \eqref{e:support} to estimate $\eta_{\frac{1-r}{4}}$ in the support of $Z^{(1)}$ and hence $\p_c Z^{(1)}$.   

We first differentiate $G_W$ to order $n\geq 1$  in $x$ via \eqref{gw:def} and then take $\p_c$ of the result to produce 
\begin{align*}
\p_c G_W^{(n)} = \p_c \dot{\tau} \beta_\tau^2 \beta_2 e^{\frac s 4} Z^{(n)} + \beta_2 \beta_\tau e^{\frac s 4} \p_c Z^{(n)}\,,
\end{align*} 

\noindent which upon estimating yields 
\begin{align} \n
\| \p_c G_W^{(n)} \|_\infty \lesssim &|\p_c \dot{\tau}| e^{\frac s 4} \| Z^{(n)} \|_\infty + e^{\frac s 4} \| \p_c Z^{(n)} \|_\infty  \\ 
\lesssim & M^{2n^2} \eps^{\frac 1 2}  e^{- s} + e^{\frac s 4} M^{2n^2} \eps^{\frac 12} e^{- \frac s2} \lesssim M^{2n^2} \eps^{\frac 1 2} e^{- \frac s 4}\,,\n
\end{align}
where we have invoked \eqref{mako:1} for the modulation variables, \eqref{Z:boot:0} and \eqref{mako:3} for the $Z$ quantities.

By differentiating \eqref{gz:def} in $\p_c$, we obtain the identities 
\begin{align} \label{Sam:smith:1}
\p_c G_Z = & \dot{\tau}_c \beta_\tau G_Z + \beta_\tau e^{\frac s 4} (\beta_2 \kappa_c - \dot{\xi}_c + Z_c)  \\ \n
 = & \p_c \dot{\tau} \beta_\tau^2 e^{\frac s 4} (\beta_2 \kappa - \dot{\xi} + Z) + \beta_\tau e^{\frac s 4} (\beta_2 \p_c \kappa - \p_c \dot{\xi} + \p_c Z) \\ \label{Sam:smith:2}
\p_c G_Z^{(n)} = & \dot{\tau}_c \beta_\tau G_Z^{(n)} + \beta_\tau e^{\frac s 4} Z_c^{(n)}\,. 
\end{align}

\noindent By estimating \eqref{Sam:smith:1} we obtain 
\begin{align} \n
\| \p_c G_Z \|_\infty \lesssim & |\p_c \dot{\tau}| e^{\frac s 4} (|\kappa| + |\dot{\xi}| + \| Z \|_\infty) + e^{\frac s 4} (|\p_c \kappa| + |\p_c \dot{\xi}| + \| \p_c Z \|_\infty) \\ \n
\lesssim & \eps^{\frac 1 2} e^{\frac s 4} (1 + \eps^{\frac 5 4}) + e^{\frac s 4} (\eps^{\frac 1 4} + \eps^{\frac 1 4} + \eps^{\frac 1 2}) \lesssim \eps^{\frac 1 4} e^{\frac s 4},  
\end{align}
where we have invoked both \eqref{mako:1} - \eqref{Baba:O:1} for the $\p_c$ of the modulation variables, \eqref{e:GW0} - \eqref{mod:sub} for the modulation variables themselves, and finally \eqref{Z:boot:0} and \eqref{mako:3} for the $Z$ quantities, with $j \ge 1$.  

 By estimating \eqref{Sam:smith:2} we obtain for $1 \le n \le 7$,  
\begin{align*} \n
\| \p_c G_Z^{(n)} \|_\infty \lesssim & |\p_c \dot{\tau}| e^{\frac s 4} \| Z^{(n)} \|_\infty + e^{\frac s 4} \| \p_c Z^{(n)} \|_\infty \\ 
\lesssim & e^{\frac s 4} \eps^{\frac 1 2} M^{2 n^2} e^{- \frac 5 4 s} + e^{\frac s 4} M^{2n^2} \eps^{\frac 1 2} e^{- \frac s 2} \lesssim M^{2n^2} \eps^{\frac 1 2} e^{- \frac s 4},  
\end{align*}
where we have invoked \eqref{mako:1} for the $\p_c \dot{\tau}$ term, and then \eqref{Z:boot:0} and \eqref{mako:3} for $Z^{(n)}$ and $\p_c Z^{(n)}$, respectively. 

\noindent For $\p_c G_A$, we perform essentially the same estimate as for $\p_c G_Z$. 

\end{proof}

\begin{lemma}[Transport Estimates] Let $c_i \in \{ \alpha, \beta \}$ for $i = 1, 2$, and fix any $0 < r \le 1$. Then the following estimates are valid for the transport speeds
\begin{align} \label{spin:1}
\| \p_{c_1 c_2} G_W \eta_{- \frac r 4} \|_\infty& \lesssim  M \eps^{\frac54}e^{\frac 5 4 s} +M^{3-r} \eps^{\frac{13}{8}-r } e^{\frac {7-5r} 4 s}\,, &&\| \p_{c_1 c_2} G_W^{(n)} \|_\infty \lesssim M^{2n^2} \eps^{\frac 5 8} e^{\frac s 2}\,, \\ \label{spin:2}
\| \p_{c_1 c_2} G_Z \|_\infty &\lesssim M^4  \eps^{\frac54}e^{\frac 5 4 s} \,, &&\| \p_{c_1 c_2} G_Z^{(n)} \|_\infty \lesssim M^{2n^2} \eps^{\frac 5 8} e^{\frac s 2}\,, \\ \label{spin:3}
\| \p_{c_1 c_2} G_A \|_\infty &\lesssim M^4  \eps^{\frac54}e^{\frac 5 4 s}  \,, &&\| \p_{c_1 c_2} G_A^{(n)} \|_\infty \lesssim M^{2n^2} \eps^{\frac 5 8} e^{\frac s 2}\,, 
\end{align}
for $1\leq n\leq 7$.
\end{lemma}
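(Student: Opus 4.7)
The strategy exactly parallels the single-derivative lemma proved immediately above: we apply $\p_{c_1 c_2}$ to the defining formulas \eqref{gw:def}--\eqref{ga:def}, expand using Leibniz together with the identity $\p_c \beta_\tau = \beta_\tau^2 \dot{\tau}_c$ (so that $\p_{c_1 c_2}\beta_\tau = \beta_\tau^2 \dot{\tau}_{c_1 c_2} + 2\beta_\tau^3 \dot{\tau}_{c_1}\dot{\tau}_{c_2}$), and then estimate each resulting term using the second-order bootstraps \eqref{group:1}--\eqref{group:2}, \eqref{posty:1}--\eqref{buddy:1} together with the first-order bootstraps from the preceding subsection.

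For $G_W$ itself, I would first twice differentiate the decomposition \eqref{def:GW:e}, namely $G_W = \mu + \beta_\tau \beta_2 e^{s/4}\int_0^x Z^{(1)}$, to obtain
\[
\p_{c_1 c_2} G_W = \p_{c_1 c_2}\mu + \sum (\text{Leibniz terms}) \cdot \beta_2 e^{s/4}\!\int_0^x \p_{c_{i_1}}\!\cdots Z^{(1)}\,dx',
\]
where the sum runs over the distributions of $\p_{c_1}, \p_{c_2}$ between $\beta_\tau$ and $Z^{(1)}$. The $\p_{c_1 c_2}\mu$ term is controlled directly by \eqref{group:1}, contributing $M\eps^{5/4}e^{5s/4}$. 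For the integral terms, I would use the same weighted trick as in the proof of \eqref{r:est:1}: write $\int_0^x f\,dx' = \int_0^x f\langle x'\rangle^{1+r}\langle x'\rangle^{-1-r}\,dx'$ so that $\eta_{-r/4}\bigl|\int_0^x f\bigr| \lesssim \|f\langle\cdot\rangle^{1-r}\|_\infty$, after which the support bootstrap \eqref{e:support} converts the weight into a factor of $(M\eps e^{5s/4})^{1-r}$. The dominant contribution is the one containing $\p_{c_1 c_2}Z^{(1)}$: bootstrap \eqref{posty:1} gives $\|\p_{c_1 c_2}Z^{(1)}\|_\infty \lesssim M^2\eps^{5/8}e^{s/4}$, which combines with the $e^{s/4}$ prefactor and the support weight to produce exactly $M^{3-r}\eps^{13/8-r}e^{(7-5r)s/4}$. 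All other Leibniz terms (with at least one $\dot{\tau}_c$ or $\dot{\tau}_{c_1 c_2}$ factor replacing a full $\p_{c_1 c_2}$ on $Z^{(1)}$) are strictly smaller by the smallness of $\dot{\tau}_c$ from \eqref{mako:1} and \eqref{group:1}.

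For the higher derivatives $G_W^{(n)}$ with $n\ge 1$, the starting point is the much simpler identity $G_W^{(n)} = \beta_\tau\beta_2 e^{s/4}Z^{(n)}$, so $\p_{c_1 c_2}G_W^{(n)}$ splits into four families: the $\p_{c_1 c_2}$ landing entirely on $\beta_\tau$, the mixed terms with one derivative on $\beta_\tau$ and one on $Z^{(n)}$, and the term where $\p_{c_1 c_2}$ falls entirely on $Z^{(n)}$. Invoking \eqref{Z:boot:0}, \eqref{mako:3}, \eqref{posty:1}, together with \eqref{mako:1}, \eqref{group:1} for the modulation factors, one checks that the last of these dominates and gives $e^{s/4}\cdot M^{2n^2}\eps^{5/8}e^{s/4} = M^{2n^2}\eps^{5/8}e^{s/2}$. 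For $G_Z$ and $G_A$ the structure is the same but more direct: from \eqref{gz:def}--\eqref{ga:def} one differentiates twice, producing a $\p_{c_1 c_2}$ of $(\beta_2\kappa - \dot{\xi} + Z)$ (respectively $(\beta_1\kappa - \dot{\xi} + \beta_1 Z)$) scaled by $\beta_\tau e^{s/4}$, plus lower-order Leibniz terms involving $\dot{\tau}_c$ and $\dot{\tau}_{c_1 c_2}$. The bounds \eqref{group:2} on $\kappa_{c_1 c_2}$ and $\dot{\xi}_{c_1 c_2}$ give the $M^4\eps^{5/4}e^{5s/4}$ bound; the higher-derivative estimates are obtained identically to the $G_W^{(n)}$ case.

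The only mild obstacle is the accounting for the weight $\eta_{-r/4}$ in the $G_W$ estimate, since as in the previous lemma the bound $(M\eps e^{5s/4})^{1-r}$ only works when $r>0$; this is why the statement restricts to $0<r\le 1$, matching the hypothesis used for the analogous Lemma above. No new ideas are needed beyond those in the $\p_c$ lemma; the only point that must be checked carefully is that the second-order modulation bootstraps \eqref{group:1}--\eqref{group:2} and the quadratic bootstraps \eqref{posty:1}--\eqref{buddy:1} are strong enough to dominate all of the Leibniz terms generated, which is the case.
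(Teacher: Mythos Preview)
Your proposal is correct and follows essentially the same approach as the paper: differentiate the integral decomposition \eqref{def:GW:e} (respectively \eqref{Sam:smith:1}--\eqref{Sam:smith:2}) once more in $c_2$, expand via Leibniz using $\p_c\beta_\tau=\beta_\tau^2\dot\tau_c$, and bound each resulting term with the second-order bootstraps \eqref{group:1}--\eqref{group:2}, \eqref{posty:1}, together with the support conversion $\eta_{(1-r)/4}\lesssim (M\eps e^{5s/4})^{1-r}$. Your identification of the two dominant contributions ($\mu_{c_1c_2}$ and the $e^{s/4}\!\int_0^x Z^{(1)}_{c_1c_2}$ term) and of $e^{s/4}Z^{(n)}_{c_1c_2}$ for the higher derivatives is exactly what the paper records in its identities \eqref{myoo}--\eqref{myoo:2} and \eqref{lose:1}--\eqref{lose:2}.
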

\begin{proof} We differentiate \eqref{coolie} in $\p_{c_2}$ which generates the identities 
\begin{align} 
&\p_{c_1 c_2} G_W = \mu_{c_1 c_2} + \beta_\tau \beta_2 e^{\frac s 4} \int_0^x Z^{(1)}_{c_1 c_2} + \beta_\tau^2 \beta_2 \dot{\tau}_{c_i} e^{\frac s 4} \int_0^x Z^{(1)}_{c_{i'}} \n\\\label{myoo}
& \qquad \qquad \qquad + (\dot{\tau}_{c_1 c_2}  + 2 \beta_\tau \dot{\tau}_{c_1} \dot{\tau}_{c_2})  \beta_\tau^2 \beta_2 e^{\frac s 4} \int_0^x Z^{(1)}\,, \\ \label{myoo:2}
&\p_{c_1 c_2} G_W^{(n)} = \beta_\tau \beta_2 e^{\frac s 4} Z^{(n)}_{c_1 c_2} + \beta_\tau^2 \beta_2 \dot{\tau}_{c_i} e^{\frac s 4} Z^{(n)}_{c_{i'}} + (\dot{\tau}_{c_1 c_2} + 2 \beta_\tau \dot{\tau}_{c_1} \dot{\tau}_{c_2}) \beta_\tau^2 \beta_2 e^{\frac s 4} Z^{(n)}\,,
\end{align}
for $ n \ge 1$.
Estimating the right-hand side of \eqref{myoo} yields 
\begin{align} \n
\| \p_{c_1 c_2} G_W \eta_{- \frac r 4} \|_\infty \lesssim & |\mu_{c_1 c_2}| + e^{\frac s 4} \| Z_{c_1 c_2}^{(1)} \eta_{\frac{1-r}{4}} \|_\infty + |\dot{\tau}_{c_i}| e^{\frac s 4} \| Z^{(1)}_{c_{i'}} \eta_{\frac{1-r}{4}} \|_\infty + (|\dot{\tau}_{c_1 c_2}| + |\dot{\tau}_{c}|^2) e^{\frac s 4} \| Z^{(1)} \eta_{\frac{1-r}{4}} \|_\infty   \\\n
\lesssim & M \eps^{\frac 54}e^{\frac 5 4 s} + M^{3-r}\eps^{\frac{13}{8}-r} e^{\frac {7-5r} 4 s}   + \eps^{2-r} e^{\frac s 4} M^{3-r} e^{(1+\frac 5 4 r)s}   \\ \n
&+ ( \eps e^{\frac 3 4 s} + \eps)  M^{3-r} \eps^{1-r} e^{-\frac {1+5r} 4s}  \,.
\end{align}
Above, we have used \eqref{group:1} for the $\mu_{c_1 c_2}, \dot{\tau}_{c_1 c_2}$ terms, \eqref{posty:1} for the $Z^{(1)}_{c_1 c_2}$ term, \eqref{mako:3} for the $Z^{(1)}_c$ term, \eqref{Z:boot:0} for the $Z^{(1)}$ term, \eqref{mako:1} for the $\dot{\tau}_c$ terms, and finally \eqref{e:support} for the estimation of $\eta$ in the presence of $Z$.   

Estimating the right-hand side of \eqref{myoo:2} yields for $j \ge 1$, 
\begin{align*}
\| \p_{c_1 c_2} G_W^{(n)} \|_\infty \lesssim e^{\frac s 4} \| Z_{c_1 c_2}^{(n)} \|_\infty + |\dot{\tau}_c| e^{\frac s 4} \| Z^{(n)}_c \|_\infty + (|\dot{\tau}_{c_1 c_2}| + |\dot{\tau}_c|^2) e^{\frac s 4} \| Z^{(n)} \|_\infty \lesssim M^{2n^2} \eps^{\frac 5 8} e^{\frac s 2}\,. 
\end{align*}
We have invoked \eqref{posty:1} for the $Z_{c_1 c_2}^{(j)}$ term, \eqref{mako:1} for the $\dot{\tau}_c$ term, \eqref{mako:3} for the $Z_c$ term, \eqref{Z:boot:0} for the $Z^{(j)}$ term, and \eqref{group:1} for the $\dot{\tau}_{c_1c_2}$ term. 

Next, we differentiate \eqref{Sam:smith:1} - \eqref{Sam:smith:2} in $\p_{c_2}$ to arrive at
\begin{align} \label{lose:1}
&\p_{c_1 c_2} G_Z = \dot{\tau}_{c_1 c_2} \beta_\tau G_Z + \dot{\tau}_{c_i} \beta_\tau \p_{c_{i'}}G_Z + \beta_\tau e^{\frac s 4} \Big( \beta_2 \kappa_{c_1 c_2} -\dot{\xi}_{c_1 c_2} + Z_{c_1 c_2} \Big)\,, \\ \label{lose:2}
&\p_{c_1 c_2} G_Z^{(n)} = \dot{\tau}_{c_1 c_2} \beta_\tau G_Z^{(n)} + \dot{\tau}_{c_i} \beta_\tau \p_{c_{i'}} G_Z^{(n)} + \beta_\tau e^{\frac s 4} Z_{c_1 c_2}^{(n)}\,. 
\end{align} 
Estimating the right-hand side gives \eqref{spin:2} via 
\begin{align} \n
\| \p_{c_1 c_2} G_Z \|_\infty \lesssim & |\dot{\tau}_{c_1 c_2}| \| G_Z \|_\infty + |\dot{\tau}| \| \p_c G_Z \|_\infty + e^{\frac s 4} \Big( |\kappa_{c_1 c_2}| + |\dot{\xi}_{c_1 c_2}| + \| Z_{c_1 c_2} \|_\infty \Big) \\
\lesssim & \eps e^{s} +  \eps^{\frac 3 4} e^{\frac s 4} + e^{\frac s 4} \Big( M^3 \eps^{\frac 5 4} e^{s} + M^4 \eps^{\frac 5 4} e^{s} + \eps^{\frac 5 8} e^{\frac s 4} \Big)\n
\end{align} 
Above we have invoked \eqref{Z:transport:1} and \eqref{r:est:2} for the $G_Z$ and $\p_c G_Z$ terms, respectively. We have also invoked \eqref{group:1} - \eqref{group:2} for the second derivatives of the modulation variables and \eqref{posty:1} for the $Z_{c_1 c_2}$ term. 

For the right-most estimate in \eqref{spin:2}, we estimate the right-hand side of \eqref{lose:2}, 
\begin{align*}
\| \p_{c_1 c_2} G_Z^{(n)} \|_\infty \lesssim & |\dot{\tau}_{c_1 c_2}| \| G_Z^{(n)} \|_\infty + |\dot{\tau}_c| \| \p_c G_Z^{(n)} \|_\infty + e^{\frac s 4} \| Z^{(n)}_{c_1 c_2} \|_\infty \\
\lesssim & M^{2n^2} \eps e^{-\frac{s}{4}} + M^{2n^2} \eps e^{- \frac s 4} +  M^{2n^2}\eps^{\frac 5 8} e^{\frac s 2} \lesssim M^{2n^2} \eps^{\frac 5 8} e^{\frac s 2}\,, 
\end{align*}
where we have invoked \eqref{Z:transport:1} and \eqref{r:est:2} for the $G_Z^{(n)}$ and $\p_c G_Z^{(n)}$ terms, respectively. 

A nearly identical estimate is valid for \eqref{spin:3}. 

\end{proof}

\subsection{Forcing estimates} \label{subsection:Forcing}

In this subsection, we will provide pointwise estimates on the forcing terms $F_W, F_Z, F_A$, defined in \eqref{def:FW} - \eqref{def:FA} as well as their various derivatives (spatial and parameter).

\subsubsection{Forcing estimates for $(W,Z,A)$ and its derivatives}

We now provide estimates on the forcing of $(W,Z,A)$ and their spatial derivatives. 

\begin{lemma} For the forcing quantities defined in \eqref{def:FW} - \eqref{def:FA} and \eqref{diff:eq}, the following estimates are valid 
\begin{align}\label{FW:est:1}
\| F_W \|_\infty &\le \eps^{\frac 3 4} e^{- \frac 3 4 s},  &\| F_W^{(n)} \|_\infty &\le \eps^{\frac 3 4} e^{-s} \qquad \text{ for } 1\leq n \leq 8 \\ \label{FW:est:2}
\| \tilde{F}_W \|_\infty &\le  e^{- \frac 3 4 s}\,, &\| \tilde{F}_{W,1} \eta_{\frac 1 4} \|_\infty &\le \eps^{\frac{1}{10}}\,, \\ \label{FW:est:3}
\| F_{W,n} \eta_{\frac 1 5} \|_\infty &\les M^{n^2 - 1} \quad \text{ for } 2 \le n \le 8\,, & \| F_{W}^{(1)} \eta_{\frac 1 4} \|_\infty &\le e^{- \frac 1 2 s}\,, \\ \label{Motor:1}
\| F_{W,1} \eta_{\frac 1 5} \|_\infty &\lesssim e^{- \frac 1 2 s}  
\end{align}
\end{lemma}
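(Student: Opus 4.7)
The plan is to prove each bound by directly Leibniz-expanding the explicit definitions \eqref{def:FW}, \eqref{FW:to:the:n}, \eqref{diff:eq0}, and \eqref{F.W.n.bot}, and then substituting the pointwise bootstraps of Section~\ref{section:Bootstraps} together with the transport-speed estimates of Lemma~\ref{l:GW}. Every summand in these formulas is a product of the form (modulation factor) $\times$ $A^{(j)} \times $ one of $\{Z^{(n-j)},\ e^{-s/4} W^{(n-j)},\ \kappa\,\delta_{n-j,0}\}$ multiplied by the prefactor $e^{-3s/4}$, together with quadratic $W$ terms and transport contributions $G_W^{(j)} W^{(n+1-j)}$ coming from the second equality in \eqref{F.W.n.bot}.

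\textbf{The pointwise bounds on $F_W, F_W^{(n)}$ and $\tilde F_W$.} For $\|F_W\|_\infty$, the dominant contribution is the $\beta_\tau \beta_4 \kappa A$ piece since $\kappa \approx \kappa_0$ is the only non-decaying factor; combining $|\kappa| \le \kappa_0 + \eps$ from \eqref{mod:sub} with $|A| \le M\eps$ from \eqref{A:boot:9} gives $\lesssim \kappa_0 M\eps\,e^{-3s/4} \le \eps^{3/4}e^{-3s/4}$ by the hierarchy $\eps \ll M^{-k}$ of \eqref{choice:M}; all other summands inherit additional smallness from $e^{-s/4}|W| \le 1$ via \eqref{est:W:good} or from $|Z| \le \eps^{5/4}$ via \eqref{Z:boot:0}. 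For $\|F_W^{(n)}\|_\infty$ with $1\le n\le 8$, the largest term is analogously $\kappa_0 A^{(n)} e^{-3s/4}$, giving $\lesssim \kappa_0 M^{2n^2}e^{-2s}\le \eps^{3/4}e^{-s}$ because $M^{128}e^{-s}$ at $s = -\log\eps$ equals $M^{128}\eps$, which is smaller than $\eps^{3/4}$ by our parameter choices. For $\|\tilde F_W\|_\infty \le e^{-3s/4}$, I would bound the three pieces of \eqref{diff:eq0} separately: the $-\beta_\tau e^{-3s/4}\dot\kappa$ piece is $\lesssim \eps^{1/8}e^{-3s/4}$ by \eqref{e:GW0}; $F_W$ is already controlled; the drift $((\beta_\tau-1)\bar W - G_W)\p_x\bar W$ is estimated by pairing \eqref{e:1beta:bnd} with the profile bounds $|\bar W|\le \tfrac32\eta_{1/20}$ and $|\p_x\bar W|\le \eta_{-1/5}$ from \eqref{decay:bar:2}, giving net spatial weight $\eta_{-3/20}\le 1$, while the $G_W\p_x\bar W$ contribution is handled with \eqref{GW:transport:est} at $r$ chosen so that multiplication by $\eta_{-1/5}$ kills the spatial factor; importantly, outside the support of $Z$ we have $G_W \equiv -\beta_\tau(\dot\xi - \kappa)e^{s/4}$, which is controlled through the identity $\mu = G_W(0,s)$ and the bootstraps \eqref{e:GW0}.

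\textbf{The weighted $F_{W,n}$ estimates and the main obstacle.} For $n\ge 2$, the expression \eqref{F.W.n.bot} contributes, in addition to $F_W^{(n)}$, the quadratic sum $\sum_{j=2}^{n-1}\binom{n}{j}W^{(j)}W^{(n+1-j)}$ (present when $n\ge 3$) and the transport sum $\sum_{j=1}^n\binom{n}{j}G_W^{(j)} W^{(n+1-j)}$. The quadratic sum is estimated via \eqref{weds:1}, producing $|W^{(j)}W^{(n+1-j)}|\le M^{j^2+(n+1-j)^2}\eta_{-2/5}$; multiplication by $\eta_{1/5}$ yields the required size $\lesssim M^{n^2-1}$ after combinatorial book-keeping of the exponents of $M$. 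For the transport sum, the $j\ge 1$ summands use $\|G_W^{(j)}\|_\infty\lesssim M^{2j^2}e^{-s}$ from \eqref{GW:transport:est}, while lower-order $G_W$-pieces use the weighted variant $\|G_W\eta_{r/4}\|_\infty$ with $r$ tuned so that $\eta_{r/4}\cdot\eta_{-1/5}\cdot\eta_{1/5}$ is bounded. The main obstacle is the two borderline estimates $\|F_W^{(1)}\eta_{1/4}\|_\infty \le e^{-s/2}$ and $\|F_{W,1}\eta_{1/5}\|_\infty \lesssim e^{-s/2}$, because $\eta_{1/4}$ is strictly stronger than $\eta_{1/5}$ and the target decay rate $e^{-s/2}$ is slower than $e^{-s}$. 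Here the $\kappa A^{(1)}$ contribution (which carries no intrinsic spatial profile) must be paired against $\p_x W$ via \eqref{e:uniform:W1}, producing the factor $\eta_{-1/5}$ which combines with $\eta_{1/4}$ to leave $\eta_{1/20}$; the latter must then be cashed against the support bound $|x|\le M\eps e^{5s/4}$ from \eqref{e:support} to produce the $e^{-s/2}$ factor. A parallel argument for $\tilde F_{W,1}$ uses \eqref{e:GW0}, \eqref{e:1beta:bnd}, \eqref{decay:bar:2} and \eqref{bound:bar:W:1} to control the additional $\bar W$-dependent drift introduced by differencing, and is the most delicate tracking of weights in the proof.
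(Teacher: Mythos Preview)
Your overall strategy matches the paper's proof exactly: Leibniz-expand the explicit formulas, insert the bootstraps from Section~\ref{section:Bootstraps} and the transport bounds from Lemma~\ref{l:GW}, and for the weighted borderline estimates trade residual spatial growth $\eta_\gamma$ against the support bound \eqref{e:support}. The combinatorial bookkeeping on the quadratic $W^{(j)}W^{(n+1-j)}$ term, producing $M^{n^2-1}$, is precisely what the paper does.

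There is, however, a concrete mix-up in your treatment of $\|F_W^{(1)}\eta_{1/4}\|_\infty$. The term $\kappa A^{(1)}$ in \eqref{FW:to:the:n} carries no factor of $\p_x W$; it cannot be ``paired against $\p_x W$.'' What the paper does is place the full weight $\eta_{1/4}$ on $A^{(1)}$ and cash it against the support of $A$, yielding $e^{-3s/4}\cdot M^2 e^{-5s/4}\cdot(M\eps e^{5s/4})\cdot\kappa_0\lesssim \eps e^{-3s/4}$. The splitting $\eta_{1/4}=\eta_{1/5}\cdot\eta_{1/20}$ that you describe is instead applied to the \emph{other} term $e^{-s}\beta_4 A\,W^{(1)}$: here $\eta_{1/5}$ is absorbed by the spatial decay of $W^{(1)}$ (via \eqref{decay:bar:2} on $\bar W^{(1)}$ and \eqref{e:Wtilde:1:bootstrap} on $\tilde W^{(1)}$), and the remaining $\eta_{1/20}$ goes on $A$ and is converted via support. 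Similarly, for $\|\tilde F_{W,1}\eta_{1/4}\|_\infty$ you omit the term $\beta_\tau\bar W^{(2)}\tilde W$ from \eqref{diff:eq}, which is the actual bottleneck: the paper controls it as $\|\bar W^{(2)}\eta_{9/20}\|_\infty\|\tilde W\eta_{-1/20}\|_\infty\lesssim\eps^{3/20}$ using \eqref{decay:bar:2} and the bootstrap \eqref{thurs:1}, and this is what sets the final $\eps^{1/10}$ bound. Your listed inputs \eqref{e:GW0}, \eqref{e:1beta:bnd}, \eqref{decay:bar:2}, \eqref{bound:bar:W:1} handle only the $\bar W$ drift pieces; you also need \eqref{thurs:1}--\eqref{e:Wtilde:1:bootstrap}.
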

\begin{proof} We use definition \eqref{def:FW} to estimate 
\begin{align} \n
\| F_W \|_\infty \lesssim &e^{- \frac 3 4 s} \| A \|_\infty (\| Z \|_\infty + \| e^{- \frac s 4} W + \kappa \|_\infty) \lesssim  e^{- \frac 3 4 s} M \eps (\eps^{\frac 5 4} +M) \lesssim_M \eps e^{- \frac 3 4 s}\,,  
\end{align}
which establishes the first inequality in \eqref{FW:est:1}. 

We now want to estimate $\tilde{F}_W$, for which we use definition \eqref{diff:eq0} to bound 
\begin{align} \n
\| \tilde{F}_W \|_\infty \le & |\beta_\tau| e^{- \frac 3 4 s}|\dot{\kappa}| + \| F_W \|_\infty + |\beta_\tau - 1| \| \bar{W} \p_x \bar{W} \|_\infty + \| G_W \eta_{- \frac 1 5} \|_\infty \| \p_x \bar{W} \eta_{\frac 1 5} \|_\infty \\ \n
\lesssim & e^{- \frac 3 4 s} \eps^{\frac 1 8} + \eps^{\frac 3 4} e^{- \frac 3 4 s} + \eps^{\frac 1 6} e^{- \frac 3 4 s} + M^{\frac{11}{5}} \eps^{\frac1 5} e^{- \frac 3 4 s} \\ \n
\lesssim & \eps^{\frac 1 8} e^{- \frac 3 4 s}\,, 
\end{align}

\noindent which establishes the first inequality in \eqref{FW:est:2}. Above, we have invoked estimate \eqref{e:GW0} for the $\dot{\kappa}$ term, the previously established estimate on $\| F_W \|_\infty$ in \eqref{FW:est:1}, \eqref{e:1beta:bnd} for the $\beta_\tau - 1$ quantity, and estimate \eqref{GW:transport:est} for the $G_W$ term, with $r = - \frac 4 5$. 

Estimating the expression \eqref{FW:to:the:n}, we obtain 
\begin{align} \n
\| F_W^{(n)} \|_\infty \lesssim &e^{- \frac 3 4 s} \sum_{j = 1}^{n -1} \| A^{(j)} \|_\infty \Big( \| Z^{(n-j)} \|_\infty + e^{- \frac s 4} \|W^{(n-j)} \|_\infty  \Big) \\ \n
& + e^{- \frac 3 4 s} \|A^{(n)} \|_\infty \| e^{- \frac s 4}W \mathbbm{1}_{B_f} + \kappa \|_\infty + e^{- \frac 3 4 s} \| A \|_\infty (\| Z^{(n)} \|_\infty + e^{- \frac s 4} \| W^{(n)} \|_\infty) \\   \label{Fwn:est:s:dec}
\lesssim & M e^{- 2 s} (e^{- \frac 5 4 s} + e^{- \frac s 4}) +  e^{- 2 s} +  \eps e^{- \frac 3 4 s}  (e^{- \frac 5 4 s} + e^{- \frac s 4}) \les_M \eps e^{-s} \,,
\end{align}

\noindent which establishes the second inequality in \eqref{FW:est:1}. To estimate \eqref{Fwn:est:s:dec}, we have invoked \eqref{e:support} and estimates \eqref{Z:boot:0} - \eqref{A:boot:9}. 

We now turn to the second inequality in \eqref{FW:est:3}. For this, we appeal to the definition \eqref{FW:to:the:n}
\begin{align} \n
\| F_W^{(1)} \eta_{\frac 1 4} \|_\infty \lesssim &e^{- \frac 3 4 s} \| A^{(1)} \eta_{\frac 1 4} \|_\infty \Big( \| Z \|_\infty + \| e^{- \frac s 4} W \mathbbm{1}_{B_f} + \kappa \|_\infty \Big) + e^{- \frac 3 4 s} \|A  \|_\infty \| Z^{(1)} \eta_{\frac 1 4 } \|_\infty \\ \n
& + e^{- \frac 3 4 s} \| A \eta_{\frac{1}{20}} \|_\infty \Big( \| \bar{W}^{(1)} \eta_{\frac 1 5} \|_\infty + \| \tilde{W}^{(1)} \eta_{\frac 1 5} \|_\infty  \Big) \\ \n
\lesssim & M^2 e^{-2s} (M \eps e^{\frac 5 4 s}) ( \eps^{\frac 5 4} + M ) + M^4 \eps^2 e^{- \frac 3 4 s}+ M e^{- \frac 3 4 s} \eps^{\frac 5 4} (M\eps e^{\frac 5 4 s})^{\frac 1 5} \ell \log M \\ \label{spoke:2}
\lesssim & \eps^{\frac 1 8 } e^{- \frac 1 2 s}\,, 
\end{align}
where above we have used the inequality $\eta_{\frac r 4} \lesssim (M\eps e^{\frac 5 4 s})^{r}$ in the support of $A, Z$, as well as estimates \eqref{Z:boot:0} - \eqref{A:boot:9} and \eqref{e:Wtilde:1:bootstrap} and \eqref{decay:bar:2} for the spatial decay property of $\bar{W}^{(1)}$.  

We now arrive at the second estimate in \eqref{FW:est:2}. An appeal to \eqref{diff:eq} gives
\begin{align} \n
\|\tilde{F}_{W,1} \eta_{\frac 1 4}\|_\infty \lesssim & \| F_W^{(1)} \eta_{\frac 1 4} \|_\infty + \| \bar{W}^{(2)} \tilde{W} \eta_{\frac 1 4}\|_\infty + \| G_W^{(1)} \tilde{W}^{(1)} \eta_{\frac 1 4} \|  + | \beta_\tau - 1| \Big(\| \bar{W} ~\bar{W}^{(2)} \eta_{\frac 1 4} \|_\infty \\ \n
& \qquad + \| \bar{W}^{(1)} \eta_{\frac 1 8}| \| \bar{W}^{(1)} \eta_{\frac 1 8} \|_\infty\Big) + \| G_W \bar{W}^{(2)} \eta_{\frac 1 4} \|_\infty + \| G_W^{(1)} \bar{W}^{(1)} \eta_{\frac 1 4} \|_\infty\\ \n
\lesssim & \| F_W^{(1)} \eta_{\frac 1 4} \|_\infty + \| \bar{W}^{(2)} \eta_{\frac{9}{20}} \|_\infty \| \tilde{W} \eta_{- \frac{1}{20}} \|_\infty + (M\eps e^{\frac 5 4 s})^{\frac 1 5} \| G^{(1)}_W \|_\infty \| \tilde{W}^{(1)} \eta_{\frac 1 5} \|_\infty  \\ \n
& + \eps^{\frac 1 6} e^{- \frac 3 4 s} + \| \bar{W}^{(2)} \eta_{\frac{9}{20}} \|_\infty \| G_W \eta_{- \frac 1 5} \|_\infty + (M\eps e^{\frac 5 4 s})^{\frac 1 5} \| G_W^{(1)} \|_\infty \| \bar{W}^{(1)} \eta_{\frac 1 5} \|_\infty \\ \n
\lesssim & \eps^{\frac 18}e^{-\frac s 2}+ \eps^{\frac{3}{20}} + \eps^{\frac 1 6} e^{- \frac 3 4 s} + \eps^{\frac 1 4} e^{- \frac 3 4 s} + \eps^{\frac 1 6} e^{- \frac 3 4 s}  +M^{\frac {11} 5}  \eps^{\frac 1 5} e^{- \frac 3 4 s} 
\lesssim  \eps^{\frac{3}{20}}\,. 
\end{align}
Above, we have used the bootstrap estimates \eqref{thurs:1} and \eqref{e:Wtilde:1:bootstrap} on $\tilde W$, the bound \eqref{decay:bar:2} regarding the decay of $\bar{W}^{(2)}$, as well as estimate \eqref{FW:est:3}, which has already been established. 
We have moreover invoked the previously established estimates \eqref{GW:transport:est} on the $G_W$ quantity with $r = - \frac 4 5$ and the $G_W^{(1)}$ quantity. 

To prove \eqref{Motor:1}, we first recall the definition \eqref{F.W.n.bot}, according to which if we pair with estimate \eqref{spoke:2} yields 
\begin{align} \n
\| F_{W,1} \eta_{\frac 1 5} \|_\infty \le &\| F_{W}^{(1)} \eta_{\frac 1 5} \|_\infty + \| G_W^{(1)} \|_\infty \| W^{(1)} \eta_{\frac 1 5} \|_\infty \les \eps^{\frac18}e^{- \frac 1 2 s} + \ell M^3\log M e^{-s}\les \eps^{\frac18}e^{- \frac 1 2 s} \, , 
\end{align}
where we have also invoked estimate \eqref{GW:transport:est}, and the bootstrap \eqref{e:uniform:W1}. 

We now appeal to the definition \eqref{F.W.n} to perform the third estimate, \eqref{FW:est:3}. We estimate also with the help of \eqref{Fwn:est:s:dec}
\begin{align*}
\| F_W^{(n)} \eta_{\frac 1 5} \|_\infty \lesssim& \eps^{\frac 34} e^{-s} (M\eps e^{\frac 5 4s})^{\frac 4 5} = M^{\frac 4 5} \eps^{\frac 7 4}\,, \\
 \Big\| 1_{n \ge 3}\beta_\tau \sum_{j = 2}^{n-1} \binom{n}{j} W^{(j)} W^{(n+1 - j)} \eta_{\frac 1 5}\Big\|_\infty \lesssim & \sum_{j = 2}^{n-1} M^{j^2} M^{(n+1-j)^2} \lesssim M^{n^2 - 1} \\
 \Big\| \sum_{j = 1}^n \binom{n}{j} G_W^{(j)} W^{(n+1-j)} \eta_{\frac 1 5}\Big\|_\infty \lesssim &\sum_{j = 1}^n M^{2j^2} e^{-s} M^{(n+1-j)^2} \le \eps^{\frac 1 2}\,. 
\end{align*}
Above we have invoked the elementary inequality $j^2 + (n+1 - j)^2 \le - 1 + n^2$ for $n \ge 3$, and $2 \le j \le n-1$, as well as the estimates on $G_W^{(j)}$ in \eqref{GW:transport:est}, and estimates \eqref{weds:1} on $W^{(n)}$.

\end{proof}

We now state a lemma regarding localized estimates, on $|x| \le \ell$, which have an enhanced scaling. 
\begin{lemma} The following estimates are valid: 
\begin{align}\label{e:FW6:decay}
\sup_{|x| \le \ell} |\tilde{F}_{W,6}| \les {\ell \eps^{\frac 1 5}}\,, && \sup_{|x| \le \ell} |\tilde{F}_{W,7}| \le { \eps^{\frac 1 5}}\,, &&& \sup_{|x| \le \ell} |\tilde{F}_{W,8}| \les {M \eps^{\frac 1 5}}\,. 
\end{align}
\end{lemma}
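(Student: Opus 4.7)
The plan is to expand $\tilde{F}_{W,n}$ via its definition \eqref{diff:eq} into six groups of terms and estimate each group in the local region $|x|\le \ell$, leveraging the refined local bootstraps \eqref{e:Wtilde:bootstrap}--\eqref{gerrard:2} on $\tilde W^{(k)}$, the vanishing of $\bar W^{(k)}$ at $x=0$ for $k = 2,3,4$ (with $\bar W^{(5)}(0) = 120$), and the decay estimates on $G_W^{(j)}$, $F_W^{(j)}$, and $\beta_\tau - 1$ already proved. Specifically, I will group the right-hand side of \eqref{diff:eq} into (i) $F_W^{(n)}$, (ii) $\beta_\tau W^{(j)}\tilde W^{(n+1-j)}$ for $2\le j \le n-1$, (iii) $\beta_\tau\bar W^{(j+1)}\tilde W^{(n-j)}$ for $1\le j\le n$, (iv) $G_W^{(j)}\tilde W^{(n+1-j)}$ for $1\le j\le n$, (v) $(\beta_\tau-1)\bar W^{(j)}\bar W^{(n+1-j)}$ for $0\le j\le n$, and (vi) $G_W^{(j)}\bar W^{(n+1-j)}$ for $0\le j\le n$.

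The main mechanism behind the smallness is a ``high-low'' pairing: whenever one factor involves $\tilde W^{(k)}$ or $\bar W^{(k)}$ with small $k$, it contributes a power of $\ell$ via Taylor expansion at $x=0$ (since $\bar W^{(k)}(0)=0$ for $k=2,3,4$ yields $|\bar W^{(2)}(x)|\lesssim|x|^3$, $|\bar W^{(3)}(x)|\lesssim|x|^2$, $|\bar W^{(4)}(x)|\lesssim|x|$, and $|\tilde W^{(k)}|\le 2\ell^{6-k}\eps^{1/5}$ for $0\le k\le 5$). Groups (i), (iv), (vi) are negligibly small because $|F_W^{(n)}|\les \eps^{7/4}$, $|G_W^{(j)}|\lesssim M^{2j^2}\eps$ by \eqref{FW:est:1} and \eqref{GW:transport:est} (since $e^{-s}\le \eps$), and these are multiplied by factors of size at most $M^3\eps^{1/5}$ or $O(1)$. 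Group (v) is handled by $|\beta_\tau-1|\le 2\eps^{1/6}e^{-3s/4}\le 2\eps^{11/12}$ from \eqref{e:1beta:bnd}, which dominates any $O(1)$ product of $\bar W$ derivatives (with an $O(\ell)$ improvement from $|\bar W(x)|\lesssim |x|$ when the $j=0$ index appears).

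The central computation is for groups (ii) and (iii). For (iii), when $n-j\le 5$, the local bootstrap gives $|\tilde W^{(n-j)}|\le 2\ell^{6-n+j}\eps^{1/5}$ while $\bar W^{(j+1)}$ is $O(1)$ on $|x|\le \ell$, yielding a product $\lesssim \ell^{6-n+j}\eps^{1/5}$; the worst case (smallest exponent of $\ell$) is $j=n-1$ giving $\ell\eps^{1/5}$ for $n=6$, $\ell\eps^{1/5}$ for $n=7$, and $\ell\eps^{1/5}$ for $n=8$, all well within our targets. When $n-j\in\{6,7\}$, we absorb the $M$-factors from \eqref{e:W6:bootstrap}--\eqref{gerrard:1} against the vanishing of $\bar W^{(j+1)}$: for instance the $j=0$ contribution is $\bar W^{(1)}\tilde W^{(n)}$, which is $O(M\eps^{1/5})$ for $n=7$ and $O(M^3\eps^{1/5})$ for $n=8$ -- these are in fact the \emph{dominant} contributions and are the reason the right-hand sides read $\eps^{1/5}$ and $M\eps^{1/5}$ respectively. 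For (ii), one rewrites $W^{(j)} = \tilde W^{(j)} + \bar W^{(j)}$ to avoid the catastrophic global bound $|W^{(j)}|\le M^{j^2}\eta_{-1/5}$ from \eqref{weds:1}; then $|W^{(j)}|\lesssim_{\ell} 1$ on $|x|\le \ell$ for $j\ge 5$, and for $j\le 4$ the Taylor bound gives extra powers of $\ell$. The indices in (ii) satisfy $2\le j\le n-1$, so $n+1-j\ge 2$, ensuring that $\tilde W^{(n+1-j)}$ always sits in a range where the local bootstraps \eqref{e:Wtilde:bootstrap}--\eqref{gerrard:2} apply, and checking the worst pairing in each case produces the claimed $\ell\eps^{1/5}$, $\eps^{1/5}$, $M\eps^{1/5}$ bounds.

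The main obstacle is the bookkeeping in group (ii) when $n=8$ and $j$ is close to $n$, where $W^{(j)}$ appears with large possible bounds. The resolution, as indicated above, is to always split $W^{(j)} = \tilde W^{(j)} + \bar W^{(j)}$ so that the large $M^{j^2}\eta_{-1/5}$ from \eqref{weds:1} is replaced by the much better $|\bar W^{(j)}|\lesssim 1 + |x|^{(5-j)_+}$ together with the $M^3\eps^{1/5}$-type bound on $\tilde W^{(j)}$; this is exactly the improvement that allows the estimate \eqref{gerrard:2} (containing a factor $M^3$) to be propagated into $\tilde F_{W,8}$ without blowing up.
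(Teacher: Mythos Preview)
Your approach is essentially the same as the paper's: decompose $\tilde F_{W,n}$ according to \eqref{diff:eq}, bound groups (i), (iv), (v), (vi) using the decay of $F_W^{(n)}$, $G_W^{(j)}$, and $|\beta_\tau-1|$, and handle groups (ii), (iii) by pairing the local bootstraps \eqref{e:Wtilde:bootstrap}--\eqref{gerrard:2} on $\tilde W^{(k)}$ with the Taylor vanishing of $\bar W^{(k)}$ at $0$. The paper's own proof is terser (it bounds $W^{(j)}$ in group (ii) by an $O(1)$ constant on $|x|\le\ell$ without spelling out the splitting $W^{(j)}=\bar W^{(j)}+\tilde W^{(j)}$), but the mechanism is identical.

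One slip to correct: in group (iii), the sum $\sum_{j=1}^{n}\binom{n}{j}\beta_\tau\bar W^{(j+1)}\tilde W^{(n-j)}$ begins at $j=1$, not $j=0$. The term $\bar W^{(1)}\tilde W^{(n)}$ that you single out as the ``dominant contribution'' is \emph{not} part of $\tilde F_{W,n}$; it sits in the damping coefficient $\tilde D_n = \tfrac14(-1+5n)+\beta_\tau(\bar W^{(1)}+nW^{(1)})$ on the left-hand side of \eqref{diff:eq}. This matters, because your claimed size $O(M\eps^{1/5})$ for this phantom term at $n=7$ would exceed the target $\eps^{1/5}$, and your argument as written would not close. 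Once you drop it, every genuine pairing in groups (ii) and (iii) carries at least one power of $\ell$ (since $\bar W^{(2)},\bar W^{(3)},\bar W^{(4)}$ vanish to high order at $0$, and $\tilde W^{(k)}$ gains $\ell^{6-k}$ for $k\le 5$), so the actual bounds are in fact better than stated and the lemma follows comfortably.
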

\begin{proof} We use the definition \eqref{diff:eq} to estimate via 
\begin{align} \n
\sup_{|x| \le \ell } | \tilde{F}_{W,6} | \lesssim &\|F_W^{(6)} \|_\infty + \sum_{j = 2}^{5} \sup_{|x| \le \ell} | \tilde{W}^{(7-j)} | + \sum_{j = 1}^6 \sup_{|x| \le \ell} | \tilde{W}^{(6-j)} | + \eps^{\frac 1 2} \eps^{\frac 1 5} \\ \n
& + \eps^{\frac 1 6} e^{- \frac 3 4 s} + \sum_{j = 1}^6 M^{2j^2} e^{-s} + \eps^{\frac 1 2} \\ \n
\lesssim & \eps^{\frac 3 4} e^{-s} + \ell \eps^{\frac 1 5} +  \eps^{\frac 1 2} \eps^{\frac 1 5}  + \eps^{\frac 1 6} e^{- \frac 3 4 s} + \eps^{\frac 1 2} \les \ell \eps^{\frac 1 5}\,, 
\end{align}
where we have invoked estimates \eqref{FW:est:1} with $n = 6$, and \eqref{e:W6:bootstrap}. 

The identical argument applies to the estimate of $\tilde{F}_{W,7}$ and $\tilde{F}_{W,8}$. 
\end{proof}

\begin{lemma} For $F_Z$ defined in \eqref{def:FZ}, the following estimates are valid 
\begin{align}\label{Z:0:order}
\| F_Z \|_\infty \le & \eps^{\frac 3 4} e^{-s}\,,  \\ \label{FZ:est:1}
\|  F_{Z,1}  \| \le &  \eps^{\frac 1 2} e^{- \frac 5 4 s} I(s) + e^{- \frac 3 2 s}\,, \\ \label{my:generation}
\| F_Z^{(n)} \|_\infty \le & \eps^{\frac 3 4} e^{- \frac 5 4 s}  \,,\\ \label{gen:n:FZ:est}
\| F_{Z, n} \|_\infty \les & {M^{2n^2-1}} e^{- \frac 5 4 s}\,.
\end{align}
for $2 \le n \le 8$,
\noindent where $I(s)$ is an integrable function of $s$ satisfying the bound $\int_{s_0}^s |I(s')| ds' < 1$.  
\end{lemma}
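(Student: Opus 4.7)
The plan is to proceed term-by-term through the explicit expressions \eqref{def:FZ}, \eqref{def:FZn}, and \eqref{F.Z.n.bot}, inserting the bootstraps from Section \ref{section:Bootstraps} together with the transport estimates of Lemma \ref{l:GW}. The architecture parallels the $F_W$ estimates already carried out, but with an improved decay rate $e^{-s}$ (in place of $e^{-3s/4}$) owing to the additional $e^{-s}$ prefactor visible in \eqref{def:FZ}.

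For \eqref{Z:0:order}, insert the pointwise bounds $\|A\|_\infty \le M\eps$, $\|Z\|_\infty \le \eps^{5/4}$, $e^{-s/4}|W| \le 1$ (from \eqref{est:W:good}), and $|\kappa| \le \kappa_0 + \eps$ directly into \eqref{def:FZ} to obtain $\|F_Z\|_\infty \lesssim_M \eps\, e^{-s}$, which is absorbed into $\eps^{3/4} e^{-s}$ for $\eps$ small relative to $M$. For \eqref{my:generation}, split the sum \eqref{def:FZn} at $j=0$ versus $j \ge 1$: summands with $j \ge 1$ acquire the improved decay $\|A^{(j)}\|_\infty \le M^{2j^2} e^{-5s/4}$ from \eqref{A:boot:9}, while for $j = 0$ the spatial derivative is forced onto either $Z^{(n)}$ (yielding $e^{-5s/4}$ via \eqref{Z:boot:0}) or onto $(e^{-s/4} W + \kappa)^{(n)}$ (yielding an explicit $e^{-s/4}$ prefactor paired with $W^{(n)}$ controlled by \eqref{weds:1}).

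For \eqref{gen:n:FZ:est} with $n \ge 2$, invoke \eqref{F.Z.n.bot} and bound its three blocks separately: (i) $F_Z^{(n)}$ via \eqref{my:generation}; (ii) the nonlinear products $W^{(j)} Z^{(n+1-j)}$ for $2 \le j \le n$ via \eqref{weds:1} and \eqref{Z:boot:0}, using the combinatorial inequality $j^2 + 2(n+1-j)^2 \le 2n^2 - 1$, which follows from convexity of the left-hand side in $j$ by checking the endpoints $j = 2$ and $j = n$ (valid for all $n \ge 2$); and (iii) the transport products $G_Z^{(j)} Z^{(n+1-j)}$ via Lemma \ref{l:GW}, yielding an excess factor $e^{-s/4}$ that is easily absorbed into block (ii).

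The main obstacle will be the sharp estimate \eqref{FZ:est:1} on $F_{Z,1}$, whose right-hand side mixes the two decay rates $\eps^{1/2} e^{-5s/4} I(s)$ and $e^{-3s/2}$. The approach is to write \eqref{F.Z.n.bot} at $n=1$ explicitly and to isolate the slowest-decaying piece $-\beta_\tau \beta_3 e^{-s} \kappa A^{(1)}$; using $\|A^{(1)}\|_\infty \le M^2 e^{-5s/4}$ this piece is of size $M^2 \kappa_0 e^{-9s/4}$ and hence fits into $\eps^{1/2} e^{-5s/4} I(s)$ upon the choice $I(s) := M^2 \kappa_0 \eps^{-1/2} e^{-s}$, which satisfies $\int_{s_0}^s |I(s')|\, ds' < 1$ since $s_0 = -\log\eps$ and $\eps$ is small. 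All remaining contributions pair $A$ (of size $M\eps$) with one of $Z^{(1)}$, $e^{-s/4} W^{(1)}$, or $G_Z^{(1)}$, and are shown via \eqref{Z:boot:0}, \eqref{eq:W1:bnd:1}, and Lemma \ref{l:GW} to decay at the faster rate $e^{-3s/2}$, completing the proof.
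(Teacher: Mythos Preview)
Your treatment of \eqref{Z:0:order}, \eqref{my:generation}, and \eqref{gen:n:FZ:est} is correct and matches the paper's argument essentially verbatim.

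For \eqref{FZ:est:1}, however, there is a genuine gap: you have misidentified the slowest-decaying term. The contribution $-\beta_\tau \beta_3 e^{-s}\kappa A^{(1)}$ that you single out decays like $M^2\kappa_0 e^{-9s/4}$, which already fits into $e^{-3s/2}$ (with room to spare, since $M^2\kappa_0 e^{-3s/4}\le M^2\kappa_0\eps^{3/4}\ll 1$); there is no need to route it through $I(s)$. The actual bottleneck is the term you place among the ``remaining contributions'', namely
\[
-\beta_\tau\beta_3 e^{-s}A\,e^{-s/4}W^{(1)}\,,
\]
whose size in $L^\infty$ is $\lesssim M\eps\,e^{-5s/4}$ via $\|A\|_\infty\le M\eps$ and \eqref{eq:W1:bnd:1}. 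This does \emph{not} decay at rate $e^{-3s/2}$ for large $s$: the inequality $M\eps\,e^{-5s/4}\le e^{-3s/2}$ is equivalent to $M\eps\,e^{s/4}\le 1$, which fails once $s>4\log(1/(M\eps))$. Nor can you absorb it into $\eps^{1/2}e^{-5s/4}I(s)$ with integrable $I$, since that would force $I(s)\ge M\eps^{1/2}$ uniformly, contradicting $\int_{s_0}^\infty I<1$.

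The paper resolves this by exploiting the \emph{spatial} decay $|W^{(1)}|\le \ell\log M\,\eta_{-1/5}$ from \eqref{e:uniform:W1} together with the trajectory escape estimate \eqref{traj:large:z}: along $\Phi_Z^{x_0}$ one has $|\Phi_Z^{x_0}(s)|\gtrsim e^{s/4}$ eventually, so $\eta_{-1/5}\circ\Phi_Z^{x_0}$ is integrable in $s$ uniformly in $x_0$. This is why the lemma is stated (and used in the $Z^{(1)}$ bootstrap) along trajectories rather than as a pure $L^\infty_x$ bound --- note the missing subscript $\infty$ on $\|F_{Z,1}\|$ in the statement. You need this trajectory mechanism; pure temporal decay does not close.
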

\begin{proof} For estimate \eqref{Z:0:order}, we use the definition \eqref{def:FZ} to estimate 
\begin{align*}
\|F_Z \|_\infty \lesssim e^{-s} \|A \|_\infty \Big( \| e^{- \frac s 4} W + \kappa \|_\infty + \| Z \|_\infty \Big) \lesssim M^2\eps e^{-s}\,, 
\end{align*}
where we have invoked \eqref{A:boot:9}, as well as \eqref{est:W:good}. 

To estimate $F_Z^{(n)}$, we recall definition \eqref{def:FZn}, which requires us to estimate the following four types of terms 
\begin{align*}
\sum_{j = 1}^n \| \beta_\tau e^{-s} A^{(j)} (e^{- \frac s 4} W + \kappa)^{(n-j)} \|_\infty &\lesssim  e^{-s} \| A^{(j)} \|_\infty \| e^{- \frac s 4} W \mathbbm{1}_{B_f} + \kappa \| \lesssim  M e^{-s} e^{- \frac 5 4 s}\,, \\
\sum_{j = 1}^n \| \beta_\tau e^{-s} A^{(j)} Z^{(n-j)} \|_\infty &\lesssim  e^{-s} \| A^{(j)} \|_\infty \| Z \| \lesssim M \eps^{\frac 5 4} e^{- \frac 9 4s}\,, \\
\| \beta_\tau e^{-s} A \beta_3 e^{- \frac s 4} W^{(n)} \|_\infty & \lesssim_M  \eps e^{- \frac 5 4 s}\,, \\
\| \beta_\tau e^{-s} A \beta_4 Z^{(n)} \|_\infty & \lesssim_M  \eps e^{- \frac 9 4 s}\,.
\end{align*}
Again, we have used estimates \eqref{Z:boot:0} - \eqref{A:boot:9}, as well as estimates \eqref{weds:1} for derivatives of $W$. 

 We now provide the estimate \eqref{gen:n:FZ:est}. Recall the definition \eqref{F.Z.n.bot}. For this, when coupled with \eqref{my:generation}, we need to estimate further the following two terms
\begin{align*}
\| 1_{n \ge 2} \beta_\tau \beta_2 \sum_{j = 2}^{n} \binom{n}{j} W^{(j)} Z^{(n+1-j)} \|_\infty &\lesssim M^{2 n^2 -  1} 1_{n \ge 2} e^{- \frac 5 4 s}\,, \\
\| \sum_{j = 1}^n G_Z^{(j)} Z^{(n+1-j)} \|_\infty &\lesssim_M  e^{- \frac 9 4 s}\,. 
\end{align*} 
Above, we have invoked estimates \eqref{weds:1} for derivatives of $W$, \eqref{Z:boot:0} for $Z$, as well as \eqref{Z:transport:1} for the $G_Z^{(j)}$ terms. 

 For estimate \eqref{FZ:est:1}, we estimate all of the terms above by $e^{- \frac 3 2 s}$ with the exception of 
\begin{align}
|\beta_\tau \beta_3 e^{- \frac 54 s} A W^{(1)} \circ \Phi_Z| \le 10 \eps^{\frac 5 4} e^{- \frac 5 4 s} |\eta_{- \frac 1 5} \circ \Phi_Z| \le \eps e^{- \frac 5 4 s} I(s)\,,\n
\end{align}
where we have invoked the trajectory estimate \eqref{traj:large:z}.

\end{proof}

\begin{lemma} \label{phone:1}   For $F_A$ defined in \eqref{def:FA}, the following estimates are valid 
\begin{align}\label{A:0:order}
\| F_A \|_\infty &\les M^{\frac 1 2} e^{-s}\,,\\ \label{FA:est:1}
\|  F_{A,1}  \|_\infty &\le   e^{- \frac 5 4 s} I(s)\,, \\ \label{myA:generation}
\| F_A^{(n)} \|_\infty &\lesssim M^{n^2} e^{- \frac 5 4 s}, \text{ for } 2 \le n \le 8\,, \\  \label{gen:n:FA:est}
\| F_{A, n} \|_\infty &\les {M^{2n^2-1}} e^{- \frac 5 4 s} \text{ for } 2 \le n \le 8\,,  
\end{align}
where $I(s)$ is an integrable function of $s$ satisfying the bound $\int_{s_0}^s |I(s')| ds' < M$.  
\end{lemma}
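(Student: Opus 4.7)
The structure of the proof is directly parallel to the $F_Z$ estimate just completed. I will handle the four assertions in turn, exploiting the expressions \eqref{def:FA}, \eqref{okey:1} and \eqref{F.A.n} together with the bootstrap bounds \eqref{Z:boot:0}--\eqref{A:boot:9} on $(Z,A)$, \eqref{W:boot:0}--\eqref{weds:1} on $W$ and its derivatives, the bounds \eqref{e:GW0}--\eqref{mod:sub} on the modulation variables, and the transport-speed bounds \eqref{A:transport:1} from Lemma \ref{l:GW}. The key elementary ingredients are the pointwise bound $e^{-s/4}|W|\lesssim 1$ from \eqref{est:W:good} and the fact that $\kappa_0$ is a universal constant (depending only on $\lambda$) so that $|\kappa|\le 2\kappa_0\lesssim 1$ in $M$.

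For \eqref{A:0:order}, I would apply \eqref{def:FA} directly: the $A^2$ contribution is bounded by $(M\eps)^2 e^{-s}$, while each of the remaining two quadratic terms is bounded by $e^{-s}(e^{-s/4}|W|+|\kappa|+|Z|)^2\lesssim e^{-s}\kappa_0^2$, absorbing everything into $M^{1/2}e^{-s}$. For \eqref{myA:generation} with $n\ge 2$, I would expand \eqref{okey:1} and split into three ranges: $j=0$, $j=n$, and $1\le j\le n-1$. The endpoint terms are of the form $e^{-s}A\cdot A^{(n)}$, $e^{-s}(\kappa+\ldots)\cdot(e^{-s/4}W+Z)^{(n)}$; using \eqref{A:boot:9}, \eqref{Z:boot:0} and \eqref{weds:1} together with $|\kappa|\lesssim 1$, these are the dominant terms and yield $M^{n^2}e^{-5s/4}$ (since $e^{-s}\cdot e^{-s/4}\cdot M^{n^2}\eta_{-1/5}\lesssim M^{n^2}e^{-5s/4}$ on the support where $\eta_{-1/5}\le 1$). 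The interior sum $1\le j\le n-1$ gives only lower-order contributions because each factor carries at least one decaying derivative. Then \eqref{gen:n:FA:est} follows upon combining with the two extra sums in \eqref{F.A.n}: the term $W^{(j)}A^{(n+1-j)}$ with $2\le j\le n$ is bounded, via \eqref{weds:1} and \eqref{A:boot:9}, by $M^{j^2}\cdot M^{2(n+1-j)^2}e^{-5s/4}\le M^{2n^2-1}e^{-5s/4}$ (using the elementary inequality $j^2+2(n+1-j)^2\le 2n^2-1$ in this range), and the $G_A^{(j)}A^{(n+1-j)}$ sum is controlled by \eqref{A:transport:1} together with \eqref{A:boot:9}, giving only $\lesssim_M e^{-9s/4}$, which is absorbed.

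The main obstacle is the refined estimate \eqref{FA:est:1}, which requires pulling out an $s$-integrable factor $I(s)$ rather than settling for uniform-in-$s$ decay. Following the pattern of the proof of \eqref{FZ:est:1}, I would estimate every term in $F_{A,1}$ (coming from \eqref{F.A.n} with $n=1$, together with \eqref{okey:1}) brutally by $e^{-5s/4}$ with room to spare, except the single term $\beta_\tau\beta_1 e^{-5s/4}\kappa\, W^{(1)}$ (and its mirror from the $\beta_5$ summand). Composing with the $A$-trajectory $\Phi_A$ and invoking the weighted bootstrap $|W^{(1)}|\lesssim \eta_{-1/5}$ from \eqref{e:uniform:W1}, one obtains $|\kappa W^{(1)}\circ\Phi_A|\lesssim |\eta_{-1/5}\circ\Phi_A|$, and the trajectory estimate in Subsection \ref{subsect:trajectory} (the analog of \eqref{traj:large:z} for the $A$-flow) supplies the integrable profile $I(s)$ with $\int_{s_0}^s|I(s')|\,ds'< M$. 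Once \eqref{FA:est:1} is established this way, the other three assertions follow by the more mechanical estimates described above, completing the lemma.
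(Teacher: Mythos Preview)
Your proposal is correct and follows essentially the same approach as the paper: the paper's own proof handles \eqref{A:0:order} and \eqref{myA:generation} directly from \eqref{def:FA} and \eqref{okey:1} exactly as you outline, and then dispatches \eqref{FA:est:1} and \eqref{gen:n:FA:est} by saying they ``follow in the same manner as \eqref{FZ:est:1}--\eqref{gen:n:FZ:est}.'' Your identification of the critical term $\kappa\, e^{-5s/4}W^{(1)}$ (in place of the $A\,e^{-5s/4}W^{(1)}$ that appears in the $F_Z$ analysis) and your use of the trajectory lower bound \eqref{traj:large:z} for $\Phi_A$ to produce the integrable profile $I(s)$ is precisely the intended argument; note that the factor $|\kappa|\sim\kappa_0$ rather than $|A|\lesssim M\eps$ is what accounts for the weaker bound $\int I<M$ here versus $\int I<1$ in the $F_Z$ case.
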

\begin{proof} First, we estimate $F_A$ via the definition in \eqref{def:FA} 
\begin{align}
\|F_A \|_\infty \lesssim e^{-s} \|A\|_\infty^2 + e^{-s} \| e^{- \frac s 4} W  + \kappa + Z \|_\infty^2 + e^{-s} \| e^{- \frac s 4} W + \kappa - Z \|_\infty^2 \les M^{\frac12 }e^{-s}\,,
\end{align}
where we have used estimate \eqref{Z:boot:0}, \eqref{A:boot:9}, \eqref{est:W:good}, and \eqref{mod:sub}, coupled with the fact that $M$ is large relative to $\kappa_0$.  

We now turn to \eqref{myA:generation}, for $n \ge 1$, for which we consider \eqref{okey:1}. 
\begin{align*}
\| F_A^{(n)} \|_\infty \lesssim & e^{-s} \sum_{j = 0}^n \|A^{(j)} \|_\infty \| A^{(n-j)} \|_\infty + e^{-s} \sum_{j = 0}^n \Big( \| (e^{- \frac s 4} W  +  \kappa)^{(j)}\|_\infty + \| Z^{(j)} \|_\infty \Big) \times \\
&  \Big( ( \| (e^{- \frac s 4} W + \kappa)^{(n-j)}\|_\infty + \| Z^{(n - j)} \|_\infty \Big) \\
\lesssim & M^{n^2} e^{- \frac 5 4 s}\,. 
\end{align*}
Above, we have invoked \eqref{Z:boot:0} - \eqref{A:boot:9} as well as \eqref{weds:1} and \eqref{est:W:good}. 

The remaining two estimates, \eqref{FA:est:1} and \eqref{gen:n:FA:est}, follow in the same manner as \eqref{FZ:est:1} - \eqref{gen:n:FZ:est}. 
\end{proof}

\subsubsection{$\nabla_{a,b}$ forcing estimates}

We now develop estimates regarding the $\p_\alpha$ and $\p_\beta$ derivatives of the forcing terms $F_W, F_Z, F_A$. We start with the quantities $\p_\alpha F_W$ and $\p_\beta F_W$ in the following lemma. 

\begin{lemma} \label{L:N:F} Let $n \ge 1$. Then, 
\begin{align} \label{Fwc.est.ult:0}
\| \p_c F_W \|_\infty &\lesssim M \eps^{\frac 3 4} e^{- \frac s 4} \,, &  \| F_{W,0}^{c} \|_\infty &\lesssim \eps^{\frac 1 8} \,,\\  \label{Fwc.est.ult}
\| \p_c F_W^{(n)} \|_\infty &\lesssim  \eps^{\frac 3 4} e^{-\frac s 4}\,, & \| F_{W,n}^{c} \eta_{\frac{1}{20}} \|_\infty &\lesssim M^{-1} M^{(n+2)^2} \eps^{\frac34} e^{\frac 3 4 s}\,. 
\end{align}
\end{lemma}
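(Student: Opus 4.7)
The strategy is direct estimation from the explicit expressions \eqref{dc.FW}, \eqref{dc.FW:0}, \eqref{dcbcFwn}, and \eqref{Fncw.fin.2}, combined with the bootstrap assumptions of Section~\ref{section:Bootstraps}, the transport speed estimates of Subsection~\ref{GW:control}, and the pointwise bounds on $F_W$ and $F_W^{(n)}$ already established in \eqref{FW:est:1}.

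For the zeroth-order bounds \eqref{Fwc.est.ult:0}, I first estimate $\|\p_c F_W\|_\infty$ term by term from \eqref{dc.FW}. The $\dot\tau_c \beta_\tau F_W$ piece is controlled by \eqref{mako:1} and \eqref{FW:est:1}; the $A_c$ piece by \eqref{mako:2} for $A_c$, \eqref{Z:boot:0} for $Z$, and $\|e^{-s/4}W + \kappa\|_\infty \lesssim 1$ from \eqref{est:W:good} and \eqref{mod:sub}; and the $A$ piece by \eqref{A:boot:9} together with the bootstrap \eqref{pc:W0}. The dominant contribution is $\beta_\tau e^{-3s/4} A \, \beta_4 e^{-s/4} W_c$, which gives $M\eps \cdot e^{-s/4} \cdot M^4 \eps^{3/4} e^{3s/4} \cdot e^{-3s/4} = M^5 \eps^{7/4} e^{-s/4}$, well within $M \eps^{3/4} e^{-s/4}$. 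For $\|F_{W,0}^c\|_\infty$, I pass to \eqref{dc.FW:0}; the additional terms are $\p_c G_W \cdot W^{(1)}$, $W^{(1)} \dot\tau_c \beta_\tau^2 W$, and the modulation pieces $e^{-3s/4}\beta_\tau \p_c\dot\kappa$ and $e^{-3s/4}\dot\kappa \p_c\dot\tau \beta_\tau^2$. The key is to exploit the spatial decay $|W^{(1)}| \lesssim \ell \log M \, \eta_{-1/5}$ from \eqref{e:uniform:W1} to cancel growth in $\p_c G_W$: using \eqref{r:est:1} with $r = 4/5$ gives $\|\p_c G_W \cdot W^{(1)}\|_\infty \lesssim \eps^{1/2} + M^{11/5}\eps^{7/10}$; the product $W^{(1)} W$ carries net spatial weight $\eta_{-3/20}$ and is bounded, so times $\dot\tau_c$ produces $\eps^{1/2}$; and \eqref{mako:1}, \eqref{e:GW0} handle the modulation pieces at size $\eps^{1/4}e^{-s/4}$.

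For the higher-order bounds \eqref{Fwc.est.ult}, $\|\p_c F_W^{(n)}\|_\infty$ is controlled by the Leibniz expansion in \eqref{dcbcFwn}. The sum splits into products of $\p_c A^{(j)}$ with $Z^{(n-j)}$ or $(e^{-s/4}W + \kappa)^{(n-j)}$, and products of $A^{(j)}$ with $\p_c Z^{(n-j)}$ or $e^{-s/4} \p_c W^{(n-j)}$. I would use \eqref{mako:3} for $\p_c A^{(j)}$ and $\p_c Z^{(n-j)}$, \eqref{A:boot:9} and \eqref{Z:boot:0} for the undifferentiated factors, \eqref{weds:1} for $W^{(n-j)}$, and the unweighted consequence of \eqref{mako:6} (valid since $\eta_{-1/20} \le 1$) for $\p_c W^{(n-j)}$. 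Each term contributes at most $M^{C_n} \eps^{7/4} e^{-s/4}$, and the hierarchy $\eps \ll M^{-k}$ absorbs the $M^{C_n}$ into $\eps^{7/4}$, producing $\eps^{3/4} e^{-s/4}$.

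The most delicate estimate is $\|F_{W,n}^c \eta_{1/20}\|_\infty$, since the target $M^{(n+2)^2 - 1}\eps^{3/4}e^{3s/4}$ is tight in $M$ with only one factor of $M^{-1}$ to spare. Expanding \eqref{Fncw.fin.2}, the $\p_c F_W^{(n)}$ contribution is negligible by the previous paragraph, and I would track the transport sums $W^{(1+j)}\p_c W^{(n-j)}$, $W^{(n-j)}\p_c W^{(j+1)}$, $G_W^{(n-j)}\p_c W^{(j+1)}$, $\p_c G_W^{(j)} W^{(n+1-j)}$, and $\dot\tau_c \beta_\tau^2 W^{(1+j)}W^{(n-j)}$. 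For the $W$-bilinear terms, pairing \eqref{weds:1} with the weighted \eqref{mako:6} produces $M^{(1+j)^2 + (n-j+2)^2}$ with net spatial weight $\eta_{-3/20}$; a direct check shows the combinatorial inequality $(1+j)^2 + (n-j+2)^2 \le (n+2)^2 - 1$ holds for $1 \le j \le n$ with equality only at the boundary indices in the $n=1$ case, where the universal constant already suffices. The $G_W^{(n-j)}$ and $\p_c G_W^{(j)}$ pieces supply additional smallness through the $e^{-s}$ and $\eps^{1/2}$ factors from \eqref{GW:transport:est} and \eqref{r:est:1}, respectively, so they fall well below target. The main obstacle is the careful combinatorial accounting of $M$-powers together with the extraction of the $M^{-1}$ slack — a pattern that recurs throughout and is essential for closing the bootstraps in the Newton iteration.
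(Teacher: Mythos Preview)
Your proposal is correct and follows essentially the same approach as the paper: term-by-term estimation from the explicit expressions \eqref{dc.FW}, \eqref{dc.FW:0}, \eqref{dcbcFwn}, and \eqref{Fncw.fin.2}, invoking the same bootstrap inputs and transport estimates, and closing the weighted bound via the same combinatorial inequality $(1+j)^2 + (n-j+2)^2 \le (n+2)^2 - 1$. The only minor imprecisions---your identification of the ``dominant'' term in $\p_c F_W$ (several terms are comparable depending on $s$) and the net weight $\eta_{-3/20}$ (a direct pairing of \eqref{weds:1} with the weighted \eqref{mako:6} actually gives $\eta_{-1/5}$ after multiplying by $\eta_{1/20}$)---do not affect the argument, since in either case the resulting weight is $\le 1$.
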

\begin{proof} First, we use equation \eqref{dc.FW} to estimate the first quantity in \eqref{Fwc.est.ult}. We proceed in order, starting with  
\begin{align} \n
\| \p_c F_W \|_\infty \lesssim &  \| \p_c \dot{\tau} \beta_\tau^2 e^{- \frac 3 4 s} A \Big( \beta_3 Z + \beta_4 (e^{- \frac s 4}W + \kappa) \Big) \|_\infty + \| \beta_\tau e^{- \frac 3 4 s} \p_c A \Big( \beta_3 Z + \beta_4 (e^{- \frac s 4} W + \kappa) \Big) \|_\infty \\ \n
& + \| \beta_\tau e^{- \frac 3 4 s} A \Big( \beta_3 \p_c Z + \beta_4 (e^{- \frac s 4} \p_c W + \p_c \kappa) \Big) \|_\infty \\ \n
\lesssim & |\p_c \dot{\tau}| e^{- \frac 3 4 s} \| A \|_\infty \Big( \| Z \|_\infty + \| e^{- \frac s 4}W \mathbbm{1}_{B_f} + \kappa \|_\infty \Big) + e^{- \frac 3 4 s} \| \p_c A \|_\infty \Big( \| Z \|_\infty + \| e^{- \frac s 4}W \mathbbm{1}_{B_f}+ \kappa \|_\infty \Big) \\ \n
& + e^{- \frac 3 4 s} \| A \|_\infty \Big( \| \p_c Z \|_\infty + \| e^{- \frac s 4} \p_c W \|_\infty + |\p_c \kappa| \Big) \\
\lesssim &M \eps^{\frac 3 2} e^{- \frac 3 4 s}  \Big( \eps^{\frac 5 4} + M \Big) + e^{- \frac 3 4 s} \eps^{\frac 1 2} \Big( \eps^{\frac 5 4} + M \Big) + M\eps e^{- \frac 3 4 s} \Big( \eps^{\frac 1 2} +\eps e^{- \frac 3 4 s}  + \eps^{\frac 3 8}  \Big)\,.\n
\end{align}
Above, we have invoked repeatedly estimates \eqref{Z:boot:0} - \eqref{A:boot:9}, as well as \eqref{mako:1} - \eqref{mako:3}. 

Next, we use equation \eqref{dc.FW:0} to estimate the second quantity in \eqref{Fwc.est.ult:0} via 
\begin{align*}
\| e^{- \frac 3 4 s} \beta_\tau \p_c \dot{\kappa}+ e^{- \frac 3 4 s} \dot{\kappa} \p_c \dot{\tau} \beta_\tau^2 \|_\infty &\lesssim e^{- \frac 3 4 s} |\p_c \dot{\kappa}| + e^{- \frac 3 4 s} |\dot{\kappa}| |\p_c \dot{\tau}| \lesssim \eps^{\frac 1 4 } e^{-\frac s 4 } + e^{- \frac 3 4 s} \eps^{\frac 5 8}\,, \\
\| \p_c G_W W^{(1)} \|_\infty &\le \| \p_c G_W \eta_{- \frac 1 5} \|_\infty \| W^{(1)} \eta_{\frac 1 5} \|_\infty \lesssim_M \eps^{\frac 1 2}\,, \\
\| W^{(1)} \dot{\tau}_c W \|_\infty &\le |\dot{\tau}_c| \| W^{(1)} \eta_{\frac 1 5} \|_{\infty} \| W \eta_{- \frac{1}{20}} \|_\infty \lesssim \eps^{\frac 1 2} \,,
\end{align*} 
where we have invoked the bootstrap bounds \eqref{e:GW0}, \eqref{mako:1}, and for the second line above we have invoked \eqref{r:est:1} with $r = \frac 4 5$. 

Next, we use equation \eqref{dcbcFwn} to estimate the first quantity in \eqref{Fwc.est.ult}. Specifically, 
\begin{align*}
\| \p_c F_W^{(n)} \|_\infty \lesssim & \Big\| \sum_{j = 0}^n \binom{n}{j} \p_c \dot{\tau} \beta_\tau^2 e^{- \frac 3 4 s} A^{(j)} \Big( \beta_3 Z^{(n-j)} + \beta_4 (e^{- \frac s 4} W + \kappa)^{(n-j)} \Big) \Big\|_\infty \\ 
& + \Big\| \sum_{j = 0}^n \binom{n}{j} \beta_\tau e^{- \frac 3 4 s} \p_c A^{(j)} \Big( \beta_3 Z^{(n-j)} + \beta_4 (e^{- \frac s 4} W + \kappa)^{(n-j)} \Big) \Big\|_\infty \\ 
& + \Big\| \sum_{j = 0}^n \binom{n}{j} \beta_\tau e^{- \frac 3 4 s} A^{(j)} \Big( \beta_3 \p_c Z^{(n-j)} + \beta_4 (e^{- \frac s 4} \p_c W + \p_c \kappa)^{(n-j)} \Big) \Big\|_\infty \\
=: & \mathcal{O}_1 + \mathcal{O}_2 + \mathcal{O}_3\,. 
\end{align*}

Bounding $ \mathcal{O}_1$, we obtain
\begin{align*}
\mathcal{O}_1 &\lesssim  \sum_{j = 1}^{n-1} |\p_c \dot{\tau}| e^{- \frac 3 4 s} \| A^{(j)} \|_\infty \Big( \| Z^{(n-j)} \|_\infty + \| e^{- \frac s 4} W^{(n-j)}  \|_\infty \Big) \\
 & \qquad  + |\p_c \dot{\tau}| e^{- \frac 3 4 s} \|A \|_\infty \Big( \| Z^{(n)} \|_\infty + \| e^{- \frac s 4} W^{(n)} \|_\infty \Big) + |\p_c \dot{\tau}| e^{- \frac 3 4 s} \| A^{(n)} \|_\infty \\
 & \qquad \times \Big( \| Z \|_\infty + \| e^{- \frac s 4} W \mathbbm{1}_{B_f}+ \kappa \|_\infty \Big) \\
& \lesssim_M   \eps^{\frac 1 2} e^{- 2 s} ( e^{- \frac 5 4 s} + e^{- \frac s 4} ) + \eps^{\frac 3 2} e^{- \frac 3 4 s} ( e^{- \frac 5 4 s} + e^{- \frac s4} ) + \eps^{\frac 1 2} e^{- 2 s} ( \eps^{\frac 5 4} + 1 ) \\
 &\lesssim_M  \eps^{\frac 5 4} e^{-s},
\end{align*}
where we have invoked estimates \eqref{Z:boot:0} - \eqref{A:boot:9}, as well as \eqref{mako:1}. 

We now bound $ \mathcal{O}_2$
\begin{align*}
\mathcal{O}_2 &\lesssim  \sum_{j = 1}^{n-1} e^{- \frac 3 4 s} \| \p_c A^{(j)} \|_\infty \Big( \| Z^{(n-j)} \|_\infty + e^{- \frac s 4} \| W^{(n-j)} \|_\infty \Big) \\
&\qquad + e^{- \frac 3 4 s} \| \p_c A \|_\infty \Big( \| Z^{(n)} \|_\infty + e^{- \frac s 4} \| W^{(n)} \|_\infty \Big) + e^{- \frac 3 4 s} \| \p_c A^{(n)} \|_\infty \\
& \qquad \times \Big( \| Z \|_\infty + \| e^{- \frac s 4} W \mathbbm{1}_{B_f}+ \kappa \|_\infty \Big) \\
& \lesssim_M  \eps^{\frac 1 2} e^{- \frac 5 4 s}  ( e^{- \frac 5 4 s} + e^{- \frac s 4} ) + e^{- \frac 3 4 s} \eps^{\frac 1 2} ( e^{- \frac 5 4 s} + e^{- \frac s 4} ) + \eps^{\frac 1 2} e^{- \frac 5 4 s}  ( \eps^{\frac 5 4} + 1) \\
&\lesssim_M  \eps^{\frac 3 4} e^{-s}.
\end{align*}
We have invoked estimates \eqref{Z:boot:0} - \eqref{A:boot:9}, as well as \eqref{mako:3}.

Finally, we estimate  $\mathcal{O}_3$
\begin{align*}
\mathcal{O}_3 &\lesssim  \sum_{j = 1}^{n-1} e^{- \frac 3 4 s} \|A^{(j)} \|_\infty \Big( \| \p_c Z^{(n-j)} \|_\infty + e^{- \frac s 4} \| \p_c W^{(n-j)} \|_\infty \Big) \\
& \qquad + e^{- \frac 3 4 s} \| A \|_\infty \Big( \| \p_c Z^{(n)} \|_\infty + e^{- \frac s 4} \| \p_c W^{(n)} \|_\infty \Big) + e^{- \frac 3 4 s} \| A^{(n)} \|_\infty \\
& \qquad \times  \Big( \| \p_c Z \|_\infty + e^{- \frac s 4} \| \p_c W \|_\infty + | \p_c \kappa | \Big) \\
&\lesssim_M  e^{- 2 s} ( \eps^{\frac 1 2} e^{- \frac 1 2 s} + \eps^{\frac 3 4 }e^{ \frac s 2}  ) + e^{- \frac 3 4 s} \eps ( \eps^{\frac 1 2} e^{- \frac s 2} + \eps^{\frac 3 4} e^{\frac s 2} ) + e^{- 2 s} ( \eps^{\frac 1 2} +\eps^{\frac34}e^{\frac 3 4 s} + \eps^{\frac 3 8} ) \\
&\lesssim_M  \eps e^{- \frac s 4}.
\end{align*}
We have used the bootstrap bounds \eqref{Z:boot:0} - \eqref{A:boot:9}, as well as \eqref{Baba:O:1} and \eqref{mako:2} - \eqref{mako:3}. 

\noindent We now remark that, according to \eqref{e:support}, 
\begin{align} \label{born:2}
\| \p_c F_W^{(n)} \eta_{\frac{1}{20}} \|_\infty \lesssim \eps^{\frac 3 4} e^{- \frac s 4} (M\eps e^{\frac 5 4 s})^{\frac{1}{5}} = M^{\frac 1 5} \eps^{\frac {19} {20}}\,. 
\end{align}

Finally, we use equation \eqref{trek:mix} to estimate the second quantity in \eqref{Fwc.est.ult}. In addition to estimate \eqref{born:2}, we need to estimate the following two quadratic quantities in $W$  
\begin{align} \label{truck:0}
 \|  \sum_{j = 1}^n \binom{n}{j} \beta_\tau W^{(1+j)} \p_c W^{(n-j)} \eta_{\frac{1}{20}} \|_\infty &\lesssim M^{(1+j)^2} M^{(n-j+2)^2} \eps^{\frac34}e^{\frac 3 4 s}  \lesssim M^{-1} M^{(n+2)^2} \eps^{\frac34}e^{\frac 3 4 s}, 
 \end{align}
and similarly 
 \begin{align} \n
 \| 1_{n \ge 2} \sum_{j = 0}^{n-2} \binom{n}{j} \beta_\tau W^{(n-j)} \p_c W^{(j+1)} \eta_{\frac{1}{20}} \|_\infty & \lesssim 1_{n \ge 2} \sum_{j = 0}^{n-2} \| W^{(n-j)}\eta_{\frac{1}{20}} \|_\infty \| \p_c W^{(j+1)} \|_\infty \\ \label{truck:-1}
&  \lesssim M^{(n-j)^2} M^{(j+3)^2} \eps^{\frac34}e^{\frac 3 4 s}\lesssim M^{-1} M^{(n+2)^2} \eps^{\frac34}e^{\frac 3 4 s}\,.
\end{align}
For both of the above estimates, \eqref{truck:0} and \eqref{truck:-1}, we have invoked \eqref{weds:1} and \eqref{pc:W0} - \eqref{mako:6}. 
 
Next, using again \eqref{trek:mix}, we need to estimate the following two quantities 
\begin{align} \label{truck:1}
 \| 1_{n \ge 1} \sum_{j = 0}^{n-1} \binom{n}{j} G_W^{(n-j)} \p_c W^{(j+1)}\eta_{\frac{1}{20}} \|_\infty &\lesssim 2 M^{2(n-j)^2} e^{-s} M^{(j+2)^2} \eps^{\frac34}e^{\frac 3 4 s} \les_M \eps^{\frac 3 4} e^{- \frac s 4}\,, \\ \label{truck:2}
\| \sum_{j = 0}^n \binom{n}{j} \p_c G_W^{(j)} W^{(n-j+1)}\eta_{\frac{1}{20}}  \|_\infty &\lesssim \sum_{j = 0}^n \| \p_c G_W^{(j)} \eta_{- \frac{3}{20}} \|_\infty \| W^{(n-j+1)} \eta_{\frac 1 5} \|_\infty \lesssim_M \eps^{\frac 9 {10}} e^{\frac s 4}\,.
\end{align}
Above, we have appealed to estimates \eqref{GW:transport:est} with $r = \frac 3 5$ and \eqref{r:est:1} on $G_W$ and $\p_c G_W$.

Finally, according to \eqref{trek:mix}, we need to estimate  
\begin{align} \label{backer:2}
\|\sum_{j = 0}^n \binom{n}{j} \dot{\tau}_c \beta_\tau^2 \sum_{j = 0}^n \binom{n}{j} W^{(1+j)} W^{(n-j)}\eta_{\frac{1}{20}} \|_\infty \lesssim_M \eps^{\frac 1 2}\,. 
\end{align}
\noindent Above, we have used the elementary inequality 
\begin{align*}
(1 + j)^2 + (n - j+2)^2 \le -1 + (n+2)^2 \text{ for } n \ge 1, 1 \le j \le n\,, 
\end{align*}
 and we have invoked estimates \eqref{pc:W0}, \eqref{mako:6}, \eqref{W:boot:0}. Combining \eqref{born:2} - \eqref{backer:2}, we obtain the right-most estimate in \eqref{Fwc.est.ult}. 

\end{proof}

We now establish enhanced localized estimates for the bottom order derivatives. 
\begin{lemma}  The following estimates are valid 
\begin{align} \label{better:1}
\sup_{|x| \le \ell} |F_{W,7}^{c}| \le  {M \ell^{\frac 1 5} \eps^{\frac34}e^{\frac 3 4 s}}\,. 
\end{align}
\end{lemma}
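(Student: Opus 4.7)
The plan is to invoke the expression \eqref{Fncw.fin.2} (or equivalently \eqref{trek:mix}) for $F_{W,n}^{c}$ with $n=7$, restricted to $|x|\le \ell$, and to bound each of the seven groups of terms that appear on its right-hand side. Six of these groups carry an explicit smallness factor and contribute negligibly: the $\dot{\tau}_c \beta_\tau F_W^{(7)}$ and $\dot{\tau}_c W^{(1+j)} W^{(7-j)}$ groups benefit from $|\dot{\tau}_c|\le \eps^{\frac12}$ via \eqref{mako:1}; the two $A$-related groups carry the explicit $e^{-\frac 34 s}$ prefactor and are handled using Lemma \ref{L:N:F}; the $G_W^{(j)}\p_c W^{(\cdot)}$ group uses the exponential decay $\|G_W^{(j)}\|_\infty\lesssim M^{2j^2}e^{-s}$ from \eqref{GW:transport:est} paired with \eqref{warrior:1}--\eqref{warrior:2}; and the $\p_c G_W^{(j)}W^{(\cdot)}$ group uses \eqref{r:est:1} together with the decomposition $\p_c G_W=\p_c\mu+\p_c G_{W,e}$ which, on $|x|\le\ell$, yields $|\p_c G_W(x)|\lesssim M^{33}\eps^{\frac12}e^{-\frac s4}$ (via \eqref{mako:1} for $\p_c\mu$ and the length-$\ell$ cost of integrating $\p_c Z^{(1)}$ using \eqref{mako:3}). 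In each case the hierarchy $\eps\ll M^{-K}$ of \eqref{choice:M} absorbs any loss of $M^K$.

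The genuinely dominant contribution comes from the two quadratic $W$-pair sums, $\sum_j W^{(1+j)}\p_c W^{(7-j)}$ and $\sum_j W^{(7-j)}\p_c W^{(j+1)}$. The strategy is to decompose $W=\bar W+\tilde W$ and exploit the local vanishing properties of $\bar W$ at the origin. From the implicit relation \eqref{e:implicit:eq} one obtains the Taylor expansion $\bar W(x)=-x+x^5-5x^9+O(x^{13})$, and hence for $|x|\le \ell$,
\begin{align*}
|\bar W^{(n)}(x)|\lesssim \ell^{5-n}\;(2\le n\le 4),\qquad |\bar W^{(5)}|\lesssim 1,\qquad |\bar W^{(n)}(x)|\lesssim \ell^{9-n}\;(6\le n\le 8).
\end{align*}
Combined with the localized bootstraps \eqref{e:Wtilde:bootstrap}--\eqref{gerrard:2} on $\tilde W^{(n)}$ (which provide $\eps^{\frac15}$-small corrections on $|x|\le \ell$) and the localized bootstraps \eqref{warrior:1}--\eqref{warrior:2} on $\p_c W^{(n)}$, a case-by-case check across $j\in\{1,\ldots,7\}$ and $j\in\{0,\ldots,5\}$ yields the uniform pointwise estimate
\begin{align*}
|W^{(1+j)}(x)\p_c W^{(7-j)}(x)|\;+\;|W^{(7-j)}(x)\p_c W^{(j+1)}(x)|\;\lesssim\; \ell^{\frac12}M\eps^{\frac34}e^{\frac34 s}
\end{align*}
on $|x|\le \ell$. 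Since $\ell<1$ forces $\ell^{\frac12}\le \ell^{\frac15}$, summing the $O(1)$ number of such terms yields \eqref{better:1}.

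The main technical obstacle is the binding pairing at $(k,8-k)=(5,3)$, where the $W^{(5)}$ factor is of unit size rather than a positive power of $\ell$, so that the entire $\ell$-gain must come from the $\ell^{\frac12}$ in \eqref{warrior:1}. A useful structural observation facilitates this: in both quadratic sums, the $\p_c W^{(\cdot)}$ index is always at most $6$, so the improved bootstrap \eqref{warrior:1} always applies; the weaker bootstrap \eqref{warrior:2} for $\p_c W^{(7)}$ only appears in the negligible $G_W^{(1)}\p_c W^{(7)}$ term (Category I), which is already suppressed by $\|G_W^{(1)}\|_\infty\lesssim M^2 e^{-s}$. A secondary check is that the $W^{(8)}$ factor appearing in the $\p_c G_W$-sum, although only $O(\ell)$ locally, is multiplied by $|\p_c G_W|\lesssim M^{33}\eps^{\frac12}e^{-\frac s4}$, which is far smaller than the target.
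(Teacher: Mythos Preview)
Your proposal is correct and follows essentially the same strategy as the paper: both reduce the estimate to the two quadratic $W$-pair sums \eqref{truck:0}--\eqref{truck:-1} (the remaining terms already carrying enough smallness from the proof of Lemma~\ref{L:N:F}), and both gain the factor $\ell^{\frac12}$ from the localized bootstrap \eqref{warrior:1} on $\p_c W^{(n)}$ for $n\le 6$, paired with $O(1)$ local bounds on $W^{(n)}$. The paper is terser---it simply cites \eqref{e:Wtilde:bootstrap} and \eqref{warrior:1}---whereas you make explicit the decomposition $W=\bar W+\tilde W$, the Taylor vanishing $\bar W^{(n)}(0)=0$ for $n\in\{2,3,4,6,7,8\}$, and the observation that the $\p_c W$ index in both sums never reaches $7$; these details are implicit in the paper's citation but spelling them out is helpful.
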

\begin{proof} An inspection of the proof Lemma \ref{L:N:F} shows that only terms \eqref{truck:0} and \eqref{truck:-1} need to be estimated, with $n = 7$. Accordingly, we estimate 
\begin{align} \n
& \|  \sum_{j = 1}^7 \binom{7}{j} \beta_\tau W^{(1+j)} \p_c W^{(7-j)} \eta_{\frac{1}{20}} \|_\infty +  \| \sum_{j = 0}^{5} \binom{n}{j} \beta_\tau W^{(7-j)} \p_c W^{(j+1)} \eta_{\frac{1}{20}} \|_\infty \lesssim  \ell^{\frac 1 2} M \eps^{\frac34}e^{\frac 3 4 s}\,, 
\end{align}
upon invoking the localized bootstraps \eqref{e:Wtilde:bootstrap} and \eqref{warrior:1}. 
\end{proof}

\begin{lemma} \label{lemma:j:1} The following estimates are valid  
\begin{align} \label{Force:Z:pc:1}
\| \p_c F_Z \|_\infty &\le \eps^{\frac 3 4} e^{- \frac s 2}\,, &\| F^{c}_{Z,0} \|_\infty &\le \eps^{\frac 1 2} e^{- \frac s 2} \,,\\ \label{Force:Z:pc:n}
\| \p_c F_Z^{(n)} \|_\infty &\lesssim \eps^{\frac 3 4} e^{- \frac s 2}\,, &\| F^{c}_{Z,n} \|_\infty &\les {M^{2n^2-1} }\eps^{\frac 1 2} e^{- \frac s 2}\,,
\end{align}
for $1\leq n\leq 7$
\end{lemma}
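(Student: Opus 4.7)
The plan is to mimic the strategy of Lemma \ref{L:N:F} (the parallel statement for $F_W$), exploiting the explicit expressions \eqref{pcFz:0}, \eqref{every:time:1}, \eqref{Midterms:1}, and \eqref{Midterms:2}, then applying the bootstraps from Section \ref{section:Bootstraps} term-by-term. The key observation controlling the scale of the final estimate is that the worst term is $\beta_\tau e^{-s} A \cdot \beta_3 e^{-s/4} W_c$, which by \eqref{A:boot:9} and \eqref{pc:W0} contributes $M\eps \cdot e^{-s/4} \cdot M^{4}\eps^{3/4}e^{3s/4} \cdot e^{-s} \lesssim M^{5} \eps^{7/4} e^{-s/2}$, thereby fixing the $e^{-s/2}$ decay rate (which is slower than the $e^{-s}$ decay of the parent estimate \eqref{Z:0:order}).

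First I would estimate $\|\p_c F_Z\|_\infty$ directly from \eqref{pcFz:0}. The term $\dot\tau_c \beta_\tau F_Z$ is bounded by \eqref{mako:1} and \eqref{Z:0:order}, yielding $\eps^{1/2}\cdot \eps^{3/4} e^{-s}$. The $A_c$-term is bounded using \eqref{mako:2}, \eqref{Z:boot:0}, \eqref{est:W:good}, and \eqref{mod:sub}, giving $e^{-s}\eps^{1/2}(\eps^{5/4}+M)$. The $A$-term is the largest and is handled as above, using \eqref{A:boot:9}, \eqref{pc:W0}, \eqref{mako:2}, and \eqref{Baba:O:1} for the $Z_c$ and $\kappa_c$ contributions. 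Summing these gives \eqref{Force:Z:pc:1}. For $F_{Z,0}^c$, I add to this the three terms from \eqref{every:time:1} of the form $Z^{(1)}(\dot\tau_c\beta_\tau^2\beta_2 W+\beta_\tau\beta_2 W_c + \p_c G_Z)$. Using \eqref{Z:boot:0} on $Z^{(1)}$, \eqref{W:boot:0} on $W$ (controlled in the support $B_f$ via \eqref{e:support}), \eqref{pc:W0} on $W_c$, and \eqref{r:est:2} on $\p_c G_Z$, each such contribution is controlled by $\eps^{1/2}e^{-s/2}$; the $Z^{(1)}W_c$ contribution is dominant, consistent with the stated bound.

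Next, for $\|\p_c F_Z^{(n)}\|_\infty$ I would apply \eqref{Midterms:2}, expanding the two binomial sums. In each sum the top term ($j=n$) and bottom term ($j=0$) are isolated and estimated using the $\p_c A^{(n)}$ bootstrap \eqref{mako:3} or $A^{(n)}$ bootstrap \eqref{A:boot:9}, paired with either $(e^{-s/4}W+\kappa)$, $Z$, or their $\p_c$-derivatives. The interior terms $1\le j\le n-1$ obey analogous (strictly smaller) bounds because both factors have decaying $L^\infty$ control. Again the dominant contribution comes from $A\cdot e^{-s/4}\p_c W^{(n)}$, which via \eqref{A:boot:9} and \eqref{mako:6} produces $M\eps\cdot e^{-s/4}\cdot M^{(n+2)^2}\eps^{3/4}e^{3s/4}\cdot e^{-s}\lesssim \eps^{3/4} e^{-s/2}$ (absorbing the $M$-powers into the implicit constant since the estimate is stated with $\lesssim$).

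Finally, for $F_{Z,n}^c$ I invoke \eqref{Midterms:1}, which in addition to $\p_c F_Z^{(n)}$ contains four further sums. I estimate them by: (i) $Z^{(j+1)} W^{(n-j)}$ terms via \eqref{Z:boot:0}, \eqref{weds:1} (with $W^{(0)}\!,W^{(1)}$ treated via \eqref{W:boot:0}, \eqref{e:uniform:W1} and spatial support); (ii) $Z^{(j+1)}W_c^{(n-j)}$ using \eqref{Z:boot:0} and \eqref{mako:6}; (iii) $Z^{(j+1)}\p_c G_Z^{(n-j)}$ using \eqref{Z:boot:0} and \eqref{r:est:2}; (iv) $G_Z^{(j)}Z_c^{(n+1-j)}$ via \eqref{Z:transport:1} and \eqref{mako:3}; (v) $W^{(j)}Z_c^{(n+1-j)}$ via \eqref{weds:1} and \eqref{mako:3}. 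For each product one of the two factors is rapidly decaying (like $e^{-5s/4}$), so all terms fit into $\lesssim M^{2n^2-1}\eps^{1/2}e^{-s/2}$. The counting of $M$-powers follows the same combinatorial trick as in Lemma \ref{L:N:F}: the inequality $j^2+(n+1-j)^2\le n^2-1$ when $1\le j\le n$ ensures one power of $M^{-1}$ is gained relative to the naive product bound $M^{2n^2}$.

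The main obstacle is the careful bookkeeping: there is no conceptual difficulty, but one must verify that the $e^{-s/2}$ decay is not spoiled anywhere. The only terms threatening this rate are those coupling $A$ (size $M\eps$) with the $\p_c W^{(j)}$ growth $\eps^{3/4}e^{3s/4}$; these are rescued by the $e^{-s}$ prefactor in $F_Z^{(n)}$ and by the $e^{-s/4}$ from the explicit $e^{-s/4}W_c$ expansion, yielding exactly $e^{-s/2}$. Analogously, the growth of $W_c^{(j)}$ (rate $\eps^{3/4}e^{3s/4}$) when paired with $Z^{(j')}$ (rate $e^{-5s/4}$) yields $\eps^{3/4}e^{-s/2}$, matching the claim. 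All other terms are strictly better.
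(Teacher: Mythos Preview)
Your proposal is correct and follows essentially the same approach as the paper: both use the identities \eqref{pcFz:0}, \eqref{every:time:1}, \eqref{Midterms:1}, \eqref{Midterms:2} together with the bootstrap estimates, and both identify the $A \cdot e^{-s/4} W_c^{(n)}$ coupling as the dominant term fixing the $e^{-s/2}$ decay. One minor slip: for the $W^{(j)}Z_c^{(n+1-j)}$ product the relevant combinatorial inequality is $j^2 + 2(n+1-j)^2 \le 2n^2-1$ for $2 \le j \le n$ (as in \eqref{elem:2}), since $Z_c^{(m)}$ carries weight $M^{2m^2}$ rather than $M^{m^2}$; the inequality you quote, $j^2+(n+1-j)^2\le n^2-1$, is already false at $n=2$.
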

\begin{proof} First, we use expression \eqref{pcFz:0} to estimate 
\begin{align} \n
\| \p_c F_Z \|_\infty &\lesssim  |\dot{\tau}_c| \| F_Z \|_\infty + e^{-s} \| A_c \|_\infty \Big( \| e^{- \frac s 4}W + \kappa \|_\infty + \| Z \|_\infty \Big) + e^{-s} \| A \|_\infty \Big( e^{- \frac s 4} \| W_c \|_\infty + | \kappa_c| + \| Z_c \|_\infty \Big) \\ \label{teq:1}
&\lesssim_M  \eps^{\frac 5 4} e^{-s} +  \eps^{\frac 1 2} e^{- \frac 3 2s} \Big(1 + \eps^{\frac 5 4} \Big) + \eps e^{-s}  \Big( \eps^{\frac 3 4} e^{\frac s 2} + \eps^{\frac 3 8} + \eps^{\frac 12} \Big) \lesssim_M \eps^{\frac 7 8} e^{- \frac s 2}\,. 
\end{align}
where above we have also invoked estimate \eqref{Z:0:order} for the $F_Z$ term together with the bootstrap estimates \eqref{W:boot:0}, \eqref{Z:boot:0}, \eqref{A:boot:9}, \eqref{mod:sub}, \eqref{Baba:O:1}, \eqref{mako:3} and \eqref{mako:6}. The first estimate in \eqref{Force:Z:pc:1} follows from \eqref{teq:1} upon bringing $\eps$ small relative to $M$. 
 
Next, we use the identity \eqref{every:time:1} to estimate 
\begin{align} \n
\| F^{c}_{Z,0} \|_\infty &\lesssim_M \| Z^{(1)} \|_\infty \Big( |\p_c \dot{\tau}| \| W \|_\infty + \| \p_c W \|_\infty + \| \p_c G_Z \|_\infty \Big) + \| \p_c F_Z \|_\infty \\ \n
&\lesssim_M  e^{- \frac 5 4 s} \Big( \eps^{\frac 1 2} e^{\frac s 4} +  \eps^{\frac 3 4} e^{\frac 3 4 s}  + \eps^{\frac 1 2} e^{\frac s 4} \Big) + \eps^{\frac 3 4} e^{- \frac s 2}  \lesssim_M \eps^{\frac 3 4 } e^{- \frac s 2}\,,  
\end{align}
from which the second estimate in \eqref{Force:Z:pc:1} follows again by bringing $\eps$ small relative to $M$. 

We now use expression \eqref{Midterms:2} to estimate the first quantity in \eqref{Force:Z:pc:n} via 
\begin{align*}
\| \p_c F_Z^{(n)} \|_\infty \lesssim &| \dot{\tau}_c | | \beta_\tau | \| F_Z^{(n)} \|_\infty + | \beta_\tau| e^{-s} \sum_{j = 0}^n \| A_c^{(j)} \Big( \beta_3 ( e^{- \frac s 4} W + \kappa)^{(n-j)} + \beta_4 Z^{(n-j)} \Big) \|_\infty \\
&+ | \beta_\tau | e^{-s} \sum_{j =0}^n  \| A^{(j)} \Big(  \beta_3 ( e^{- \frac s 4} W_c + \kappa_c)^{(n-j)} + \beta_4 Z_c^{(n-j)} \Big) \|_\infty \\
\lesssim & \eps^{\frac 3 2} e^{- \frac 5 4 s} + \eps^{\frac 1 4} e^{-s} + \eps e^{- \frac s 2}\,, 
\end{align*}
where above we have invoked the forcing estimate, \eqref{my:generation}. 

Next, in order to complete the estimate of the quantity $\| F_{Z,n}^{c} \|_\infty$, we need to estimate the remaining five terms in \eqref{Midterms:1}. The second, third, and sixth terms from the right-side of \eqref{Midterms:1} are estimated via 
\begin{align*}
\sum_{j = 0}^n |\dot{\tau}_c| \| Z^{(j+1)} \|_\infty \| W^{(n-j)} \|_\infty &\lesssim \sum_{j = 0}^n M^{2(j+1)^2} \eps^{\frac 12} e^{- \frac 5 4 s} M^{(n-j)^2} e^{\frac s 4}\,, \\
\sum_{j = 0}^n \| Z^{(j+1)} \|_\infty \| W_c^{(n-j)} \|_\infty &\lesssim \sum_{j = 0}^n M^{2(j+1)^2} e^{- \frac 5 4 s}  M^{(n-j)^2} \eps^{\frac34}e^{\frac 3 4 s}\,, \\
\sum_{j = 2}^n \| W^{(j)} \|_\infty \| Z_c^{(n-j+1)} \|_\infty &\lesssim M^{j^2} \eps^{\frac 1 2} M^{2 (n-j+1)^2} e^{- \frac s 2} \lesssim M^{-1 + 2n^2} \eps^{\frac 1 2} e^{- \frac s 2}\,.
\end{align*}
Above, we have invoked \eqref{mako:1}, \eqref{mako:3}, and \eqref{mako:6}. 

The fourth and fifth terms from the right-side of \eqref{Midterms:1} are estimated via 
\begin{align*}
\sum_{j = 0}^n \| Z^{(j+1)} \|_\infty \| \p_c G_Z^{(n-j)} \|_\infty & \lesssim \sum_{j = 0}^n M^{2(j+1)^2}  \eps^{\frac 1 2} e^{-s} \\
\sum_{j = 1}^n \| G_Z^{(j)} \|_\infty \| Z_c^{(n+1-j)} \|_\infty &\lesssim M^{2j^2}  M^{(n+1-j)^2} \eps e^{- \frac 3 4 s},
\end{align*}
where we have invoked the estimates on $G_Z$ and $\p_c G_Z$ from \eqref{Z:transport:1} and \eqref{r:est:2}. Above we have also used the elementary inequality
\begin{align} \label{elem:2}
j^2 + 2 (n+1-j)^2 \le -1 + 2 n^2 \quad\text{ for } n \ge 2 \mbox{ and } 2 \le j \le n\,. 
\end{align}

\end{proof}

\begin{lemma} The following estimates are valid  
\begin{align} \label{Force:A:pc:1}
\| \p_c F_A  \|_\infty &\le \eps^{\frac 1 2} e^{- \frac s 2}\,, &&\| F^{c}_{A,0}  \|_\infty \le \eps^{\frac 1 2 } e^{- \frac s2}\,, \\ \label{Force:A:pc:n}
\| \p_c F_A^{(n)}  \|_\infty &\le \eps^{\frac 12} e^{- \frac s 2} ,&&\| F^{c}_{A,n} \|_\infty {\les  M^{2n^2-1} \eps^{\frac 1 2} e^{- \frac s 2} }\,,
\end{align}
for $ 1 \le n \le 7$
\end{lemma}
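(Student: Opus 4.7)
The plan is to proceed in direct analogy with Lemma \ref{lemma:j:1}, exploiting the parallel structure between the $Z$ and $A$ equations. The four estimates will be established in order using the identities \eqref{pcFa}, \eqref{socialite:1}, \eqref{Midterms:3}, and \eqref{Midterms:4}, with the bootstrap bounds substituted term-by-term.

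For the first estimate, $\|\p_c F_A\|_\infty$, I would expand via \eqref{pcFa}. The leading term $\dot{\tau}_c \beta_\tau F_A$ is controlled by \eqref{mako:1} together with \eqref{A:0:order}, producing a bound of order $\eps^{\frac{1}{2}}M^{\frac{1}{2}}e^{-s}$. The bilinear terms of the form $e^{-s}(e^{-s/4}W + \kappa \pm Z)(e^{-s/4}W_c + \kappa_c \pm Z_c)$ are handled by \eqref{est:W:good}, \eqref{mod:sub}, \eqref{Z:boot:0} on the uncommented factor and \eqref{Baba:O:1}, \eqref{mako:2}, \eqref{pc:W0} on the differentiated factor; the dominant contribution comes from $\kappa \cdot e^{-s/4}W_c$, bounded by $e^{-s}\kappa_0 \cdot e^{-s/4} M^4 \eps^{\frac{3}{4}}e^{\frac{3}{4}s} \lesssim_M \eps^{\frac{3}{4}}e^{-\frac{s}{2}}$, which after taking $\eps$ small relative to $M$ gives the desired bound. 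For $\|F^{c}_{A,0}\|_\infty$, I would use \eqref{socialite:1}, adding to the previous bound the terms $(\dot{\tau}_c \beta_\tau^2 \beta_1 W + \beta_\tau \beta_1 W_c + \p_c G_A)A^{(1)}$, controlled by \eqref{e:support}, \eqref{pc:W0}, and \eqref{r:est:3} paired with \eqref{A:boot:9}.

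For $\|\p_c F_A^{(n)}\|_\infty$ with $1 \le n \le 7$, I would apply \eqref{Midterms:4} and estimate each Leibniz sum as in Lemma \ref{lemma:j:1}: bilinear products $(e^{-s/4}W + \kappa \pm Z)^{(j)}(e^{-s/4}W_c + \kappa_c \pm Z_c)^{(n-j)}$ expand into pieces each controlled by combinations of \eqref{Z:boot:0}, \eqref{weds:1}, \eqref{mako:2}, \eqref{mako:3}, \eqref{pc:W0}, \eqref{mako:6}. The dominant scaling again comes from the piece $\kappa \cdot e^{-s/4} W_c^{(n)}$, giving $e^{-s} \cdot e^{-s/4} \cdot M^{(n+2)^2} \eps^{\frac{3}{4}} e^{\frac{3}{4}s} \lesssim \eps^{\frac{1}{2}} e^{-\frac{s}{2}}$ once $\eps$ is taken sufficiently small relative to $M$. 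For $\|F^{c}_{A,n}\|_\infty$, I would combine this with the extra contributions in \eqref{Midterms:3}: terms of the form $W^{(j)} A_{c_1 c_2}^{(n-j+1)}$ or $W_c^{(j)} A^{(n+1-j)}$ and $\p_c G_A^{(j)} A^{(n+1-j)}$. For the quadratic $W$–$A_c$ and $W_c$–$A$ pieces, \eqref{weds:1}, \eqref{mako:3}, \eqref{mako:6}, and \eqref{A:boot:9} together with the elementary inequality
\begin{align*}
j^2 + 2(n+1-j)^2 \le -1 + 2n^2 \quad \text{for } n \ge 2,\ 2 \le j \le n
\end{align*}
(exactly as used in \eqref{elem:2}) yield the stated $M^{2n^2 - 1}\eps^{\frac{1}{2}} e^{-\frac{s}{2}}$ bound.

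The main obstacle, as in the $Z$ analog, is bookkeeping of the $M$-powers in the Leibniz expansion so that the $M^{2n^2-1}$ factor—rather than $M^{2n^2}$—is achieved; this requires that every summand where two high-derivative factors meet either picks up the $-1$ from the elementary inequality above, or trades an $M$-growth against the $e^{-s}$ prefactor together with the decay of $\|\p_c A^{(n)}\|_\infty$, $\|\p_c G_A^{(n)}\|_\infty$, etc. The additional nonlinearity in $A$ (compared to $Z$, which was linear in $A$) only introduces $A$-$A_c$ and $A$-$A$ self-couplings, and these are uniformly small by \eqref{A:boot:9}, \eqref{mako:2}, and \eqref{mako:3}, so they do not alter the final scaling.
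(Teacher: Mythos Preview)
Your proposal is correct and follows essentially the same approach as the paper: expand $\p_c F_A$ via \eqref{pcFa}, $F^{c}_{A,0}$ via \eqref{socialite:1}, $\p_c F_A^{(n)}$ via \eqref{Midterms:4}, and then handle the remaining terms of $F^{c}_{A,n}$ from \eqref{Midterms:3} exactly as in Lemma~\ref{lemma:j:1} using the elementary inequality \eqref{elem:2}. One small slip: in your discussion of \eqref{Midterms:3} you wrote $A_{c_1 c_2}^{(n-j+1)}$ where you mean $A_c^{(n-j+1)}$, but the structure and bounds you invoke are the right ones.
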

\begin{proof} We appeal to the expression \eqref{pcFa} to estimate 
\begin{align} \n
\| \p_c F_A  \|_\infty &\lesssim  |\dot{\tau}| \| F_A  \|_\infty + e^{-s} \Big( \| e^{- \frac s 4}W  + \kappa \|_\infty + \|Z \|_\infty \Big) \Big( \| e^{- \frac s 4} W_c + \kappa_c \|_\infty + \|Z_c \|_\infty \Big) \\
&\lesssim M^{\frac 1 2}   \eps^{\frac 1 6} e^{- \frac 7 4 s}  + e^{-s} (  M^4 \eps^{\frac34}e^{\frac 1 2 s}+  \eps^{\frac 1 2}  + \eps^{\frac 1 2})\,.\n
\end{align}  
Above, we have invoked \eqref{e:GW0}, \eqref{est:W:good}, \eqref{Z:boot:0}, \eqref{Baba:O:1}, \eqref{mako:1}, \eqref{pc:W0} and finally \eqref{A:0:order} for the $F_A$ contribution. 

Next, we appeal to the expression \eqref{socialite:1} to estimate 
\begin{align} \n
\| F_{A,0}^{c}\circ \Phi_A^{x_0} \|_\infty \lesssim & \| \p_c F_A \circ \Phi_A^{x_0}\|_\infty + \|A^{(1)} \|_\infty \Big( |\dot{\tau}_c| \|W\|_\infty + \| W_c \|_\infty + \| \p_c G_A \|_\infty  \Big) \\
\lesssim & \eps^{\frac 12} e^{- \frac s 2} + M^2 e^{- \frac 5 4 s} \Big( \eps^{\frac 1 2} e^{\frac s 4} +M^4 \eps^{\frac34}e^{\frac 3 4 s} + \eps^{\frac 1 4} e^{\frac s 4} \Big)\,,\n
\end{align}
where we have appealed to estimates \eqref{Force:A:pc:1}, as well as bootstrap assumptions \eqref{A:boot:9}, \eqref{mako:1}, \eqref{est:W:good}, \eqref{pc:W0}, and \eqref{r:est:3} for the $\p_c G_A$ contribution. 

Next, we appeal to the expression of \eqref{Midterms:4} to estimate 
\begin{align} \n
\| \p_c F_A^{(n)} \|_\infty &\lesssim  |\dot{\tau}_c| \| F_A^{(n)} \|_\infty + e^{-s} \sum_{j = 0}^n (\| (e^{- \frac s 4} W^{(j)}+\kappa)^{(j)}\|_{\infty} + \| Z^{(j)} \|_\infty ) \times \\ \n
& \qquad (\|(e^{- \frac s 4}  W_c  +\kappa_c)^{(n-j)}\|_\infty + \| Z_c^{(n-j)} \|_\infty) \\ \n
&\lesssim_M \eps^{\frac 1 2} e^{-s} + e^{-s} (  1 + \eps^{\frac 12} + \eps^{\frac 5 4} ) ( \eps^{\frac34}e^{\frac s 2 }+ \eps^{\frac 1 2} e^{- \frac s 2} ) \lesssim_M  \eps^{\frac 3 4} e^{- \frac s 2}\,,  
\end{align}
where we have invoked estimates \eqref{mako:1}, \eqref{est:W:good}, \eqref{weds:1}, \eqref{e:GW0} - \eqref{mod:sub}, as well as \eqref{mako:3}. 

The final estimate in \eqref{Force:A:pc:n} requires an estimate of the remaining terms in \eqref{Midterms:3}, which is identical to that of Lemma \ref{lemma:j:1}. 

\end{proof}

\subsubsection{$\nabla_{a,b}^2$ forcing estimates}

\begin{lemma} For $1 \le n \le 6$, the following estimates are valid
\begin{align} \label{disclosure:1}
\| \p_{c_1 c_2} F_W \|_\infty &\le \eps^{\frac 1 2} e^{\frac s 2}\,, && \| F_{W,0}^{c_1, c_2} \|_\infty \le M^{14} \eps^{\frac32}e^{\frac 3 2 s}\,, \\ \label{disclosure:2}
\| \p_{c_1 c_2} F_W^{(n)} \|_\infty &\les \eps^{\frac 1 2} e^{\frac s 2}\,,  && \| F_{W,n}^{c_1, c_2} \|_\infty \le M^{(n+5)^2 - 1} \eps^{\frac32}e^{\frac 3 2 s}\,.
\end{align}
\end{lemma}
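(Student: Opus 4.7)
The plan is to establish both pairs of estimates by substituting the explicit expansions \eqref{find:1}, \eqref{Fwn:c1:c2:exp}, \eqref{HAIM:1}, and \eqref{Bernie:1} and estimating term by term. This parallels the proof of Lemma \ref{L:N:F}, but with the added bookkeeping complication of two parameter derivatives. In every occurrence of a product in these expansions, at most two factors carry $(\alpha,\beta)$--derivatives: either both sit on the same factor (yielding a $\p_{c_1 c_2}$ quantity controlled by the $\nabla_{\alpha,\beta}^{2}$ bootstraps \eqref{posty:1}--\eqref{buddy:1} and \eqref{group:1}--\eqref{group:2}), or one sits on each factor (yielding a product of two $\p_c$ quantities controlled by \eqref{mako:1}--\eqref{warrior:2}). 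The companion factors in each product (i.e., $A,Z,W,\kappa$ without parameter derivatives) are handled directly by the base bootstraps of Subsection~\ref{subsection:base}.

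First, I would treat $\|\p_{c_1c_2} F_W\|_\infty$ using \eqref{find:1}. The three terms carrying previously estimated objects (two copies of $\p_c F_W$ and one of $F_W$) are controlled by \eqref{FW:est:1}, \eqref{Fwc.est.ult:0}, and the modulation bounds \eqref{mako:1}, \eqref{group:1}. The dominant contribution comes from $\beta_\tau e^{-3s/4}A_{c_1c_2}(\beta_3 Z+\beta_4(e^{-s/4}W+\kappa))$ and its symmetric analogues, which by \eqref{posty:2}, \eqref{buddy:1} combined with $\|\kappa\|_\infty \lesssim 1$ yield $\lesssim M^{14}\eps^{5/8}e^{-3s/4}e^{s/4}\leq \eps^{1/2}e^{s/2}$ (with $\eps$ small relative to $M$). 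The analogous estimate for $\|\p_{c_1c_2}F_W^{(n)}\|_\infty$ from \eqref{Fwn:c1:c2:exp} works the same way; the sums over $j$ are absorbed into the $M$-powers permitted by the bound, with the worst terms involving $A_{c_1c_2}^{(j)}\kappa^{(n-j)}$ (for $n-j=0$) and their analogues, giving the claim.

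Next, for $F_{W,0}^{c_1,c_2}$ using \eqref{HAIM:1}, the genuinely new terms beyond $\p_{c_1c_2}F_W$ are the quadratic products $\beta_\tau W^{(1)}_{c_2} W_{c_1}$, $\dot{\tau}_{c_i}\beta_\tau^{2}W^{(1)}W_{c_{i'}}$, and the modulation packet $\mathcal{M}^{c_1,c_2}$ from \eqref{Mod:Mod:1}. The first is the dominant contribution: by \eqref{pc:W0} and \eqref{mako:6} with $n=1$, $|W_{c_1}||W_{c_2}^{(1)}|\lesssim M^{4}\cdot M^{9}\eps^{3/2}e^{3s/2}=M^{13}\eps^{3/2}e^{3s/2}$, hence $\leq M^{14}\eps^{3/2}e^{3s/2}$. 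The remaining terms in \eqref{HAIM:1} involving $\p_c G_W$, $\dot\tau_c\beta_\tau^2 W W_c^{(1)}$, and $\mathcal{M}^{c_1,c_2}$ are subdominant once the bootstraps \eqref{r:est:1}, \eqref{group:1}, \eqref{group:2} and the gain $e^{-3s/4}$ on the modulation packet are invoked; each is at most $M^2\eps^{5/4}e^{5s/4}\leq M^{14}\eps^{3/2}e^{3s/2}$ since $s\geq-\log\eps$ forces $\eps^{1/4}e^{s/4}\geq 1$. For $F_{W,n}^{c_1,c_2}$ from \eqref{Bernie:1}, the same strategy applies; the combinatorial heart of the estimate is that in each product $W_{c_1}^{(j)}W_{c_2}^{(n+1-j)}$ (or equivalent), the bootstrap \eqref{mako:6} furnishes $M^{(j+2)^{2}}M^{(n+3-j)^{2}}$, and the elementary inequality $(j+2)^{2}+(n+3-j)^{2}\leq -1+(n+5)^{2}$ valid for $0\le j\le n+1$ absorbs the full $M$-cost into $M^{(n+5)^{2}-1}$, with the $\eps^{3/2}e^{3s/2}$ scaling being provided directly by the product of two copies of $\eps^{3/4}e^{3s/4}$.

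The main obstacle is the sharp tracking of the $M$-combinatorics in $F_{W,n}^{c_1,c_2}$, in particular ensuring that the cross products $W^{(j)}W_{c_1c_2}^{(n+1-j)}$ (with $j\ge 2$) from the second summation line of \eqref{Bernie:1} also fit into $M^{(n+5)^{2}-1}$; this requires the inequality $j^{2}+(n-j+6)^{2}\leq -1+(n+5)^{2}$ for $2\le j \le n$, which is elementary but sensitive to the boundary cases $n=1$ and $j=n$. All other terms (modulation sums $G_W^{(j)}W_{c_1c_2}$, $\p_c G_W \cdot W_c$, $\dot\tau\cdot WW$ products) are subdominant by a polynomial margin in $M^{-1}$ and need no delicate treatment. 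Assembling these bounds produces the four claimed inequalities.
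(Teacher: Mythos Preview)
Your proposal is correct and follows essentially the same approach as the paper: term-by-term estimation of the explicit expansions \eqref{find:1}, \eqref{Fwn:c1:c2:exp}, \eqref{HAIM:1}, \eqref{Bernie:1}, with the dominant contribution to $F_{W,0}^{c_1,c_2}$ coming from the quadratic $W_{c_1}W_{c_2}^{(1)}$ product (yielding $M^{13}\eps^{3/2}e^{3s/2}$), and the $M$-combinatorics in $F_{W,n}^{c_1,c_2}$ handled by the same elementary quadratic inequalities you identify. Your bound $M^{2}\eps^{5/4}e^{5s/4}$ for $\mathcal{M}^{c_1,c_2}$ is in fact the sharp one (the paper's intermediate display understates $|\dot\kappa_{c_1c_2}|$), though the final conclusion is unaffected since $\eps^{1/4}e^{s/4}\geq 1$.
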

\begin{proof}[Proof of \eqref{disclosure:1}] For the computation of $\p_{c_1 c_2} F_W$, we recall the definition of \eqref{find:1}, and proceed to estimate systematically 
\begin{align} \n
&\| \beta_\tau e^{- \frac 3 4 s}  A_{c_1 c_2} ( \beta_3 Z + \beta_4 (e^{- \frac s 4}W + \kappa) ) \|_\infty \\
& \qquad \lesssim e^{- \frac 3 4 s} \| A_{c_1 c_2 } \|_\infty (\| Z \|_\infty + \| e^{- \frac s 4}W + \kappa \|_\infty) \lesssim_M \eps^{\frac 5 8} e^{- \frac s 2}  (\eps^{\frac 5 4} + 1) \lesssim_M \eps^{\frac 5 8} e^{- \frac s 2}\,,\n
\end{align}
and next 
\begin{align} \n
&\| \beta_\tau e^{- \frac 3 4 s} A_{c_1} ( \beta_3 Z_{c_2} + \beta_4 (e^{- \frac s4} W_{c_2} + \kappa_{c_2}) ) \|_\infty \\
& \qquad \lesssim e^{- \frac 3 4 s} \| A_{c} \|_\infty (\| Z_{c} \|_\infty + e^{- \frac s 4} \| W_c \|_\infty + | \kappa_c |) \lesssim_M  \eps^{\frac 1 2} e^{- \frac 3 4 s} ( \eps^{\frac 1 2} +  \eps^{\frac 3 4} e^{\frac s 2} + \eps^{\frac 1 4} ) \lesssim_M \eps^{\frac 3 4} e^{- \frac s 4}\,.\n
\end{align}
Above, we have invoked bootstrap assumptions \eqref{posty:1} as well as \eqref{mako:2} - \eqref{mako:3}, and \eqref{Z:boot:0} - \eqref{A:boot:9}. 

The first term on the second line of \eqref{find:1} is estimated in an identical fashion, while the second term is estimated via 
\begin{align} \n
&\| \beta_\tau e^{- \frac 3 4 s} A (\beta_3 Z_{c_1 c_2} + \beta_4 (e^{- \frac s 4}W_{c_1 c_2} + \kappa_{c_1 c_2})) \|_\infty \\ \n
& \qquad \lesssim e^{- \frac 3 4 s} \| A \|_\infty (\| Z_{c_1 c_2} \|_\infty + e^{- \frac s 4} \| W_{c_1 c_2} \|_\infty + |\kappa_{c_1 c_2}|) \\
& \qquad \lesssim_M \eps e^{- \frac 3 4 s}  ( \eps^{\frac 5 8} e^{\frac s 4} + \eps^{\frac 3 2} e^{\frac 5 4 s}  +  \eps^{\frac 5 4} e^s) \lesssim_M \eps e^{\frac s 2}\,,\n 
\end{align}
where again we have invoked bootstrap assumptions \eqref{posty:1} as well as \eqref{mako:2} - \eqref{mako:3}, and \eqref{Z:boot:0} - \eqref{A:boot:9}. 

Finally, the last line of \eqref{find:1} is estimated via 
\begin{align} \n
&\| \dot{\tau}_{c_2} \beta_\tau \p_{c_1} F_W + \dot{\tau}_{c_1 c_2} \beta_\tau F_W + \dot{\tau}_{c_1} \beta_\tau \p_{c_2} F_W \|_\infty \\
& \qquad \lesssim |\dot{\tau}_c| \| \p_c F_W \|_\infty + |\dot{\tau}_{c_1c_2}| \| F_W \|_\infty \lesssim_M  \eps^{\frac 5 4} e^{- \frac s 4} + \eps^{\frac 3 2} \lesssim_M \eps\,,\n
\end{align}
where we have invoked the estimates \eqref{FW:est:1} and \eqref{Fwc.est.ult:0}. 

Next, to estimate the remaining quantity in \eqref{disclosure:1}, we recall definition \eqref{HAIM:1}, according to which we define the following two auxiliary quantities:  
\begin{align*}
&\mathcal{L}_1 := \beta_\tau W^{(1)}_{c_2} W_{c_1} - \beta_\tau^2 \dot{\tau}_{c_2} W^{(1)} W_{c_1} - \Big( \beta_\tau^2 \dot{\tau}_{c_2} W + \beta_\tau W_{c_2} + \p_{c_2} G_W \Big) W^{(1)}_{c_1} \\ 
&\mathcal{L}_2 := - \dot{\tau}_{c_1} \beta_\tau^2 W W^{(1)}_{c_2} - \dot{\tau}_{c_1 c_2} \beta_\tau^2 W W^{(1)} - 2 \beta_\tau^2 \dot{\tau}_{c_1} \dot{\tau}_{c_2} W W^{(1)} - \dot{\tau}_{c_1} \beta_\tau^2 W^{(1)} W_{c_2},
\end{align*}
so that we have the identity 
\begin{align*}
F_{W,0}^{c_1, c_2} = \p_{c_1 c_2} F_W + \mathcal{L}_1 + \mathcal{L}_2 - \mathcal{M}^{c_1, c_2}\,, 
\end{align*}
where $\mathcal{M}^{c_1, c_2}$ has been defined in \eqref{Mod:Mod:1}. 

We first estimate $\mathcal{L}_1$ via 
\begin{align} \n
\| \mathcal{L}_1 \|_\infty \lesssim & (1 + |\dot{\tau}_c|) \| W^{(1)}_c \|_\infty \| W_c \|_\infty + |\dot{\tau}_c| \| W \|_\infty \|W^{(1)}_c \|_\infty + \| \p_c G_W \eta_{- \frac{1}{20}} \|_\infty \| W^{(1)}_c \eta_{\frac{1}{20}} \|_\infty \\ \n
\lesssim & (1 + \eps^{\frac 1 2}) M^{13} \eps^{\frac32}e^{\frac 3 2 s} + \eps^{\frac 1 2} M^4 \eps^{\frac32}e^{\frac 3 2 s} + {M^{12}} \eps^{\frac {41} {20}}e^{\frac 3 2 s} {+ M^9 \eps^{\frac 54} e^{\frac 3 4s} }\\
\lesssim & M^{13} \eps^{\frac32}e^{\frac 3 2 s}\,.\n
\end{align}
Note that for the estimation of the final term above, we have used crucially the spatial decay of $W_c^{(1)}$, as guaranteed by the bootstrap assumption \eqref{mako:6}, and we have also applied estimate \eqref{r:est:1} with $r = \frac{1}{5}$. 

Next, we estimate $\mathcal{L}_2$ via 
\begin{align} \n
\| \mathcal{L}_2 \|_\infty &\lesssim  |\dot{\tau}_c| \| W \|_\infty \| W^{(1)}_c \|_\infty +( |\dot{\tau}_{c_1 c_2} | +  |\dot{\tau}_c|^2 ) \| W \eta_{-\frac{1}{20}} \|_\infty \| W^{(1)} \eta_{\frac{1}{20}} \|_\infty + |\dot{\tau}_c| \| W^{(1)} \|_\infty \| W_c \|_\infty \\
&\lesssim_M  \eps^{\frac 1 2} e^{\frac s 4} + (\eps e^{\frac 3 4 s} + \eps) + \eps^{\frac 5 4} e^{\frac 3 4 s} \lesssim_M \eps e^{\frac 34 s}\,,\n
\end{align}
where we invoke the bootstrap assumptions \eqref{W:boot:0} - \eqref{e:uniform:W1}, \eqref{mako:1} - \eqref{Baba:O:1}, \eqref{mako:6}, and \eqref{group:1}. 

Next, we estimate $\mathcal{M}^{c_1, c_2}$ via 
\begin{align} \n
|\mathcal{M}^{c_1, c_2}| &\lesssim  e^{- \frac 3 4 s}\Big( | \dot{\kappa}_{c_1 c_2}| + |\dot{\kappa}_{c}| |\dot{\tau}_{c}|  + |\dot{\kappa}| |\dot{\tau}_{c_1 c_2}|  +  |\dot{\kappa}| | \dot{\tau}_{c}|^2 \Big) \\
&\lesssim_M e^{- \frac 3 4 s} \Big( \eps^{\frac 54} e^s +  \eps^{\frac 34} + \eps^{\frac 9 8} e^{\frac 3 4 s}  + \eps^{\frac 5 8}\Big) \lesssim_M \eps^{\frac 5 8} e^{\frac s 4}\,,\n
\end{align}
where we have invoked the bootstrap assumptions on the second (parameter) derivatives of the modulation variables, \eqref{group:1} - \eqref{group:2}. 
\end{proof}
\begin{proof}[Proof of \eqref{disclosure:2}] We now move to the $1 \le n \le 6$ estimates, for which we first recall the expression of $\p_{c_1 c_2} F_W^{(n)}$ from \eqref{Fwn:c1:c2:exp}. The estimate of this is identical to the estimate of $\p_{c_1 c_2}F_W$ (the $n = 0$ case above), and so we omit it. We now proceed to estimate all of the remaining terms in \eqref{Bernie:1}. 
\begin{align*}
\| \sum_{j = 1}^n \beta_\tau^2 \dot{\tau}_{c_i} W^{(1+j)} W^{(n-j)}_{c_{i'}} \|_\infty &\lesssim \sum_{j = 1}^n |\dot{\tau}_c| \| W^{(1+j)} \|_\infty \| W^{(n-j)}_c \|_\infty \lesssim_M \eps^{\frac 5 4} e^{\frac 3 4 s} \,,\\
\| \sum_{j = 0}^n \binom{n}{j} \beta_\tau W_{c_1}^{(j)} W_{c_2}^{(n+1-j)} \|_\infty &\lesssim \sum_{j = 0}^n \|W_{c}^{(j)}\|_\infty  \| W_c^{(n+1-j)} \|_\infty \lesssim \sum_{j = 0}^n M^{(j+2)^2} M^{(n-j+3)^2} \eps^{\frac 3 2} e^{\frac 3 2 s}.
\end{align*}
We have invoked the bootstrap assumptions \eqref{weds:1} on derivatives of $W$, \eqref{mako:1}, as well as \eqref{mako:6}. 
We now appeal to the elementary inequality 
\begin{align*}
(j+1)^2 + (n-j+3)^2 \le (n+5)^2 - 1 \text{ for } 0 \le j \le n, \qquad n \ge 1\,. 
\end{align*}

We continue with 
\begin{align*}
\| \sum_{j =1}^n \binom{n}{j} \beta_\tau W^{(1+j)} W^{(n-j)}_{c_1 c_2} \|_\infty \lesssim \sum_{j = 1}^n \| W^{(1+j)} \|_\infty \|W_{c_1 c_2}^{(n-j)} \|_\infty \lesssim \sum_{j = 1}^n M^{(1+j)^2} M^{(n-j+5)^2} \eps^{\frac 3 2} e^{\frac 3 2 s}\,,
\end{align*}
and again appeal to an elementary inequality 
\begin{align*}
(1 + j)^2 + (n-j+5)^2 \le -1 + (n+5)^2 \text{ for } 1 \le j \le n, \qquad n \ge 1\,. 
\end{align*}

The fifth term on the right-hand side of \eqref{Bernie:1} is formally the same as the second term, with the exception of the $j = 0$ case, which we estimate via 
\begin{align*}
\| \beta_\tau^2 \dot{\tau}_{c_{i'}} W W_{c_i}^{(n+1)} \|_\infty \lesssim_M \eps^{\frac 1 2} e^{\frac s 4} e^{\frac 3 4 (s - s_0)} \lesssim_M {\eps^{\frac 5 4} e^s}\,. 
\end{align*}

We now move to the term 
\begin{align*}
\| \sum_{j = 0}^n \binom{n}{j} \p_{c_2} G_W^{(j)} W_{c_1}^{(n+1-j)} \|_\infty &\lesssim  \sum_{j = 0}^n \| \p_{c_2} G_W^{(j)} \eta_{- \frac{1}{20}} \|_\infty \| W_{c_1}^{(n+1-j)} \eta_{\frac{1}{20}} \|_\infty\\ 
 &\lesssim_M  \eps^{\frac {33} {20} } e^{\frac 3 2 s} + {\eps^{\frac 5 4} e^{\frac 3 4s}}\,.
\end{align*}
Above we have invoked \eqref{r:est:1} with $r = \frac 1 5$. 

We now move to the final three terms, which are easily estimated via  
\begin{align*}
\| \sum_{j = 1}^n G_W^{(j)} W_{c_1 c_2}^{(n-j+1)} \|_\infty &\lesssim_M \eps^{\frac 3 2} e^{\frac s 2}, \\
\| \sum_{j = 0}^n \beta_\tau^2 (\dot{\tau}_{c_1 c_2} + 2 \dot{\tau}_{c_1} \dot{\tau}_{c_2}) W^{(j)} W^{(n+1-j)} \|_\infty &\lesssim_M \eps  e^{\frac 3 4 s} + \eps,
\end{align*}
where we have invoked \eqref{weds:1} for derivatives of $W$, \eqref{GW:transport:est} for $j \ge 1$ for the $G_W$ contribution, and \eqref{mako:1}, \eqref{group:1} for $\p_c$ and $\p_{c}^2$ of $\dot{\tau}$.  


\end{proof}

\begin{lemma} \label{Lemma:ka} For $1 \le n \le 6$, the following estimates are valid 
\begin{align} \label{jay:1}
\| \p_{c_1 c_2} F_Z \|_\infty &\le \eps e^{\frac s 4}\,, && \| F_{Z,0}^{c_1, c_2} \|_\infty \le \eps e^{\frac 1 4 s}\,, \\ \label{jay:2}
\| \p_{c_1 c_2} F_Z^{(n)} \|_\infty &\le \eps e^{\frac s 4}\,, && \| F_{Z,n}^{c_1, c_2} \|_\infty \les M^{2n^2 -1}  \eps^{\frac 5 8} e^{\frac 1 4 s}\,.
\end{align}
\end{lemma}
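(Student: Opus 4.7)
The plan is to mirror the strategy used in Lemma \ref{lemma:j:1} (the $\nabla_{\alpha,\beta}$ version of this same estimate for $F_Z$), but now using the second--order expressions \eqref{Lauv:3}--\eqref{Lauv:4} for $\p_{c_1c_2}F_Z$ and $\p_{c_1c_2}F_Z^{(n)}$, and \eqref{Lauv:1}--\eqref{Lauv:2} for the full quantities $F_{Z,0}^{c_1,c_2}$ and $F_{Z,n}^{c_1,c_2}$. I will estimate the four quantities in order: first $\|\p_{c_1c_2}F_Z\|_\infty$, then feed this into the formula for $F_{Z,0}^{c_1,c_2}$ to obtain the bound on $\|F_{Z,0}^{c_1,c_2}\|_\infty$, then estimate $\|\p_{c_1c_2}F_Z^{(n)}\|_\infty$, and finally $\|F_{Z,n}^{c_1,c_2}\|_\infty$.

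For the first estimate, I would apply \eqref{Lauv:3} term by term. The genuinely new terms (compared to $\p_c F_Z$) are the ones containing $A_{c_1c_2}$, $W_{c_1c_2}$, $Z_{c_1c_2}$, $\kappa_{c_1c_2}$, and $\dot\tau_{c_1c_2}$; each is controlled by the corresponding second--order bootstrap assumption in \eqref{posty:1}--\eqref{buddy:1} and \eqref{group:1}--\eqref{group:2}. The mixed terms, such as $\dot\tau_{c_i}\beta_\tau\p_{c_{i'}}F_Z$ and $A_{c_i}(e^{-s/4}W_{c_{i'}}+\kappa_{c_{i'}})$, are handled by pairing the $\p_c$--bootstraps \eqref{mako:1}--\eqref{Baba:O:1}, \eqref{mako:2}--\eqref{mako:3}, and \eqref{pc:W0}--\eqref{mako:6} against the forcing estimates \eqref{Force:Z:pc:1}. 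All contributions carry an overall prefactor of $e^{-s}$ from the factor $\beta_\tau e^{-s}$, and the worst term (the one pairing $A$ with $e^{-s/4}W_{c_1c_2}$, using $\|A\|_\infty\lesssim M\eps$ and $\|W_{c_1c_2}\|_\infty\lesssim M^{25}\eps^{3/2}e^{3s/2}$) yields the stated bound $\eps\, e^{s/4}$ after bringing $\eps$ small relative to $M$.

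The bound on $\|F_{Z,0}^{c_1,c_2}\|_\infty$ from \eqref{Lauv:1} then follows by combining the just--proved estimate with the $\|Z^{(1)}\|_\infty\lesssim M^2 e^{-5s/4}$ bound from \eqref{Z:boot:0} multiplied against the cluster $(\dot\tau_{c_1c_2}W+\dot\tau_{c_1}\dot\tau_{c_2}W+\dot\tau_c W_{c_{i'}}+W_{c_1c_2}+\p_{c_1c_2}G_Z)$, where each factor is controlled by the appropriate bootstrap, and \eqref{spin:2} provides the bound on $\p_{c_1c_2}G_Z$. The $Z^{(1)}$ prefactor provides substantial decay that absorbs all growth, producing the required $\eps\, e^{s/4}$ bound.

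For the higher--order estimate on $\|\p_{c_1c_2}F_Z^{(n)}\|_\infty$, I use \eqref{Lauv:4} and split each product into the cases $j=0$, $j=n$, and $1\le j\le n-1$ as in Lemma \ref{lemma:j:1}; the product structure, combined with \eqref{posty:1}, \eqref{mako:3}, \eqref{Z:boot:0}, and \eqref{A:boot:9}, again gives $\eps\, e^{s/4}$, with the dominant contribution coming from $\beta_\tau e^{-s}A\cdot e^{-s/4}W_{c_1c_2}^{(n)}$. The final estimate on $\|F_{Z,n}^{c_1,c_2}\|_\infty$ then follows from \eqref{Lauv:2} by summing each term separately; the hard step is the sum $\sum_{j=2}^n \binom{n}{j}\beta_\tau\beta_2 W^{(j)}Z^{(n-j+1)}_{c_1c_2}$ and $\sum_j W^{(j)}_{c_1}W^{(n-j)}_{c_2}$--type products, where the combinatorial explosion must be controlled by the elementary inequality $j^2+2(n+1-j)^2\le -1+2n^2$ already used in \eqref{elem:2}. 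The factor $\beta_\tau\beta_2\le 2$ and the $\eps^{5/8}e^{s/4}$ factor from the $Z_{c_1c_2}^{(n-j+1)}$ bootstrap \eqref{posty:1} then give the stated $M^{2n^2-1}\eps^{5/8}e^{s/4}$ bound. The main obstacle throughout is book--keeping the $M$--powers so that the product of polynomial--in--$M$ bootstraps never exceeds $M^{2n^2-1}$; this is the only nontrivial aspect and is resolved purely by the elementary inequality noted above.
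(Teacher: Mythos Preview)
Your proposal is correct and follows essentially the same approach as the paper: both use the explicit expressions \eqref{Lauv:3}--\eqref{Lauv:4} and \eqref{Lauv:1}--\eqref{Lauv:2}, estimate term by term using the second--order bootstraps \eqref{posty:1}--\eqref{buddy:1}, \eqref{group:1}--\eqref{group:2}, the transport estimate \eqref{spin:2}, and the elementary inequality \eqref{elem:2} to control the $M$--powers in the sum $\sum_{j=2}^n W^{(j)}Z_{c_1c_2}^{(n-j+1)}$. One minor slip: there are no $W_{c_1}^{(j)}W_{c_2}^{(n-j)}$--type products in $F_{Z,n}^{c_1,c_2}$ (those occur only in $F_{W,n}^{c_1,c_2}$); the relevant cross terms in \eqref{Lauv:2} are of the form $Z_{c_i}^{(j+1)}W_{c_{i'}}^{(n-j)}$ and $Z^{(j+1)}W_{c_1c_2}^{(n-j)}$, which are in fact easier since the $Z$--factor always supplies $e^{-5s/4}$ or $e^{-s/2}$ decay.
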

\begin{proof} First, we turn to the estimation of $\p_{c_1 c_2} F_Z^{(n)}$, for which we appeal to the expression given in \eqref{Lauv:4} and estimate term by term via
\begin{align}  \n
&\| \beta_\tau e^{-s} \sum_{j = 0}^n \binom{n}{j} A^{(j)} \Big( \beta_3( e^{- \frac s 4} W_{c_1 c_2} + \kappa_{c_1 c_2})^{(n-j)} + \beta_4 Z_{c_1 c_2}^{(n-j)} \Big) \|_\infty \\ \n
&\qquad \lesssim  e^{-s} \sum_{j = 0}^n \| A^{(j)} \|_\infty \Big(  \| e^{- \frac s 4}W_{c_1 c_2}+ \kappa_{c_1 c_2})^{(n-j)}\|_{\infty} + \| Z^{(n-j)}_{c_1 c_2}\|_\infty \Big) \\ 
&\qquad \lesssim_M \eps e^{-s}  \Big( \eps^{\frac 3 2} e^{\frac 5 4 s}+ \eps^{\frac 5 4} e^s + \eps^{\frac 5 8} e^{\frac s 4}  \Big) \lesssim_M \eps^{\frac 5 4} e^{\frac s 4}\,.\n
\end{align}
Above, we have invoked estimates \eqref{A:boot:9}, \eqref{posty:1}, \eqref{buddy:1}, and \eqref{group:2}.  

Next, the second term from \eqref{Lauv:4} is estimated via 
\begin{align} \n
 &\|  \beta_\tau e^{-s} \sum_{j = 0}^n \binom{n}{j} A_{c_1 c_2}^{(j)} \Big( \beta_3 (e^{- \frac s 4} W + \kappa)^{(n-j)} + \beta_4 Z^{(n-j)} \Big) \|_\infty \\ \n
 & \qquad \lesssim e^{-s} \sum_{j = 0}^n \| A^{(j)}_{c_1 c_2} \|_\infty \Big( e^{- \frac s 4} \|W^{(n-j)} \|_\infty + |\kappa| + \| Z^{(n-j)} \|_\infty \Big) \\ 
 & \qquad \lesssim_M \eps^{\frac 5 8} e^{- \frac 3 4 s} ( 1+ \eps^{\frac 5 4} )\,,\n
\end{align}
where we have invoked \eqref{weds:1}, \eqref{est:W:good}, \eqref{Z:boot:0} \eqref{mod:sub}, and  \eqref{posty:2}.

Next, the third term from \eqref{Lauv:4} is estimated via 
\begin{align} \n
& \| \beta_\tau e^{-s} \sum_{j = 0}^n \sum_{i \in \{1, 2\}} \binom{n}{j} A_{c_i}^{(j)} \Big( \beta_3( e^{- \frac s 4} W_{c_{i'}} + \kappa_{c_{i'}})^{(n-j)} + \beta_4 Z_{c_{i'}}^{(n-j)}  \Big) \|_\infty \\ \n
& \qquad \lesssim e^{-s} \sum_{j = 0}^n \| A_c^{(j)} \|_\infty \Big( e^{- \frac s 4} \| W_c^{(n-j)} \|_\infty + |\kappa_c| + \| Z_c^{(n-j)} \|_\infty  \Big) \\
& \qquad \lesssim_M  \eps^{\frac 1 2}  e^{-s}\Big( \eps^{\frac 3 4} e^{\frac s 2}+ \eps^{\frac 1 4} + \eps^{\frac 1 2} \Big) \lesssim_M \eps e^{- \frac s 2}\,,\n
\end{align}
where we have invoked \eqref{Baba:O:1}, \eqref{mako:2} - \eqref{mako:3}, and \eqref{mako:6}

 We now move to the final terms from \eqref{Lauv:4} which evaluate to 
\begin{align} \n
&\| \dot{\tau}_{c_1} \beta_\tau \p_{c_2} F_Z^{(n)}  + \dot{\tau}_{c_2} \beta_\tau \p_{c_1} F_Z^{(n)}  + \dot{\tau}_{c_1 c_2} \beta_\tau F_Z^{(n)} \|_\infty \\
& \qquad  \lesssim |\dot{\tau}_c| |\p_c F_Z^{(n)}| + |\dot{\tau}_{c_1 c_2}| \| F_Z^{(n)} \|_\infty \lesssim  \eps^{\frac 5 4} e^{- \frac s 2} + \eps^2 e^{- \frac s 4}\,, \n
\end{align}
where we have invoked the estimates \eqref{Force:Z:pc:1} - \eqref{Force:Z:pc:n}, as well as estimates \eqref{Z:0:order} and \eqref{my:generation}. 

We now turn to equation \eqref{Lauv:2} for the form of $F_{Z,n}^{c_1, c_2}$. We will estimate term by term, starting with 
\begin{align*}
\| 1_{n \ge 2} \sum_{j = 2}^n \binom{n}{j} \beta_\tau \beta_2 W^{(j)} Z_{c_1 c_2}^{(n-j+1)} \|_\infty &\lesssim 1_{n \ge 2} M^{j^2} M^{2(n-j+1)^2} \eps^{\frac 5 8} e^{\frac s 4} \lesssim M^{-1 + 2n^2} \eps^{\frac 5 8} e^{\frac s 4}\,, \\
\| 1_{n \ge 1} \sum_{j = 1}^n \binom{n}{j} G_Z^{(j)} Z_{c_1 c_2}^{(n-j+1)} \|_\infty& \lesssim_M  \eps^{\frac 5 8} e^{- \frac 3 4 s}\,, 
\end{align*}
where for the first estimate above we have invoked the elementary inequality \eqref{elem:2}, and for the second estimate we have invoked \eqref{Z:transport:1}. 

Next, we continue by estimating 
\begin{align} \n
&\| \sum_{j = 0}^n \sum_{i \in \{1, 2 \}} \binom{n}{j} Z_{c_i}^{(j+1)} \Big( \dot{\tau}_{c_{i'}} \beta_\tau^2 \beta_2 W^{(n-j)} + \beta_\tau \beta_2 W_{c_{i'}}^{(n-j)} + \p_{c_{i'}}G_Z^{(n-j)} \Big) \|_\infty \\ \n
& \qquad \lesssim \sum_{j = 0}^n \| Z^{(j+1)}_c \|_\infty \Big(|\dot{\tau}_c| \| W^{(n-j)} \|_\infty + \| W_c^{(n-j)} \|_\infty + \| \p_c G_Z^{(n-j)} \|_\infty \Big) \\ 
& \qquad \lesssim_M \eps^{\frac 1 2} e^{- \frac s 2} \Big( \eps^{\frac 1 2} e^{\frac s 4} + \eps^{\frac 3 4} e^{\frac 3 4} + \eps^{\frac 1 4} e^{\frac s 4} \Big) \lesssim_M \eps^{\frac 5 4} e^{\frac s 4},\n
\end{align}
where we have invoked the bootstrap assumptions \eqref{weds:1}, \eqref{mako:1}, \eqref{mako:2} - \eqref{mako:3}, \eqref{mako:6}, as well as \eqref{r:est:2} on the $\p_c G_Z$ term. 

We return to \eqref{Lauv:2}, and address the third and fourth lines by estimating 
\begin{align} \n
& \|  \sum_{j = 0}^n \binom{n}{j} Z^{(j+1)} \Big( \dot{\tau}_{c_1 c_2} \beta_\tau^2 \beta_2 W^{(n-j)} + 2 \dot{\tau}_{c_1} \dot{\tau}_{c_2} \beta_\tau^2 \beta_2 W^{(n-j)} + \beta_\tau \beta_2 W_{c_1 c_2}^{(n-j)} \\ \n
& \qquad \qquad \qquad \qquad + \sum_{i \in \{1, 2\}} \dot{\tau}_{c_i} \beta_\tau^2 \beta_2 W_{c_{i'}}^{(n-j)} + \p_{c_1 c_2}G_Z^{(n-j)} \Big) \|_{\infty} \\ \n
& \lesssim \sum_{j = 0}^n \| Z^{(j+1)} \|_\infty  \Big( |\dot{\tau}_{c_1 c_2}| \| W^{(n-j)} \|_\infty + |\dot{\tau}_c|^2 \| W^{(n-j)} \|_\infty + \| W_{c_1 c_2}^{(n-j)} \|_\infty \\ \n
& \qquad \qquad \qquad \qquad + |\dot{\tau}_c| \| W_c^{(n-j)} \|_\infty + \| \p_{c_1 c_2} G_Z^{(n-j)} \|_\infty \Big) \\
& \lesssim_M e^{- \frac 5 4 s} \Big( \eps e^s + \eps e^{\frac s 4} + \eps^{\frac 3 2} e^{\frac 3 2 s} + \eps^{\frac 5 4} e^{\frac 3 4 s} + \eps^{\frac 5 4} e^{\frac 5 4 s}  \Big) \lesssim_M \eps^{\frac 5 4}  e^{\frac s 4}\,.\n
\end{align}
Above, we have invoked \eqref{weds:1}, \eqref{Z:boot:0}, \eqref{mako:1}, \eqref{mako:6}, \eqref{buddy:1}, \eqref{group:1}, as well as \eqref{spin:3} for the $\p_{c_1 c_2} G_Z$ contribution. 

This concludes the treatment of the terms from \eqref{Lauv:2} and hence the proof of the lemma. 

\end{proof}

\begin{lemma} For $1 \le n \le 6$, the following estimates are valid 
\begin{align} \label{found:me:1}
\| \p_{c_1 c_2} F_A  \|_\infty &\le \eps e^{\frac s 4}\,, && \| F_{A,0}^{c_1, c_2} \|_\infty \le \eps e^{\frac 1 4 s}\,, \\ \label{found:me:2}
\| \p_{c_1 c_2} F_A^{(n)} \|_\infty &\le \eps e^{\frac s 4}\,, && \| F_{A,n}^{c_1, c_2} \|_\infty \les M^{2n^2 -1} \eps^{\frac 5 8} e^{\frac 1 4 s}\,.
\end{align}
\end{lemma}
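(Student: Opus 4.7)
My plan is to mirror the proof of Lemma \ref{Lemma:ka} almost verbatim, replacing the identities \eqref{Lauv:1}--\eqref{Lauv:4} for $Z$ by their $A$-counterparts \eqref{Lauv:5}--\eqref{Lauv:8}. The structural parallel is that both $F_Z$ and $F_A$ carry the same $e^{-s}$ prefactor, so the worst-case growth rates generated by the bilinear blocks in $\p_{c_1c_2}F_A^{(n)}$ are of the same order as for $\p_{c_1c_2}F_Z^{(n)}$. The only new wrinkle is that the nonlinearity \eqref{def:FA} contains an $A\cdot A$ quadratic term (absent from $F_Z$), but this is the best-behaved contribution since $A$ and all its $\p_c$ and $\p_{c_1c_2}$-derivatives enjoy additional $\eps$-smallness by \eqref{A:boot:9}, \eqref{mako:2}--\eqref{mako:3}, and \eqref{posty:2}.

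For the estimates on $\p_{c_1c_2}F_A$ and $\p_{c_1c_2}F_A^{(n)}$, I would expand \eqref{Lauv:7}--\eqref{Lauv:8} into their four bilinear blocks plus the modulation tail $\dot{\tau}_{c_1c_2}\beta_\tau F_A^{(n)}+\dot{\tau}_c\beta_\tau\p_cF_A^{(n)}$. In each bilinear block, depending on where the $\p_{c_1c_2}$-derivative or the pair of $\p_c$-derivatives lands, one invokes: for undifferentiated factors, \eqref{est:W:good}, \eqref{Z:boot:0}, \eqref{A:boot:9}, \eqref{mod:sub}, and \eqref{weds:1}; for singly $\p_c$-differentiated factors, \eqref{Baba:O:1}, \eqref{mako:2}--\eqref{mako:3}, and \eqref{pc:W0}--\eqref{mako:6}; for twice-differentiated factors, \eqref{posty:1}, \eqref{posty:2}, \eqref{buddy:1}, and \eqref{group:1}--\eqref{group:2}. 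The modulation tail is absorbed via \eqref{A:0:order}, \eqref{Force:A:pc:1}--\eqref{Force:A:pc:n}, \eqref{mako:1}, and \eqref{group:1}. In every case the $e^{-s}$ prefactor dominates the worst growth $\eps^{3/2}e^{3s/2}$ coming from the $\p_{c_1c_2}$-derivatives, producing at most $\eps^{5/4}e^{s/4}$, comfortably inside the target $\eps e^{s/4}$.

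For $F_{A,n}^{c_1,c_2}$, I would start from \eqref{Lauv:6}, which besides $\p_{c_1c_2}F_A^{(n)}$ contains three families of additional contributions: products of $W^{(j)}$ with $A_{c_1c_2}^{(n+1-j)}$, products of $A^{(j+1)}$ with second derivatives of the transport speed and with $W_{c_1c_2}$-type objects, and products of $A^{(j+1)}_{c_{i'}}$ with $W_{c_i}$-type terms. The dominant combinatorial contribution is the $W^{(j)}A_{c_1c_2}^{(n-j+1)}$ block, for which invoking \eqref{weds:1} and \eqref{posty:2} and then applying the elementary inequality $j^2+2(n-j+1)^2 \le 2n^2-1$ valid for $2\le j\le n$, $n\ge 2$, yields the sharp constant $M^{2n^2-1}\eps^{5/8}e^{s/4}$. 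The $\p_{c_1c_2}G_A^{(n-j)}$ pieces are absorbed through \eqref{spin:3}, and the remaining mixed products of the form $A^{(j+1)}_{c_{i'}}W_{c_i}^{(n-j)}$ and $A^{(j+1)}W_{c_1c_2}^{(n-j)}$ are strictly sub-critical once \eqref{mako:3}, \eqref{mako:6}, and \eqref{buddy:1} are invoked.

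The only point requiring real care is the top-order case $n=6$, where the $\p_{c_1c_2}A^{(n)}$ bootstrap meets the $W^{(j)}$ bootstrap at its worst index, and one must extract the saving of a single $M^{-1}$ from the combinatorial inequality in order to arrive at the sharp $M^{2n^2-1}$ rather than $M^{2n^2}$; this saving is ultimately what allows \eqref{posty:2} to be closed later in the iteration. Beyond this bookkeeping I do not foresee any new conceptual obstacle, since all the transport-speed estimates \eqref{spin:1}--\eqref{spin:3}, all the lower-order forcing estimates \eqref{Force:A:pc:1}--\eqref{Force:A:pc:n}, and all the $\p_{c_1c_2}$ bootstraps \eqref{posty:1}--\eqref{group:2} are already available from the preceding lemmas.
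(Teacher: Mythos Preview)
Your proposal is correct and follows essentially the same approach as the paper: the paper estimates a few representative terms from \eqref{Lauv:7} explicitly and then defers the remaining bounds on $F_{A,0}^{c_1,c_2}$ and $F_{A,n}^{c_1,c_2}$ to the analogous computations in Lemma~\ref{Lemma:ka}. Your sketch is in fact more detailed than the paper's own argument, and your observation that the additional $A\cdot A$ quadratic term is harmless due to the extra $\eps$-smallness of $A$ and its derivatives is precisely the point.
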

\begin{proof} First, we use expression \eqref{Lauv:7} to produce the estimates 
\begin{align} \n
\|  \beta_\tau \beta_1 e^{-s} \Big( e^{- \frac s 4} W_{c_2} + \kappa_{c_2} + Z_{c_2} \Big)\Big( e^{- \frac s 4} W_{c_1} + \kappa_{c_1} + Z_{c_1} \Big) \|_\infty &\le \eps^{\frac 3 2} e^{\frac s 4}\,, \\
\| \beta_\tau \beta_1 e^{-s}\Big( e^{- \frac s 4} W + \kappa + Z \Big) \Big( e^{- \frac s 4} W_{c_1 c_2} + \kappa_{c_1 c_2} + Z_{c_1 c_2} \Big) \|_{\infty} &\le \eps^{\frac 5 4}e^{\frac s 4} , \n
\end{align}
where we have invoked estimates \eqref{est:W:good}, \eqref{Baba:O:1}, \eqref{mako:2}, \eqref{pc:W0}, \eqref{posty:1}, \eqref{buddy:1}, and \eqref{group:1}. 

For the last line from expression \eqref{Lauv:7}, we have 
\begin{align} \n
&\| \dot{\tau}_{c_1 c_2} \beta_\tau F_A + \dot{\tau}_{c_2} \beta_\tau \p_{c_1}F_A + \dot{\tau}_{c_1} \beta_\tau \p_{c_2}F_A \| \lesssim  |\dot{\tau}_{c_1 c_2}| \| F_A \|_\infty + |\dot{\tau}_c| \| \p_c F_A \|_\infty \lesssim M \eps e^{- \frac s 4} + \eps e^{- \frac s 2} \,, \n
\end{align}
where we have invoked the forcing estimates \eqref{myA:generation} and \eqref{Force:A:pc:n}. This contribution is clearly bounded by $\eps^{\frac 3 4} e^{- \frac s 4}$ by bringing $\eps$ small relative to $M$. 

Next, we move to the second estimate in \eqref{found:me:1}, for which we appeal to the expression \eqref{Lauv:5}. However, these estimates are exactly analogous to those of Lemma \ref{Lemma:ka}, estimate \eqref{jay:1}, and so we omit repeating these estimates.  The estimates for general $n$, \eqref{found:me:2} also follow analogously to Lemma \ref{Lemma:ka}. 
%

\end{proof}

\subsection{Trajectory estimates} \label{subsect:trajectory}

In this subsection, we provide estimates on the trajectories associated with the transport structure of the equations \eqref{W:0} - \eqref{A:0}. We now define these trajectories via 
\begin{align*}
&\p_s \Phi^{x_0}_W(s) = \mathcal{V}_W \circ \Phi_{W}^{x_0}\,,  & \Phi^{x_0}_{W}(s_0) &= x_0\,, \\
&\p_s \Phi^{x_0}_Z(s) = \mathcal{V}_Z \circ \Phi_{Z}^{x_0}\,, & \Phi^{x_0}_{Z}(s_0) &= x_0\,, \\
&\p_s \Phi^{x_0}_A(s) = \mathcal{V}_A \circ \Phi_{A}^{x_0}\,, & \Phi^{x_0}_{A}(s_0) &= x_0\,. 
\end{align*}

\begin{lemma} \label{l:support} Let $\Phi(s)$ denote either $\Phi^{x_0}_W$, $\Phi^{x_0}_Z(s)$ or $\Phi^{x_0}_A$, then for $\abs{x_0}\leq \frac M 2 \eps^{- \frac 1 4}$ we have
\begin{equation}
\label{upp:Z:t}
|\Phi^{x_0}(s) | \leq \frac{2M}{3}e^{\frac 5 4 s}\,.
\end{equation}
As a consequence we obtain
\begin{align} \label{assume:cpct:supp:ZA}
\supp W(s)\cup\supp Z(s) \cup \supp A(s) \subset B\left(\frac{3}{4} M \eps e^{\frac 5 4 s}\right)\,,
\end{align}
which verifies the bootstrap assumption \eqref{e:support}
\end{lemma}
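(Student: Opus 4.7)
The plan is to represent each trajectory via a Duhamel formula and close a continuity bootstrap on $|\Phi(s)|$ using the transport-speed estimates of Lemma \ref{l:GW}. Writing $\Phi$ for any of $\Phi_W^{x_0}, \Phi_Z^{x_0}, \Phi_A^{x_0}$ and $g_*$ for the corresponding drift from \eqref{gw:def}--\eqref{ga:def}, the ODE $\dot{\Phi}(s) = g_*(\Phi(s), s) + \tfrac{5}{4} \Phi(s)$ combined with the integrating factor $e^{-\frac 5 4 s}$ produces
\begin{equation*}
\Phi(s) = e^{\frac{5}{4}(s-s_0)} x_0 + \int_{s_0}^s e^{\frac{5}{4}(s-s')} g_*(\Phi(s'), s')\, ds'.
\end{equation*}
Since $s_0 = -\log \eps$, the homogeneous contribution is controlled by $|x_0|\, e^{\frac 5 4 (s - s_0)} \le \tfrac{M}{2}\eps\, e^{\frac 5 4 s}$.

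To bound the Duhamel integral for $g_W = \beta_\tau W + G_W$, I combine the bootstrap \eqref{W:boot:0} giving $|W| \lesssim \ell \log M \cdot \eta_{1/20}$ with Lemma \ref{l:GW} at $r = 0$, which yields $|G_W| \lesssim \eps^{1/6} e^{-3s'/4} + M^3\eps\, e^{s'/4}$. Under the continuity bootstrap $|\Phi(s')| \le \tfrac{2M}{3}\eps\, e^{\frac 5 4 s'}$, the weight $\eta_{1/20}(\Phi(s'))$ is at most $1 + M^{1/5}\eps^{1/5} e^{s'/4}$, and the bound $\int_{s_0}^s e^{-s'}\,ds' \le \eps$ then shows the Duhamel integral is $o(\eps\, e^{\frac 5 4 s})$ for $\eps$ small with respect to $M^{-1}$. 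For $g_Z$ and $g_A$, estimates \eqref{Z:transport:1}--\eqref{A:transport:1} give $|G_Z|, |G_A| \lesssim \kappa_0 e^{s'/4}$, whose Duhamel contribution is bounded by $\kappa_0 \eps\, e^{\frac 5 4 s} \le \tfrac M 6 \eps\, e^{\frac 5 4 s}$ since $M$ is chosen large relative to $\kappa_0$. Summing, $|\Phi(s)| < \tfrac{2M}{3}\eps\, e^{\frac 5 4 s}$ strictly, closing the continuity bootstrap.

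The support statement \eqref{assume:cpct:supp:ZA} follows by combining the trajectory bound with the initial support \eqref{blue:grass}: because $F_W$ and $F_Z$ in \eqref{W:0}--\eqref{Z:0} are proportional to $A$, they vanish identically along characteristics starting in the exterior of $[-\tfrac M 2 \eps^{-1/4}, \tfrac M 2 \eps^{-1/4}]$, so $W, Z$ are carried by the flow from the initial support, which by the trajectory bound is contained in $B(\tfrac{2M}{3}\eps\, e^{\frac 5 4 s}) \subset B(\tfrac{3M}{4}\eps\, e^{\frac 5 4 s})$. The main technical point is bounding $|g_W|$ along $\Phi$, as this couples the trajectory bound to \eqref{W:boot:0} through the spatial weight $\eta_{1/20}(\Phi)$ and so must be handled by the strict-inequality continuity argument; a secondary subtlety is that the spatially constant piece of $F_A$ does not vanish at $W = Z = A = 0$ for $\gamma \neq 3$, but its contribution is $O(\kappa_0^2 \eps)$ over the time interval and is compatible with the bootstrap $\|A\|_\infty \le M\eps$.
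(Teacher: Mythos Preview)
Your trajectory argument for \eqref{upp:Z:t} is essentially the paper's: both integrate $\dot\Phi = \tfrac54\Phi + g_*$ and bound $|g_*| \lesssim e^{s'/4}$ via \eqref{W:boot:0} and Lemma~\ref{l:GW}, then apply Gr\"onwall. Your explicit continuity bootstrap on $|\Phi|$ is a clean way to control $\eta_{1/20}(\Phi)$; the paper instead invokes the ambient support bootstrap \eqref{e:support} directly, which is equally valid as a standard continuation argument.

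Your argument for the support conclusion \eqref{assume:cpct:supp:ZA}, however, is internally inconsistent. You claim $F_W, F_Z$ vanish along exterior characteristics because they are proportional to $A$, and then immediately acknowledge that $A$ does \emph{not} vanish there (the $\kappa^2$ piece of $F_A$ survives for $\gamma\ne 3$). Once $A \ne 0$ outside, $F_W$ and $F_Z$ are nonzero there as well, and the transport argument for $\supp W, \supp Z$ collapses. You also overlook the spatially constant source $-e^{-3s/4}\beta_\tau\dot\kappa$ on the right of \eqref{W:0}, which already by itself prevents $W$ from being literally compactly supported. Your fallback --- that the tail is $O(\kappa_0^2\eps)$ and ``compatible with the bootstrap $\|A\|_\infty \le M\eps$'' --- addresses the wrong bootstrap: \eqref{assume:cpct:supp:ZA} is a support statement, not an $L^\infty$ bound. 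To be fair, the paper's one-line justification glosses over exactly the same point; your observation about $F_A$ is in fact flagging a genuine looseness in the stated lemma rather than a defect in your transcription of the argument.
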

\begin{proof}
We restrict to the case $\Phi=\Phi^{x_0}_W$. The cases  $\Phi=\Phi^{x_0}_Z$ and  $\Phi=\Phi^{x_0}_A$ will follow in an analogous fashion. Recall that for  $\Phi=\Phi^{x_0}_W$ we have
\begin{equation*}
\p_s \Phi =   \frac 5 4 \Phi+ \beta_\tau W\circ \Phi+ G_W\circ \Phi\,.
\end{equation*}
As a consequence of \eqref{e:support}, \eqref{W:boot:0} and \eqref{GW:transport:est}, we have
\begin{equation}\label{e:pelican}
\norm{W}_{\infty}+\norm{G_W}_{\infty}\les M^{\frac15}\eps^{\frac15} e^{\frac s4}+e^{\frac s4}\les  e^{\frac s4}\,.
\end{equation}
Thus by Gr\"onwall we obtain we obtain \eqref{upp:Z:t}.

The support bound \eqref{assume:cpct:supp:ZA} follows directly from \eqref{blue:grass}, the defining equations  \eqref{W:0}-\eqref{A:0}, together with \eqref{upp:Z:t}.
\end{proof}

\begin{lemma} Let $\Phi(s)$ denote either $\Phi^{x_0}_Z(s)$ or $\Phi^{x_0}_A$, then for $\abs{x_0}\leq  \frac M 2 \eps^{- \frac 1 4}$ we have
\begin{align}  \label{traj:large:z}
|\Phi^{x_0}(s)| &\ge \text{min} (e^{\frac s 4}, e^{\frac s 4} - e^{\frac{s_\ast}{4}}) \text{ for some } s_\ast \ge s_0\,.  
\end{align}
\end{lemma}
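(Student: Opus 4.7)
My plan is to exploit that $G_Z$ is spatially constant, from \eqref{gz:def}, with dominant negative contribution $-(1-\beta_2)\kappa_0 e^{s/4}$, so that after a rescaling the trajectory evolves monotonically. Setting $\Psi(s) := e^{-5s/4} \Phi(s)$ where $\Phi := \Phi_Z^{x_0}$, the equation $\p_s \Phi = \tfrac{5}{4}\Phi + \beta_\tau\beta_2 W(\Phi,s) + G_Z(s)$ becomes
\[
\p_s \Psi(s) = e^{-5s/4}\bigl[\beta_\tau \beta_2 W(\Phi(s),s) + G_Z(s)\bigr].
\]
Using the transport estimate \eqref{Z:transport:1}, which gives $G_Z(s) = -(1-\beta_2)\kappa_0 e^{s/4} + r(s)$ with $|r(s)| \le C e^{s/4}$ for a universal constant $C$, together with the bootstrap \eqref{W:boot:0} and the support bound of Lemma~\ref{l:support} (which implies $|\beta_\tau \beta_2 W(\Phi(s),s)| \lesssim \ell \log(M)(M\eps)^{1/5} e^{s/4}$ wherever $W(\Phi,\cdot)$ is nonzero), I would then deduce, for $\kappa_0$ sufficiently large and $\eps$ sufficiently small, the two-sided bound
\[
-2(1-\beta_2)\kappa_0\, e^{-s} \;\le\; \p_s \Psi(s) \;\le\; -\tfrac{(1-\beta_2)\kappa_0}{2}\, e^{-s}.
\]
Thus $\Psi$ is strictly monotone decreasing on $[s_0,\infty)$, and consequently $\Phi$ admits at most one zero there.

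Next I would define $s_*$ to be the unique zero of $\Phi$ in $[s_0,\infty)$ if it exists, and split into two cases. In the first case $\Phi(s_*) = 0$, and integrating the upper bound on $\p_s\Psi$ from $s_*$ to $s \ge s_*$ gives $|\Psi(s)| \ge \tfrac{(1-\beta_2)\kappa_0}{2}(e^{-s_*} - e^{-s})$, hence with $u := s - s_* \ge 0$,
\[
|\Phi(s)| = e^{5s/4}|\Psi(s)| \;\ge\; \tfrac{(1-\beta_2)\kappa_0}{2}\, e^{s_*/4}\, e^{u/4}(e^u - 1),
\]
while $e^{s/4} - e^{s_*/4} = e^{s_*/4}(e^{u/4} - 1)$. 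The elementary inequality $\tfrac{e^{u/4}(e^u-1)}{e^{u/4} - 1} \ge 4$ for $u \ge 0$ (verified via the substitution $t = e^{u/4} \ge 1$, reducing to $t(t+1)(t^2+1) \ge 4$) then yields $|\Phi(s)| \ge e^{s/4} - e^{s_*/4}$ for the choice $\kappa_0 \ge \tfrac{1}{2(1-\beta_2)}$; for $s < s_*$ the bound is trivial since $\min(e^{s/4}, e^{s/4} - e^{s_*/4}) < 0$. In the second case $\Phi$ has no zero and hence has constant sign on $[s_0,\infty)$, so $|\Psi|$ is bounded below either by the tail integral $|\Psi(s) - \Psi_\infty| \ge \tfrac{(1-\beta_2)\kappa_0}{2} e^{-s}$ (when $\Psi > 0$) or by $|\Psi(s_0)|$ (when $\Psi < 0$, since $|\Psi|$ is nondecreasing); a direct calculation then allows a selection of $s_* \ge s_0$ so that the bound $|\Phi(s)| \ge e^{s/4} - e^{s_*/4}$ holds for $s \ge s_*$. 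The trajectory $\Phi_A^{x_0}$ is handled identically, with $\beta_2$ replaced by $\beta_1$ and using \eqref{A:transport:1} in place of \eqref{Z:transport:1}.

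The main technical obstacle is verifying that both perturbative terms $\beta_\tau \beta_2 W(\Phi,s)$ and $r(s)$ are genuinely dominated by the main drift $-(1-\beta_2)\kappa_0 e^{s/4}$. This requires taking $\kappa_0$ large relative to the universal constant in \eqref{Z:transport:1} (so that the $r(s)$ contribution becomes a strict fraction of the dominant term) and exploiting the parameter hierarchy $\eps \ll M^{-1} \ll \ell$ together with the support bound of Lemma~\ref{l:support} to ensure the $W$ contribution is negligible against $e^{s/4}$. A secondary subtlety is the edge of the no-crossing subcase, where the appropriate choice of $s_*$ depends on $|x_0|$, but in all instances $s_* \ge s_0$ and the bound remains verifiable by a direct comparison.
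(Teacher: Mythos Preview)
Your approach is correct and takes a genuinely different route from the paper. The paper argues via a \emph{barrier}: it shows that whenever $\Phi(s)\le e^{s/4}$ one has $\partial_s\Phi(s)\le -e^{s/4}$ (this is where largeness of $\kappa_0$ enters), and then splits into cases according to the sign of $x_0$ and whether $\Phi$ ever dips below the curve $e^{s/4}$. You instead rescale $\Psi=e^{-5s/4}\Phi$ and extract a two-sided bound $\partial_s\Psi\sim -(1-\beta_2)\kappa_0 e^{-s}$, forcing $\Psi$ to be strictly monotone; uniqueness of the zero and the quantitative lower bound both then follow by direct integration together with the elementary inequality $t(t+1)(t^2+1)\ge 4$. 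Your route makes the zero-crossing structure especially transparent; the paper's barrier avoids the rescaling and the exponential comparison.

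Two corrections. First, $G_Z$ is not spatially constant: from \eqref{gz:def} it carries $Z(x,s)$, so strictly you should write $G_Z(\Phi(s),s)$; since $\|Z\|_\infty\le\eps^{5/4}$ this is purely cosmetic. Second, your treatment of the no-crossing $\Psi<0$ subcase has a genuine (though easily repaired) gap. The bound $|\Psi(s)|\ge|\Psi(s_0)|$ gives only $|\Phi(s)|\ge|x_0|\,e^{5(s-s_0)/4}$, which for small $|x_0|$ fails to dominate $e^{s/4}-e^{s_0/4}$ near $s=s_0$ (the latter has initial slope $\sim\tfrac14\eps^{-1/4}$). The fix is to integrate your upper bound on $\partial_s\Psi$ from $s_0$ exactly as in the crossing case, obtaining $|\Psi(s)|\ge\tfrac{(1-\beta_2)\kappa_0}{2}(e^{-s_0}-e^{-s})$; the same elementary inequality then yields $|\Phi(s)|\ge e^{s/4}-e^{s_0/4}$ with $s_*=s_0$. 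Likewise, in the $\Psi>0$ no-crossing subcase your tail bound gives $|\Phi(s)|\ge\tfrac{(1-\beta_2)\kappa_0}{2}e^{s/4}$, so you actually need $\kappa_0\ge\tfrac{2}{1-\beta_2}$ (not merely $\tfrac{1}{2(1-\beta_2)}$) to conclude $|\Phi|\ge e^{s/4}$; this is allowed by the hypotheses of Theorem~\ref{thm:general}.
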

\begin{proof} 
We first show that if $\Phi (s) \leq e^{\frac s 4}$, then we have the inequality
\begin{equation}\label{e:PhiZA:left}
\frac{\p}{\p s} \Phi(s) \le - e^{\frac s 4}\,.
\end{equation}
For notational purposes, we set $(j,GZ)=(2,G_Z)$ or $(j,GZ)=(1,G_A)$ for the cases $\Phi(s)=\Phi^{x_0}_Z(s)$ or $\Phi(s)=\Phi^{x_0}_A$, respectively. We then have the ODE
\begin{align} \n
\p_s \Phi =  & \frac 5 4 \Phi+ \beta_\tau\beta_j W\circ \Phi+ G\circ \Phi\,.
\end{align}
Note that since $\alpha>1$, then $\abs{\beta_j}<1$. Assuming $\eps$ to be sufficiently small (dependent on $\alpha$), then applying \eqref{e:1beta:bnd} yields $\beta_{\tau}\beta_{j}\leq 1$. Then if $\Phi (s) \leq e^{\frac s 4}$, we have from \eqref{W:boot:0}, \eqref{Z:transport:1} and \eqref{A:transport:1}
\begin{align}
\frac{\p}{\p s} \Phi(s) &\le \frac 5 4 e^{\frac s4}+2\eta_{\frac{1}{20}}\circ\Phi(s)-(1-\beta_j)\kappa_0e^{\frac s4}+\eps^{\frac 1 2}e^{\frac s4}\n\\
&\le \frac 5 4 e^{\frac s4}-(1-\beta_j)\kappa_0e^{\frac s4}+\eps^{\frac 1 2}e^{\frac s4}\,,
\notag 
\end{align}
where we used \eqref{traj:large:z}. Since $(1-\beta_j)>0$, then assuming $\kappa_0$ is sufficiently large, dependent of $\alpha$, we obtain \eqref{e:PhiZA:left}.

We now split the proof of \eqref{traj:large:z} into two subcases:
\begin{enumerate}
\item\label{case:Tokaji1} Either $\Phi(s)> e^{\frac s4}$ for all $s\in[s_0,\infty)$, or $x_{0}\leq 0$.
\item\label{case:Tokaji2} We have $x_0>0$ and there exists  a smallest $s_1\in[s_0,\infty)$ such that $0<\Phi(s_1)\leq e^{\frac {s_1}4}$.
\end{enumerate}
Consider first Case \ref{case:Tokaji1}. Note that $\Phi_1(s)> e^{\frac s4}$ directly implies \eqref{traj:large:z}. If $x_0\leq 0$, then  \eqref{e:PhiZA:left} implies that
$
\Phi(s)\leq -e^{\frac s 4}+\eps^{-\frac14} $ and hence \eqref{traj:large:z} is satisfied for $s_{*}=-\log\eps$.

Now consider Case \ref{case:Tokaji2}. The estimate  \eqref{e:PhiZA:left} implies that
$
\frac{d}{ds}\Phi(s)\leq  -e^{\frac s4}$ for all $s\geq s_0$.
Thus by continuity,  there exists a unique $s_*>s_0$ such that $\Phi(s_*)=0$. By continuity, there exists $s_*>s_1$ such that $\Phi(s_*)=0$. Then as a consequence of \eqref{traj:large:z}, by following trajectories forwards and backwards in time from $s_*$ we conclude that 
\[\abs{\Phi(s)}\geq \abs{e^{\frac s4}-e^{\frac{s_*}{4}}}\,,\]
for all $s\in[s_1,\infty)$. For the case $s_1\neq s_0$, then if $s\in [s_0,s_1)$ we have by definition that $\abs{\Phi(s)}\geq e^{\frac s4}$. Thus we have \eqref{e:PhiZA:left}.
\end{proof}

\begin{lemma}
For any $\abs{x_0}\geq \ell$ and $s_0\geq -\log\eps$ we have
\begin{align}\label{e:escape:from:LA}
\Phi_W^{x_0} \ge |x_0|\eps^{\frac15}  e^{\frac{s}{5}}\,.
\end{align}
\end{lemma}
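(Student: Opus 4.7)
\emph{Plan of proof.} Set $\Phi := \Phi_W^{x_0}$ and $\rho(s) := |x_0| \eps^{1/5} e^{s/5}$, so $\rho(s_0) = |x_0|$ for $s_0 = -\log\eps$. The strategy is to show $|\Phi(s)| \ge \rho(s)$ for all $s \ge s_0$ via a first-touch barrier argument. The key algebraic observation is that the self-similar Burgers transport speed satisfies
\begin{equation*}
\sgn(x)\bigl[\tfrac{5}{4} x + \bar W(x)\bigr] \;=\; \tfrac{|x|}{4} + |\bar W(x)|^5 \;\ge\; \tfrac{|x|}{4}
\end{equation*}
for every $x \in \RR$. Indeed, \eqref{e:implicit:eq} forces $\sgn(\bar W(x)) = -\sgn(x)$ together with $|\bar W| + |\bar W|^5 = |x|$; substituting $\bar W = -\sgn(x)|\bar W|$ yields the identity. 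In particular the Burgers transport speed always points away from the origin with magnitude at least $|x|/4$.

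Under the provisional assumption $|\Phi(s)| \ge \ell$, I would split $W = \bar W + \tilde W$ and, with $\sigma := \sgn(\Phi)$, combine the identity above with the trajectory ODE to obtain
\begin{equation*}
\partial_s |\Phi| \;=\; \sigma\bigl[\tfrac{5}{4}\Phi + \beta_\tau W(\Phi) + G_W(\Phi)\bigr] \;\ge\; \tfrac{|\Phi|}{4} + \sigma\bigl[(\beta_\tau - 1)\bar W(\Phi) + \beta_\tau \tilde W(\Phi) + G_W(\Phi)\bigr]\,.
\end{equation*}
The three correction terms are all small, as follows. The $(\beta_\tau - 1)\bar W(\Phi)$ piece is bounded using \eqref{e:1beta:bnd} and \eqref{decay:bar:2}. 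For $\tilde W(\Phi)$ I would use \eqref{thurs:1} when $|\Phi| \le \eps^{-1/4}$, and the uniform bootstrap \eqref{W:boot:0} together with \eqref{decay:bar:2} when $|\Phi| \ge \eps^{-1/4}$, then exploit the elementary bound $\eta_{1/20}(\Phi)/|\Phi| \le |\Phi|^{-4/5} \le \eps^{1/5}$ in the latter region. For $G_W$ I would use the decomposition \eqref{def:GW:e} together with the pointwise estimate $|G_{W,e}(\Phi)| \le \beta_\tau \beta_2 e^{s/4}\|Z^{(1)}\|_\infty |\Phi| \lesssim M^2 e^{-s}|\Phi|$ coming from \eqref{Z:boot:0} (\emph{genuine $s$-decay}, not available from the cruder supremum bound in \eqref{GW:transport:est}), and \eqref{e:GW0} on $\mu$. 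Since $\eta_{1/20}(\Phi)/|\Phi| \lesssim \ell^{-1}$ whenever $|\Phi| \ge \ell$, all three corrections are at worst $O(\ell^{-1}\eps^{1/6} + M^2 \eps)$ times $|\Phi|$, whence for $\eps$ sufficiently small
\begin{equation*}
\partial_s |\Phi| \;\ge\; \bigl(\tfrac{1}{4} - C\ell^{-1}\eps^{1/6} - CM^2\eps\bigr)|\Phi| \;>\; \tfrac{|\Phi|}{5}\,.
\end{equation*}

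The barrier argument then concludes: $|\Phi(s_0)| = |x_0| = \rho(s_0) \ge \ell$, and if $s_\ast > s_0$ were the first time at which $|\Phi(s_\ast)| = \rho(s_\ast)$, then $\rho(s_\ast) \ge \rho(s_0) \ge \ell$, the displayed differential inequality applies at $s_\ast$, and it yields $\partial_s(|\Phi| - \rho)(s_\ast) > 0$, contradicting the definition of $s_\ast$. Hence $|\Phi(s)| \ge \rho(s)$ for all $s \ge s_0$, the provisional bound $|\Phi| \ge \ell$ is preserved automatically, and the case $x_0 < 0$ follows verbatim from the sign-symmetric identity. The main delicate point is securing uniform-in-$s$ smallness of the $G_W$ correction: the naïve use of \eqref{GW:transport:est} would introduce an $e^{s/4}$ factor that overwhelms the $|\Phi|/4$ gain, and one is saved only by pulling out a factor of $|\Phi|$ from $G_{W,e}$ via the integral representation of $Z^{(1)}$, which upgrades the estimate to genuine exponential decay in $s$.
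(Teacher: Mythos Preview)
Your proof is correct, but the paper takes a shorter path. Instead of splitting $W = \bar W + \tilde W$ and invoking the exact Burgers identity, the paper uses the single bootstrap \eqref{eq:W1:bnd:1} ($|W^{(1)}| \le 1 + e^{-3s/4}$) together with the constraint $W(0,s)=0$ to write, via the fundamental theorem of calculus,
\[
\mathcal V_W\,x \;=\; \tfrac54 x^2 + \beta_\tau x W(x) + x G_W(x) \;\ge\; x^2\Bigl(\tfrac54 - \beta_\tau\|W^{(1)}\|_\infty - \|G_W^{(1)}\|_\infty\Bigr) - |x|\,|\mu| \;\ge\; \tfrac{x^2}{5}
\]
for $|x|\ge\ell$, and then Gr\"onwall on $\Phi^2$. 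Your treatment of $G_W$ (pulling out a factor of $|\Phi|$ from $G_{W,e}$ via $\|Z^{(1)}\|_\infty$) is exactly the paper's --- it is the same fundamental-theorem-of-calculus trick hidden in the $\|G_W^{(1)}\|_\infty$ term above.

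The difference is only in the handling of $W$ itself: you exploit the explicit algebraic relation $\sgn(x)[\tfrac54 x + \bar W] = \tfrac{|x|}{4} + |\bar W|^5$ and then control the perturbation $\tilde W$ region by region via \eqref{thurs:1} and \eqref{W:boot:0}, while the paper observes that the bootstrap \eqref{eq:W1:bnd:1} already encodes ``slope $\le 1+$ tiny'' globally, making the decomposition unnecessary. Your route is more structural (it isolates the Burgers mechanism) but incurs more bookkeeping; the paper's is a one-line appeal to \eqref{eq:W1:bnd:1}. A small quibble: your summary ``$O(\ell^{-1}\eps^{1/6} + M^2\eps)$'' understates the $\tilde W$ contribution --- the correct order in the near region is $\ell^{-1}\eps^{3/20}$ and in the far region $(\ell\log M)\eps^{1/5}$ --- but both are still negligible, so the conclusion stands.
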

\begin{proof} Using $W(0,s)=0$, \eqref{e:1beta:bnd}, \eqref{eq:W1:bnd:1}, \eqref{e:GW0}  and \eqref{GW:transport:est} we obtain
\begin{align*}
\mathcal V_W x&= \frac{5}{4}x^2+x\beta_{\tau} W+G_W x\\
&\geq x^2\left(\frac{5}{4}-\beta_{\tau}\norm{W^{(1)}}_{\infty}-\abs{G_W ^{(1)}}\right)-\abs{\mu}\\&\geq x^2\left(\frac{1}{4}-2e^{-\frac34 s}\right)-\eps^{\frac16}e^{-\frac34 s}\geq \frac15\,,
\end{align*}
where inequality we used that $\abs{x}\geq\ell\geq \eps^{\frac14}$ and $s_0$ is taken to be sufficiently large.

Thus we obtain
\begin{equation}
\frac{d}{ds} \left(\Phi_W^{x_0}\right)^2=2\mathcal V_W(\Phi_W^{x_0}) \Phi_W^{x_0}\geq \frac{2 (\Phi_W^{x_0})^2}{5}\label{e:escape:from:New:York}\,.
\end{equation}
and hence \eqref{e:escape:from:LA} follows by Gr\"onwall.
\end{proof}
\section{Analysis of modulation variables}
In this section we close all bootstraps related to the modulation variables $\kappa$, $\xi$ and $\tau$, together with the quantity $\mu$.

\subsection{Modulation variables and their time derivatives}

The following lemma verifies the bootstraps \eqref{mod:sub}.
\begin{lemma}  The following estimates are valid 
\begin{align} \label{mod:1}
|\kappa - \kappa_0| \lesssim \eps^{\frac 9 8}, \qquad |\dot{\xi} - \kappa_0| \lesssim \eps\,.
\end{align}
\end{lemma}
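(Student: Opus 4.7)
The plan is to use the constraint ODEs \eqref{eq:ODE:1} to isolate $\dot\kappa$, combine with the definition of $\mu$ in \eqref{def:mu} to isolate $\dot\xi-\kappa$, and then pass from the $t$-integration to the $s$-integration (noting that $dt=\beta_\tau e^{-s}\,ds$, which gives exponential decay in $s$) to convert the bootstrap on the instantaneous size of $\dot\kappa$ into a much smaller bound on the total change $\kappa-\kappa_0$.

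First, I would solve \eqref{eq:ODE:1} for $\dot\kappa$ to obtain
\[
\dot\kappa \;=\; \frac{e^{3s/4}}{\beta_\tau}\bigl(\mu + F_W(0,s)\bigr).
\]
Applying the bootstrap $|\mu|\le \eps^{1/6}e^{-3s/4}$ from \eqref{e:GW0}, the forcing estimate $|F_W(0,s)|\le \eps^{3/4}e^{-3s/4}$ from \eqref{FW:est:1}, and $|\beta_\tau-1|\le 2\eps^{1/6}e^{-3s/4}$ from \eqref{e:1beta:bnd}, the $e^{3s/4}$ prefactor is cancelled and I get $|\dot\kappa|\lesssim \eps^{1/6}$. (This is notably sharper than the bootstrap $|\dot\kappa|\le\eps^{1/8}$ from \eqref{e:GW0} and incidentally closes that bootstrap.)

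Next, I would change variables from $t$ to $s$. Since $s=-\log(\tau-t)$ gives $\tfrac{dt}{ds}=\beta_\tau e^{-s}$, and since $s(-\eps)=s_0=-\log\eps$ at the initial time, I can write
\[
\kappa(t)-\kappa_0 \;=\; \int_{-\eps}^{t}\dot\kappa(t')\,dt' \;=\; \int_{s_0}^{s(t)}\dot\kappa(s')\,\beta_\tau(s')\,e^{-s'}\,ds'.
\]
Inserting the bound $|\dot\kappa|\lesssim\eps^{1/6}$ and integrating $e^{-s'}$ from $s_0=-\log\eps$ to $\infty$ yields
\[
|\kappa(t)-\kappa_0|\;\lesssim\; \eps^{1/6}\,e^{-s_0}\;=\;\eps^{7/6}\;\lesssim\;\eps^{9/8},
\]
which is the first claim (and simultaneously closes the bootstrap $|\kappa-\kappa_0|\le\eps$).

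Finally, for $\dot\xi$ I would use the definition \eqref{def:mu} to write
\[
\dot\xi-\kappa \;=\; -\frac{\mu}{\beta_\tau}\,e^{-s/4} + \beta_2\,Z(0,s).
\]
The same bootstrap on $\mu$ gives $|\mu\,e^{-s/4}/\beta_\tau|\lesssim \eps^{1/6}e^{-s}\le \eps^{1/6}e^{-s_0}=\eps^{7/6}$ for $s\ge s_0$, while $|Z(0,s)|\le\eps^{5/4}$ from \eqref{Z:boot:0}. Hence $|\dot\xi-\kappa|\lesssim \eps^{7/6}$, and combining with the previous step yields
\[
|\dot\xi-\kappa_0|\;\le\;|\dot\xi-\kappa|+|\kappa-\kappa_0|\;\lesssim\;\eps^{9/8}\;\lesssim\;\eps,
\]
establishing the second claim. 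The whole argument is essentially algebraic once one recognizes that the key gain comes from the $e^{-s}$ Jacobian absorbing the $e^{3s/4}$ prefactor when integrating back to the $t$-variable; no delicate cancellation is required because all bootstraps already carry the correct exponential weights.
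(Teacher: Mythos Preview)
Your proposal is correct and follows essentially the same route as the paper: integrate $\dot\kappa$ over the $t$-interval of length $\lesssim\eps$ to bound $\kappa-\kappa_0$, then rearrange the definition of $\mu$ in \eqref{def:mu} to control $\dot\xi-\kappa$. The only difference is that the paper simply invokes the bootstrap $|\dot\kappa|\le\eps^{1/8}$ directly (giving $\eps\cdot\eps^{1/8}=\eps^{9/8}$), whereas you first re-derive the sharper bound $|\dot\kappa|\lesssim\eps^{1/6}$ from \eqref{eq:ODE:1}---this is redundant for the present lemma (the paper does it separately) but harmless, and yields the slightly stronger $\eps^{7/6}$.
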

\begin{proof} We integrate 
\begin{align*}
|\kappa(t) - \kappa_0| \le \int_{1-\eps}^t |\dot{\kappa}| \ud t' \lesssim \eps^{\frac 9 8}\,, 
\end{align*}
where we have invoked \eqref{e:GW0}. 

For $\dot{\xi}$, we rearrange \eqref{def:mu} to obtain 
\begin{align*}
\beta_\tau \dot{\xi} = \beta_\tau \kappa - e^{- \frac s 4} \mu + \beta_\tau \beta_2 Z(0, s)\,.
\end{align*} 
Estimating the right-hand side and using that $\beta_\tau \ge \frac 1 2$ on the left-hand side yields 
\begin{align*}
|\dot{\xi} - \kappa_0| \lesssim |\kappa - \kappa_0| + e^{- \frac s 4} |\mu| + \| Z \|_\infty \lesssim \eps^{\frac 9 8} +  \eps^{\frac 1 6} e^{-  s} + \eps^{\frac 5 4}\,, 
\end{align*}
where we have invoked the bootstrap bounds \eqref{e:GW0} and \eqref{Z:boot:0}. 
\end{proof}

The following lemma verifies the bootstraps on $\dot{\tau}$, the second estimate of \eqref{e:GW0}. 
\begin{lemma}[$\dot{\tau}$ Estimate]  The following estimates are valid,
\begin{align*}
|\dot{\tau}| \les M^2 e^{-s}\,.
\end{align*}
\end{lemma}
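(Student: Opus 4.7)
The plan is to extract $\dot\tau$ algebraically from the constraint ODE \eqref{eq:ODE:2}, rather than integrating in time, since the identity
\[
\dot\tau = \frac{1}{\beta_\tau} G_W^{(1)}(0,s) - \frac{\mu}{\beta_\tau} q^{(2)}(s) + \frac{1}{\beta_\tau} F_W^{(1)}(0,s)
\]
is a direct consequence of imposing $\partial_x W(0,s) = -1$ and the evolution equation \eqref{W:n} for $n = 1$ evaluated at $x = 0$. Since $|\beta_\tau - 1| \le 2\eps^{1/6} e^{-3s/4}$ by \eqref{e:1beta:bnd}, the prefactors $1/\beta_\tau$ are harmless, so the estimate reduces to bounding each of the three right-hand side terms.

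For the first term, $G_W^{(1)}(0,s)$, I would invoke the transport speed estimate \eqref{GW:transport:est} with $n=1$, which gives $\|G_W^{(1)}\|_\infty \lesssim M^2 e^{-s}$; this is the dominant term and supplies the claimed bound. For the second term, the bootstrap bounds \eqref{e:GW0} and \eqref{boot:decay} give $|\mu| \le \eps^{1/6} e^{-3s/4}$ and $|q^{(2)}| \le \eps^{1/10} e^{-3s/4}$, so $|\mu q^{(2)}| \lesssim \eps^{4/15} e^{-3s/2}$, which is much smaller than $M^2 e^{-s}$ for $s \ge -\log \eps$. For the third term, the forcing estimate \eqref{FW:est:1} yields $|F_W^{(1)}(0,s)| \le \|F_W^{(1)}\|_\infty \le \eps^{3/4} e^{-s}$, again absorbed into $M^2 e^{-s}$.

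Combining these three bounds gives
\[
|\dot\tau| \lesssim M^2 e^{-s} + \eps^{4/15} e^{-3s/2} + \eps^{3/4} e^{-s} \lesssim M^2 e^{-s},
\]
which is the desired conclusion. The argument is purely algebraic once the forcing and transport speed lemmas are in hand; there is no obstacle to speak of here, as the negative damping pathology that makes the $q^{(2)}$ and $q^{(3)}$ equations delicate does \emph{not} appear in the $\dot\tau$ equation, which is constraint-type (zeroth order in the modulation variable) rather than a damped ODE. The only subtlety is verifying that the $M^2 e^{-s}$ bound coming from $G_W^{(1)}$ is strictly better than the bootstrap $\eps^{1/6} e^{-3s/4}$ on $\dot\tau$ in \eqref{e:GW0}; this holds for $s \ge -\log\eps$ provided $\eps$ is chosen sufficiently small relative to $M$, which is consistent with the hierarchy \eqref{choice:M}.
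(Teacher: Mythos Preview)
Your proof is correct and follows essentially the same approach as the paper: rearrange the constraint identity \eqref{eq:ODE:2} for $\dot\tau$, then bound the three terms on the right-hand side using \eqref{GW:transport:est} for $G_W^{(1)}$, the bootstraps \eqref{e:GW0} and \eqref{boot:decay} for $\mu q^{(2)}$, and \eqref{FW:est:1} for $F_W^{(1)}$. The numerical bounds you obtain match the paper's line-for-line.
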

\begin{proof} We rearrange the first ODE equation, \eqref{eq:ODE:1}, to obtain the following estimate 
\begin{align} \n
|\dot{\tau}| \le & |(1 - \dot{\tau})| |G_W^{(1)}(s, 0)| - |(1 - \dot{\tau})| | \mu| | W^{(2)}(s, 0)| + |(1 - \dot{\tau})| | F_W^{(1)}(s, 0)| \\
\les &  M^{2} e^{-s} + \eps^{\frac 4 {15}}   e^{-\frac 32 s} +  \eps^{\frac 3 4} e^{-s}    \,,\n
\end{align}

\noindent where we have invoked the second estimate in \eqref{GW:transport:est}, the bootstrap bounds \eqref{e:GW0}, \eqref{boot:decay}, and the second estimate in \eqref{FW:est:1} to estimate the forcing.  
\end{proof}

The following verifies the bootstraps on $\mu$, the first estimate on \eqref{e:GW0}. 
\begin{lemma}[$\mu$ Estimate] The following estimates are valid,
\begin{align*}
|\mu| \lesssim_M e^{-s}. 
\end{align*}
\end{lemma}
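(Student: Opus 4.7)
The key observation is that although the natural constraint \eqref{eq:ODE:1} gives $\mu = \beta_\tau e^{-3s/4}\dot\kappa - F_W(0,s)$, this does not directly yield the claimed improved bound because $\dot\kappa$ is only controlled at the level $\eps^{1/8}$ by the bootstrap \eqref{e:GW0}, and $F_W(0,s)$ is only $\lesssim_M \eps e^{-3s/4}$ (as the term $\beta_4 A(0,s)\kappa$ lives at this scale). Instead the plan is to use the fourth constraint equation \eqref{eq:ODE:5}:
\begin{equation*}
q^{(5)} \mu + 10 \beta_\tau q^{(2)} q^{(3)} + \sum_{j = 2}^4 \binom{4}{j} G_W^{(j)}(0, s) q^{(5-j)} = F_W^{(4)}(0, s),
\end{equation*}
where the coefficient of $\mu$ is bounded below thanks to \eqref{moon}, namely $|q^{(5)}|\ge 100$. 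Solving for $\mu$ we obtain
\begin{equation*}
\mu = \frac{1}{q^{(5)}}\Bigl[ F_W^{(4)}(0,s) - 10 \beta_\tau q^{(2)} q^{(3)} - \sum_{j=2}^4 \binom{4}{j} G_W^{(j)}(0,s) q^{(5-j)} \Bigr].
\end{equation*}

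It remains to estimate each term on the right, all at the target scale $e^{-s}$. First, $|F_W^{(4)}(0,s)|\le \eps^{3/4} e^{-s}$ by \eqref{FW:est:1}. Next, by the crucial decay bootstraps \eqref{boot:decay}, $|q^{(2)} q^{(3)}|\le M^{40}\eps^{1/10} e^{-7s/4}$, and since $s\ge -\log\eps$ we have $e^{-3s/4}\le \eps^{3/4}$, giving $|q^{(2)} q^{(3)}|\le M^{40}\eps^{17/20} e^{-s}\lesssim_M e^{-s}$ for $\eps$ small. Finally, the transport-speed estimates \eqref{GW:transport:est} give $\|G_W^{(j)}\|_\infty\lesssim M^{2j^2} e^{-s}$ for $j\ge 1$; the dominant contribution is the $j=4$ term where $q^{(1)}=W^{(1)}(0,s)=-1$ by the constraint \eqref{e:constraints}, yielding $|G_W^{(4)} q^{(1)}|\lesssim M^{32} e^{-s}$. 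The remaining $j=2,3$ terms are paired with $q^{(3)}, q^{(2)}$ and are therefore $\lesssim_M \eps^{1/10} e^{-7s/4}$, which is negligible.

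Combining these contributions and using $|q^{(5)}|\ge 100$, we conclude $|\mu|\lesssim M^{32} e^{-s}\lesssim_M e^{-s}$, which in particular closes the bootstrap \eqref{e:GW0} on $\mu$ for $\eps$ sufficiently small relative to $M$. The main (conceptual) obstacle is recognising that the correct equation to invert for $\mu$ is \eqref{eq:ODE:5} rather than \eqref{eq:ODE:1}; the actual estimates are then routine consequences of the already-established forcing, transport-speed, and constraint bootstraps.
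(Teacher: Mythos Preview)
Your proof is correct and follows essentially the same approach as the paper: both invert the constraint equation \eqref{eq:ODE:5} for $\mu$, use the lower bound \eqref{moon} on $|q^{(5)}|$, and then estimate the right-hand side via the forcing bound \eqref{FW:est:1}, the decay bootstraps \eqref{boot:decay} on $q^{(2)}, q^{(3)}$, and the transport-speed estimates \eqref{GW:transport:est} on $G_W^{(j)}$. Your additional remark explaining why \eqref{eq:ODE:1} is the wrong equation to invert is a helpful piece of exposition not present in the paper's proof.
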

\begin{proof} We rearrange \eqref{eq:ODE:5} for $\mu(s)$, yielding 
\begin{align} \label{gW:eq}
q^{(5)} \mu(s) = &- 10 \beta_\tau q^{(2)} q^{(3)} -  \sum_{j = 2}^4 \binom{4}{j} G_W^{(j)}(0, s) q^{(5-j)} + F_W^{(4)}(s, 0)\,. 
 \end{align}

\noindent We use the bootstrap that $| q^{(5)}(s)| \ge \frac 1 2$, \eqref{moon}, to estimate from below the denominators. We then estimate the right-hand side via  
\begin{align} \n
|\mu| &\lesssim |q^{(2)}| |q^{(3)}| + \sum_{j = 2}^3 \binom{4}{j} |G_W^{(j)}(0, s)| | q^{(5-j)}| + |G_W^{(4)}(0, s)| + |F_W^{(4)}(0, s)| \\
 & \lesssim_M  e^{-  \frac 3 2 s} + e^{-\frac 7 4 s} + e^{-s} + \eps^{\frac 3 4} e^{-s} \lesssim_M e^{-s} \,,\n
\end{align}
where we have invoked \eqref{GW:transport:est} with $j = 4$, and \eqref{FW:est:1} for the $F_W^{(4)}$ term, as well as the decay bootsraps \eqref{boot:decay} on $q^{(2)}, q^{(3)}$. 
\end{proof}

The following lemma verifies the bootstrap \eqref{e:GW0} on $\dot{\kappa}$. 
\begin{lemma}[$\dot{\kappa}$ Estimate]  The following estimates are valid, 
\begin{align*}
|\dot{\kappa}| \le \frac{\eps^{\frac 1 8}}{2}\,. 
\end{align*}
\end{lemma}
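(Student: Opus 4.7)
The plan is to read $\dot{\kappa}$ directly off the ODE \eqref{eq:ODE:1}, which was obtained by evaluating the $W$ equation at $x = 0$ under the constraint $W(0,s) = 0$. Solving that algebraic relation yields
\begin{align*}
\dot{\kappa} \;=\; \frac{e^{\frac{3}{4}s}}{\beta_\tau}\bigl(\mu(s) + F_W(0,s)\bigr)\,,
\end{align*}
so the task reduces to bounding each factor on the right-hand side using estimates that have already been established in this section and in the preliminary estimates of Section 6.

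First I would use the bootstrap $|\dot\tau| \le \eps^{1/6} e^{-3s/4}$ from \eqref{e:GW0}, which gives $\beta_\tau \ge \tfrac{1}{2}$ and hence allows us to absorb the $\beta_\tau^{-1}$ factor harmlessly. Next I would invoke the improved estimate $|\mu| \lesssim_M e^{-s}$ just proven (which is stronger than the bootstrap \eqref{e:GW0}), and the forcing estimate $|F_W(0,s)| \le \|F_W\|_\infty \le \eps^{3/4} e^{-3s/4}$ from \eqref{FW:est:1}. Combining these yields
\begin{align*}
|\dot{\kappa}| \;\lesssim\; e^{\frac{3}{4}s}\bigl(|\mu| + |F_W(0,s)|\bigr) \;\lesssim_M\; e^{-\frac{s}{4}} + \eps^{\frac{3}{4}}\,.
\end{align*}

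Since the analysis is carried out for $s \ge s_0 = -\log\eps$, the first term satisfies $e^{-s/4} \le \eps^{1/4}$, giving $|\dot{\kappa}| \lesssim_M \eps^{1/4}$. The implicit constant depends only on $M$, while the target bound $\eps^{1/8}/2$ carries no $M$ dependence; this is reconciled by the parameter hierarchy in which $\eps$ is taken significantly smaller than any power of $M^{-1}$ (cf.\ \eqref{choice:M}), so that $C(M)\,\eps^{1/4} \le \tfrac{1}{2}\eps^{1/8}$ for $\eps$ sufficiently small. I do not foresee any genuine obstacle here: the only subtlety is that the amplification factor $e^{3s/4}$ is precisely cancelled by the $e^{-s}$ decay in $\mu$ and the $e^{-3s/4}$ decay in $F_W(0,s)$, which is why the enhanced estimate on $\mu$ proven in the previous lemma (rather than the weaker bootstrap) is essential for closing this estimate with room to spare.
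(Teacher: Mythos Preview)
Your proof is correct and follows the same route as the paper: rearrange \eqref{eq:ODE:1} for $\dot\kappa$, then estimate $\mu$ and $F_W(0,s)$. One small correction: your closing remark that the enhanced $\mu$ estimate is \emph{essential} is not accurate --- the paper simply invokes the bootstrap $|\mu|\le \eps^{1/6}e^{-3s/4}$ from \eqref{e:GW0}, which already gives $e^{3s/4}|\mu|\le \eps^{1/6}\ll \eps^{1/8}$ with room to spare, so the improved $|\mu|\lesssim_M e^{-s}$ is not needed here.
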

\begin{proof} We rearrange equation \eqref{eq:ODE:1} to obtain
\begin{align} \n
|\dot{\kappa}| \le & |(1 - \dot{\tau})| e^{\frac 3 4 s} |\mu| + |(1 - \dot{\tau})| e^{\frac 3 4 s} |F_W(0, s)| \le 2  \eps^{\frac 1 6}  + \eps^{\frac 3 4}  \le M \eps^{\frac 1 6}\,,  
\end{align} 
where we have invoked bootstrap \eqref{e:GW0} for the $\mu$ estimate, and \eqref{FW:est:1} for the estimate on $F_W$.  
\end{proof}

\subsection{$\nabla_{\alpha, \beta}$ derivatives of modulation variables}

The following lemma verifies the bootstraps in \eqref{Baba:O:1}. 
\begin{lemma} Let $c \in \{ \alpha, \beta \}$. Then the following estimates are valid
\begin{align} \label{cor:mod}
 |\kappa_{c}| \lesssim \eps^{\frac 3 4}, \qquad |\dot{\xi}_{c}| \le \frac M 2 \eps^{\frac 1 2}\,.
\end{align}
\end{lemma}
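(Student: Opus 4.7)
The plan is to estimate $\kappa_c$ by direct integration in time and to estimate $\dot{\xi}_c$ via the algebraic identity defining $\mu$.

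For $\kappa_c$, I will use that the initial value $\kappa(t_0) = \kappa_0$ is prescribed independent of the parameters $(\alpha, \beta)$, so $\kappa_c(t_0) = 0$. Integrating $\p_c \dot{\kappa}$ in $t$ from $t_0 = -\eps$ and converting to the self-similar variable via $\ud t = (1-\dot\tau)^{-1} e^{-s} \ud s$, the bootstrap $|\p_c \dot{\kappa}| \le \eps^{1/4} e^{s/2}$ from \eqref{mako:1} produces
\begin{equation*}
|\kappa_c(s)| \le \int_{s_0}^s |\p_c \dot{\kappa}(s')| (1-\dot\tau)^{-1} e^{-s'} \ud s' \lesssim \eps^{\frac 1 4} \int_{s_0}^s e^{- \frac{s'}{2}} \ud s' \lesssim \eps^{\frac 1 4} e^{- \frac{s_0}{2}} = \eps^{\frac 3 4}\,,
\end{equation*}
which is the claimed bound. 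Here I have used $s_0 = -\log\eps$ and the trivial lower bound $(1-\dot\tau)^{-1} \le 2$ from \eqref{e:1beta:bnd}.

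For $\dot{\xi}_c$, I will differentiate in $c$ the rearrangement $\beta_\tau \dot{\xi} = \beta_\tau \kappa - e^{-s/4}\mu + \beta_\tau \beta_2 Z(0, s)$ of \eqref{def:mu}. Using the identity $\p_c \beta_\tau = \beta_\tau^2 \dot{\tau}_c$ and solving for $\dot{\xi}_c$ (the division by $\beta_\tau$ is harmless thanks to \eqref{e:1beta:bnd}), I obtain
\begin{equation*}
|\dot{\xi}_c| \lesssim |\dot{\tau}_c|\bigl(|\dot{\xi}|+|\kappa|+\|Z\|_\infty\bigr) + |\kappa_c| + e^{-s/4}|\mu_c| + \|\p_c Z\|_\infty\,.
\end{equation*}
Estimating each term via the bootstraps \eqref{mako:1}--\eqref{mako:2} on the $\p_c$ modulation and $Z$ quantities, the zeroth-order bootstraps \eqref{mod:sub} and \eqref{Z:boot:0}, and the freshly established bound $|\kappa_c|\lesssim \eps^{3/4}$, the dominant contribution is $|\dot{\tau}_c||\dot{\xi}| \lesssim \kappa_0 \eps^{1/2}$, while the $\mu_c$ piece contributes at most $M^{33} \eps^{1/2} e^{-s/2} \le M^{33} \eps$, which is negligible compared to $\eps^{1/2}$ under the $\eps$-hierarchy from Section \ref{section:Bootstraps}.

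The only point requiring attention is the absorption of the universal constant $\kappa_0$ into the target constant $M/2$; this follows from the parameter hierarchy $\kappa_0 \ll M$ stipulated at the start of Section \ref{section:Bootstraps}. No substantive analytic difficulty arises, since \eqref{def:mu} algebraically converts the otherwise non-ODE quantity $\dot{\xi}_c$ into a combination of already-controlled modulation quantities and the boundary value $\p_c Z(0, s)$, with every factor inheriting decay in $s$ (or smallness in $\eps$) from the previously closed bootstraps.
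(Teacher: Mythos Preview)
Your proof is correct and follows essentially the same approach as the paper: integrate $\p_c\dot\kappa$ in $t$ for the first bound, and for the second differentiate the algebraic identity \eqref{def:mu} to solve for $\dot\xi_c$. The only cosmetic difference is that the paper differentiates \eqref{def:mu} in the form $\mu = \beta_\tau(\cdots)$ and thereby collects all the $\p_c\beta_\tau$ contributions into the single term $\dot\tau_c\beta_\tau\mu$, whose size is $\lesssim \eps^{2/3}e^{-s}$ by the $\mu$-bootstrap \eqref{e:GW0}; your expansion instead produces $|\dot\tau_c|(|\dot\xi|+|\kappa|+\|Z\|_\infty)\lesssim \kappa_0\eps^{1/2}$, which is larger but still absorbed by $M/2$ via the hierarchy $\kappa_0\ll M$.
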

\begin{proof} First, we have for every $- \eps \le t \le 0$, 
\begin{align*}
|\p_c \kappa(t)| = |\int_{ -\eps}^{t} \p_c \dot{\kappa}(t') \ud t' | \le \int_{-\eps}^t \eps^{\frac 1 4} e^{\frac 1 2 s(t')} \ud t' \le \int_{s_0}^\infty \eps^{\frac 1 4} e^{\frac  1 2 s'} e^{-s'} \ud s' \lesssim \eps^{\frac 3 4},
\end{align*}

\noindent where we have used that $\ud s = e^{-s} \ud t$, and the bootstrap assumption on $\dot{\kappa}_c$ in \eqref{mako:1}. 

We now compute $\p_c$ of equation \eqref{def:mu} to obtain the identity 
\begin{align} \label{free:folk:1}
\mu_c = \dot{\tau}_c \beta_\tau \mu - e^{\frac s 4} \dot{\xi}_c \beta_\tau + e^{\frac s 4} \kappa_c \beta_\tau + \beta_\tau \beta_2 e^{\frac s 4} Z_c(s, 0)\,,  
\end{align}
which upon rearranging for $\dot{\xi}_c$, we obtain 
\begin{align}
|\dot{\xi}_c| \lesssim e^{- \frac s 4}|\mu_c| + e^{- \frac s 4} |\dot{\tau}_c| |\mu| + |\kappa_c| + \| Z_c \|_\infty \lesssim {M^{33}} \eps^{\frac 1 2} e^{- \frac s 2} + \eps^{\frac 2 3} e^{- s}+ \eps^{\frac 1 2} + \eps^{\frac 1 2} \le \frac M 2 \eps^{\frac 1 2}\,,\n
\end{align}
where above we have invoked the bootstrap assumptions \eqref{mako:1} for $\p_c$ of the modulation variables, and \eqref{mako:2} for $\p_c Z$.  
\end{proof}

The following verifies the first bootstrap in \eqref{mako:1}. 
\begin{lemma} Let $c \in \{ \alpha, \beta \}$. Then the following estimates are valid
\begin{align*}
|\mu_c| \le \frac{{M^{33}}}{2} \eps^{\frac 1 2} e^{- \frac s 4}.
\end{align*}
\end{lemma}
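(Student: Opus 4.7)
The plan is to differentiate in the parameter $c$ the algebraic identity that defines $\mu$, namely equation \eqref{eq:ODE:5}, and then solve for $\mu_c$ using the nondegeneracy $|q^{(5)}| \ge 100$ from \eqref{moon}. Unlike in the derivation of the bound on $\dot\xi_c$, we cannot extract $\mu_c$ directly from the identity \eqref{free:folk:1} (which only ties $\mu_c$ to $\dot\xi_c$); we must instead work with the algebraic constraint arising from differentiating $W^{(4)}$ at $x=0$, where $q^{(5)}$ appears as a nonvanishing coefficient.

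Differentiating \eqref{eq:ODE:5} in $c$ yields
\begin{align*}
q^{(5)} \mu_c &= \p_c F_W^{(4)}(0,s) - (\p_c q^{(5)}) \mu - 10 (\p_c \beta_\tau)\, q^{(2)} q^{(3)} - 10\beta_\tau \bigl[ (\p_c q^{(2)}) q^{(3)} + q^{(2)} (\p_c q^{(3)}) \bigr] \\
&\qquad - \sum_{j=2}^4 \binom{4}{j}\Bigl[ (\p_c G_W^{(j)}(0,s))\, q^{(5-j)} + G_W^{(j)}(0,s)\, \p_c q^{(5-j)} \Bigr].
\end{align*}
The crucial algebraic simplification is that the term $G_W^{(4)}(0,s)\, \p_c q^{(1)}$ vanishes identically, since the constraint $W^{(1)}(0,s) \equiv -1$ imposed by \eqref{e:constraints} forces $\p_c q^{(1)} = 0$. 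Dividing by $q^{(5)}$ and invoking \eqref{moon} to bound $|q^{(5)}|^{-1} \le 1/100$, the remainder of the proof is a term-by-term estimation.

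The dominant contribution on the right-hand side is $\binom{4}{4}(\p_c G_W^{(4)}(0,s))\, q^{(1)} = -\p_c G_W^{(4)}(0,s)$, which by \eqref{r:est:1} is controlled by $C M^{32} \eps^{1/2} e^{-s/4}$; this is the term that dictates the prefactor $M^{33}/2$ in the conclusion. All other contributions carry extra powers of $\eps$ or $e^{-s}$: estimate $\p_c F_W^{(4)}(0,s)$ by $\eps^{3/4} e^{-s/4}$ via \eqref{Fwc.est.ult}; estimate $(\p_c q^{(5)}) \mu$ by combining the enhanced bound \eqref{hotel:motel} with the previously established $|\mu| \lesssim_M e^{-s}$, giving a term of order $\eps^{3/8} e^{-7s/8}$; estimate the quadratic-in-$q$ terms using \eqref{boot:decay} together with \eqref{mako:1} and \eqref{mako:4}--\eqref{mako:5}, yielding contributions no larger than $M^{40} \eps^{3/5} e^{-7s/4} + \eps^{11/10} e^{-s/4}$; and estimate the remaining mixed terms $(\p_c G_W^{(j)}) q^{(5-j)}$ and $G_W^{(j)} \p_c q^{(5-j)}$ for $j=2,3$ via \eqref{GW:transport:est}, \eqref{r:est:1}, \eqref{boot:decay}, and \eqref{mako:4}--\eqref{mako:5}, obtaining bounds that are polynomial in $M$ but of order $\eps^{1/2} e^{-5s/4}$ or smaller.

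Assembling these estimates, the total is $\le C M^{32} \eps^{1/2} e^{-s/4}$ up to lower-order terms, and taking $\eps$ sufficiently small relative to $M$ absorbs all contributions into $\tfrac{M^{33}}{2} \eps^{1/2} e^{-s/4}$. The main (and essentially only) obstacle is the tight bookkeeping with $M$-powers to confirm that the single $M^{32}$ contribution from $\p_c G_W^{(4)}$ is the binding constraint; the bootstrap hierarchy \eqref{choice:M} is designed precisely so that this works and all other $M$-factors remain strictly subdominant.
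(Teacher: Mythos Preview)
Your proof is correct and follows essentially the same approach as the paper: differentiate the algebraic relation \eqref{eq:ODE:5} (equivalently \eqref{gW:eq}) in $c$, divide through by $q^{(5)}$ using the lower bound \eqref{moon}, and estimate term by term, with the dominant contribution $\p_c G_W^{(4)}(0,s)\,q^{(1)}$ producing the $M^{32}\eps^{1/2}e^{-s/4}$ bound via \eqref{r:est:1}. Your observation that $G_W^{(4)}(0,s)\,\p_c q^{(1)}$ vanishes identically is correct and slightly more explicit than the paper's presentation, which simply absorbs this term into the sum.
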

\begin{proof} We take $\p_c$ of equation \eqref{gW:eq} which produces the identity 

\begin{align} \n
 q^{(5)} \mu_c = &- q_c^{(5)} \mu - 10 \dot{\tau}_c \beta_\tau^2 q^{(2)} q^{(3)} - 10 \beta_\tau (q^{(2)}_c q^{(3)} + q^{(2)} q^{(3)}_c)  \\  \label{deriv:gW}
 &- \sum_{j = 2}^4 \binom{4}{j} (G_W^{(j)}(0, s) q_c^{(5-j)} + \p_c G_W^{(j)}(0, s) q^{(5-j)}) + \p_c F_W^{(4)}(0, s)\,, 
\end{align}
where we recall that $q^{(j)}(s) := W^{(j)}(0, s)$, according to \eqref{q:def:q}. We now estimate each of the terms on the right-hand side above. 
 \begin{align*}
 |\mu_c| &\lesssim | q^{(5)}_c | |\mu| + |\dot{\tau}_c| |q^{(2)}| |q^{(3)}| + |q^{(2)}_c| |q^{(3)}| + |q^{(2)}| |q^{(3)}_c| \\
 &\qquad + \sum_{j = 2}^4 (|G_W^{(j)}(s, 0)| \| q^{(5-j)}_c \|_\infty + \| \p_c G_W^{(j)} \|_\infty |q^{(5-j)}|) + \| \p_cF_W^{(4)} \|_\infty \\
 &\lesssim \eps^{\frac {13} {24}} e^{- \frac 5 8 s} + \eps^{\frac 1 2} e^{- \frac 3 2 s} + M^{40}\eps^{\frac 3 4} e^{- \frac s 4} +  \eps^{\frac35}e^{-\frac{s}{4}} +M^{18} \eps^{\frac 3 4} e^{-\frac 1 4 s}  + M^{32} \eps^{\frac 1 2} e^{- \frac s 4} + \eps^{\frac 3 4} e^{- \frac s 4}\les M^32\eps^{\frac12}e^{-\frac s4}\,, 
 \end{align*}
 where we have invoked estimates \eqref{GW:transport:est}, \eqref{r:est:1} for the $G_W^{(j)}$ contributions, and \eqref{Fwc.est.ult} for the forcing term, \eqref{e:GW0} and \eqref{mako:1} to estimate $\mu$ and $ \dot \tau_c$, as well as the estimates \eqref{boot:decay}, \eqref{mako:4}, \eqref{mako:5}, \eqref{hotel:motel} to bound the terms involving q. We have also invoked bootstrap \eqref{mako:6}. 
\end{proof}

The following verifies the second bootstrap in \eqref{mako:1}. 
\begin{lemma}Let $c \in \{ \alpha, \beta \}$. Then the following estimates are valid
\begin{align*}
|\dot{\tau}_c| \le \frac 1 2 \eps^{\frac 1 2}\,.
\end{align*}
\end{lemma}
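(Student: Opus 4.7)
The plan is to mimic the strategy used in the preceding lemma for $\dot{\tau}$ itself: differentiate the constrained ODE \eqref{eq:ODE:2} in the parameter $c \in \{\alpha,\beta\}$, isolate $\dot{\tau}_c$ on the left-hand side, and then estimate each resulting term via the forcing estimates from Section~6 and the bootstrap assumptions from Section~4. Since $\beta_\tau^{-1} = 1 - \dot{\tau}$, applying $\p_c$ to \eqref{eq:ODE:2} yields an identity of the schematic form
\[
\dot{\tau}_c \bigl(1 + G_W^{(1)}(0,s) - \mu\, q^{(2)} + F_W^{(1)}(0,s)\bigr) = (1-\dot{\tau})\bigl(\p_c G_W^{(1)}(0,s) - \mu_c q^{(2)} - \mu\, \p_c q^{(2)} + \p_c F_W^{(1)}(0,s)\bigr).
\]
By \eqref{GW:transport:est} with $n=1$, \eqref{e:GW0}, \eqref{boot:decay} and \eqref{FW:est:3}, each term in the coefficient on the left (apart from $1$) is bounded by $M^2 e^{-s} + \eps^{17/60} e^{-3s/2} + \eps^{1/8} e^{-s/2} \ll 1$, so the coefficient is bounded below by, say, $\tfrac{1}{2}$ after choosing $\eps$ small relative to $M$.

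Next I would bound the four contributions on the right-hand side. The $\p_c G_W^{(1)}(0,s)$ term is controlled by \eqref{r:est:1} by $M^2\eps^{1/2} e^{-s/4}$. The product $\mu_c q^{(2)}$ is handled by combining the freshly-proved bound $|\mu_c| \le M^{33} \eps^{1/2} e^{-s/4}$ from \eqref{mako:1} with $|q^{(2)}| \le \eps^{1/10} e^{-3s/4}$ from \eqref{boot:decay}, giving $M^{33} \eps^{3/5} e^{-s}$. For $\mu\, \p_c q^{(2)}$, the bootstraps \eqref{e:GW0} and \eqref{mako:4}--\eqref{mako:5} combine to give
\[
|\mu\, \p_c q^{(2)}| \le \eps^{1/6} e^{-3s/4} \cdot 4 \eps^{3/4} e^{3s/4} \lesssim \eps^{11/12},
\]
which, crucially, is uniform in $s$. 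Finally, $|\p_c F_W^{(1)}(0,s)| \lesssim \eps^{3/4} e^{-s/4}$ by \eqref{Fwc.est.ult}. All four contributions are therefore bounded by $\eps^{11/12}$ times a power of $M$, which for $\eps$ sufficiently small relative to $M$ is strictly less than $\tfrac{1}{4}\eps^{1/2}$. Combining with the lower bound on the left-hand coefficient yields the claimed bound $|\dot{\tau}_c| \le \tfrac{1}{2}\eps^{1/2}$, strictly improving the bootstrap in \eqref{mako:1}.

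The only subtlety is bookkeeping: one must verify that the products appearing from differentiating the $\tfrac{\mu}{\beta_\tau}q^{(2)}$ term (namely $\dot{\tau}_c \mu q^{(2)}$) can be absorbed into the left-hand coefficient, and similarly for the $F_W^{(1)}/\beta_\tau$ term; this is automatic because both $\mu q^{(2)}$ and $F_W^{(1)}$ decay in $s$. There is no genuine obstacle here — the main work has already been done in establishing the $\mu_c$ estimate in the previous lemma, which in turn relied on the constrained ODE \eqref{eq:ODE:5}. The present lemma is a direct parallel to the $\dot{\tau}$ estimate above, with the forcing bounds upgraded to their $\p_c$ counterparts from Section~6.
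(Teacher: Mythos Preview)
Your approach is essentially identical to the paper's: differentiate \eqref{eq:ODE:2} in $c$, isolate $\dot\tau_c$, bound the coefficient below and the four right-hand terms above using exactly the same forcing and bootstrap estimates. The paper writes the differentiated identity as $\beta_\tau(1+\beta_\tau\dot\tau)\dot\tau_c = \p_c G_W^{(1)} - \mu_c q^{(2)} - \mu q^{(2)}_c + \p_c F_W^{(1)}$, which is algebraically equivalent to your form once you use $\beta_\tau\dot\tau = G_W^{(1)} - \mu q^{(2)} + F_W^{(1)}$; one minor bookkeeping slip is that the $\p_c G_W^{(1)}$ term gives $M^2\eps^{3/4}$ (not $\eps^{11/12}$) at $s=s_0$, so the dominant right-hand contribution is $\lesssim_M \eps^{3/4}$ rather than $\eps^{11/12}$, but this still beats $\eps^{1/2}$ and the argument closes exactly as you say.
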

\begin{proof} We take $\p_c$ of equation \eqref{eq:ODE:2} to obtain 
\begin{align} \label{Ipso:1}
\beta_\tau (1 + \beta_\tau \dot{\tau}) \dot{\tau}_c = \p_c G^{(1)}_W(0, s) - \mu_c q^{(2)} - \mu q^{(2)}_c + \p_c F^{(1)}_W(0, s)\,. 
\end{align}

\noindent We now estimate the right-hand side above via  
\begin{align*}
| \eqref{Ipso:1} | \lesssim & \| \p_c G_W^{(1)} \|_\infty + |\mu_c| |q^{(2)}| + |\mu| |q^{(2)}_c| + \| \p_c F_W^{(1)} \|_\infty \\
\lesssim & M^{2j^2} \eps^{\frac 1 2} e^{- \frac s 4} +{M^{33}}\eps^{\frac 3 5} e^{-  s } + \eps^{\frac{11}{12}} + \eps^{\frac 3 4} e^{- \frac s 4} \lesssim_M \eps^{\frac 3 4},
\end{align*}
where above, we have invoked estimate \eqref{r:est:2} for the $\p_c G_W^{(1)}$ contribution, bootstraps \eqref{e:GW0}, \eqref{mako:1} for the $\mu, \mu_c$ estimates respectively, bootstraps \eqref{boot:decay}, \eqref{mako:4} for the $q^{(2)}, q^{(2)}_c$ contributions respectively, and finally \eqref{Fwc.est.ult} for the $\p_c F_W^{(1)}$ estimate. 

Finally, to conclude, we estimate the prefactor on the left-hand side of \eqref{Ipso:1} from below 
\begin{align*}
\beta_\tau (1 + \beta_\tau \dot{\tau}) \ge \frac 7 8( 1 - \beta_\tau  |\dot{\tau}|) \ge \frac 3 4\,. 
\end{align*}
\end{proof}

The following verifies the third bootstrap in \eqref{mako:1}. 
\begin{lemma}Let $c \in \{ \alpha, \beta \}$. Then the following estimates are valid
\begin{align*}
|\dot{\kappa}_c| < \frac 12 \eps^{\frac 14} e^{\frac s 2}\,.
\end{align*}
\end{lemma}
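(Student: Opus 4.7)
The plan is to differentiate the first constrained ODE \eqref{eq:ODE:1} in $c \in \{\alpha,\beta\}$ and solve for $\dot\kappa_c$. Writing \eqref{eq:ODE:1} as
\[
e^{-\frac{3}{4}s}\dot\kappa = \frac{\mu}{\beta_\tau} + \frac{1}{\beta_\tau}F_W(0,s),
\]
and using $\p_c \beta_\tau = \beta_\tau^2\dot\tau_c$, applying $\p_c$ and multiplying by $e^{\frac{3}{4}s}$ yields the identity
\[
\dot\kappa_c \;=\; e^{\frac{3}{4}s}\Big(\tfrac{\mu_c}{\beta_\tau} + \tfrac{1}{\beta_\tau}\p_c F_W(0,s)\Big) \;-\; e^{\frac{3}{4}s}\,\beta_\tau\,\dot\tau_c\,\bigl(\mu + F_W(0,s)\bigr).
\]

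The right-hand side is then bounded using the already-established estimates. Specifically, I would invoke the bootstrap $|\mu_c|\le M^{33}\eps^{\frac 1 2}e^{-\frac s 4}$ from the preceding lemma, the forcing estimate $\|\p_c F_W\|_\infty \lesssim M\eps^{\frac 3 4}e^{-\frac s 4}$ from \eqref{Fwc.est.ult:0}, together with $|\mu|\le \eps^{\frac 1 6}e^{-\frac 3 4 s}$, $\|F_W\|_\infty \le \eps^{\frac 3 4}e^{-\frac 3 4 s}$ from \eqref{e:GW0} and \eqref{FW:est:1}, and $|\dot\tau_c|\le \eps^{\frac 1 2}$ from the just-proved bound. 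Combining these yields
\[
|\dot\kappa_c| \;\lesssim\; M^{33}\eps^{\frac 1 2}e^{\frac s 2} \;+\; M\eps^{\frac 3 4}e^{\frac s 2} \;+\; \eps^{\frac 2 3} \;+\; \eps^{\frac 5 4},
\]
where $\beta_\tau$ is treated as an order-one factor using \eqref{e:1beta:bnd}.

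The dominant term is $M^{33}\eps^{\frac 1 2}e^{\frac s 2}$, inherited from the $\mu_c$ estimate. Because the parameter hierarchy \eqref{choice:M} enforces that $\eps$ is smaller than any power of $M^{-1}$, we have $M^{33}\eps^{\frac 1 4}\ll 1$, so this term is absorbed by $\tfrac{1}{4}\eps^{\frac 1 4}e^{\frac s 2}$; the remaining terms are trivially smaller. This yields the desired inequality $|\dot\kappa_c|<\tfrac{1}{2}\eps^{\frac 1 4}e^{\frac s 2}$.

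There is no genuine obstacle here beyond the bookkeeping: the lemma is the last and least delicate of the modulation-variable estimates, and all the hard work (particularly the improved bound on $\mu_c$, which requires use of the sharp bootstraps on $q^{(2)},q^{(3)},q^{(5)}_c$ in \eqref{boot:decay}, \eqref{mako:4}, \eqref{hotel:motel}) has already been done. The only point to be careful about is that the $M^{33}$ constant from the $\mu_c$ bound is what forces the $\eps$-smallness assumption, so one must state the final step as ``taking $\eps$ sufficiently small relative to $M$.''
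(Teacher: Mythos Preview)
Your approach is essentially the same as the paper's: differentiate the constraint \eqref{eq:ODE:1} in $c$, isolate $\dot\kappa_c$, and bound each term using the $\mu_c$, $\dot\tau_c$, and $\p_c F_W$ estimates already in hand, with the dominant $M^{33}\eps^{1/2}e^{s/2}$ contribution absorbed by taking $\eps$ small relative to $M$. The only slip is a harmless algebra error---$\p_c(1/\beta_\tau)=-\dot\tau_c$, not $-\beta_\tau\dot\tau_c$, so your second term should carry no extra $\beta_\tau$ factor---but since $\beta_\tau\sim 1$ this does not affect the estimate.
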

\begin{proof} We compute $\p_c$ of equation \eqref{eq:ODE:1} to obtain the identity 
\begin{align} \label{identity:kappa:c1}
\beta_\tau \dot{\kappa}_c = e^{\frac 3 4 s} \mu_c - \dot{\kappa} \beta_\tau^2\dot{\tau}_c + e^{\frac 3 4 s} \p_c F_W(0, s)\,,  
\end{align}
upon which estimating yields 
\begin{align} \n
| \dot{\kappa}_c| \lesssim & e^{\frac 3 4 s} | \mu_c| + |\dot{\kappa}| |\dot{\tau}_c| + e^{\frac 3 4 s} \| \p_c F_W \|_\infty \lesssim {M^{33}}e^{\frac 1 2 s} \eps^{\frac 1 2} + \eps^{\frac 5 8}  + M  \eps^{\frac 3 4} e^{ \frac s 2} \lesssim_M \eps^{\frac 1 2} e^{\frac s 2}, 
\end{align}
where we have invoked the bootstraps on the modulation variables, \eqref{e:GW0}, \eqref{mako:1}, as well as the forcing estimate \eqref{Fwc.est.ult:0}.  

\end{proof}

\subsection{$\nabla_{\alpha, \beta}^2$ derivatives of modulation variables}

The following verifies the bootstraps in \eqref{group:2}. 
\begin{lemma}Let $c_i \in \{ \alpha, \beta \}$ for $i = 1,2$. Then the following estimates are valid
\begin{align*}
|\kappa_{c_1 c_2}| \le M^{\frac 5 2}  \eps^{\frac 5 4} e^s, \qquad |\dot{\xi}_{c_1 c_2}| \le M^{\frac 7 2} \eps^{\frac 5 4} e^{s}.
\end{align*}
\end{lemma}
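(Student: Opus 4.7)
The plan is to mimic the approach used for the first-order parameter derivative bounds in \eqref{cor:mod}. Both estimates follow from straightforward integration and algebraic manipulation once one invokes the previously established bootstraps on $\dot{\kappa}_{c_1 c_2}$ and $\mu_{c_1 c_2}$ from \eqref{group:1}, together with the bootstrap on $Z_{c_1 c_2}$ from \eqref{posty:1} evaluated at $x=0$.

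For the $\kappa_{c_1 c_2}$ estimate, the key observation is that at initial time $t = -\eps$, the value $\kappa(-\eps) = \kappa_0$ is a fixed constant independent of the parameters $(\alpha, \beta)$, so $\kappa_{c_1 c_2}(-\eps) = 0$. Thus
\[
\kappa_{c_1 c_2}(t) = \int_{-\eps}^t \dot{\kappa}_{c_1 c_2}(t') \, \ud t'\,,
\]
and upon changing variables via $\ud t' = \beta_\tau^{-1} e^{-s'} \ud s'$, the bootstrap $|\dot{\kappa}_{c_1 c_2}| \le M^2 \eps^{5/4} e^{2s'}$ from \eqref{group:1} yields $|\kappa_{c_1 c_2}(s)| \lesssim M^2 \eps^{5/4} \int_{s_0}^s e^{s'} \ud s' \lesssim M^2 \eps^{5/4} e^{s}$, which is strictly smaller than the desired $M^{5/2} \eps^{5/4} e^s$ provided $M$ is taken sufficiently large.

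For the $\dot{\xi}_{c_1 c_2}$ bound, I would rearrange the defining identity \eqref{def:mu} for $\dot{\xi}$ as
\[
\dot{\xi} = \kappa - \beta_\tau^{-1} e^{-s/4}\mu + \beta_2 Z(0,s)\,,
\]
and then apply $\p_{c_1 c_2}$. Using Leibniz on the product $(1 - \dot{\tau})\mu = \beta_\tau^{-1}\mu$, we obtain schematically
\[
\dot{\xi}_{c_1 c_2} = \kappa_{c_1 c_2} - e^{-s/4}\Big[ \beta_\tau^{-1}\mu_{c_1 c_2} - \dot{\tau}_{c_1 c_2}\mu - \sum_{i \in \{1,2\}}\dot{\tau}_{c_i}\mu_{c_{i'}} \Big] + \beta_2 Z_{c_1 c_2}(0,s)\,.
\]
Estimating term by term using the freshly established $\kappa_{c_1 c_2}$ bound, the bootstraps \eqref{group:1} on $\mu_{c_1 c_2}, \dot{\tau}_{c_1 c_2}$, the bootstraps \eqref{e:GW0}, \eqref{mako:1} on $\mu, \dot{\tau}_c, \mu_c$, and the bootstrap \eqref{posty:1} on $Z_{c_1 c_2}$, the dominant contributions are the $\kappa_{c_1 c_2}$ term of size $M^{5/2}\eps^{5/4} e^s$ and the $e^{-s/4} \beta_\tau^{-1}\mu_{c_1 c_2}$ term of size $M\eps^{5/4} e^s$, while all remaining cross terms are strictly smaller. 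Summing yields $|\dot{\xi}_{c_1 c_2}| \le M^{7/2}\eps^{5/4}e^s$ after taking $M$ sufficiently large.

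No real obstacle presents itself: this lemma is a bookkeeping step closing the $\nabla^2_{\alpha,\beta}$ bootstrap loop at the modulation-variable level, once the upstream estimates on $\mu_{c_1 c_2}$, $\dot{\kappa}_{c_1 c_2}$, $\dot{\tau}_{c_1 c_2}$, and $Z_{c_1 c_2}$ are in hand. The only mild subtlety is to verify that the $\beta_\tau^{-1}\mu_{c_1 c_2}$ contribution indeed enters the $\dot{\xi}_{c_1 c_2}$ estimate with the right exponential weight; the $e^{-s/4}$ prefactor precisely converts the $e^{5s/4}$ growth of $\mu_{c_1 c_2}$ into the required $e^s$ growth.
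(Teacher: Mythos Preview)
Your proposal is correct and follows essentially the same approach as the paper: integrate $\dot{\kappa}_{c_1 c_2}$ for the first bound, and for the second, differentiate the defining relation \eqref{def:mu} twice in the parameters and solve for $\dot{\xi}_{c_1 c_2}$. The only cosmetic difference is that you first rearrange \eqref{def:mu} for $\dot{\xi}$ (writing $\beta_\tau^{-1}\mu = (1-\dot{\tau})\mu$) and then differentiate, whereas the paper differentiates the identity for $\mu$ and then rearranges; the resulting estimates are the same, with the dominant contributions coming from $\kappa_{c_1 c_2}$ and $e^{-s/4}\mu_{c_1 c_2}$ exactly as you identify.
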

\begin{proof} We have to integrate 
\begin{align*}
|\kappa_{c_1 c_2}| = | \int_{1-\eps}^{t} \dot{\kappa}_{c_1 c_2}| \lesssim \int_{s_0}^s M^2 \eps^{\frac 5 4} e^{2s'} e^{-s'} \ud s' = M^2 \eps^{\frac 5 4} (e^s - e^{s_0}),
\end{align*}
where above we have invoked the bootstrap assumption \eqref{group:1} on $\dot{\kappa}_{c_1 c_2}$. 

 Next, we want to obtain an expression for $\dot{\xi}_{c_1 c_2}$. For this, we differentiate the expression \eqref{def:mu} which produces the identity 
\begin{align*}
\mu_{c_1 c_2} = \beta_\tau \dot{\tau}_{c_2} \mu_{c_1} +  \beta_\tau \dot{\tau}_{c_1} \mu_{c_2} + \dot{\tau}_{c_1 c_2} \beta_\tau \mu - e^{\frac s 4} \beta_\tau \dot{\xi}_{c_1 c_2} + e^{\frac s 4} \beta_\tau \kappa_{c_1 c_2} + \beta_\tau \beta_2 e^{\frac s 4} Z_{c_1 c_2}(s, 0),
\end{align*}
which rearranging for $\dot{\xi}_{c_1 c_2}$ gives 
\begin{align} \n
|\dot{\xi}_{c_1 c_2}| \lesssim & e^{- \frac s 4}( |\mu_{c_1 c_2}| + |\dot{\tau}_c| |\mu_c| + |\dot{\tau}_{c_1 c_2}| |\mu|) + |\kappa_{c_1 c_2}| + \| Z_{c_1 c_2} \|_\infty \\ \n
\lesssim & e^{- \frac s 4} (M \eps^{\frac 5 4} e^{\frac 5 4 s} + {M^{33}} \eps e^{- \frac s 4} + \eps^{\frac 7 6}) + M^3 \eps^{\frac 5 4} e^s + M^{2j^2} \eps^{\frac 5 8} e^{\frac s 4} \\ \n
\lesssim & M^3 \eps^{\frac 5 4} e^s\,, 
\end{align}
where above we have invoked \eqref{group:1} - \eqref{group:2} for the second derivatives of the modulation variables, \eqref{mako:1} for the $\dot{\tau}_c$ term, \eqref{e:GW0} for the $\mu$ term, and \eqref{posty:1} for the $Z_{c_1 c_2}$ contribution. 
\end{proof}

The following verifies the bootstraps in \eqref{group:1} on $\mu$. 
\begin{lemma} Let $c_i \in \{ \alpha, \beta \}$ for $i = 1,2$. Then the following estimates are valid
\begin{align*}
| \mu_{c_1 c_2}| \le \frac{ M}{2} \eps^{\frac 5 4} e^{\frac 5 4 s}.
\end{align*}
\end{lemma}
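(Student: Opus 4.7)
The strategy mirrors the single $\p_c$ estimate proved above: differentiate the algebraic identity \eqref{gW:eq} for $q^{(5)}\mu$ once more in $c_2$ and solve for $\mu_{c_1c_2}$, using the lower bound $|q^{(5)}|\ge 100$ from \eqref{moon} to invert the prefactor. Concretely, applying $\p_{c_2}$ to \eqref{deriv:gW} yields an identity of the schematic form
\begin{align*}
q^{(5)}\mu_{c_1c_2}
= & -q^{(5)}_{c_1 c_2}\mu - q^{(5)}_{c_1}\mu_{c_2} - q^{(5)}_{c_2}\mu_{c_1} \\
& - 10\,\p_{c_1c_2}\bigl[\dot\tau\,\beta_\tau^2 q^{(2)} q^{(3)}\bigr]
 - 10\,\p_{c_1c_2}\bigl[\beta_\tau(q^{(2)} q^{(3)})\bigr] \\
& - \sum_{j=2}^{4}\binom{4}{j}\p_{c_1c_2}\!\bigl[G_W^{(j)}(0,s)\,q^{(5-j)}\bigr]
 + \p_{c_1c_2}F_W^{(4)}(0,s),
\end{align*}
into which I expand all double parameter derivatives by the product rule. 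The goal is then to show each piece is bounded by $\tfrac{M}{2}\eps^{5/4}e^{5s/4}$.

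\textbf{Identification of the main term.} The dominant contribution comes from the $q^{(2)}_{c_1}q^{(3)}_{c_2}$ and $q^{(2)}_{c_2}q^{(3)}_{c_1}$ cross terms in $\p_{c_1c_2}(q^{(2)}q^{(3)})$: invoking \eqref{mako:4}--\eqref{mako:5} gives $|q^{(2)}_c|\lesssim \eps^{3/4}e^{3s/4}$ and $|q^{(3)}_c|\lesssim \eps^{1/2}e^{s/2}$, so
\[
\bigl|q^{(2)}_{c_1}q^{(3)}_{c_2}\bigr|\lesssim \eps^{5/4}e^{5s/4},
\]
which saturates the target bound up to a universal constant. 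Every other piece must be shown to be smaller.

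\textbf{Estimating the remaining terms.} Using \eqref{moon} to divide by $q^{(5)}$, I estimate the remaining contributions one at a time using the bootstraps already established:
\begin{itemize}
\item $|q^{(5)}_{c_1c_2}\mu|\lesssim M^{100}\eps^{3/2}e^{3s/2}\cdot \eps^{1/6}e^{-3s/4}=M^{100}\eps^{5/3}e^{3s/4}$ by \eqref{buddy:1} and \eqref{e:GW0}; since $\eps$ is taken small relative to $M$, this beats $M\eps^{5/4}e^{5s/4}$;
\item $|q^{(5)}_{c_i}\mu_{c_{i'}}|\lesssim \eps^{3/8}e^{s/8}\cdot M^{33}\eps^{1/2}e^{-s/4}$ by \eqref{hotel:motel} and \eqref{mako:1}, which is far smaller;
\item the $\dot\tau$-prefactored pieces (e.g.\ $\dot\tau_{c_1c_2}\beta_\tau^2 q^{(2)}q^{(3)}$, $\dot\tau_{c_1}\dot\tau_{c_2}q^{(2)}q^{(3)}$, $\dot\tau_c\, q^{(2)}_{c_{i'}}q^{(3)}$, etc.) are bounded using \eqref{mako:1}, \eqref{group:1}, and the decay bootstraps \eqref{boot:decay}; all are dominated by $\eps e^{3s/4}$ times small factors;
\item $q^{(2)}_{c_1c_2}q^{(3)}$ and $q^{(2)}q^{(3)}_{c_1c_2}$ are bounded by $M^{89}\eps^{3/2}e^{s/2}$ and $M^{64}\eps^{8/5}e^{3s/4}$ using \eqref{buddy:1} and \eqref{boot:decay};
\item the $G_W^{(j)}$-terms $G_W^{(j)}q^{(5-j)}_{c_1c_2}$, $\p_{c_i}G_W^{(j)}q^{(5-j)}_{c_{i'}}$, and $\p_{c_1c_2}G_W^{(j)}q^{(5-j)}$ for $j=2,3,4$ are estimated using \eqref{GW:transport:est}, \eqref{r:est:1}, and \eqref{spin:1} together with the previously established $q^{(j)}$ and $q^{(j)}_c$ bootstraps;
\item the forcing term $\p_{c_1c_2}F_W^{(4)}(0,s)$ is bounded by \eqref{disclosure:2} (applicable since $n=4\le 6$), giving $\lesssim \eps^{1/2}e^{s/2}$.
\end{itemize}

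\textbf{Conclusion.} Summing the estimates, the main $q^{(2)}_{c_i}q^{(3)}_{c_{i'}}$ contribution produces a term $\le C\eps^{5/4}e^{5s/4}$ with $C$ universal, while every other contribution is smaller than $\tfrac{M}{4}\eps^{5/4}e^{5s/4}$ once $\eps$ is chosen small enough relative to $M$. Dividing by $|q^{(5)}|\ge 100$ then yields $|\mu_{c_1c_2}|\le \tfrac{M}{2}\eps^{5/4}e^{5s/4}$. The main obstacle is purely organizational: the twice-differentiated identity generates a large number of terms, several of which saturate at the target scale, so one must verify that no single term produces a coefficient worse than $\tfrac{M}{2}$. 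This is handled by exploiting the gap between the $\p_c$ bootstraps for $q^{(2)},q^{(3)},\mu$ (which gain extra powers of $\eps$) and the relatively generous $\nabla_{\alpha,\beta}^2$ bootstraps \eqref{group:1}--\eqref{buddy:1}, together with the decay of the zero-derivative quantities.
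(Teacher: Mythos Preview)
Your proposal is correct and follows essentially the same approach as the paper: differentiate the identity \eqref{deriv:gW} once more in $c_2$, divide by $|q^{(5)}|\ge 100$, and estimate term by term, with the dominant contribution being exactly the cross term $|q^{(2)}_{c_i}q^{(3)}_{c_{i'}}|\lesssim \eps^{5/4}e^{5s/4}$ you identified. The only minor difference is that the paper bounds $|q^{(5)}_{c_i}|$ via the global bootstrap \eqref{mako:6} rather than the sharper localized bound \eqref{hotel:motel} you used, but both choices suffice.
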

\begin{proof} We differentiate equation \eqref{deriv:gW} in $\p_{c_2}$ to get 
\begin{align}  \label{bird:1}
q^{(5)} \mu_{c_1 c_2} = & - q^{(5)}_{c_2} \mu_{c_1} - q^{(5)}_{c_1 c_2} \mu - 10 ( \dot{\tau}_{c_1 c_2} + 2 \dot{\tau}_{c_1} \dot{\tau}_{c_2} ) \beta_\tau^2 q^{(2)} q^{(3)} - 10 \beta_\tau^2 \dot{\tau}_{c_{i'}} (q^{(2)}_{c_i} q^{(3)} + q^{(2)} q^{(3)}_{c_i}) \\ \label{bird:2}
& - 10 \beta_\tau (q^{(3)}_{c_i} q^{(2)}_{c_{i'}} + q^{(2)}_{c_1 c_2} q^{(3)} + q^{(2)} q^{(3)}_{c_1 c_2}) - \sum_{j = 2}^4 \binom{4}{j} (\p_{c_2}G_W^{(j)}(s, 0) q_{c_1}^{(5-j)} \\ \label{bird:3}
& + G_W^{(j)}(s, 0) q_{c_1 c_2}^{(5-j)} + \p_{c_1}G_W^{(j)}(s, 0) q_{c_2}^{(5-j)} + \p_{c_1 c_2} G_W^{(5-j)}(s, 0) q^{(5-j)} ) + \p_{c_1 c_2} F_W^{(4)}(s, 0)\,. 
\end{align}

\noindent We now estimate all of the terms above, line by line, starting with 
\begin{align*}
| \eqref{bird:1} |& \lesssim  \| W^{(5)}_c \|_\infty |\mu_c| + \| W^{(5)}_{c_1 c_2} \|_\infty |\mu| + ( |\dot{\tau}_{c_1 c_2}| + |\dot{\tau}_c|^2) |q^{(2)}| |q^{(3)}| + |\dot{\tau}_c| ( |q^{(2)}_c| |q^{(3)}| + |q^{(2)}| |q^{(3)}_c| ) \\
&\lesssim_M  \eps^{\frac 5 4} e^{\frac s 2}+ \eps^{\frac 5 3} e^{\frac 3 4 s}  + ( \eps e^{\frac 3 4 s} + \eps )\eps^{\frac1{10}} e^{-\frac 7 4s}  +  \eps^{\frac12}(\eps^{\frac34}e^{-\frac s4}+\eps^{\frac 3{5}}e^{-\frac s4 })\les_{M} \eps^{\frac54} e^{\frac34 s}\,.
\end{align*}
Above, we have invoked \eqref{mako:6}, \eqref{buddy:1} for the $W^{(5)}_c, W^{(5)}_{c_1 c_2}$ contributions, respectively, \eqref{e:GW0}, \eqref{mako:1} and \eqref{group:1} for the estimates on the modulation variable, \eqref{boot:decay} for the decay estimates on $q^{(2)}, q^{(3)}$, and finally \eqref{mako:4} and \eqref{mako:5} for $q^{(2)}_c, q^{(3)}_c$ estimates. 

Next, we bound the terms in \eqref{bird:3} 
\begin{align*}
|\eqref{bird:3}| \lesssim & |q_c^{(3)}| |q^{(2)}_c| + |q^{(3)}| |q^{(2)}_{c_1 c_2}| + |q^{(2)}| |q^{(3)}_{c_1 c_2}| + \sum_{j = 2}^4 |\p_c G_W^{(j)}(s, 0)| \| W_c^{(5-j)}\|_{\infty} \\
\lesssim &\eps^{\frac 5 4} e^{\frac 5 4 s} +\eps^{\frac 3 2} e^{\frac s 2 }+\eps^{\frac85 }e^{\frac34s} +  M^{18}\eps^{\frac54}e^{-\frac s2}\les \eps^{\frac 5 4} e^{\frac 5 4 s} 
 \,,
\end{align*}
where above we have invoked estimate \eqref{r:est:1} for the transport term, as well as  the bootstraps \eqref{boot:decay}, \eqref{mako:4}, \eqref{mako:5}, \eqref{buddy:1} for the $q^{(2)}, q^{(3)}$ quantities (and their derivatives in $c$). 

Lastly, we estimate the terms in \eqref{bird:3}
\begin{align*}
\| \eqref{bird:3} \|_\infty &\lesssim \sum_{j = 2}^4 (\| G_W^{(j)} \|_\infty \| q_{c_1 c_2}^{(5-j)} \|_\infty + \| \p_c G_W^{(j)} \|_\infty \| W_c^{(5-j)} \|_\infty + \| \p_{c_1 c_2}G_W^{(j)} \|_\infty ) + \| \p_{c_1 c_2} F_W^{(4)} \|_\infty \\
&\lesssim_M  \eps^{\frac 3 2} e^{\frac s 2} + \eps^{\frac 5 4} e^{\frac s 2}  + \eps^{\frac 5 8} e^{\frac s 2} + \eps e^{\frac s 2},
\end{align*} 
where we have invoked estimates \eqref{GW:transport:est}, \eqref{r:est:1}, \eqref{spin:1}, and \eqref{disclosure:2}.   

\end{proof}

The following verifies the bootstraps \eqref{group:1} on $\dot{\tau}_{c_1 c_2}$. 
\begin{lemma}Let $c_i \in \{ \alpha, \beta \}$ for $i = 1,2$. Then the following estimates are valid
\begin{align*}
|\dot{\tau}_{c_1 c_2}| \le \frac{1}{2} \eps e^{\frac 3 4 s}.
\end{align*}
\end{lemma}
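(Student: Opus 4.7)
The approach is to differentiate the identity \eqref{Ipso:1} once more in $\p_{c_2}$ (equivalently, take $\p_{c_1 c_2}$ of the ODE \eqref{eq:ODE:2}) and solve for $\dot{\tau}_{c_1 c_2}$. Writing $\p_{c_2}$ of \eqref{Ipso:1}, I obtain an identity of the schematic form
\begin{align*}
\beta_\tau (1 + \beta_\tau \dot{\tau}) \dot{\tau}_{c_1 c_2} = &\; \p_{c_1 c_2} G_W^{(1)}(0,s) - \mu_{c_1 c_2}\, q^{(2)} - \mu_{c_1} q^{(2)}_{c_2} - \mu_{c_2} q^{(2)}_{c_1} - \mu\, q^{(2)}_{c_1 c_2} \\
& + \p_{c_1 c_2} F_W^{(1)}(0,s) - \mathcal{R}(\dot{\tau}_{c_1}, \dot{\tau}_{c_2}, \dot{\tau}_{c_1 c_2}) \dot{\tau}_{c_1},
\end{align*}
where $\mathcal{R}$ collects the correction terms arising from $\p_{c_2}$ hitting the prefactor $\beta_\tau(1 + \beta_\tau \dot{\tau})$ (all involving products of $\dot\tau_c$ with $\dot\tau_{c'}$ or $\dot\tau_{c_1 c_2}$). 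The prefactor on the left is bounded below using \eqref{e:GW0} by $\beta_\tau(1+\beta_\tau \dot\tau) \ge \tfrac{3}{4}$, so the bootstrap closes provided the right-hand side is bounded by, say, $\tfrac{1}{4}\eps e^{\frac{3}{4}s}$.

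To estimate the right-hand side, I plug in the already-established bounds term by term. The transport contribution $\p_{c_1 c_2} G_W^{(1)}$ is controlled by \eqref{spin:1} (with $n=1$) by $\lesssim M^{2}\eps^{\frac{5}{8}} e^{\frac{s}{2}}$. The $\mu_{c_1 c_2} q^{(2)}$ term is dominated using the freshly proven bound $|\mu_{c_1 c_2}|\le \tfrac{M}{2}\eps^{\frac54}e^{\frac54 s}$ together with \eqref{boot:decay}, giving $\lesssim M\eps^{\frac{27}{20}}e^{\frac{s}{2}}$. For the mixed term $\mu_{c}q^{(2)}_{c'}$, the bootstrap \eqref{mako:1} on $\mu_c$ pairs with \eqref{mako:4}--\eqref{mako:5} on $q^{(2)}_c$ to yield $\lesssim M^{33}\eps^{\frac{5}{4}}e^{\frac{s}{2}}$. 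The term $\mu\, q^{(2)}_{c_1 c_2}$ uses \eqref{e:GW0} and \eqref{buddy:1} (with $n=2$), producing $\lesssim M^{49}\eps^{\frac{5}{3}}e^{\frac{3}{4}s}$. The forcing term $\p_{c_1 c_2}F_W^{(1)}$ is bounded by \eqref{disclosure:2} by $\lesssim \eps^{\frac12} e^{\frac{s}{2}}$. Finally the remainder $\mathcal{R}$ is controlled by combining \eqref{mako:1} (for $\dot\tau_c$) with \eqref{group:1} (for $\dot\tau_{c_1 c_2}$) and the previously estimated $\p_c$ version of the identity.

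Each of these contributions is of the form $M^{A}\eps^{B}e^{Cs}$ with either $C<\tfrac{3}{4}$ (and $B$ at least $\tfrac{1}{2}$) or $C=\tfrac{3}{4}$ with $B>1$. Since $s\ge -\log\eps$, trading powers of $e^{s}$ for powers of $\eps^{-1}$ and invoking the hierarchy \eqref{choice:M} (so that $\eps$ is smaller than any power of $M^{-1}$) makes each contribution absorbable into $\tfrac{1}{8}\eps e^{\frac{3}{4}s}$, which after summation yields the desired strict inequality.

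The main obstacle is tracking whether the cross term $\mu_c q^{(2)}_{c'}$ actually fits: since $|q^{(2)}_c|$ grows like $\eps^{3/4}e^{\frac{3}{4}s}$ and $|\mu_c|$ is only $M^{33}\eps^{\frac{1}{2}}e^{-\frac{s}{4}}$, the product has the target growth rate $e^{\frac{s}{2}}<e^{\frac{3}{4}s}$ and is small in $\eps$, but the large power $M^{33}$ must be beaten by the $\eps$ hierarchy; this is the place where the choice of $\eps$ small relative to any power of $M^{-1}$ is invoked most sharply. The analogous issue appears for $\mu\, q^{(2)}_{c_1 c_2}$, where the huge factor $M^{49}$ from \eqref{buddy:1} must similarly be absorbed by $\eps^{\frac{2}{3}}$, again relying on \eqref{choice:M}.
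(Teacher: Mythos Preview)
Your proposal is correct and follows essentially the same route as the paper: differentiate \eqref{Ipso:1} in $\p_{c_2}$, isolate $\dot{\tau}_{c_1 c_2}$ on the left with the prefactor $\beta_\tau(1+\beta_\tau\dot\tau)\ge \tfrac{3}{4}$, and bound each term on the right via \eqref{spin:1}, \eqref{boot:decay}, \eqref{mako:1}, \eqref{mako:4}--\eqref{mako:5}, \eqref{buddy:1}, \eqref{group:1}, \eqref{disclosure:2}, and the $\eps\ll M^{-k}$ hierarchy. One minor imprecision: when $\p_{c_2}$ hits the prefactor $\beta_\tau(1+\beta_\tau\dot\tau)$, the resulting correction terms are all products of $\dot\tau_{c_1}$ with $\dot\tau_{c_2}$ (and $\beta_\tau$, $\dot\tau$); no $\dot\tau_{c_1 c_2}$ appears on the right-hand side, so your remainder $\mathcal{R}$ need not depend on it.
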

\begin{proof} We take $\p_{c_2}$ of equation \eqref{Ipso:1} to obtain the identity 
\begin{align} \n
\beta_\tau (1 + \beta_\tau \dot{\tau}) \dot{\tau}_{c_1 c_2} = & - \beta_\tau^2 (1 + \beta_\tau \dot{\tau}) \dot{\tau}_{c_1} \dot{\tau}_{c_2} - \beta_\tau^3 \dot{\tau} \dot{\tau}_{c_2} \dot{\tau}_{c_1} - \beta_\tau^2 \dot{\tau}_{c_2} \dot{\tau}_{c_1} - \p_{c_1 c_2} G_W^{(1)}(0, s) \\ \label{bravo:1}
&- \mu_{c_1 c_2} q^{(2)} - \mu_{c_i} q^{(2)}_{c_{i'}} - \mu q^{(2)}_{c_1 c_2} + \p_{c_1 c_2} F_W^{(1)}(0,s)\,. 
\end{align}
We now estimate each of the terms on the right-hand side above via 
\begin{align} \n
|\dot{\tau}_{c_1 c_2}| &\lesssim  |\dot{\tau}_c|^2 + (|\dot{\tau}| + 1) |\dot{\tau}_c|^2 + \| \p_{c_1 c_2} G_W^{(1)} \|_\infty + |q^{(2)}| |\mu_{c_1 c_2}| + |\mu_c| |q^{(2)}_c| \\ \n
&\qquad + |\mu| \| W^{(2)}_{c_1 c_2} \|_\infty + \| \p_{c_1 c_2} F_W^{(1)} \|_\infty \\ 
&\lesssim_M \eps +  \eps^{\frac 5 8} e^{\frac s 2} + \eps^{\frac 5 4} e^{\frac s 2} + \eps^{\frac 5 4} e^{\frac s 2} + \eps^{\frac 5 4} e^{\frac 3 4 s} + \eps e^{\frac s 2}\,,\n
\end{align}
where we have invoked estimates \eqref{spin:1} for the $G_W^{(1)}$ term above and \eqref{disclosure:2} for the $F_W^{(1)}$ term. We have also invoked \eqref{mako:1}, \eqref{group:1} - \eqref{group:2} for the modulation variables, and \eqref{buddy:1}. 

\end{proof}

The following verifies the bootstraps on $\dot{\kappa}_{c_1 c_2}$, the second estimate in \eqref{group:1}. 
\begin{lemma} Let $c_i \in \{ \alpha, \beta \}$ for $i = 1,2$. Then the following estimates are valid
\begin{align*}
|\dot{\kappa}_{c_1 c_2}| \le \frac{M^2}{2} \eps^{\frac 5 4} e^{2s}\,.
\end{align*}
\end{lemma}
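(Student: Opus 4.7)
The plan is to mimic the strategy used for the first-derivative estimate on $\dot{\kappa}_c$, by differentiating the identity \eqref{identity:kappa:c1} in $\p_{c_2}$ in order to obtain a clean algebraic expression for $\dot{\kappa}_{c_1 c_2}$ in terms of quantities already controlled by the existing bootstrap estimates. Explicitly, applying $\p_{c_2}$ to \eqref{identity:kappa:c1} and using $\p_{c_2} \beta_\tau = \beta_\tau^2 \dot{\tau}_{c_2}$ yields an identity of the schematic form
\begin{align*}
\beta_\tau \dot{\kappa}_{c_1 c_2} = e^{\frac 3 4 s} \mu_{c_1 c_2} - \beta_\tau^2 \dot{\tau}_{c_2} \dot{\kappa}_{c_1} - \beta_\tau^2 \dot{\tau}_{c_1} \dot{\kappa}_{c_2} - \dot{\kappa}(\beta_\tau^2 \dot{\tau}_{c_1 c_2} + 2 \beta_\tau^3 \dot{\tau}_{c_1} \dot{\tau}_{c_2}) + e^{\frac 3 4 s} \p_{c_1 c_2} F_W(0,s).
\end{align*}
Since $\beta_\tau \ge \frac 1 2$ by \eqref{e:1beta:bnd}, rearranging reduces the problem to bounding each term on the right.

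Term by term, using the already-verified bootstraps on the modulation variables and the forcing estimate \eqref{disclosure:1}, I expect the following budgets: the leading contribution comes from $e^{\frac 3 4 s} |\mu_{c_1 c_2}|$, which by \eqref{group:1} is at most $M \eps^{\frac 5 4} e^{2s}$; the cross terms $|\dot\tau_c||\dot\kappa_c|$ are bounded by $\eps^{\frac 1 2} \cdot \eps^{\frac 1 4} e^{\frac s 2}$ via \eqref{mako:1}; the $\dot{\kappa}$ terms are negligible using $|\dot{\kappa}| \le \eps^{\frac 1 8}$ from \eqref{e:GW0} together with \eqref{group:1} to handle $\dot\tau_{c_1 c_2}$; and finally $e^{\frac 3 4 s} \|\p_{c_1 c_2} F_W\|_\infty \le \eps^{\frac 1 2} e^{\frac 5 4 s}$ from \eqref{disclosure:1}. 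All of these are absorbed into $\frac{M^2}{2} \eps^{\frac 5 4} e^{2s}$ provided $M$ is taken sufficiently large relative to universal constants, with a factor of $M$ to spare coming from the $M^2$ in the target bound versus the $M$ contributed by the leading $\mu_{c_1 c_2}$ estimate.

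There is no real obstacle here beyond careful bookkeeping: the required identity is produced by a single application of $\p_{c_2}$, and every contribution on the right-hand side has already been controlled in prior lemmas of Section~5, so the main task is simply to verify that the $e^{2s}$ growth rate is dictated exclusively by the $\mu_{c_1 c_2}$ term and that all other terms decay strictly faster in $s$. The structural reason this closes is that $\dot{\kappa}_{c_1 c_2}$ inherits precisely an $e^{\frac 3 4 s}$ amplification of $\mu_{c_1 c_2}$, exactly in parallel with how $\dot{\kappa}_c$ inherits $e^{\frac 3 4 s} \mu_c$ in \eqref{identity:kappa:c1}, so the scaling of the bootstrap $M^2 \eps^{\frac 5 4} e^{2s}$ versus the $\mu_{c_1 c_2}$ bootstrap $M \eps^{\frac 5 4} e^{\frac 5 4 s}$ is manifestly consistent.
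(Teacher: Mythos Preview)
Your proposal is correct and follows essentially the same approach as the paper: differentiate \eqref{identity:kappa:c1} in $\p_{c_2}$, isolate $\beta_\tau \dot{\kappa}_{c_1 c_2}$, and bound each right-hand term using the existing bootstraps, with the dominant contribution coming from $e^{\frac 3 4 s}|\mu_{c_1 c_2}| \le M \eps^{\frac 5 4} e^{2s}$. If anything, your identity is slightly more careful than the paper's displayed one, since you retain the $\dot{\kappa}\,\beta_\tau^2 \dot{\tau}_{c_1 c_2}$ term (harmlessly bounded by $\eps^{\frac 1 8} \cdot \eps e^{\frac 3 4 s}$), which the paper absorbs without writing explicitly.
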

\begin{proof} We compute $\p_{c_2}$ of equation \eqref{identity:kappa:c1} to get to 
\begin{align} \n
|\beta_\tau \dot{\kappa}_{c_1 c_2}| =& |- \beta_\tau^2 \dot{\tau}_{c_i} \dot{\kappa}_{c_{i'}} + e^{\frac 3 4 s} \mu_{c_1 c_2} - 2 \dot{\kappa} \beta_\tau^2 \dot{\tau}_{c_1} \dot{\tau}_{c_2} + e^{\frac 3 4s} \p_{c_1 c_2}F_W(0, s)| \\ \n
\lesssim & \eps^{\frac 3 4} e^{\frac s 2} + M \eps^{\frac 5 4} e^{2s} + \eps^{\frac 5 4} e^{\frac s 2} + \eps^{\frac 1 2} e^{\frac 5 4 s} \lesssim M \eps^{\frac 5 4} e^{2s}\,,
\end{align}
where we have invoked estimates \eqref{mako:1} for the first derivative of the modulation variables in $c$, \eqref{group:1} for the $\mu_{c_1 c_2}$ term, and estimate \eqref{disclosure:1} for the $\p_{c_1 c_2} F_W$ term. 

\end{proof}

\section{Analysis of $Z$ and $A$}

For this section, we consider the equations for $Z$ and $A$ given by \eqref{eq:Z:0} and \eqref{eq:A:0}. We begin with the lowest order estimate, for which there is no damping, in which we verify the first bootstrap assumption in \eqref{Z:boot:0}.
\begin{lemma} The quantities $(Z, A)$ satisfy the following bounds 
\begin{align} \label{feel:1}
&\| Z \|_\infty \le \frac 3 4 \eps^{\frac 5 4}\,,  && \| Z^{(n)} \|_{\infty} \le \frac{M^{2n^2}}{2} e^{- \frac 5 4 s} \text{ for } 1 \le n \le 8\,, \\ \label{feel:2}
&\| A \|_\infty \le \frac 3 4 M \eps\,,  && \| A^{(n)} \|_{\infty} \le \frac{M^{2n^2}}{2} e^{- \frac 5 4 s} \text{ for } 1 \le n \le 8\,,  
\end{align}
which thereby verifies the bootstraps \eqref{Z:boot:0} and \eqref{A:boot:9}. 
\end{lemma}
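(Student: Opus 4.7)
The plan is to prove each of the four families of estimates in \eqref{feel:1}--\eqref{feel:2} by integrating along the trajectories $\Phi_Z^{x_0}, \Phi_A^{x_0}$ from Subsection \ref{subsect:trajectory}, combining the damping visible on the left-hand side of \eqref{Z:n}--\eqref{A:n} with the forcing estimates already in hand.

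First I would handle the two ``base'' estimates (the $n=0$ cases) where there is no damping and one must rely purely on temporal decay of the forcing. Along $\Phi_Z^{x_0}$ the equation \eqref{basic:z} becomes $\frac{d}{ds}(Z \circ \Phi_Z) = F_Z \circ \Phi_Z$, so integrating from $s_0=-\log\eps$ gives
\begin{align*}
|Z \circ \Phi_Z^{x_0}(s)| \le \|Z_0\|_\infty + \int_{s_0}^s \|F_Z\|_\infty \, ds' \le \eps^{3/2} + \eps^{3/4}\int_{s_0}^s e^{-s'}\,ds' \le \eps^{3/2} + \eps^{7/4} \le \tfrac{3}{4}\eps^{5/4},
\end{align*}
using the initial data bound and \eqref{Z:0:order}. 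The identical argument for $A$, using \eqref{A:0:order}, yields $\|A\|_\infty \le \eps^{3/2} + M^{1/2}\cdot \eps \le \tfrac34 M\eps$ once $M$ is chosen sufficiently large.

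For the derivative bounds with $n\ge 1$, I would use the integrating-factor form of \eqref{Z:n}. Composed with $\Phi_Z$ it becomes
\begin{align*}
\tfrac{d}{ds}\Bigl(e^{\Lambda_n^Z(s)} Z^{(n)}\circ\Phi_Z\Bigr) = e^{\Lambda_n^Z(s)} F_{Z,n}\circ\Phi_Z, \qquad \Lambda_n^Z(s) := \int_{s_0}^s\Bigl(\tfrac{5n}{4} + n\beta_\tau\beta_2 W^{(1)}\circ\Phi_Z\Bigr)\,ds'.
\end{align*}
The key point is damping control: using $|\beta_\tau\beta_2| \le 1+o(1)$ from \eqref{e:1beta:bnd} and the uniform bound $|W^{(1)}| \le \ell\log M\,\eta_{-1/5}$ from \eqref{e:uniform:W1}, we obtain $\Lambda_n^Z(s) \ge \tfrac{5n}{4}(s-s_0) - C n\ell\log M\cdot(s-s_0)$, so the effective damping rate is at least $\tfrac{5n}{4}(1-o(1))$. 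Applying Duhamel with the initial bound $\|Z_0^{(n)}\|_\infty \le \eps^{3/2}$ and the forcing bound $\|F_{Z,n}\|_\infty \lesssim M^{2n^2-1}e^{-5s/4}$ from \eqref{gen:n:FZ:est} (together with \eqref{FZ:est:1} for $n=1$) gives
\begin{align*}
\|Z^{(n)}\|_\infty \lesssim e^{-\frac{5n}{4}(s-s_0)}\eps^{3/2} + e^{-\frac{5s}{4}}\int_{s_0}^s e^{-\frac{5(n-1)}{4}(s-s')}\,M^{2n^2-1}\,ds'.
\end{align*}
Since $s_0=-\log\eps$, the first term is bounded by $\eps^{5(n-1)/4}\eps^{3/2-5n/4}\cdot e^{-5s/4} = \eps^{1/4}e^{-5s/4}$, and for $n\ge 2$ the integral in the second term is bounded by $\tfrac{4}{5(n-1)}$, giving a total of at most $M^{2n^2-1}e^{-5s/4}$; for $n=1$ the forcing estimate \eqref{FZ:est:1} has the favorable structure $\eps^{1/2}e^{-5s/4}I(s)+e^{-3s/2}$ with $\int I<1$, which yields the same conclusion. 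In both cases the bound sits comfortably below $\tfrac{M^{2n^2}}{2}e^{-5s/4}$.

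The argument for $A^{(n)}$ is identical, swapping $\beta_2 \mapsto \beta_1$ in the damping term (still $|\beta_1|\le 1$), using the trajectory $\Phi_A$, and invoking \eqref{gen:n:FA:est} (and \eqref{FA:est:1} for $n=1$) in place of the $F_Z$ estimates. The only step that requires any genuine care is the lower bound on $\Lambda_n$: one must verify that the $n W^{(1)}$ contribution, which is only marginally smaller than $\tfrac{5n}{4}$ in absolute value, is in fact controlled by $\ell\log M \ll 1$ thanks to \eqref{e:uniform:W1} rather than the weaker \eqref{eq:W1:bnd:1}. Once this is established, the remaining estimates are straightforward Gr\"onwall computations and no interaction with the modulation ODEs or the $W$-bootstraps is needed.
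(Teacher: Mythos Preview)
Your treatment of the $n=0$ bounds is fine and matches the paper. The gap is in your damping estimate for $n\ge 1$. You assert that $\ell\log M\ll 1$, and from this conclude $\Lambda_n^Z(s)\ge\tfrac{5n}{4}(s-s_0)(1-o(1))$. But the parameter choice \eqref{choice:M} is $\ell^{-1}=\log\log M$, so $\ell\log M=\tfrac{\log M}{\log\log M}\to\infty$ as $M\to\infty$. With only the pointwise bound $|W^{(1)}|\le\ell\log M$, your lower bound on $\Lambda_n^Z$ becomes $\bigl(\tfrac{5n}{4}-Cn\ell\log M\bigr)(s-s_0)$, which is large and \emph{negative}; the integrating factor then grows rather than decays, and the Duhamel argument collapses. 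Falling back to \eqref{eq:W1:bnd:1} does not save this either: it gives effective damping $n(\tfrac54-|\beta_2|)$, which for $|\beta_2|$ close to $1$ (i.e.\ $\lambda$ near $0$ or $\lambda$ large) is strictly less than $\tfrac54$, and then the Duhamel integral against $M^{2n^2-1}e^{-\frac54 s'}$ produces a bound that grows relative to $e^{-\frac54 s}$.

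The missing ingredient is the trajectory lower bound \eqref{traj:large:z}: along $\Phi_Z^{x_0}$ one has $|\Phi_Z^{x_0}(s)|\gtrsim e^{s/4}$ (away from a single crossing time $s_*$), so $\eta_{-1/5}\circ\Phi_Z^{x_0}$ is \emph{integrable in $s$} with a bound independent of $x_0$. Combining this with $|W^{(1)}|\le \ell\log M\,\eta_{-1/5}$ gives
\[
\int_{s_0}^s n|\beta_\tau\beta_2|\,|W^{(1)}|\circ\Phi_Z\,ds' \;\le\; C n\,\ell\log M,
\]
a constant (not growing in $s$). Hence $e^{-\Lambda_n^Z(s)+\Lambda_n^Z(s')}\le C_n e^{-\frac{5n}{4}(s-s')}$ with $C_n=e^{Cn\ell\log M}=M^{Cn/\log\log M}=M^{o(1)}$, and the full damping rate $\tfrac{5n}{4}$ is retained. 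From there your Duhamel computation goes through exactly as written. The same remark applies verbatim to the $A^{(n)}$ argument with $\Phi_A$ in place of $\Phi_Z$.
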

\begin{proof} An application of the Gr\"onwall lemma coupled with estimate \eqref{Z:0:order} yields the estimate  
\begin{align} \n
\| Z(\Phi_Z(s, x), s) \|_\infty \le & \| Z(x, s_0) \|_\infty + \int_{s_0}^s  \| F_Z(\Phi_Z(s', x), s') \|_\infty ds' \\
\le & \frac 1 2 \eps^{\frac 5 4} + \int_{s_0}^s \eps^{\frac 3 4 } e^{- s'} \ud s' \le \frac 3 4 \eps^{\frac 5 4}\,,\n
\end{align}

\noindent which establishes the desired bound upon invoking that $\Phi_Z(\cdot, x)$ is a diffeomorphism for all $s \ge s_0$. 

According to \eqref{Z:n}, we calculate 
\begin{align} \nonumber
e^{-\int_{s_0}^s \Big( \frac{5n}{4} + n \beta_\tau \beta_2 W^{(1)}  \Big) \circ \Phi_Z \ud s'} = & e^{- \frac{5n}{4}(s - s_0)} e^{- \int_{s_0}^s n \beta_\tau \beta_2 W^{(1)} \circ \Phi_Z} \\ \n
\le & e^{- n \beta_\tau \beta_2 \int_{s_0}^s  \eta_{- \frac 1 5} \circ \Phi_Z }e^{- \frac{5n}{4}(s - s_0)} \\
\le &C_n e^{- \frac{5n}{4}(s - s_0)}\,.\n
\end{align}

Using this estimate, coupled with \eqref{gen:n:FZ:est}, the Gr\"onwall lemma, we estimate for $n \ge 2$, 
\begin{align} \n
|Z^{(n)}(\Phi_Z(x, s), s)| \le & C_n |e^{-\frac{10}{4} (s - s_0)} Z^{(n)}(s_0, x)| + C_n \int_{s_0}^s |e^{-\frac{10}{4}(s - s')} F_{Z,n} \circ \Phi_Z |\ud s'   \\ \n
\le & C_n \eps^{\frac 5 4} e^{- \frac 5 4 (s - s_0)} +  C_n \int_{s_0}^s e^{- \frac{10}{4} (s - s')}M^{2n-1} e^{- \frac 5 4 s'} \ud s' \\ \n
 \le & \frac{M^{2n}}{2} e^{- \frac 5 4 s}\,.
\end{align}

We now perform a similar calculation for $n = 1$, using estimate \eqref{FZ:est:1} in place of \eqref{gen:n:FZ:est}. For the $A$ estimates, the identical arguments apply using Lemma \ref{phone:1}. 

\end{proof}

\begin{lemma} For $1 \le n \le 7$, we have the following estimates on $Z$ and $A$
\begin{align*}
&\| \p_c Z \|_\infty \le \frac 1 2 \eps^{\frac 1 2}\,, && \| \p_c Z^{(n)} \|_\infty \le \frac 1 2 M^{2k^2} \eps^{\frac 1 2} e^{- \frac s 2}\,, \\
&\| \p_c A \|_\infty \le \frac 1 2 \eps^{\frac 1 2}\,, && \| \p_c A^{(n)} \|_\infty \le \frac 1 2 M^{2k^2} \eps^{\frac 1 2} e^{- \frac s 2}\,, 
\end{align*}
which thereby verifies the bootstraps \eqref{mako:2} - \eqref{mako:3}. 
\end{lemma}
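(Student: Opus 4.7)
The plan is to derive every bound via the method of characteristics along the transport flows $\Phi_Z$ and $\Phi_A$ from Subsection~\ref{subsect:trajectory}, exploiting the crucial fact that the initial data $Z_0,A_0$ are independent of the parameters $(\alpha,\beta)$ (as specified in Section 4, where only $W_0$ depends on $\alpha,\beta$). Consequently, $\p_c Z(\cdot,s_0)\equiv 0$ and $\p_c A(\cdot,s_0)\equiv 0$, and all their spatial derivatives vanish initially. This eliminates the initial data contribution entirely from the Gr\"onwall estimates, so the bounds come solely from time-integrating the forcing.

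For the $n=0$ bounds, I would work directly with \eqref{every:time:1} and \eqref{socialite:1}, neither of which has a linear damping term. Composing with the characteristic $\Phi_Z$ and integrating from $s_0$ gives
\begin{equation*}
|\p_c Z(\Phi_Z(x,s),s)| \le \int_{s_0}^s |F_{Z,0}^c(\Phi_Z(x,s'),s')|\,\mathrm{d}s' \les \int_{s_0}^s \eps^{\frac12}e^{-\frac{s'}{2}}\,\mathrm{d}s' \les \eps^{\frac12}e^{-\frac{s_0}{2}} = \eps,
\end{equation*}
by the forcing bound in \eqref{Force:Z:pc:1}. Since $\eps \ll \eps^{\frac12}$, this closes the bootstrap \eqref{mako:2} for $\p_c Z$ with a large margin, and the identical computation applied to \eqref{socialite:1} with \eqref{Force:A:pc:1} handles $\p_c A$.

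For $1 \le n \le 7$, the damped equations \eqref{Midterms:1} and \eqref{Midterms:3} provide integrating factors of the form $\exp(-\int_{s'}^s(\tfrac{5n}{4} + n\beta_\tau\beta_j W^{(1)})\circ \Phi\,\mathrm{d}s'')$ with $j\in\{1,2\}$. Following the same strategy used to derive \eqref{feel:1}, the contribution of $n\beta_\tau\beta_j W^{(1)}\circ \Phi$ is uniformly bounded because $|W^{(1)}| \le \eta_{-\frac15}$ from \eqref{e:uniform:W1}, and the escape estimate \eqref{traj:large:z} guarantees $\int_{s_0}^s \eta_{-\frac15}\circ\Phi\,\mathrm{d}s'' \le C$. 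Consequently the integrating factor is bounded above by $C_n e^{-\frac{5n}{4}(s-s')}$. Combining this with the forcing estimates \eqref{Force:Z:pc:n} and \eqref{Force:A:pc:n} and applying Gr\"onwall yields
\begin{equation*}
|\p_c Z^{(n)}\circ \Phi_Z|\; \le\; C_n \int_{s_0}^s e^{-\frac{5n}{4}(s-s')} M^{2n^2-1}\eps^{\frac12}e^{-\frac{s'}{2}}\,\mathrm{d}s'\; \les\; M^{2n^2-1}\eps^{\frac12}e^{-\frac{s}{2}},
\end{equation*}
and analogously for $\p_c A^{(n)}$. The $M^{-1}$ gain in the forcing estimate absorbs the universal constant $C_n$ once $M$ is chosen large relative to $n$, delivering the sharp prefactor $\frac12 M^{2n^2}$.

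The only nuance I anticipate is verifying the positivity of the effective damping at $n=1$, where the coefficient $\tfrac{5}{4}+\beta_\tau\beta_j W^{(1)}$ is the smallest. Using $|W^{(1)}|\le 1 + e^{-\frac34 s}$ from \eqref{eq:W1:bnd:1} together with $|\beta_\tau \beta_j| \le 1$ (valid since $\lambda>0$ implies $\beta_1,\beta_2<1$, combined with \eqref{e:1beta:bnd}) gives net damping at least $\tfrac{5}{4}-1 - O(e^{-\frac34 s}) = \tfrac14 - O(e^{-\frac34 s}) > 0$ for $s\ge s_0$ with $\eps$ small, so the integrating-factor bound holds uniformly and the argument closes without obstruction.
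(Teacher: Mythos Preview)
Your proposal is correct and follows essentially the same approach as the paper: the paper's proof is a one-line appeal to Gr\"onwall together with the forcing bounds \eqref{Force:Z:pc:1}--\eqref{Force:Z:pc:n} and \eqref{Force:A:pc:1}--\eqref{Force:A:pc:n}, and you have simply spelled out the details (vanishing initial data, the integrating-factor bound via \eqref{traj:large:z} exactly as in the proof of \eqref{feel:1}, and the positivity check at $n=1$). Your observation that the $M^{-1}$ gain in the forcing absorbs the constant $C_n$ is the mechanism that closes the bootstrap with the factor $\tfrac12$.
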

\begin{proof} This follows immediately from Gr\"onwall, upon invoking the two right-most estimates in \eqref{Force:Z:pc:1} - \eqref{Force:Z:pc:n} for $Z$, and similarly \eqref{Force:A:pc:1} - \eqref{Force:A:pc:n} for $A$. 
\end{proof}

\begin{lemma} For $0 \le n \le 6$, 
\begin{align*}
& \| \p_{c_1 c_2} Z^{(n)} \|_\infty \le \frac 1 2 M^{2n^2} \eps^{\frac 5 8} e^{\frac s 4}\,, && \| \p_{c_1 c_2} A^{(n)} \|_\infty \le\frac 1 2 M^{2n^2} \eps^{\frac 5 8} e^{\frac s 4},
\end{align*}
which therefore verifies the bootstrap assumptions \eqref{posty:1} - \eqref{posty:2}. 
\end{lemma}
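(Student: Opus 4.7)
The plan is to close this by the same Grönwall/transport argument used in the previous two lemmas in this section. The governing equations are \eqref{Lauv:2} for $Z^{(n)}_{c_1 c_2}$ and \eqref{Lauv:6} for $A^{(n)}_{c_1 c_2}$. For both, the only $n$-dependent damping coefficient on the left-hand side is $\frac{5n}{4} + n\beta_\tau \beta_j W^{(1)}$ (with $\beta_j = \beta_2$ or $\beta_1$ respectively), and everything else has been packaged into the forcing terms $F_{Z,n}^{c_1,c_2}$ and $F_{A,n}^{c_1,c_2}$, which are controlled by Lemma \ref{Lemma:ka} and the subsequent lemma in the forcing section, namely
\begin{align*}
\| F_{Z,n}^{c_1,c_2} \|_\infty + \| F_{A,n}^{c_1,c_2} \|_\infty \lesssim M^{2n^2-1} \eps^{\frac{5}{8}} e^{\frac{s}{4}}\,.
\end{align*}

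First I would estimate the damping factor along characteristics, exactly as in the proof of \eqref{feel:1}: using $|W^{(1)}| \le 1 + e^{-\frac{3}{4}s}$ from \eqref{eq:W1:bnd:1}, or the sharper decay bound $|W^{(1)}| \le \eta_{-\frac{1}{5}}\ell \log M$ from \eqref{e:uniform:W1}, one obtains
\begin{align*}
\exp\Bigl(-\int_{s_0}^{s}\bigl(\tfrac{5n}{4} + n\beta_\tau \beta_j W^{(1)}\bigr)\circ \Phi \,\ud s'\Bigr) \le C_n \, e^{-\frac{5n}{4}(s-s_0)}\,,
\end{align*}
where $\Phi$ is $\Phi_Z^{x_0}$ or $\Phi_A^{x_0}$ as appropriate (the integral of $|W^{(1)}|\circ \Phi$ is finite by the trajectory lower bound \eqref{traj:large:z} and the $\eta_{-\frac{1}{5}}$ decay of $W^{(1)}$).

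Next I would apply Grönwall along the (invertible) trajectories. The initial data at $s_0 = -\log \eps$ contribute nothing: $Z_0$ and $A_0$ do not depend on $(\alpha,\beta)$ by \eqref{guitar:or:band}, so $\p_{c_1 c_2}Z^{(n)}(x,s_0) = \p_{c_1 c_2}A^{(n)}(x,s_0) = 0$. Hence, for $0 \le n \le 6$,
\begin{align*}
|Z^{(n)}_{c_1 c_2}(\Phi_Z(x,s),s)| &\le C_n \int_{s_0}^{s} e^{-\frac{5n}{4}(s-s')} \|F_{Z,n}^{c_1,c_2}\|_\infty \,\ud s' \\
&\le C_n M^{2n^2-1}\eps^{\frac{5}{8}} \int_{s_0}^{s} e^{-\frac{5n}{4}(s-s')} e^{\frac{s'}{4}}\,\ud s' \le \tfrac{1}{4}M^{2n^2}\eps^{\frac{5}{8}} e^{\frac{s}{4}}\,,
\end{align*}
provided $M$ is taken large compared to universal constants, which gains the needed factor $M$ over $M^{2n^2-1}$. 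For $n=0$ the same integration yields a bound of order $\eps^{\frac{5}{8}}(e^{\frac{s}{4}}-e^{\frac{s_0}{4}})$, still comfortably below $\tfrac{1}{2}\eps^{\frac{5}{8}}e^{\frac{s}{4}}$. The identical argument for \eqref{Lauv:6} closes the $A$ bootstrap.

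The only point requiring any care is the gain of a factor of $M$ in the final estimate. This is exactly why the forcing estimates were written with the sharper prefactor $M^{2n^2-1}$ (rather than $M^{2n^2}$); a quick inspection of the proof of Lemma \ref{Lemma:ka} shows this gain comes from the elementary inequality $j^2+2(n+1-j)^2\le 2n^2-1$ used in the nonlinear terms. Given this prefactor and the smallness of $\eps$ relative to $M$, the Grönwall step closes with a factor $1/2$ to spare, verifying \eqref{posty:1}--\eqref{posty:2}.
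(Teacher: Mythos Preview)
Your proposal is correct and follows essentially the same approach as the paper, which dispatches the lemma in one sentence by invoking Gr\"onwall together with the right-most forcing estimates in \eqref{jay:1}--\eqref{jay:2} and \eqref{found:me:1}--\eqref{found:me:2}. You have simply written out the details the paper suppresses: the vanishing of the initial data, the damping-factor estimate along $\Phi_Z$ and $\Phi_A$, and the observation that the $M^{2n^2-1}$ prefactor in the forcing gains the factor of $M$ needed to close the bootstrap.
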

\begin{proof}  This follows immediately from Gr\"onwall, upon invoking the two right-most estimates in \eqref{jay:1} - \eqref{jay:2} for $Z$, and similarly \eqref{found:me:1} - \eqref{found:me:2} for $A$. 
\end{proof}

\section{Analysis of $W$ at $x = 0$}\label{s:wx:at:0}

In this section, we analyze $W$ and higher order derivatives of $W$ at $x = 0$. While $q^{(0)}(s), q^{(1)}(s), q^{(4)}(s)$ are constrained from \eqref{e:constraints}, the quantities $q^{(2)}, q^{(3)}$ and $q^{(5)}$ are not constrained and therefore must be determined through ODEs in $s$ that they obey.  

\subsection{ODE analysis of $q^{(2)}, q^{(3)}$}

In this series of estimates, we use the crucial inductive assumption, \eqref{induct:1}, in order to integrate \textit{backwards} the flow. First, we rewrite the ODEs in the following way: 
\begin{align} \label{mika}
(\p_s  - \frac 3 4 )q^{(2)} = & \mathcal{F}^{(2)}(s), \qquad (\p_s - \frac 1 2) q^{(3)} = \mathcal{F}^{(3)}(s)\,. 
\end{align}

\noindent where 
\begin{align} \label{back:2}
&\mathcal{F}^{(2)} := 3(\beta_\tau - 1) q^{(2)} - \mu q^{(3)} - 2 G_W^{(1)}(0, s) q^{(2)}  - G_W^{(2)}(0, s) + F_W^{(2)}(0, s)\,, \\ \label{back:3}
&\mathcal{F}^{(3)} := 4(\beta_\tau - 1) q^{(3)} - 3 G^{(1)}_W(0, s) q^{(3)} -3 \beta_\tau |q^{(2)}|^2 - 3 G_W^{(2)}(0, s) q^{(2)} - G_W^{(3)}(0, s) + F_W^{(3)}(0, s)\,.
\end{align}
and we recall the notation $q^{(n)}=W^{(n)}(0)$ specified in \eqref{q:def:q}. 

We first prove lemmas for the particular quantities $W_{\alpha_{N}, \beta_{N}}^{(2)}(0, s)$ and  $W_{\alpha_{N}, \beta_{N}}^{(3)}(0, s)$.  

\begin{lemma}Assume that $W^{(2)}_{\alpha_N, \beta_N}(0, s_N) = 0$ and $W^{(3)}_{\alpha_N, \beta_N}(0, s_N) = 0$. Then, for all $s_0 \le s \le s_{N+1}$, the following estimates hold: 
\begin{align} \label{britney:spears:1}
|\mathcal{F}^{(2)}| \lesssim M^8 e^{-s} , \qquad |\mathcal{F}^{(3)}| \le M^{18} e^{-s}, \qquad s_0 \le s \le s_{N+1}\,,
\end{align}
and in particular, this implies that 
\begin{align}\label{eq:2:3:W0}
|W^{(2)}_{\alpha_N, \beta_N}(0, s)| \le \frac{M^9}{2} e^{-s} , \qquad |W^{(3)}_{\alpha_N, \beta_N}(0, s)| \le \frac{M^{19}}{2} e^{-s}, \qquad s_0 \le s \le s_{N+1}\,. 
\end{align}
\end{lemma}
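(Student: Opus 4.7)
The strategy is to exploit the unstable damping coefficients $+\tfrac34$ and $+\tfrac12$ appearing in the ODEs \eqref{mika} by integrating the Duhamel formula \emph{backward} from the endpoint $s_N$, where the inductive hypothesis $q^{(2)}(s_N) = q^{(3)}(s_N) = 0$ supplies vanishing boundary data. For $s \in [s_0, s_{N+1}]$ with boundary condition at $s_N$, Duhamel's formula gives $q^{(2)}(s) = \int_{s_N}^s e^{\frac{3}{4}(s-s')} \mathcal{F}^{(2)}(s')\,ds'$ and analogously for $q^{(3)}$ with growth rate $\tfrac12$. Once I have the right-hand side bound $|\mathcal{F}^{(j)}| \lesssim e^{-s}$, the exponential decay of the forcing beats the exponential growth of the Duhamel kernel, producing the improved estimates.

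The first step is to establish \eqref{britney:spears:1} by bounding every term in \eqref{back:2}--\eqref{back:3} using the bootstrap assumptions from Section \ref{section:Bootstraps} and the transport estimates in Lemma \ref{l:GW}. For $\mathcal{F}^{(2)}$, invoking $|\beta_\tau-1|\lesssim \eps^{1/6}e^{-3s/4}$ from \eqref{e:1beta:bnd}, $|q^{(2)}|\le \eps^{1/10}e^{-3s/4}$ and $|q^{(3)}|\le M^{40}e^{-s}$ from \eqref{boot:decay}, $|\mu|\le\eps^{1/6}e^{-3s/4}$ from \eqref{e:GW0}, $\|G_W^{(n)}\|_\infty\lesssim M^{2n^2}e^{-s}$ from \eqref{GW:transport:est}, and $\|F_W^{(2)}\|_\infty\lesssim \eps^{3/4}e^{-s}$ from \eqref{FW:est:1}, all contributions are $o(M^8 e^{-s})$ except the pure transport forcing $G_W^{(2)}(0,s)$, which saturates at $M^8e^{-s}$. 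An identical inventory for $\mathcal{F}^{(3)}$ reveals that every term is subordinate to $G_W^{(3)}(0,s)$, bounded by $M^{18}e^{-s}$.

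The second step is the Duhamel integration. For $s\in[s_0, s_N]$ we have $s \le s'\le s_N$ and hence $e^{\gamma(s-s')} \le 1$, so
\begin{align*}
|q^{(2)}(s)| \le \int_s^{s_N} e^{\frac{3}{4}(s-s')} |\mathcal{F}^{(2)}(s')|\,ds' \lesssim M^8 e^{\frac{3s}{4}} \int_s^{s_N} e^{-\frac{7s'}{4}}\,ds' \lesssim M^8 e^{-s},
\end{align*}
and the analogous calculation with exponent $\tfrac12$ in place of $\tfrac34$ yields $|q^{(3)}(s)|\lesssim M^{18} e^{-s}$. For $s \in [s_N, s_{N+1}]$ we integrate forward; using $s - s_N \le 1$ we have $e^{\gamma s}e^{-(\gamma+1)s_N} = e^{\gamma(s-s_N)}e^{-s_N}\le e^{\gamma}e^{1-s}$, giving the same $O(e^{-s})$ estimate on the unit-length forward interval. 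Absorbing the universal constants into the prefactors $M^9/2$ and $M^{19}/2$ using that $M$ is chosen large yields \eqref{eq:2:3:W0}.

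The main subtlety, rather than a true obstacle, is the asymmetric use of the inductive hypothesis: the bound on $[s_0,s_N]$ is a genuine improvement over the bootstrap (from $\eps^{1/10}e^{-3s/4}$ and $M^{40}e^{-s}$ down to $M^9 e^{-s}/2$ and $M^{19}e^{-s}/2$), while on $[s_N, s_{N+1}]$ we are essentially observing that one unit of forward time cannot destroy the vanishing achieved at $s_N$. One must take care that the bootstrap bounds used to estimate $\mathcal{F}^{(j)}$ feed in the $q$-terms linearly multiplied by small quantities ($\beta_\tau-1$, $G_W^{(1)}$, $\mu$, etc.), so the argument is not circular; the dominant contribution is always the transport term $G_W^{(n+1)}(0,s)$, whose bound is independent of any $q$-quantity.
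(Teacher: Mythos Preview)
Your proposal is correct and follows essentially the same route as the paper: estimate each term of $\mathcal{F}^{(2)}$ and $\mathcal{F}^{(3)}$ using the bootstrap hypotheses (with the dominant contribution coming from $G_W^{(2)}(0,s)$ and $G_W^{(3)}(0,s)$ respectively), then feed these bounds into the Duhamel formula based at $s_N$ where the inductive hypothesis supplies vanishing data. The only cosmetic difference is that the paper writes a single Duhamel integral $\int_{s_N}^s$ valid for all $s\in[s_0,s_{N+1}]$ and bounds it uniformly via $e^{\frac34 s}(e^{-\frac74 s}+e^{-\frac74 s_N})\lesssim e^{-s}$, whereas you split explicitly into the backward ($s\le s_N$) and forward ($s\in[s_N,s_{N+1}]$) cases; the content is identical.
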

\begin{proof} The decay estimates \eqref{eq:2:3:W0} follow upon writing the Duhamel formula associated to the evolution of \eqref{eq:ODE:3}, and crucially using the vanishing at $s_{N}$: 
\begin{align} \label{frankie:1}
W^{(2)}_{\alpha_N, \beta_N}(0, s) = & \int^s_{s_{N}} e^{\frac 3 4 (s - s')} \mathcal{F}^{(2)}(s') \ud s', \qquad W^{(3)}_{\alpha_N, \beta_N}(0, s) =  \int^s_{s_{N}} e^{\frac 12 (s - s')} \mathcal{F}^{(3)}(s') \ud s'\,.
\end{align}
We will thus focus on proving estimates \eqref{britney:spears:1}, starting with 
\begin{align*}
|\mathcal{F}^{(2)}| \lesssim & |\beta_\tau - 1| |q^{(2)}| + |\mu| |q^{(3)}| + \| G_W^{(1)} \|_\infty |q^{(2)}| + \| G_W^{(2)} \|_\infty + \|F_W^{(2)} \|_\infty \\
\lesssim &  \eps^{\frac 4 {15}} e^{- \frac 3 2 s} + M^{40} \eps^{\frac 1 6} e^{-\frac 7 4 s} + M^{2}\eps^{\frac1{10}}e^{-\frac 7 4 s} + M^{8} e^{-s} + \eps^{\frac 3 4} e^{-s} \lesssim M^{8} e^{-s},
\end{align*}
where above we have used estimates \eqref{GW:transport:est} for the transport terms $G_W$, and the estimates \eqref{FW:est:1} for the $F_W^{(2)}$ term. We have also invoked  \eqref{boot:decay},  \eqref{e:GW0}, and \eqref{e:1beta:bnd}.

We now move to 
\begin{align*}
| \mathcal{F}^{(3)} | \lesssim & |\beta_\tau - 1| |q^{(3)}| + \| G_W^{(1)} \|_\infty |q^{(3)}| + |q^{(2)}|^2 + \| G_W^{(2)} \|_\infty |q^{(2)}| + \| G_W^{(3)} \|_\infty + \|F_W^{(3)}\|_\infty \\ 
\lesssim & M^{40}  \eps^{\frac 16}  e^{- \frac 7 4 s} + M^{42} e^{-2s} + \eps^{\frac15} e^{-\frac 3 2 s} + M^{8}\eps^{\frac1{10}} e^{-\frac 7 4 s} + M^{18} e^{-s} + \eps^{\frac 3 4} e^{-s} \\
\lesssim & M^{18} e^{-s},
\end{align*}
where we have invoked estimates \eqref{boot:decay} for the $q^{(2)}, q^{(3)}$ quantities, \eqref{e:1beta:bnd} for the $|\beta_\tau - 1|$ estimate, \eqref{GW:transport:est} for the estimate of $G_W^{(1)}, G_W^{(2)}, G_W^{(3)}$, and \eqref{FW:est:1} for the forcing estimate.  

To establish \eqref{eq:2:3:W0}, we appeal to \eqref{frankie:1} (which holds for all values of $s$)
\begin{align*}
|W^{(2)}_{\alpha_N, \beta_N}(0, s)| \lesssim \int^s_{s_N} e^{\frac 3 4 (s - s')} M^8 e^{-s'} \ud s' \lesssim
M^8e^{\frac34 s}\left(e^{-\frac74 s}+e^{-\frac74 s_N}\right) \lesssim M^8 e^{-{s}},
\end{align*}
for all $s_0 \le s \le s_{N+1}$, where we have used that $s_{N+1} - s_N = 1$ to estimate $e^{s_{N+1}}e^{-s_{N}}\leq e$.

 A similar argument applies to $W^{(3)}_{\alpha_N, \beta_N}(s, 0)$. 
\end{proof}

We now verify the bootstrap assumptions \eqref{boot:decay}, which apply to every $(\alpha, \beta) \in \mathcal{B}_N(\alpha_N, \beta_N)$. 
\begin{lemma} The following estimates are valid uniformly in the parameter set $\mathcal{B}_N$ given by \eqref{def:BN} 
\begin{align*}
|W^{(2)}(0, s)| \le \frac 1 2 \eps^{\frac{1}{10}} e^{- \frac 3 4  s} , \qquad |W^{(3)}(0, s)| \le \frac{M^{40}}{2} e^{-  s}\,, 
\end{align*}
\end{lemma}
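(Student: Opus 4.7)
The idea is to transfer the decay just established for $W^{(n)}_{\alpha_N,\beta_N}(0,s)$ with $n=2,3$ (equation \eqref{eq:2:3:W0}) to arbitrary $(\alpha,\beta)\in\mathcal{B}_N$ via the fundamental theorem of calculus in the parameters, using the bootstrap bounds on $\p_c q^{(n)}$ from \eqref{mako:4}--\eqref{mako:5} together with the explicit size of the parameter rectangle $\mathcal{B}_N$ from \eqref{def:BN}. Concretely, for any $(\alpha,\beta)\in\mathcal B_N$ write
\begin{equation*}
W^{(n)}_{\alpha,\beta}(0,s)=W^{(n)}_{\alpha_N,\beta_N}(0,s)+\int_0^1\!\!\Big[(\alpha-\alpha_N)\,\p_\alpha W^{(n)}+(\beta-\beta_N)\,\p_\beta W^{(n)}\Big]_{(\alpha_N+t(\alpha-\alpha_N),\beta_N+t(\beta-\beta_N))}(0,s)\,\ud t.
\end{equation*}

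For $n=2$, the baseline term is bounded by $\tfrac12 M^9 e^{-s}$ by \eqref{eq:2:3:W0}. Using $|\p_\alpha q^{(2)}|\le 4\eps^{3/4}e^{3s/4}$ and $|\p_\beta q^{(2)}|\le \eps\,e^{3s/4}$, the correction is at most
\begin{equation*}
|\alpha-\alpha_N|\cdot 4\eps^{3/4}e^{3s/4}+|\beta-\beta_N|\cdot\eps\, e^{3s/4}.
\end{equation*}
Plugging in the diameters from \eqref{def:BN} and using the monotonicity $s\le s_{N+1}=s_N+1$ (so $e^{\rho s}\le e^\rho e^{\rho s_N}$), one checks
\begin{equation*}
|\alpha-\alpha_N|\,\eps^{3/4}e^{3s/4}\lesssim M^{30}e^{-s_N}+\eps^{9/20}e^{-3s_N/4},\qquad |\beta-\beta_N|\,\eps\, e^{3s/4}\lesssim M^{30}\eps^{1/2}e^{-3s_N/4}.
\end{equation*}
Multiplying through by $e^{3s/4}$ (to remove the prescribed weight) and invoking $s_N\ge -\log\eps$ to absorb $M^{30}$ into a power of $\eps$, each contribution is $\ll \eps^{1/10}$ once $\eps$ is small relative to $M$. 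This yields $|W^{(2)}(0,s)|\le \tfrac12\eps^{1/10}e^{-3s/4}$.

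The argument for $n=3$ is identical in structure, now using $|\p_\alpha q^{(3)}|\le\eps\, e^{s/2}$ and $|\p_\beta q^{(3)}|\le 4\eps^{1/2}e^{s/2}$. The baseline from \eqref{eq:2:3:W0} contributes $\tfrac12 M^{19}e^{-s}$, while the $\beta$-correction (the dominant one) is bounded by
\begin{equation*}
|\beta-\beta_N|\cdot 4\eps^{1/2}e^{s/2}\lesssim M^{30}e^{-3s_N/2}\cdot \eps^{1/2}\cdot \eps^{-1/2}\cdot e^{s/2}\lesssim M^{31}e^{-s},
\end{equation*}
where the final step uses again $s\le s_N+1$; the $\alpha$-correction is much smaller. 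The sum $(M^{19}+M^{31})e^{-s}$ is bounded by $\tfrac12 M^{40}e^{-s}$ for $M$ sufficiently large.

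The main obstacle is purely bookkeeping: one must verify that each factor of $e^{\rho s_N}$ hidden in the diameter of $\mathcal B_N$ exactly cancels the growth factor $e^{\rho s}$ coming from the $\p_c q^{(n)}$ bounds, leaving only a harmless $e^\rho$ (from $s-s_N\le 1$) and a surplus decay that can be traded against powers of $M$ using $s_N\ge -\log\eps$. The algebraic exponents in \eqref{def:BN} and \eqref{mako:4}--\eqref{mako:5} have been chosen precisely so that this cancellation succeeds at both $n=2$ and $n=3$; no other estimates from the paper are needed.
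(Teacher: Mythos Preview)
Your proposal is correct and follows exactly the same route as the paper: you apply the fundamental theorem of calculus in the parameters to transfer the decay \eqref{eq:2:3:W0} from the special point $(\alpha_N,\beta_N)$ to all of $\mathcal B_N$, using the bootstrap bounds \eqref{mako:4}--\eqref{mako:5} for the $\partial_c q^{(n)}$ and the explicit size of the rectangle \eqref{def:BN}. The paper writes the diameters of $\mathcal B_N$ in the equivalent form $e^{-s_N}e^{-\rho(s_N-s_0)}$ (using $\eps = e^{-s_0}$) rather than your $\eps^{-a}e^{-b s_N}$, but the arithmetic is identical.
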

\begin{proof} We use the fundamental theorem of calculus in the space of parameters via 
\begin{align} \n
|W_{\alpha, \beta}^{(2)}(0, s)| \le & |W^{(2)}_{\alpha_{N}, \beta_{N}}(0, s)| + |\alpha - \alpha_{N}| \sup_{\alpha \in \mathcal{B}_N} |\p_\alpha W^{(2)}(0, s)| + |\beta - \beta_{N}| \sup_{\beta \in \mathcal{B}_N} |\p_\beta W^{(2)}(0, s)| \\ \n
\le & M^9 e^{-s}+ \Big( M^{30} e^{-s_N} e^{- \frac 3 4 (s_N - s_0)} + \eps^{\frac 1 5} e^{-s_N} e^{- \frac 1 2 (s_N - s_0)} \Big) 4 e^{\frac 3 4 (s - s_0)} \\ \n
& + M^{30} e^{-s_N} e^{- \frac 1 2 (s_N - s_0)} \eps^{\frac 1 4} e^{\frac 3 4 (s - s_0)} \\
\le & \frac12 \eps^{\frac{1}{10}} e^{- \frac 3 4  s}\,,\n
\end{align}
where above we have used that $s_{N+1} - s_N = 1$, coupled with the particular estimates \eqref{eq:2:3:W0}, the two left-most bootstrap bounds in \eqref{mako:4} - \eqref{mako:5}, and the assumed size of the parameter rectangle in \eqref{def:BN}.   

Similarly, for the quantity $W^{(3)}_{\alpha, \beta}$, we have 
\begin{align} \n
|W_{\alpha, \beta}^{(3)}(0,s)| \le & |W_{\alpha_N, \beta_N}^{(3)}(0, s)| + |\alpha - \alpha_{N}| \sup_{\alpha \in \mathcal{B}_N} |\p_\alpha W^{(3)}(0, s)| + |\beta - \beta_N| \sup_{\beta \in \mathcal{B}_N} |\p_\beta W^{(3)}(0, s)| \\ \n
\le & M^{19} e^{-s} + \Big( M^{30} e^{-s_N} e^{- \frac 3 4 (s_N - s_0)} + \eps^{\frac 1 5} e^{-s_N} e^{- \frac 1 2 (s_N - s_0)} \Big) \eps^{\frac 1 2} e^{\frac 1 2 (s - s_0)} \\ \n
& +  M^{30} e^{- s_N} e^{- \frac 1 2 (s_N - s_0)} 4 e^{\frac 1 2 (s - s_0)} \\
\le & \frac{M^{40}}{2} e^{-s}\,.\n
\end{align}
Again, we have invoked the particular bound \eqref{eq:2:3:W0}, the two right-most estimates in \eqref{mako:4} - \eqref{mako:5}, as well as the size of the parameter rectangle in \eqref{def:BN}.
\end{proof}

Finally, we are left at estimating $W^{(5)}(0, s)$, and in particular to verify the bootstrap assumption \eqref{boot:W:5:0}. As a result, we write the ODE evolution for this quantity, equation \eqref{eq:ODE:6}, as
\begin{align} \label{calvin:1}
&\p_s \tilde{q}^{(5)} = \mathcal{F}^{(5)}\,, 
\end{align}
where
\begin{align}
 \label{calvin:2}
&\mathcal{F}^{(5)} := - \mu q^{(6)} + (1 - \beta_\tau) q^{(5)} - 10 |q^{(3)}|^2 - \sum_{j = 1}^5 \binom{5}{j} G_W^{(j)}(0, s) q^{(6-j)} + F_W^{(5)}(0, s)\,.
\end{align} 

We now verify the bootstrap assumptions \eqref{boot:W:5:0}. 
\begin{lemma} The following estimate is valid for the quantity $\tilde{q}^{(5)}(s)$
\begin{equation} \label{fifth:deriv:W:0}
\abs{\tilde{q}^{(5)}}\les \eps^{\frac 7 8}\,. 
\end{equation}
\end{lemma}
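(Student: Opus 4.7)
The plan is to integrate the ODE \eqref{calvin:1} starting from the initial time $s_0=-\log\eps$ and to show that $\mathcal F^{(5)}$ is integrable in $s$ with integral bounded by a small power of $\eps$, plus to verify that the initial value $\tilde q^{(5)}(s_0)$ is suitably small.

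First I would compute the initial value. By definition $\tilde q^{(5)}(s_0)=W_0^{(5)}(0)-\bar W^{(5)}(0)$. Using the ansatz \eqref{guitar:or:band}, the Leibniz rule, the facts $\bar W^{(n)}(0)=0$ for $n\in\{0,2,3,4\}$ and $\bar W^{(5)}(0)=120$ from \eqref{fifth:deriv:bar:W}, together with $\hat W_0^{(5)}(0)=0$ from \eqref{assume:1} and the bound $|\alpha|+|\beta|\le 4M^{30}\eps$ from \eqref{apple:1}, the spurious contributions from differentiating $\chi(\eps^{1/4}x)$ carry at least one factor of $\eps^{1/4}\bar W^{(1)}(0)$, and from $\alpha x^2\chi(x)+\beta x^3\chi(x)$ carry $\alpha,\beta$. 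Hence $|\tilde q^{(5)}(s_0)|\lesssim M^{30}\eps\le\tfrac14\eps^{7/8}$ once $\eps$ is small relative to $M$.

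Next I would estimate each term in $\mathcal F^{(5)}$ from \eqref{calvin:2} using already-established bounds. For $|\mu q^{(6)}|$, combine $|\mu|\lesssim_M e^{-s}$ proven in the $\mu$-estimate lemma with $|q^{(6)}|\lesssim M^{36}$ (from \eqref{weds:1} at $x=0$). For $|(1-\beta_\tau)q^{(5)}|$, use \eqref{e:1beta:bnd} and \eqref{moon}, giving $\lesssim\eps^{1/6}e^{-3s/4}$. For $|q^{(3)}|^2$ use \eqref{boot:decay}, giving $\lesssim M^{80}e^{-2s}$. For $\sum_{j=1}^{5}\binom{5}{j}G_W^{(j)}(0,s)\,q^{(6-j)}$ use \eqref{GW:transport:est}, together with \eqref{boot:decay} for $q^{(2)},q^{(3)}$ and \eqref{weds:1} at $x=0$ for $q^{(j)}$ with $j\ge4$, yielding a bound $\lesssim M^{50}e^{-s}$. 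Finally $|F_W^{(5)}(0,s)|\lesssim\eps^{3/4}e^{-s}$ by \eqref{FW:est:1}. Combining, the dominant bound is
\begin{equation*}
|\mathcal F^{(5)}(s)|\lesssim M^{50}e^{-s}+\eps^{1/6}e^{-3s/4}.
\end{equation*}

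Integrating from $s_0$ to $s$ and using $e^{-s_0}=\eps$ gives
\begin{equation*}
\int_{s_0}^{s}|\mathcal F^{(5)}(s')|\,ds'\lesssim M^{50}\eps+\eps^{11/12}.
\end{equation*}
Adding the initial value contribution bounded above, and using the parameter hierarchy \eqref{choice:M} (so $\eps$ is smaller than any negative power of $M$), the right-hand side is bounded by $\tfrac12\eps^{7/8}$, which closes \eqref{fifth:deriv:W:0}.

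The only delicate point in this argument is confirming that the bounds on $q^{(2)},q^{(3)}$ invoked in the estimation of $\mathcal F^{(5)}$ — the \emph{unconstrained} decaying quantities — are indeed available for every $(\alpha,\beta)\in\mathcal B_N$ on the relevant range of $s$; this is exactly the purpose of the decay bootstrap \eqref{boot:decay}, which has been verified in the preceding ODE analysis of $q^{(2)}$ and $q^{(3)}$. Once this is granted, the estimate above is essentially a backward Duhamel computation in which integrability of the forcing is controlled by the exponential-in-$s$ decay of every single term in $\mathcal F^{(5)}$.
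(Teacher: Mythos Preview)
Your proof is correct and follows essentially the same approach as the paper: integrate \eqref{calvin:1} from $s_0$, bound the initial datum using the form of $W_0$ and the parameter bounds \eqref{apple:1}, and estimate each term of $\mathcal F^{(5)}$ to show it is integrable in $s$ with integral $\lesssim\eps^{7/8}$. The only slip is that for the term $|(1-\beta_\tau)q^{(5)}|$ you cite \eqref{moon}, which is a \emph{lower} bound on $|q^{(5)}|$; the upper bound you actually need comes from the bootstrap \eqref{boot:W:5:0} together with $\bar W^{(5)}(0)=120$, or simply from \eqref{weds:1} at $x=0$. The paper differs only in minor bookkeeping: it uses the sharper localized bootstrap \eqref{e:W6:bootstrap} for $q^{(6)}$ (noting $\bar W^{(6)}(0)=0$) and the bootstrap \eqref{e:GW0} on $\mu$, obtaining $|\mathcal F^{(5)}|\lesssim \eps^{1/8}e^{-3s/4}$, whereas you use the cruder global bound \eqref{weds:1} on $q^{(6)}$ and the improved $|\mu|\lesssim_M e^{-s}$, leading to an $M$-dependent constant absorbed by the parameter hierarchy---both routes close the estimate.
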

\begin{proof} We use \eqref{calvin:1} to integrate
\begin{align}\label{e:bluebottle}
\tilde{q}^{(5)}(s) = \tilde{q}^{(5)}(s_0) + \int_{s_0}^s \mathcal{F}^{(5)}(s') \ud s'\,, 
\end{align} 
and we estimate the $\mathcal{F}^{(5)}$ term on the right-hand side via   
\begin{align}
|\mathcal{F}^{(5)}| \lesssim & \eps^{\frac{11}{30} } e^{- \frac 3 4 s'} + \eps^{\frac 1 8} e^{- \frac 3 4 s'}+ 10 M^{36}e^{-2s'} + e^{-\frac{9}{10}s'}  + \eps^{\frac 3 4} e^{-s'} \lesssim \eps^{\frac 18} e^{- \frac 3 4 s'}. \label{e:bluebottle2}
\end{align}
 Above, we have used the bootstraps \eqref{e:GW0} on $\mu$, invoked estimate \eqref{FW:est:1} to control the forcing term, \eqref{GW:transport:est} to control the transport terms, $G_W^{(j)}$, \eqref{e:1beta:bnd} to estimate the $1 - \beta_\tau$ term, estimates \eqref{boot:decay} for the $q^{(2)}, q^{(3)}$ terms, and finally \eqref{e:W6:bootstrap} for the $q^{(6)}$ term, coupled with the fact that $\bar{W}^{(6)}(0) = 0$ so $q^{(6)} = \tilde{q}^{(6)}$. 

Next, we estimate the initial data via appealing to the specific form of \eqref{guitar:or:band} and also the parameter bootstraps, \eqref{apple:1} 
\begin{align*}
|\tilde{q}^{(5)}(s_0)| = | \hat{W}_0^{(5)}(0) + \alpha\p_x^5 (x^2 \chi(|x|))(0) + \beta \p_x^5 (x^3 \chi(|x|) )(0)  | \lesssim |\alpha| + |\beta| \lesssim_M \eps\,.
\end{align*}

\end{proof}

\subsection{{ODE analysis of $\nabla_{\alpha, \beta} q^{(n)}$ for $n = 2, 3, 5$}}

We start with the two formulas, which importantly, are valid for all values of the parameters $(\alpha, \beta) \in \mathcal{B}_n$: 
\begin{align} \label{W2:ab}
&q^{(2)}(s) = W^{(2)}(0, s) = e^{\frac 34 (s - s_0)} \alpha + \int_{s_0}^s e^{\frac 34 (s - s')} \mathcal{F}^{(2)}(s') \ud s'\,, \\ \label{W3:ab}
&q^{(3)}(s) = W^{(3)}(0, s) = e^{\frac 12 (s - s_0)} \beta + \int_{s_0}^s e^{\frac 12 (s - s')} \mathcal{F}^{(3)}(s') \ud s'\,,  
\end{align}

\noindent where the forcing terms are defined in \eqref{back:2}, \eqref{back:3}. We differentiate the above expressions in $\alpha$, recalling the notation that $q_\alpha := \p_\alpha q$ and $q_\beta := \p_\beta q$
\begin{align} \label{forward:1}
&{ q^{(2)}_\alpha} = e^{\frac 3 4 (s - s_0)} + \int_{s_0}^s e^{\frac 3 4 (s - s')} \p_\alpha \mathcal{F}^{(2)}(s') \ud s'\,, \\ \label{forward:2}
&{  q_\alpha^{(3)} } =   \int_{s_0}^s e^{\frac 1 2 (s - s')} \p_\alpha \mathcal{F}^{(3)}(s') \ud s'\,. 
\end{align} 

\noindent Similarly, differentiating in $\beta$ yields the expressions 
\begin{align} \label{forward:1:b}
&{ q_\beta^{(2)} } = \int_{s_0}^s e^{\frac 3 4 (s - s')} \p_\beta \mathcal{F}^{(2)}(s') \ud s'\,, \\ \label{forward:2:b}
& { q_\beta^{(3)} }= e^{\frac 1 2 (s - s_0)} + \int_{s_0}^s e^{\frac 1 2 (s - s')} \p_\beta \mathcal{F}^{(3)}(s') \ud s'\,. 
\end{align}
Third, by integrating \eqref{calvin:1} - \eqref{calvin:2} we have 
\begin{align*}
{ \tilde{q}^{(5)}_c =  \tilde{q}^{(5)}_c(s_0) }+ \int_{s_0}^s \p_c \mathcal{F}^{(5)}(s') \ud s'\,.
\end{align*}

 We now write the expressions: 
\begin{align} \n
\p_c \mathcal{F}^{(2)} = & 3 \dot{\tau}_c \beta_\tau^2 q^{(2)} + 3 (\beta_\tau - 1) q_c^{(2)} - \mu_c q^{(3)} - \mu q^{(3)}_c - 2 \p_c G^{(1)}_W(0,s) q^{(2)} \\ \label{joes:2}
& - 2 G^{(1)}_W(0,s) q^{(2)}_c + \p_c F^{(2)}_W(0, s) + \p_c G^{(2)}_W(0, s)\,,  
\end{align}
and 
\begin{align} \n
\p_c \mathcal{F}^{(3)} =& 4 \beta_\tau^2 q^{(3)} \dot{\tau}_c + 4 (\beta_\tau - 1) q^{(3)}_c - 3 \p_c G^{(1)}_W(0, s) q^{(3)} - 3 G^{(1)}_W(0, s) q_c^{(3)} \\ \n
&- 3 \beta_\tau^2 \dot{\tau}_c |q^{(2)}|^2  - 6 \beta_\tau q^{(2)}q_c^{(2)} - 3 \p_c G_W^{(2)}(0, s) q^{(2)} - 3 G_W^{(2)}(0, s) q^{(2)}_c \\ 
& \label{joes:2:2} + \p_c G_W^{(3)}(0, s)+ \p_c F^{(3)}_W(0, s)\,,  
\end{align}
for $c \in \{ \alpha, \beta \}$. 
We also record, by differentiating \eqref{calvin:1}, the expression 
\begin{align} \n
\p_c \mathcal{F}^{(5)} = &- \mu_c q^{(6)} - \mu q^{(6)}_c - \beta_\tau^2 \dot{\tau}_c q^{(5)} + (1 - \beta_\tau) q^{(5)}_c - 20 q^{(3)} q^{(3)}_c \\ \label{joes:3:3}
& - \sum_{j = 1}^{4} \binom{5}{j} (\p_c G_W^{(j)}(0, s) q^{(6-j)} + G_W^{(j)}(0, s) q_c^{(6-j)}) + \p_c G_W^{(5)}(0, s) + \p_c F_W^{(5)}(0, s)\,. 
\end{align} 

\begin{lemma} The following estimates are valid on the quantities defined in \eqref{joes:2}, \eqref{joes:2:2}, \eqref{joes:3:3}
\begin{align} \label{whole:1}
|\p_c \mathcal{F}^{(2)} | \le  \eps^{\frac 5 8}\,, && |\p_c \mathcal{F}^{(3)}| \le \eps^{\frac 5 8}\,, && |\p_c \mathcal{F}^{(5)}| \le \eps^{\frac 3 8}\,. 
\end{align}
\end{lemma}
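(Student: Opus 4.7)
The plan is to prove each estimate in \eqref{whole:1} by bounding the right-hand side of the corresponding explicit expression \eqref{joes:2}, \eqref{joes:2:2}, or \eqref{joes:3:3} term-by-term. Each such expression is a finite sum of products, and the strategy in each case is to pair a factor that decays in $s$ against one that may grow in $s$, verify that the net $s$-exponent of the product is nonpositive over $[s_0,s_{N+1}]$, and absorb the polynomial-in-$M$ prefactor using the hierarchy \eqref{choice:M} and the smallness of $\eps$. Because $s_0=-\log\eps$, a product of the form $M^k\eps^a e^{bs}$ with $b\le 0$ has maximum $M^k \eps^{a+b}$, and we need $a+b>\tfrac58$ (respectively $a+b>\tfrac38$).

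For $\p_c \mathcal{F}^{(2)}$ the ingredients are the modulation bounds \eqref{e:GW0}, \eqref{mako:1}, \eqref{e:1beta:bnd}, the decay bootstraps \eqref{boot:decay} on $q^{(2)},q^{(3)}$, the enhanced bootstraps \eqref{mako:4}--\eqref{mako:5} on $q^{(2)}_c,q^{(3)}_c$, the transport estimates \eqref{GW:transport:est} and \eqref{r:est:1}, and the forcing estimate \eqref{Fwc.est.ult}. The dominant pairings are $(\beta_\tau-1)q_c^{(2)}$ (combining \eqref{e:1beta:bnd} with \eqref{mako:4}), $\mu_c q^{(3)}$ and $\mu q_c^{(3)}$ (combining \eqref{e:GW0}, \eqref{mako:1} with \eqref{boot:decay}, \eqref{mako:4}--\eqref{mako:5}), $G_W^{(1)} q_c^{(2)}$ (combining \eqref{GW:transport:est} with \eqref{mako:4}), and the direct contribution $\p_c F_W^{(2)}$, each of which yields a uniform bound of the form $M^k\eps^a$ with $a>\tfrac58$. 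The argument for $\p_c \mathcal{F}^{(3)}$ is structurally identical; the only additional term is the quadratic $\beta_\tau q^{(2)} q_c^{(2)}$, controlled by combining \eqref{boot:decay} with \eqref{mako:4}.

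For $\p_c \mathcal{F}^{(5)}$ the same plan applies. The dominant term is $\dot\tau_c \beta_\tau^2 q^{(5)}$, of size $\lesssim \eps^{\frac12}$ since $|q^{(5)}|$ is of order unity rather than decaying. The delicate pairings are $\mu q^{(6)}_c$ (using \eqref{e:GW0} with \eqref{mako:6} at $n=6$) and $\mu_c q^{(6)}$ (using \eqref{mako:1} with \eqref{e:W6:bootstrap}); in both cases the $s$-decay of the modulation factor exactly compensates the growth or boundedness of the $W^{(6)}$-type factor. The $(1-\beta_\tau) q^{(5)}_c$ contribution is handled by writing $q^{(5)}_c = \tilde q^{(5)}_c$ and invoking the refined bootstrap \eqref{hotel:motel}, yielding $\eps^{\frac{13}{24}}e^{-\frac{5s}{8}}$. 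The quadratic $q^{(3)}q^{(3)}_c$ and the transport/forcing terms at order $n=5$ are absorbed using \eqref{GW:transport:est}, \eqref{r:est:1}, \eqref{Fwc.est.ult}.

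The main obstacle is book-keeping: since $s$ ranges up to $s_{N+1}$, which is unbounded in $N$, any term whose net $s$-exponent is strictly positive would be fatal to the iteration. Each of the many terms in \eqref{joes:2}--\eqref{joes:3:3} must be individually inspected and verified to have a nonpositive net $s$-exponent, after which the large $M^k$ prefactors are absorbed by choosing $\eps$ sufficiently small relative to $M$ in accordance with \eqref{choice:M}. No new analytic input beyond the bootstraps and the forcing/transport estimates of Section~5 is required.
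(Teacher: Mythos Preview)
Your proposal is correct and follows essentially the same approach as the paper: term-by-term estimation of \eqref{joes:2}, \eqref{joes:2:2}, \eqref{joes:3:3} using exactly the ingredients you list (the modulation bounds \eqref{e:GW0}, \eqref{mako:1}, \eqref{e:1beta:bnd}, the decay bootstraps \eqref{boot:decay}, the $\p_c$-bootstraps \eqref{mako:4}--\eqref{mako:5} and \eqref{hotel:motel}, the transport estimates \eqref{GW:transport:est}, \eqref{r:est:1}, and the forcing bound \eqref{Fwc.est.ult}), concluding $\lesssim_M \eps^{3/4}$ for $\p_c\mathcal{F}^{(2)},\p_c\mathcal{F}^{(3)}$ and $\lesssim_M \eps^{1/2}$ for $\p_c\mathcal{F}^{(5)}$, then absorbing the $M$-dependent constants into the target exponents. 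One small point worth making explicit: when you invoke \eqref{e:W6:bootstrap} for the $\mu_c q^{(6)}$ term, you are implicitly using $\bar W^{(6)}(0)=0$ so that $q^{(6)}=\tilde q^{(6)}$; the paper notes this identification explicitly.
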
 
\begin{proof}  We now estimate each of the terms in the forcing above in \eqref{joes:2}: 
\begin{align} \n
|\p_c \mathcal{F}^{(2)}| &\lesssim  |\dot{\tau}_c| |q^{(2)}| + |\beta_\tau -1| |q_c^{(2)}| + |\mu_c| |q^{(3)}| + |\mu| |q^{(3)}_c| + |\p_c G_W^{(1)}(0, s)| |q^{(2)}| \\ \n
& \qquad \qquad + |G_W^{(1)}(0, s)| |q^{(2)}_c| + |\p_c F_W^{(2)}(0, s)| + |\p_c G_W^{(2)}(0, s)| \\  \label{wolf:alice}
&\lesssim_M  \eps^{\frac 1 2} e^{-\frac 3 4 s} + \eps^{\frac 1 6} e^{- \frac 3 4 s} + \eps^{\frac 1 2} e^{- \frac 5 4s} + \eps^{\frac{11}{12}}+  \eps^{\frac 1 2} e^{- s}  + \eps^{\frac 3 4} e^{- \frac s 4}  + \eps^{\frac 3 4 } e^{- \frac s 4} + \eps^{\frac 1 2} e^{- \frac s 4}\les_M \eps^{\frac34}\,,
\end{align} 
and similarly, we estimate  
\begin{align} \n
|\p_c \mathcal{F}^{(3)}| &\lesssim |q^{(3)}| |\dot{\tau}_c| + |\beta_\tau - 1| |q^{(3)}_c| + \| \p_c G_W^{(1)} \|_\infty |q^{(3)}| + \|G_W^{(1)}\| |q_c^{(3)}| + |\dot{\tau}_c| |q^{(2)}|^2 \\ \n
& \qquad  + |q^{(2)}| |q^{(2)}_c| + \| \p_c G_W^{(2)}\| |q^{(2)}| + \|G_W^{(2)}\| |q_c^{(2)}| + \|\p_c G_W^{(3)} \| + \|\p_c F_W^{(3)} \| \\ 
& \lesssim_M \eps^{\frac 1 2} e^{-s}  + \eps^{\frac{11}{12}} +   \eps^{\frac 1 2} e^{- \frac 5 4 s} + \eps^{\frac 3 4} e^{- \frac s 4}  + \eps^{\frac 1 2} e^{-\frac 3 2 s} + \eps^{\frac 3 4} + \eps^{\frac 1 2}  e^{-\frac 5 4 s} + \eps^{\frac 3 4} e^{-\frac 1 4 s} + \eps^{\frac 1 2} e^{- \frac s 4} + \eps^{\frac 3 4} e^{- \frac s 4}\n\\
& \lesssim_M \eps^{\frac34}\,.
\label{wolf:alice:2}
\end{align}
In both estimates above we have invoked the bootstrap estimate \eqref{e:GW0} on $\mu$, the estimate \eqref{e:1beta:bnd} on $|1 - \beta_\tau|$, the bootstraps \eqref{mako:1} on the $\dot{\tau}_c, \mu_c$ terms, \eqref{boot:decay} for the decay estimates on $q^{(2)}, q^{(3)}$, \eqref{mako:4} - \eqref{mako:5} for the estimates on $q^{(2)}_c, q^{(3)}_c$, and finally \eqref{r:est:1} and \eqref{Fwc.est.ult} for the transport and forcing terms, respectively. 

From \eqref{wolf:alice} and \eqref{wolf:alice:2}, we can take $\eps$ small relative to the implicit constant which depends on $M$ to conclude that 
\begin{align*}
|\p_c \mathcal{F}^{(2)}| \le \eps^{\frac 5 8}, \qquad |\p_c \mathcal{F}^{(3)}| \le \eps^{\frac 5 8}\,.
\end{align*}

 Finally, estimating $\p_c \mathcal{F}_5$ yields
\begin{align} \n
|\p_c \mathcal{F}_5| &\lesssim  |\mu_c| \|W^{(6)} \|_\infty + |\mu| \| W_c^{(6)} \|_\infty + |\dot{\tau}_c| |q^{(5)}| + |1 - \beta_\tau| |q^{(5)}_c| + |q^{(3)}| |q^{(3)}_c| \\ \n
&\qquad + \sum_{j  = 1}^4 (\| \p_c G_W^{(j)} \|_\infty |q^{(6-j)}| + \| G_W^{(j)} \|_\infty \| W_c^{(6-j)} \|_\infty ) + \| \p_c G_W^{(5)} \|_\infty +  \| \p_c F_W^{(5)} \|_\infty  \\ \n
&\lesssim_M \eps^{\frac 1 2} e^{- \frac s 4} + \eps^{\frac{11}{12}} + \eps^{\frac 1 2} + \eps^{\frac 16} e^{- \frac 3 4 s} (1 +  \eps^{\frac 3 8} e^{\frac s 8} ) + \eps^{\frac 3 4} e^{- \frac s 4} + \eps^{\frac 1 2} e^{- \frac s 4} + \eps^{\frac 3 4} e^{- \frac s 4}\\ \n
& \qquad  + \eps^{\frac 1 2} e^{- \frac s 4} + \eps^{\frac 3 4} e^{- \frac s 4}\\
&\les_M \eps^{\frac12}\,, \n
\end{align}
from which we can conclude $|\p_c \mathcal{F}^{(5)}| \le \eps^{\frac 3 8}$, establishing the final estimate of \eqref{whole:1}. We invoke the same set of bootstraps as in the estimate of $\p_c \mathcal{F}^{(2)}, \p_c \mathcal{F}^{(3)}$ above, and in addition we invoke \eqref{hotel:motel} on the estimate of $q^{(5)}_c$ and \eqref{mako:6} on the $W^{(n)}_c$ quantities. 
\end{proof}

\begin{corollary} The following estimates are valid \begin{align} \label{joes:oes:1}
|q^{(2)}_\alpha - \eps^{\frac 3 4} e^{\frac 3 4 s} |& \le \eps^{\frac 5 4} e^{\frac 3 4s}\,,  & |q^{(2)}_\beta| &\le \frac{1}{2} \eps^{\frac 5 4} e^{\frac 3 4 s} \,,\\ \label{joes:oes:2}
|q^{(3)}_\alpha| &\le \frac{1}{2} \eps e^{\frac 1 2 s}  \,,& |q^{(3)}_\beta - \eps^{\frac 1 2} e^{\frac s 2}| &\le  \eps e^{\frac 12 s} \,, \\ \label{joes:oes:3}
|\tilde{q}_c^{(5)}|& \le \frac 12 \eps^{\frac 3 8} e^{\frac 1 8 s}\,. 
\end{align}
In particular, this verifies the bootstrap estimates \eqref{mako:4} - \eqref{mako:5}, and \eqref{hotel:motel}. 
\end{corollary}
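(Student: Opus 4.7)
The strategy is to apply Duhamel's formula together with the forcing bounds \eqref{whole:1}, exploit the crucial cancellation that $s_0 = -\log \eps$ so the homogeneous factors $e^{\frac{3}{4}(s-s_0)}$ and $e^{\frac{1}{2}(s-s_0)}$ equal $\eps^{\frac34}e^{\frac34 s}$ and $\eps^{\frac12}e^{\frac12 s}$ respectively, and verify that the initial data for $\tilde q^{(5)}_c$ vanishes.

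\emph{Estimates on $q^{(2)}_c$ and $q^{(3)}_c$.} Starting from the integral representations \eqref{forward:1}--\eqref{forward:2:b}, each formula contains a main term (either zero or a pure exponential coming from the initial data) plus a Duhamel integral. Using $s_0 = -\log\eps$, the main terms evaluate exactly to $\eps^{\frac34}e^{\frac34 s}$ in \eqref{forward:1} and $\eps^{\frac12}e^{\frac12 s}$ in \eqref{forward:2:b}, matching the quantities appearing in \eqref{joes:oes:1}--\eqref{joes:oes:2}. Applying the bound $|\p_c\mathcal F^{(2)}|\le \eps^{\frac58}$ from \eqref{whole:1} together with the elementary identity $\int_{s_0}^{s} e^{\frac34(s-s')}\,\ud s' \le \tfrac43 \eps^{\frac34} e^{\frac34 s}$ yields
\[
\Big|\int_{s_0}^{s} e^{\frac34(s-s')}\p_c \mathcal F^{(2)}(s')\,\ud s'\Big| \le \tfrac43\eps^{\frac58+\frac34}e^{\frac34 s} = \tfrac43\eps^{\frac{11}{8}}e^{\frac34 s} \le \eps^{\frac54}e^{\frac34 s}\,,
\]
for $\eps$ small, which gives both \eqref{joes:oes:1} (the $c=\alpha$ case against the main term, the $c=\beta$ case against zero). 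The same calculation with the weight $e^{\frac12(s-s')}$ and the bound $|\p_c \mathcal F^{(3)}|\le \eps^{\frac58}$ produces a remainder bounded by $2\eps^{\frac98}e^{\frac12 s}\le \eps\, e^{\frac12 s}$, establishing \eqref{joes:oes:2}.

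\emph{Estimate on $\tilde q^{(5)}_c$.} The Duhamel formula gives $\tilde q^{(5)}_c(s) = \tilde q^{(5)}_c(s_0) + \int_{s_0}^{s}\p_c \mathcal F^{(5)}(s')\,\ud s'$. The initial contribution vanishes: the ansatz \eqref{guitar:or:band} shows that $\p_\alpha W_0 = x^2\chi(x)$ and $\p_\beta W_0 = x^3\chi(x)$, both of which agree with $x^2$ and $x^3$ respectively in a neighborhood of the origin, so $\p_x^5$ of either evaluated at $x=0$ is zero. Hence $\tilde q^{(5)}_c(s_0)=0$. Applying the bound $|\p_c \mathcal F^{(5)}|\le \eps^{\frac38}$ from \eqref{whole:1} yields $|\tilde q^{(5)}_c(s)|\le \eps^{\frac38}(s-s_0)$. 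To compare with $\tfrac12\eps^{\frac38}e^{\frac{s}{8}}$, we note that $(s-s_0)e^{-\frac{s}{8}}$ attains its maximum on $[s_0,\infty)$ at $s=s_0+8$ with value $8e^{-1}e^{-\frac{s_0}{8}} = 8e^{-1}\eps^{\frac18}$, which is bounded by $\tfrac12$ for $\eps$ sufficiently small. This gives \eqref{joes:oes:3}.

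\emph{Main obstacle.} The analysis above is almost mechanical once the forcing bounds \eqref{whole:1} are in hand; the only subtlety is the $\tilde q^{(5)}_c$ case, where the forcing does not decay in $s$ and the bootstrap target $\tfrac12\eps^{\frac38}e^{\frac{s}{8}}$ has only marginal exponential growth. The linear-in-$s$ growth coming from integrating a non-decaying forcing must therefore be shown to fit under this slowly growing exponential, which ultimately works thanks to the choice $s_0=-\log\eps$ providing a large reserve factor $\eps^{\frac18}$ at the critical time $s-s_0=8$. That the initial data for $\tilde q^{(5)}_c$ vanishes is essential — any nonzero initial value would immediately violate the target bound at $s=s_0$, and this is precisely why the ansatz \eqref{guitar:or:band} only perturbs $W_0$ in the $x^2$ and $x^3$ directions.
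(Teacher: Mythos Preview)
Your proof is correct and follows exactly the approach of the paper: combine the Duhamel formulas \eqref{forward:1}--\eqref{forward:2:b} with the forcing bounds \eqref{whole:1}, use $s_0=-\log\eps$ to identify the homogeneous contributions, and for $\tilde q^{(5)}_c$ verify from \eqref{guitar:or:band} that the initial data vanishes. The paper states this in one line; you have merely filled in the arithmetic (including the nice observation that $(s-s_0)e^{-s/8}$ is maximized at $s-s_0=8$ with value $8e^{-1}\eps^{1/8}\le \tfrac12$), so there is nothing to add.
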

\begin{proof} For \eqref{joes:oes:1} - \eqref{joes:oes:2}, this follows immediately upon combining estimates \eqref{whole:1} with the expressions \eqref{forward:1} - \eqref{forward:2:b}. For the estimate on $\tilde{q}_c^{(5)}$, we need to use that  
\begin{align*}
\tilde{q}_\alpha^{(5)}(s_0) &= \p_x^5|_{x= 0} \Big( x^2 \chi(x) \Big) = 0\,, \\
\tilde{q}_\beta^{(5)}(s_0) &= \p_x^5|_{x = 0} \Big( x^3 \chi(x) \Big) = 0\,,
\end{align*}
according to \eqref{guitar:or:band}.
\end{proof}%

\section{Estimates for $W$}

In this section we will verify various pointwise bootstrap estimates on $W$, solving \eqref{eq:W:0}, and derivatives thereof. The main objective is to verify the bootstrap assumptions \eqref{W:boot:0} - \eqref{weds:1},  \eqref{eq:W1:bnd:1},  \eqref{e:Wtilde:bootstrap} - \eqref{gerrard:2}, \eqref{pc:W0} - \eqref{warrior:2}, as well as \eqref{buddy:1}. 

The following lemma verifies the bootstrap \eqref{eq:W1:bnd:1}. 
\begin{lemma} The following estimate is valid on $W^{(1)}$
\begin{align*}
|W^{(1)}| \le 1 + \frac \ell 2 M^{40} e^{-s}\,, 
\end{align*}
which in particular verifies \eqref{eq:W1:bnd:1}.
\end{lemma}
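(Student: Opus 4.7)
The plan is to split the analysis into three regions based on $|x|$, using the decomposition $W^{(1)} = \bar{W}^{(1)} + \tilde{W}^{(1)}$ in each. The constraint $W^{(1)}(0,s) = -1$ gives $|W^{(1)}(0,s)| = 1$ exactly, so the target bound $1 + \frac{\ell}{2}M^{40}e^{-s}$ is a near-equality there, and the entire estimate rests on quantifying how $W^{(1)}$ departs from $-1$ nearby.

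For the delicate inner region $|x| \le \ell$, I would Taylor expand about $x=0$, using $W^{(1)}(0,s)=-1$, $W^{(4)}(0,s)=0$, and $W^{(k)}(0,s)=q^{(k)}(s)$ for $k=2,3,5$, writing
\[
W^{(1)}(x,s) = -1 + x q^{(2)}(s) + \tfrac{x^2}{2}q^{(3)}(s) + \tfrac{x^4}{24}q^{(5)}(s) + \tfrac{x^5}{120}W^{(6)}(\xi,s).
\]
The key structural fact is that $q^{(5)}(s) \ge 100$ by \eqref{moon}, so $\tfrac{x^4}{24}q^{(5)} \ge 0$; for $\ell$ small enough that $5\ell^4 \le \tfrac{1}{2}$, the quantity $-1 + \tfrac{x^4}{24}q^{(5)}$ lies in $[-1,-\tfrac12]$, so $|-1+\tfrac{x^4}{24}q^{(5)}| \le 1$. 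The $q^{(5)}$ term therefore does \emph{not} contribute to $|W^{(1)}|-1$ at leading order. Applying the triangle inequality on the remaining decaying pieces yields
\[
|W^{(1)}| \le 1 + \ell|q^{(2)}| + \tfrac{\ell^2}{2}|q^{(3)}| + \tfrac{\ell^5}{120}\|W^{(6)}\|_{L^\infty(|\xi|\le \ell)},
\]
and then invoking the decay bootstraps \eqref{boot:decay} ($|q^{(2)}| \le \eps^{1/10}e^{-3s/4}$, $|q^{(3)}| \le M^{40}e^{-s}$) together with $|W^{(6)}| \le |\bar W^{(6)}| + |\tilde W^{(6)}| \lesssim 1$ (via \eqref{decay:bar:2} and \eqref{e:W6:bootstrap}) produces a bound whose dominant decaying term is $\tfrac{\ell^2}{2}M^{40}e^{-s}$, which is strictly below $\tfrac{\ell}{2}M^{40}e^{-s}$ thanks to the spare factor of $\ell$.

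For the intermediate region $\ell \le |x| \le \eps^{-1/4}$, I would use the sharper profile bound \eqref{truth:1}, namely $|\bar{W}^{(1)}| \le 1 - \tfrac{\ell^7}{50}$, together with the global bootstrap \eqref{e:Wtilde:1:bootstrap} giving $|\tilde{W}^{(1)}| \le \eps^{1/20}\eta_{-1/5} \le \eps^{1/20}$. The parameter hierarchy $\ell^{-1} = \log\log M$ with $\eps$ smaller than any negative power of $M$ (hence vastly smaller than any positive power of $\ell$) yields $\eps^{1/20} \ll \ell^7/50$, so $|W^{(1)}| \le 1$. For the outer tail $|x| \ge \eps^{-1/4}$, the uniform bound \eqref{e:uniform:W1} gives $|W^{(1)}| \le \ell(\log M)\eta_{-1/5}(x) \le \ell(\log M)\eps^{1/5} \ll 1$.

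The hard part is the inner region: the Taylor remainder $\tfrac{\ell^5}{120}\|W^{(6)}\|_\infty$ and the subleading contribution $\tfrac{\ell^4}{24}|\tilde q^{(5)}| \le \tfrac{\ell^4}{24}\eps^{1/2}$ are constant in $s$, while the target $\tfrac{\ell}{2}M^{40}e^{-s}$ decays. The main obstacle is verifying that these non-decaying pieces remain below the target for the entire bootstrap range $s \in [s_0, s_{N+1}]$; this requires exploiting the quantitative hierarchy $\ell^5 \ll M^{40}e^{-s}$ on that interval, which amounts to a constraint relating the termination time $s_{N+1}$ to $M$ and $\ell$ that is consistent with the construction of the iteration in Proposition~\ref{induct:prop}. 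Once this hierarchical bookkeeping is in place, combining the three regions closes the bootstrap \eqref{eq:W1:bnd:1} since $\tfrac{\ell}{2}M^{40}e^{-s} \le e^{-3s/4}$ for $s \ge s_0 = |\log\eps|$ under the smallness of $\eps$ relative to $M^{-1}$.
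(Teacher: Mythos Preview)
Your three-region decomposition and the middle/outer arguments match the paper, but there is a genuine gap in the inner region. By Taylor expanding to fifth order with a $W^{(6)}$ Lagrange remainder and then applying the triangle inequality, you produce the non-decaying term $\tfrac{\ell^5}{120}\|W^{(6)}\|_{L^\infty(|x|\le\ell)}$. Your proposed resolution --- a constraint linking $s_{N+1}$ to $M$ and $\ell$ --- cannot work: the bootstrap must close uniformly on $[s_0, s_{N+1}]$ for \emph{every} $N$, and Proposition~\ref{induct:prop} is iterated with $N\to\infty$, so no upper bound on $s$ is available. For any fixed $\ell>0$ the inequality $\ell^5 \lesssim M^{40}e^{-s}$ (or even $\ell^5 \lesssim e^{-3s/4}$) fails once $s$ is large enough.

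The fix is to avoid splitting the fourth-order term from the remainder. The paper stops the expansion one order earlier, writing the exact Lagrange remainder as $\tfrac{x^4}{24}W^{(5)}(x_*,s)$ for some $|x_*|\le\ell$, and then shows $W^{(5)}(x_*,s)>0$ by decomposing $W^{(5)}(x_*)=\bar W^{(5)}(x_*)+\tilde W^{(5)}(x_*)$ with $\bar W^{(5)}(x_*)\ge \tfrac12$ (from $\bar W^{(5)}(0)=120$ and continuity) and $|\tilde W^{(5)}(x_*)|$ small by \eqref{e:Wtilde:bootstrap}. The full fourth-order remainder is therefore nonnegative, yielding the one-sided bound $W^{(1)}\ge -1 - \ell|q^{(2)}| - \tfrac{\ell^2}{2}|q^{(3)}|$ with no leftover constant term. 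Equivalently, within your expansion you could group
\[
\tfrac{x^4}{24}q^{(5)} + \tfrac{x^5}{120}W^{(6)}(\xi) = \tfrac{x^4}{24}\Bigl(q^{(5)} + \tfrac{x}{5}W^{(6)}(\xi)\Bigr)
\]
and observe that $\bigl|\tfrac{x}{5}W^{(6)}(\xi)\bigr|\lesssim \ell \ll 100 \le q^{(5)}$, so the bracket is positive and the combined term is nonnegative. Either way, the essential point is that the \emph{sign} of the fourth-order piece must be used, not merely its size; the triangle inequality discards exactly the cancellation that makes the estimate close.
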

\begin{proof} We subdivide into three regions $|x| \le \ell$, $\ell\le |x| \le \eps^{-\frac14}$ and $\abs{x}\geq\eps^{-\frac14}$. In the middle region $\ell \le |x| \le \eps^{- \frac 1 4}$, we have 
\begin{align*}
|W^{(1)}(x, s)| \le |\bar{W}^{(1)}(x)| + |\tilde{W}^{(1)}(x, s)| \le 1 - \frac{\ell^7}{50} + |\tilde{W}^{(1)}(x, s)| \le 1 - \frac{\ell^7}{50} + \eps^{\frac 1 5} < 1,
\end{align*}
where we have invoked \eqref{truth:1} to bound $|\bar{W}^{(1)}|$ above in this region, and the bootstrap \eqref{e:Wtilde:1:bootstrap} which is also valid in this region. 

In the far-field region, $|x| \ge \ell$, we use 
\begin{align*}
|W^{(1)}(x)| \le M \eta_{- \frac 1 5}(x) \lesssim_M (\eps^{- \frac 1 4})^{\frac 4 5}\,. 
\end{align*}

In the region $|x| \le \ell$, we obtain by a Taylor expansion of $W^{(1)}$ for some $|x_\ast| \le \ell$.
\begin{align} \n
W^{(1)}(x, s) =& -1 + W^{(2)}(0, s) x + W^{(3)}(0, s) \frac{x^2}{2} + W^{(5)}(x_\ast, s) \frac{x^4}{24} \\ \n
= & -1 + W^{(2)}(0, s) x + W^{(3)}(0, s) \frac{x^2}{2} + \bar{W}^{(5)}(x_\ast) \frac{x^4}{24} +  \tilde{W}^{(5)}(x_\ast, s) \frac{x^4}{24} \\ \n
\ge &  (-1 + \bar{W}^{(5)}(x_\ast) \frac{x^4}{24} - | \tilde{W}^{(5)}(x_\ast, s) \frac{x^4}{24}|)  + W^{(2)}(0, s) x + W^{(3)}(0, s) \frac{x^2}{2} \\ 
\ge & -1 + \ell M^{40} e^{-s} + \ell^2 \frac{M^{40}}{2} e^{-s}\,.\n 
\end{align}

\noindent Above, we have used property \eqref{fifth:deriv:bar:W} to assert that $\bar{W}^{(5)}(x_\ast) > \frac 1 2$ via a further Taylor expansion: 
\begin{align*}
\bar{W}^{(5)}(x_\ast) > \bar{W}^{(5)}(0) - |x_\ast| \| \bar{W}^{(6)} \|_\infty > \bar{W}^{(5)}(0) - C \ell > \frac 1 2\,. 
\end{align*}

\noindent in which case we use \eqref{fifth:deriv:W:0} to bound 
\begin{align*}
\frac{x^4}{24} \Big( \bar{W}^{(5)}(x_\ast) - |\tilde{W}^{(5)}(x_\ast, s)| \Big) \ge \frac 1 2 - \eps \ge \frac 1 4\,.
\end{align*}
\end{proof}

We now collect various estimates on damping terms. To do so, we first make the following definitions. 
\begin{align} \label{country:coffee:1}
D_n &:=  \frac 1 4 (- 1 + 5n) + \beta_{\tau}(n+1_{n> 1}) W^{(1)}\,,\\ \label{country:coffee:2}
\tilde{D}_n&:=  \frac 1 4 (-1 + 5n) + \beta_{\tau}\left(\bar W^{(1)}+ nW^{(1)} \right) \,,\\ \label{country:coffee:3}
D_{n}^{c} &:= \frac{5n-1}{4} +(n+1) \beta_\tau W^{(1)}\,, \\ \label{bravo:16}
D_{n,r} &:= D_n - \eta_{- \frac r 4} \mathcal{V}_W \p_x \eta_{\frac r 4} =  \frac 1 4 (- 1 + 5n) + \beta_{\tau}(n+1_{n> 1}) W^{(1)}- \eta_{- \frac r 4} \mathcal{V}_W \p_x \eta_{\frac r 4}\,, \\ \label{country:coffee:4}
\tilde{D}_{n,r} &:= \tilde{D}_{n} - \eta_{- \frac r 4} \mathcal{V}_W \p_x \eta_{\frac r 4} =  \frac 1 4 (-1 + 5n) + \beta_{\tau}\left(\bar W^{(1)}+ nW^{(1)} \right) - \eta_{- \frac r 4} \mathcal{V}_W \p_x \eta_{\frac r 4} \,, \\ \label{country:coffee:5}
D_{n, r}^{c} &:= D_{n}^{c} - \eta_{- \frac r 4} \mathcal{V}_W \p_x \eta_{\frac r 4} =  \frac{5n-1}{4} +(n+1) \beta_\tau W^{(1)} -  \eta_{- \frac r 4} \mathcal{V}_W \p_x \eta_{\frac r 4}\,. 
\end{align}
We now state various estimates on these damping terms. 

\begin{lemma} Let $|x_0| \ge \ell$. Then, for $D \in \{ \tilde{D}_6, D_7^{c} \}$, $\bar{D} \in \{ \tilde{D}_{1, \frac 4 5}, \tilde{D}_{0,- \frac 1 5} \}$, and for $n \ge 2$, $j \ge 1$, the following estimates are valid 
\begin{align} \label{headphones:1}
D  \ge & \frac 1 8\,,   \\ \label{street:beat:2}
-\int_{s_0}^s \bar{D} \circ \Phi^{x_0}_W  \le & \frac{1}{50} \log M\,, \\ \label{street:beat:3}
- \int_{s_0}^s D_{n, \frac 4 5} \circ \Phi^{x_0}_W \le & - \frac 1 9 (s - s_0) + \frac{1}{50} \log M\,, \\ \label{thor:4}
- \int_{s_0}^s W^{(1)} \circ \Phi^{x_0}_W \le &\frac{1}{50} \log M\,,  \\ \label{thor:5}
- \int_{s_0}^s D^{c}_{j, \frac{1}{5}} \circ \Phi^{x_0}_W \le & \frac{1}{50} \log M\,.
\end{align}
\end{lemma}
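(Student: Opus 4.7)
The strategy is to first establish the pointwise bound \eqref{headphones:1} directly, then derive the central integrated estimate \eqref{thor:4}, and finally dispatch all other integrated estimates by algebraic decomposition of the damping quantities.

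Estimate \eqref{headphones:1} follows by direct substitution. Writing $\tilde D_6 = \tfrac{29}{4} + \beta_\tau(\bar W^{(1)} + 6 W^{(1)})$, the bound $|\bar W^{(1)}| \le 1$ from \eqref{decay:bar:2} together with $|\beta_\tau - 1|, |W^{(1)}|-1 \le O(\eps^{1/6})$ from \eqref{eq:W1:bnd:1} and \eqref{e:1beta:bnd} give $\tilde D_6 \ge \tfrac{29}{4} - 7 - O(\eps^{1/6}) \ge \tfrac{1}{8}$; the argument for $D_7^c = \tfrac{17}{2} + 8 \beta_\tau W^{(1)}$ is identical with margin $\tfrac 1 2$.

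The central tool is \eqref{thor:4}. Combining the global bootstrap $|W^{(1)}| \le \ell \log M \cdot \eta_{-1/5}$ from \eqref{e:uniform:W1} with the escape estimate \eqref{e:escape:from:LA} (available because $|x_0| \ge \ell$), one has
\[
\int_{s_0}^s |W^{(1)} \circ \Phi_W^{x_0}| \ud s' \lesssim \ell \log M \int_{s_0}^s (\ell \eps^{1/5} e^{s'/5})^{-4/5} \ud s' \lesssim \ell^{1/5} \eps^{-4/25} e^{-4 s_0/25} \log M,
\]
and $s_0 = -\log\eps$ reduces this to $\lesssim \ell^{1/5} \log M$, which is $\le \tfrac{1}{50}\log M$ for $M$ large by \eqref{choice:M}. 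An identical argument, applied to the identity
\[
\eta_{-r/4}\mathcal V_W \p_x \eta_{r/4} = \tfrac{5r}{4}\cdot \tfrac{x^4}{1+x^4} + \tfrac{r x^3}{1+x^4}\bigl(\beta_\tau W + G_W\bigr),
\]
shows that the time integral along the trajectory of the last ``weight correction'' term is likewise bounded by $O(\ell^{1/5} \log M)$, using \eqref{W:boot:0} and \eqref{GW:transport:est}; meanwhile the first term is pointwise $\le \tfrac{5|r|}{4}$, contributing an explicit $\tfrac{5r}{4}(s-s_0)$ upon integration.

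With these ingredients in hand, each remaining estimate reduces to tracking the leading constant of the damping quantity minus $\tfrac{5r}{4}$. For \eqref{street:beat:3} with $n \ge 2$, this constant is $\tfrac{5n-1}{4} - 1 = \tfrac{5(n-1)}{4} \ge \tfrac{5}{4} \gg \tfrac{1}{9}$, and the required lower bound follows at once. For \eqref{street:beat:2} the leading constant vanishes exactly in both cases $(n,r) = (1, \tfrac 4 5)$ and $(0, -\tfrac 1 5)$ of the definitions \eqref{country:coffee:4}, leaving only perturbations of size $O(\ell^{1/5}\log M)$; note the $\bar W^{(1)}$ terms are controlled along the trajectory by \eqref{decay:bar:2} and the escape estimate exactly like $W^{(1)}$. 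For \eqref{thor:5} the constant is $\tfrac{5j-2}{4} \ge \tfrac 3 4 > 0$, giving positivity up to perturbations. The main obstacle — and the sole reason for the specific choice $\ell^{-1} = \log\log M$ — is to ensure the $\ell^{1/5}$ in every perturbation estimate is small enough to convert the generic $O(\log M)$ error into the sharper $\tfrac{1}{50}\log M$; this is pure accounting but must be tracked uniformly across all five damping structures.
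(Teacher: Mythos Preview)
Your proof is correct and follows essentially the same approach as the paper. The paper also reduces everything to the observation that each damping quantity equals a leading constant (which either vanishes or has favorable sign) plus terms bounded by $\ell\log M\cdot\eta_{-1/5}$, and then integrates along the trajectory using the escape bound \eqref{e:escape:from:LA}; the only cosmetic differences are that the paper writes out the explicit formulas for $\tilde D_{0,-1/5}$ and $\tilde D_{1,4/5}$ first rather than isolating \eqref{thor:4} as the organizing lemma, and that the paper's trajectory integral $\int\eta_{-1/5}\circ\Phi_W^{x_0}$ is evaluated via a substitution giving $\lesssim \log\ell^{-1}$ rather than your cruder bound $\eta_{-1/5}(x)\le|x|^{-4/5}$ giving $\lesssim\ell^{-4/5}$, but both are comfortably $o(\log M)$ under \eqref{choice:M}.
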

\begin{proof} First, for \eqref{headphones:1}, 
\begin{align}
 \tilde{D}_{6} =\frac 1 4 (-1 + 30) + \beta_{\tau}\left(\bar W^{(1)}+ 6W^{(1)} \right)  \geq \frac{1}{4}- 6\abs{1-\beta_\tau}\geq \frac{1}{8}\,.\label{eq:damping:W6}
 \end{align}
 where we have used that $\bar W^{(1)}\geq -1$, \eqref{eq:W1:bnd:1} and \eqref{e:GW0}. An analogous estimate applies for the $D_7^{c}$ term. 
 
We turn now to \eqref{street:beat:2}. By a simple calculation, we have
\begin{align*}
&\tilde{D}_{0, -\frac 1 5}= \beta_{\tau} \bar W^{(1)}+ \frac{1}{4} \eta_{-1}+\frac{x^3}{5}\eta_{-1}g_W\,,\\
&\tilde{D}_{1, \frac 4 5}= \beta_{\tau} ( \bar W^{(1)}+W^{(1)})- \eta_{-1}-\frac{4x^3}{5}\eta_{-1}g_W\,.
\end{align*}

Observe, that for either the case $D_q=\tilde{D}_{0, -\frac 1 5}, \tilde{D}_{1, \frac 4 5}$, we have from \eqref{W:boot:0}, \eqref{e:uniform:W1}, \eqref{e:1beta:bnd}, \eqref{GW:transport:est}
\begin{align*}
\abs{D_q}&\leq  3 \ell \log M \eta_{-\frac15}+\eta_{-1}(1+\abs{x}(\abs{W}+\abs{G_W}))\\
&\leq  4 \ell \log M \eta_{-\frac15}+\abs{x}\eta_{-1}( \frac{1}{1000} \log M \eta_{\frac1 {20}}+ \eta_{\frac14})\\
&\leq 6\ell \log M\,.
\end{align*}
Thus, using in addition \eqref{e:escape:from:LA}, we have
\begin{align} \n
 -\int_{s_0}^s  D_{q}\circ\Phi_W^{x_0}(s')  \,ds' \leq &  6 \ell \log M \int_{s_0}^s  \left(\eta_{-\frac15}(\ell \eps^{\frac15}e^{\frac15 s})+e^{- s}\right) ds' \\  \label{steve:aoki}
 \leq & 6 \ell \log M (20 \log \ell^{-1}) \leq \frac{1}{50} \log M\,.
\end{align}
The same calculation establishes estimate \eqref{street:beat:3}, \eqref{thor:4}, \eqref{thor:5}, with minor modifications. 
\end{proof}

\subsection{Transport Estimates for $W$}

We now prove a uniform estimate on $\tilde{ W}^{(6)}$ in the region $\abs{x}\leq \ell$.  We will prove the estimates along trajectories originating at $\abs{x_0}\leq \ell$. Note that no trajectory originating outside the ball of radius $\ell$ may enter the ball of radius $\ell$. This is a consequence of \eqref{e:escape:from:LA}. The following establishes the bootstrap bounds \eqref{e:W6:bootstrap} - \eqref{gerrard:2}.

\begin{lemma} The following localized estimates hold in the region $\abs{x}\leq \ell $
\begin{align}\label{iniesta:1}
 & |\tilde{W}^{(n)}| \le  \frac 1 2 (\abs{x}^{6-k}\eps^{\frac{1}{5}}+\eps^{\frac12})\leq \abs{\ell}^{6-n}\eps^{\frac{1}{5}},\quad \mbox{for }  n=0,\dots, 5\,, \\ 
 \label{messi:1}
 &  |\tilde{W}^{(6)}| \le \frac 1 2 \eps^{\frac{1}{5}}\,,\\   \label{xavi:1}
 & |\tilde{W}^{(7)}| \le \frac M 2 \eps^{\frac 1 5}\,, \\ \label{rooney:1}
 & |\tilde{W}^{(8)}| \le \frac{M^3}{2} \eps^{\frac 1 5} \,. 
\end{align}
\end{lemma}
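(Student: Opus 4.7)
The plan is to first establish the top-order estimates \eqref{messi:1}--\eqref{rooney:1} via transport along the $W$-characteristics, and then deduce the lower-order estimates \eqref{iniesta:1} by Taylor expanding around $x = 0$. The order matters: once $|\tilde{W}^{(6)}|$ is controlled uniformly in $|x| \le \ell$, the lower-order $\tilde{W}^{(n)}$ for $n \le 5$ are obtained essentially algebraically from already-proven ODE bounds at $x = 0$.

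For the top-order bounds, I begin with a geometric observation: the escape estimate \eqref{e:escape:from:LA} implies that no trajectory starting at $|x_0| \ge \ell$ can enter the ball of radius $\ell$, so any trajectory $\Phi_W^{x_0}$ landing at $|x| \le \ell$ at time $s$ must have originated at some $|x_0| \le \ell$. Along such a trajectory, equation \eqref{diff:eq} reads $\frac{d}{ds}[\tilde{W}^{(n)}\circ\Phi_W^{x_0}] = -\tilde{D}_n\,\tilde{W}^{(n)}\circ\Phi_W^{x_0} + \tilde{F}_{W,n}\circ\Phi_W^{x_0}$. For $n = 6$ the damping estimate \eqref{headphones:1} gives $\tilde{D}_6 \ge \tfrac{1}{8}$, and the same algebra (using $\bar{W}^{(1)} \ge -1$, \eqref{eq:W1:bnd:1}, and \eqref{e:1beta:bnd}) yields $\tilde{D}_7, \tilde{D}_8 \ge \tfrac{1}{8}$. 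Applying Gr\"onwall together with the initial datum bounds \eqref{assume:1} and the localized forcing estimates \eqref{e:FW6:decay} yields
\[
|\tilde{W}^{(n)}(x, s)| \lesssim e^{-(s - s_0)/8}\,\eps + \sup_{s_0 \le s' \le s}|\tilde{F}_{W,n}|,
\]
from which \eqref{messi:1}--\eqref{rooney:1} follow upon taking $\ell$ sufficiently small (for $n = 6$, which requires absorbing the factor of $\ell$ in the forcing estimate) and $\eps$ small relative to $M$ (for $n = 7, 8$, where the forcing estimates already scale as $\eps^{1/5}$ and $M\eps^{1/5}$ respectively).

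For the low-order estimates $n = 0, \ldots, 5$, I Taylor expand at $x = 0$. The constraints \eqref{e:constraints} combined with the identities \eqref{fifth:deriv:bar:W} for $\bar W$ give $\tilde{W}(0, s) = \tilde{W}^{(1)}(0, s) = \tilde{W}^{(4)}(0, s) = 0$, while the unconstrained scalars at the origin obey
\[
|\tilde{W}^{(2)}(0, s)| \le \eps^{1/10} e^{-3s/4}, \quad |\tilde{W}^{(3)}(0, s)| \le M^{40} e^{-s}, \quad |\tilde{W}^{(5)}(0, s)| \le \eps^{1/2},
\]
by the ODE bootstraps \eqref{boot:decay} and \eqref{boot:W:5:0} established in Section~\ref{s:wx:at:0}. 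Combining these with the bound on $\tilde{W}^{(6)}$ just proved, the Taylor formula with integral remainder
\[
\tilde{W}^{(n)}(x, s) = \sum_{j = 0}^{5 - n}\frac{x^j}{j!}\,\tilde{W}^{(n+j)}(0, s) + \int_0^x \frac{(x - y)^{5-n}}{(5-n)!}\,\tilde{W}^{(6)}(y, s)\,dy
\]
yields \eqref{iniesta:1} upon estimating each term using $s \ge -\log\eps$ to convert $e^{-3s/4}$ into $\eps^{3/4}$ and $e^{-s}$ into $\eps$, and then taking $\eps$ small relative to $M$.

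The main delicacy is balancing the small parameters: $\ell$ must be small enough for the $\ell\eps^{1/5}$ forcing contribution for $\tilde W^{(6)}$ to be absorbed into $\tfrac12 \eps^{1/5}$, while $\eps$ must be small relative to $M$ so that the $M^{40}\eps$-type contributions from the Taylor remainder for low $n$ are dominated by $\eps^{1/2}$. The hierarchy $\eps \ll M^{-1} \ll \ell$ from \eqref{choice:M} makes both requirements simultaneously consistent.
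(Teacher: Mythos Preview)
Your proof is correct and follows essentially the same approach as the paper: transport along $W$-characteristics with the damping $\tilde D_n \ge \tfrac18$ and the localized forcing bounds \eqref{e:FW6:decay} to close \eqref{messi:1}--\eqref{rooney:1}, followed by a Taylor expansion at $x=0$ using the constraints and the ODE bounds at the origin to obtain \eqref{iniesta:1}. The only cosmetic differences are that you cite the bootstraps \eqref{boot:decay}, \eqref{boot:W:5:0} for the origin values (the paper cites the sharper closed estimates \eqref{eq:2:3:W0}, \eqref{fifth:deriv:W:0}, but either suffices), and you make explicit the damping bounds $\tilde D_7, \tilde D_8 \ge \tfrac18$, which the paper leaves implicit.
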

\begin{proof} Composing with the flow we have
 \[\frac{d}{ds}\left(\tilde W^{(6)}\circ \Phi_W^{x_0}\right) +\left(\tilde{D}_{6}\circ \Phi_W^{x_0}\right)\left(\tilde W^{(6)}\circ \Phi_W^{x_0}\right)=\tilde{F}_{W,n}\circ \Phi_W^{x_0}\,.\]
 Hence, applying Gr\"onwall,  \eqref{e:FW6:decay} and the lower bound \eqref{eq:damping:W6}, we obtain
 \begin{equation*}
 \abs{\tilde W^{(6)}\circ \Phi_W^{x_0}}\les \abs{\tilde W^{(6)}(x_0,-\log\eps)}+\ell\eps^{\frac 1 5}\les \ell\eps^{\frac 1 5}\,.
 \end{equation*}
 The same argument applies for \eqref{xavi:1} and \eqref{rooney:1} using the latter two estimates in \eqref{e:FW6:decay}. 
 
 From the constraints \eqref{e:constraints} and the estimate \eqref{e:W6:bootstrap}, we have
\begin{align*}
\tilde W(x) =\frac{\tilde W^{(2)}(0)}{2!}x^2 +  \frac{\tilde W^{(3)}(0)}{3!}x^3  +  \frac{\tilde W^{(5)}(0)}{5!}x^5  + \OO(\eps^{\frac15}|x|^6) \, .
\end{align*}
Then applying \eqref{eq:2:3:W0} and \eqref{fifth:deriv:W:0}, we obtain \eqref{iniesta:1}.

\end{proof}

%
%
%

\begin{lemma} 
For $\ell\leq \abs{x}\leq \eps^{-\frac14}$ we have
\begin{align} \label{rover:1}
 |\tilde{W}| &\le \frac 1 2 \eps^{\frac{3}{20}}\eta_{\frac{1}{20}} \,, \\ \label{3eb:4}
 |\tilde{W}^{(1)}| &\le \frac 1 2 \eps^{\frac{1}{20}}\eta_{-\frac15}\,, 
\end{align}
which thus verifies the bootstraps \eqref{thurs:1} - \eqref{e:Wtilde:1:bootstrap}. 
\end{lemma}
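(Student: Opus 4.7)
The plan is to propagate weighted versions of $\tilde W$ and $\tilde W^{(1)}$ along the $W$-trajectories $\Phi_W^{x_0}$ and close the inequalities via Gronwall. Concretely, I would multiply \eqref{diff:eq0} by $\eta_{-1/20}$ to recast it as
\[
(\p_s + \tilde D_{0,-\frac15})(\tilde W\, \eta_{-\frac{1}{20}}) + \mathcal V_W\, \p_x(\tilde W\, \eta_{-\frac{1}{20}}) = \tilde F_W\, \eta_{-\frac{1}{20}},
\]
and multiply the $n=1$ instance of \eqref{diff:eq} by $\eta_{1/5}$ to obtain the analogous equation for $\tilde W^{(1)}\eta_{\frac15}$ with damping $\tilde D_{1,\frac45}$ and forcing $\tilde F_{W,1}\eta_{\frac15}$. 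The escape estimate \eqref{e:escape:from:LA} guarantees that every trajectory $\Phi_W^{x_0}$ with $|x_0|\ge \ell$ remains outside the ball of radius $\ell$ for all $s\ge s_0$; hence every pair $(x,s)$ with $\ell\le |x|\le \eps^{-1/4}$ is the endpoint of such a trajectory at time $s$, and it suffices to propagate the two weighted quantities along these flows.

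The next step is to compose with the flow and apply Gronwall. The damping estimate \eqref{street:beat:2} already gives $e^{-\int_{s'}^{s}\tilde D \circ\Phi_W^{x_0}}\le M^{1/50}$ uniformly for both $\tilde D\in\{\tilde D_{0,-\frac15},\tilde D_{1,\frac45}\}$. For the initial data at $s_0=-\log\eps$, the bounds \eqref{assume:1} on $\hat W_0$, together with the parameter constraint \eqref{apple:1} on $\alpha,\beta$ and the boundedness of the cut-off $\chi$, yield
\[
|\tilde W_0\,\eta_{-\frac{1}{20}}| + |\tilde W_0^{(1)}\,\eta_{\frac15}| \;\lesssim\; M^{30}\eps
\]
on $|x|\le\eps^{-1/4}$, which is negligible compared with the targets $\tfrac14\eps^{3/20}$ and $\tfrac14\eps^{1/20}$ when $\eps$ is small relative to $M$. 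The forcing for the $\tilde W$ equation is also easy: from \eqref{FW:est:2}, $|\tilde F_W\,\eta_{-\frac1{20}}|\le e^{-\frac{3s}{4}}$, whose $s$-integral from $s_0$ contributes at most $\lesssim \eps^{3/4}$, comfortably below $\tfrac14\eps^{3/20}$.

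The main obstacle will be the forcing in the $\tilde W^{(1)}$ equation. The estimate $\|\tilde F_{W,1}\eta_{\frac14}\|_\infty\lesssim \eps^{1/10}$ from \eqref{FW:est:2} only yields $|\tilde F_{W,1}\eta_{\frac15}|\lesssim \eps^{1/10}\eta_{-\frac{1}{20}}$, which carries no apparent $s$-decay. My plan is to convert the spatial escape \eqref{e:escape:from:LA} into temporal integrability: on the trajectory, $|\Phi_W^{x_0}(s')|\ge \ell\eps^{1/5}e^{s'/5}$, so past the threshold $s_0 + 5\log\ell^{-1}$ one has $\eta_{-\frac{1}{20}}(\Phi_W^{x_0}(s'))\lesssim (\ell\eps^{1/5})^{-1/5}e^{-s'/25}$. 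Splitting the time integral at this threshold, the short early window contributes $\lesssim \eps^{1/10}\log\ell^{-1}$ and the long late window $\lesssim \eps^{1/10}$, each easily absorbed into $\tfrac14\eps^{1/20}$ since $\log\ell^{-1}=\log\log\log M$. Once this spatial-to-temporal conversion is in hand, Gronwall closes both inequalities and the lemma follows.
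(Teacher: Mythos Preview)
Your overall strategy matches the paper's, but there is a genuine gap in the trajectory coverage step. From the escape estimate \eqref{e:escape:from:LA} you correctly deduce that every trajectory $\Phi_W^{x_0}$ with $|x_0|\ge\ell$ at time $s_0$ stays in $\{|x|\ge\ell\}$; however, the converse you then assert --- that every point $(x,s)$ with $\ell\le|x|\le\eps^{-1/4}$ lies on such a trajectory --- is false. Since the flow is expanding on $\{|x|\ge\ell\}$, the image of $\{|x_0|\ge\ell\}$ at time $s>s_0$ is contained in $\{|x|\ge \ell e^{(s-s_0)/5}\}$, which is strictly smaller than $\{|x|\ge\ell\}$. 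Points in the annulus $\ell\le|x|<\Phi_W^{\pm\ell}(s)$ lie on trajectories that started \emph{inside} the ball of radius $\ell$ at time $s_0$ and crossed $|x|=\ell$ at some later time $s_\ast>s_0$.

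The fix, which the paper carries out, is to run Gronwall from two types of starting points: either $(s_0,x_0)$ with $|x_0|>\ell$, or $(s_\ast,\pm\ell)$ for an arbitrary $s_\ast\ge s_0$. For the second type the ``initial data'' is $\tilde W(\pm\ell,s_\ast)$ and $\tilde W^{(1)}(\pm\ell,s_\ast)$, which you control not by \eqref{assume:1}--\eqref{apple:1} but by the localized bootstraps \eqref{e:Wtilde:bootstrap}: these give $|\tilde W(\pm\ell,s_\ast)|\le 2\ell^6\eps^{1/5}$ and $|\tilde W^{(1)}(\pm\ell,s_\ast)|\le 2\ell^5\eps^{1/5}$, which, after multiplication by $M^{1/50}$, still sit comfortably below $\tfrac12\eps^{3/20}$ and $\tfrac12\eps^{1/20}$. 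Your damping and forcing estimates go through unchanged once you integrate from $s_\ast$ instead of $s_0$ (the escape bound then reads $|\Phi_W^{\pm\ell}(s')|\ge\ell\,e^{(s'-s_\ast)/5}$, and your splitting argument adapts verbatim). With this amendment the proof closes exactly as you outline.
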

\begin{proof} We write  
\begin{align} \label{stuck:1}
(\p_s + \tilde{D}_{0, - \frac 1 5}) (\eta_{- \frac{1}{20}} \tilde{W}) + \mathcal{V}_W \p_x (\eta_{- \frac{1}{20}} \tilde{W}) &= \eta_{- \frac{1}{20}} \tilde{F}_{W,0}\,, \\ \label{stuck:2}
(\p_s + \tilde{D}_{1, \frac 4 5}) (\eta_{\frac{1}{5}} \tilde{W}^{(1)}) + \mathcal{V}_W \p_x (\eta_{\frac 1 5} \tilde{W}^{(1)}) &= \eta_{\frac{1}{5}} \tilde{F}_{W,1}\,.
\end{align}

We now fix any $|x_0| \ge \ell$. We will consider trajectories starting with $(s_\ast, x_0 = \pm \ell)$ or $(s_0, x_0)$ for $|x_0| > \ell$. Writing the solution to \eqref{stuck:1} we obtain
\begin{align*}
\eta_{- \frac{1}{20}} \tilde{W} \circ \Phi^{x_0}_W = \eta_{- \frac{1}{20}} \tilde{W}(s_\ast, \Phi^{x_0}_W(s_\ast)) e^{- \int_{s_\ast}^s \tilde{D}_{0, - \frac 1 5} \circ \Phi^{x_0}_W} + \int_{s_\ast}^s e^{- \int_{s'}^s \tilde{D}_{0, - \frac 1 5} \circ \Phi^{x_0}_W} \eta_{- \frac{1}{20}} \tilde{F}_{W} \circ \Phi_W^{x_0} \ud s'\,.
\end{align*}
We now estimate both sides to produce 
\begin{align*} 
|\eta_{- \frac{1}{20}} \tilde{W} \circ \Phi^{x_0}_W | \le & ( \eps^{\frac 34}  + 2 \ell^6 \eps^{\frac 1 5} ) M^{\frac{1}{50}} + \int_{s_\ast}^s M^{\frac{1}{50}} e^{- \frac 3 4 s'} \ud s' \le \frac 1 2 \eps^{\frac{3}{20}}\,. 
\end{align*}
Above, we have invoked estimate \eqref{FW:est:2} on the forcing term and \eqref{street:beat:2} for the damping term. We have moreover estimated the initial data by using \eqref{guitar:or:band} to write 
\begin{align} \label{truck}
\tilde{W}(x, s_0) = & \hat{W}_0 + \alpha x^2 \chi + \beta x^3 \chi - \bar{W} (1 - \chi(\eps^{\frac 1 4}x) )\,.
\end{align}
When $|x| \le \eps^{- \frac 1 4}$, the last term above is zero, and so we estimate, for $|x| \le \eps^{- \frac 1 4}$, 
\begin{align*}
|\tilde{W}(x, s_0) \eta_{- \frac{1}{20}}| \le \|\hat{W}_0 \eta_{- \frac{1}{20}} \|_\infty + |\alpha| + |\beta| \le \eps^{\frac 3 4}\,,
\end{align*}
by the estimates \eqref{assume:1} and \eqref{apple:1}.  

Writing the solution to \eqref{stuck:2} yields 
\begin{align*}
\eta_{\frac 1 5} \tilde{W}^{(1)} \circ \Phi^{x_0}_W = \eta_{\frac 1 5} \tilde{W}^{(1)}(s_\ast, x_0) e^{- \int_{s_\ast}^s \tilde{D}_{1, \frac 4 5} \circ \Phi^{x_0}_W} + \int_{s_\ast}^s e^{- \int_{s'}^s \tilde{D}_{1, \frac 4 5} \circ \Phi^{x_0}_W} \eta_{\frac 1 5} \tilde{F}_{W,1} \circ \Phi_W^{x_0} \ud s' \,.
\end{align*}
We now estimate the right-hand side via 
\begin{align} \n
|\eta_{\frac 1 5} \tilde{W}^{(1)} \circ \Phi^{x_0}_W| \le &( \eps^{\frac 3 4}  + 2 \ell^5 \eps^{\frac 1 5}  ) M^{\frac{1}{50}} + \eps^{\frac{1}{10}} M^{\frac{1}{50}} \int_{s_\ast}^s |\eta_{- \frac{1}{20}}(x_0 e^{\frac 1 5 (s' - s_0)}) \ud s' \le \frac 1 2 \eps^{\frac{1}{20}}\,,
\end{align}
where above we have invoked estimate \eqref{street:beat:2} for the damping term, and \eqref{FW:est:2} for the forcing term. For the initial data, we differentiate \eqref{truck} to obtain 
\begin{align*}
\tilde{W}^{(1)}(x, s_0) = \hat{W}_0' + \p_x \Big( \alpha x^2 \chi + \beta x^3 \chi \Big) - \p_x \Big( \bar{W} (1 - \chi(\eps^{\frac 1 4}x)) \Big),
\end{align*}
which upon noting that the latter term is identically zero on $|x| \le \eps^{- \frac 1 4}$, we obtain 
\begin{align*}
|\tilde{W}^{(1)}(x, s_0) \eta_{\frac 1 5}| \le \| \eta_{\frac 1 5} \hat{W}_0' \|_\infty + |\alpha| + |\beta| \le \eps^{\frac 3 4}\,, 
\end{align*} 
upon invoking estimates \eqref{assume:1} and \eqref{apple:1}. 

\end{proof}

\begin{lemma} For $\abs{x}\geq \ell$ we have
\begin{align} \label{3eb:1}
 |W| &\leq  \frac{\ell}{2} \log M \eta_{\frac{1}{20}}\,, \\ \label{3eb:2}
 |W^{(1)}| &\leq \frac \ell 2  \log M \eta_{- \frac 1 5}\,, \\ \label{3eb:3}
 \abs{W^{(n)}}&\leq \frac 1 2 M^{k^2} \eta_{- \frac 1 5}\quad \text{ for } n =2,\dots, 8\,,  
\end{align}
which verifies the bootstraps \eqref{W:boot:0} - \eqref{weds:1}. 
\end{lemma}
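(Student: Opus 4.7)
The plan is to treat the three estimates separately, splitting the far region $|x|\ge \ell$ into a middle annulus $\ell\le |x|\le \eps^{-1/4}$ and a far annulus $|x|\ge \eps^{-1/4}$.

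For \eqref{3eb:1} and \eqref{3eb:2} in the middle annulus, I will simply decompose $W=\bar W+\tilde W$ and combine the pointwise Burgers bounds \eqref{decay:bar:2} with the previously established $\tilde W$ bounds \eqref{rover:1} and \eqref{3eb:4}. This gives $|W|\le \tfrac32\eta_{1/20}+\tfrac12\eps^{3/20}\eta_{1/20}\le 2\eta_{1/20}$, which fits inside $\tfrac{\ell}{2}\log M\,\eta_{1/20}$ since $\ell\log M = \log M/\log\log M$ is large by \eqref{choice:M}; the bound on $W^{(1)}$ follows identically. In the far annulus $|x|\ge \eps^{-1/4}$, I will work with the equations \eqref{W:0} and \eqref{W:n} for $n=1$ directly, multiply by $\eta_{-1/20}$ and $\eta_{1/5}$ respectively, and compose with $\Phi_W^{x_0}$. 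Tracing backwards from $(s,x)$ with $|x|\ge \eps^{-1/4}$, the escape estimate \eqref{e:escape:from:LA} ensures that the trajectory either reaches $s_0$ with $|x_0|\ge \eps^{-1/4}$ (use \eqref{guitar:or:band} and \eqref{assume:1} for initial data) or crosses $|x|=\eps^{-1/4}$ at some $s_*$ (use the middle-annulus bound as boundary data). The weighted damping $D_{0,-1/5}$, resp.\ $D_{1,4/5}$, differs from $\tilde D_{0,-1/5}$, resp.\ $\tilde D_{1,4/5}$, only by $\beta_\tau\bar W^{(1)}$, which integrates to $O(1)$ along trajectories using \eqref{decay:bar:2}; hence \eqref{street:beat:2} applies with the same $\tfrac{1}{50}\log M$ bound, and Gronwall with forcing estimates \eqref{FW:est:1} and \eqref{Motor:1} closes the estimate.

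For \eqref{3eb:3} with $2\le n\le 8$, I compose \eqref{W:n} with the $\Phi_W^{x_0}$ flow after multiplying by $\eta_{1/5}$, giving the weighted damping $D_{n,4/5}$. By \eqref{street:beat:3} the amplification factor obeys
\[
\exp\Big(-\int_{s_*}^s D_{n,4/5}\circ \Phi_W^{x_0}\Big)\le M^{1/50}e^{-(s-s_*)/9},
\]
and the forcing bound \eqref{FW:est:3} gives $\|\eta_{1/5} F_{W,n}\|_\infty\lesssim M^{n^2-1}$. Trajectories starting outside $|x_0|\ge \ell$ stay outside by \eqref{e:escape:from:LA}, while trajectories that enter the $|x|\ge \ell$ region backwards in time originate from $|x_0|=\ell$, where the interior estimates \eqref{messi:1}--\eqref{rooney:1} provide the required boundary data. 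Initial data $W^{(n)}(s_0,\cdot)$ is controlled via the explicit form \eqref{guitar:or:band}, combined with \eqref{decay:bar:2} on $\bar W^{(n)}$ and \eqref{assume:1} on $\hat W_0^{(n)}$, yielding a contribution of order $M^{n^2}M^{1/50}e^{-(s-s_0)/9}$, which is absorbed into $\tfrac12 M^{n^2}\eta_{-1/5}$. Adding the Duhamel contribution of size $M^{n^2-1+1/50}$ finishes the estimate.

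The main technical obstacle is confirming that the damping estimates \eqref{street:beat:2}--\eqref{street:beat:3} suffice uniformly across the three regimes: near the inner boundary $|x|=\ell$ (where the interior bounds must patch in), in the transition zone near $|x|=\eps^{-1/4}$ (where the middle-annulus bound serves as boundary data and one must verify compatibility with the far-field weighted equation), and in the far tail (where $\mathcal V_W\p_x\eta_{\pm\gamma}$ must be compared with the bare damping). The remaining work is a systematic transport/Gronwall argument paralleling the proof of the preceding lemma for $\tilde W$.
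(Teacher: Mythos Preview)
Your plan is essentially the same as the paper's: write the weighted transport equations, split $|x|\ge\ell$ into the middle annulus $\ell\le|x|\le\eps^{-1/4}$ (handled for $n=0,1$ by $W=\bar W+\tilde W$ plus \eqref{rover:1}--\eqref{3eb:4}) and the far annulus (handled by Gr\"onwall along $\Phi_W^{x_0}$ with the damping estimates \eqref{street:beat:2}--\eqref{street:beat:3} and the forcing bounds \eqref{FW:est:3}, \eqref{Motor:1}). Your reduction from $D_{0,-1/5},D_{1,4/5}$ to $\tilde D_{0,-1/5},\tilde D_{1,4/5}$ is correct and in fact needs no $O(1)$ correction: since $\bar W^{(1)}\le 0$ one has $D_{n,r}\ge\tilde D_{n,r}$ for $n=0,1$, so \eqref{street:beat:2} transfers directly. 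The paper alternatively asserts the sharper bound $\exp(-\int D_{1,4/5}\circ\Phi_W^{x_0})\le 10$ for $|x_0|\ge\eps^{-1/4}$, but either route closes.

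There is one slip in your $n\ge 2$ bookkeeping. You write that the initial/boundary data contribution is ``of order $M^{n^2}M^{1/50}e^{-(s-s_0)/9}$'' and claim it is absorbed into $\tfrac12 M^{n^2}$; at $s=s_0$ this would read $M^{n^2+1/50}\le\tfrac12 M^{n^2}$, which is false. The point is that the data is \emph{not} of size $M^{n^2}$: the very references you cite, \eqref{guitar:or:band}, \eqref{decay:bar:2}, \eqref{assume:1}, give $|\eta_{1/5}W^{(n)}(\cdot,s_0)|\lesssim 1$, and the interior bounds \eqref{iniesta:1}--\eqref{rooney:1} together with $\bar W^{(n)}(\pm\ell)=O(1)$ give $|\eta_{1/5}W^{(n)}(\pm\ell,s_*)|\lesssim 1$ as well (indeed the paper records this as $M+10\eps^{1/5}$). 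With the correct $O(1)$ data, the contribution is $O(M^{1/50})e^{-(s-s_*)/9}$ and the absorption into $\tfrac12 M^{n^2}$ is immediate. Fix this and your argument is complete.
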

\begin{proof}We write, for $n \ge 1$, 
\begin{align} \label{stuck:3}
&(\p_s + D_{n, \frac 4 5}) \eta_{\frac 1 5} W^{(n)} + \mathcal{V}_W \p_x (\eta_{\frac 1 5} W^{(n)}) = \eta_{\frac{1}{5}} F_{W,n}\,, \\ \label{stuck:4}
&(\p_s + D_{0, -\frac 1 5}) (\eta_{- \frac{1}{20}} W) + \mathcal{V}_W \p_x (\eta_{- \frac{1}{20}}W) = \eta_{- \frac{1}{20}} F_{W,0}\,. 
\end{align}
We will treat the cases $n = 0$, $n = 1$, and $n \ge 2$ cases separately. 

Writing Gr\"onwall for \eqref{stuck:3} gives 
\begin{align} \label{nat:1}
\eta_{\frac 1 5}W^{(n)} \circ \Phi_W^{x_0} = \eta_{\frac 1 5} W^{(n)}(s_\ast, x_0) e^{- \int_{s_\ast}^s D_{n, \frac 4 5} \circ \Phi_W^{x_0}} + \int_{s_\ast}^s e^{- \int_{s'}^s D_{n, \frac 4 5} \circ \Phi_W^{x_0} }  \eta_{\frac 1 5} F_{W,n} \circ \Phi_W^{x_0} \ud s'\,. 
\end{align}
Estimating both sides for $n \ge 2$ gives 
\begin{align*}
|\eta_{\frac 1 5}W^{(n)} \circ \Phi_W^{x_0}| \le ( M + 10 \eps^{\frac 1 5} ) e^{- \frac{1}{9} (s - s_\ast)} M^{\frac{1}{50}} +M^{\frac{1}{50}} \int_{s_\ast}^s e^{- \frac{1}{9} (s - s')} M^{-\frac{9}{10}} M^{n^2}  \ud s',
\end{align*}
where we have appealed to estimate \eqref{street:beat:3} for the damping term and estimate \eqref{FW:est:3} for the forcing. 

For the $n = 0, 1$ cases, it suffices to prove estimates \eqref{3eb:1} and \eqref{3eb:2} in the region $|x| \ge \eps^{- \frac 1 4}$ due to \eqref{rover:1} - \eqref{3eb:4}. In this case, we select $|x_0| \ge \eps^{- \frac 1 4}$ and $s_\ast \ge s_0$ such that $(s_\ast, x_0)$ is the origin of the trajectories consider. More specifically, we take either $|x_0| > \eps^{- \frac 1 4}$ and $s_\ast = s_0$ or $|x_0| = \eps^{- \frac 1 4}$ and any $s_\ast \ge s_0$. In this case, \eqref{nat:1} continues to hold for $n = 1$, and we estimate via 
\begin{align} \n
|\eta_{\frac 1 5} W^{(1)} \circ \Phi_W^{x_0}| \le & |\eta_{\frac 1 5} W^{(1)}(x_0, s_\ast) | | e^{- \int_{s_\ast}^s D_{1, \frac 4 5} \circ \Phi_W^{x_0}} | + \int_{s_\ast}^s |e^{- \int_{s'}^s D_{1, \frac 4 5} \circ \Phi_W^{x_0}}| \| \eta_{\frac 1 5} F_{W,1} \|_\infty \ud s' \\ \label{martin:1}
\lesssim & \Big( \sup_{|x| \ge \eps^{- \frac 1 4}} |\eta_{\frac 1 5} W^{(1)}(x, s_0)|  + | \eta_{\frac 1 5} W^{(1)}(\eps^{- \frac 1 4}, s_\ast)| \Big)  + \int_{s_\ast}^s e^{- \frac 1 2 s'} \ud s' \\ \label{martin:2}
\lesssim & \Big(1 +|\eta_{\frac 1 5} \bar{W}^{(1)}(\eps^{- \frac 1 4})| + |\eta_{\frac 1 5} \tilde{W}^{(1)}(\eps^{- \frac 1 4}, s_\ast)| \Big)  + \int_{s_\ast}^s e^{- \frac 1 2 s'} \ud s' \\ \n
\le & \frac \ell 2 \log M\,. 
\end{align}
To evaluate the size of the initial data, from \eqref{martin:1} to \eqref{martin:2}, we have used \eqref{guitar:or:band} to compute
\begin{align*}
|\eta_{\frac 1 5} W^{(1)}(x, s_0)| = \Big| \Big( \bar{W}^{(1)} \chi(\eps^{\frac 1 4}x) + \bar{W} \eps^{\frac 1 4} \chi'(\eps^{\frac 1 4}x) + \hat{W}_0' + \p_x \Big( \alpha x^2 \chi(x) + \beta x^3 \chi(x) \Big) \Big) \eta_{\frac 1 5} \Big| \lesssim 1\,. 
\end{align*} 

Above, we have invoked the choice \eqref{choice:M} to ensure that $\ell \log M$ can be selected larger than the implicit constants appearing in the above estimate. We have also invoked bootstrap \eqref{e:Wtilde:1:bootstrap} to control the $\tilde{W}^{(1)}$ term above. We have also invoked \eqref{Motor:1} to control the forcing term, and used the fact that 
\begin{align*}
\exp \Big( - \int_{s_0}^s D_{1, \frac 4 5} \circ \Phi_W^{x_0} \Big) \le 10 \quad\text{ for } |x_0| \ge \eps^{- \frac 1 4}\,. 
\end{align*}

An analogous series of estimates applies to \eqref{3eb:1}.  
\end{proof}

\subsection{Transport estimates of $\nabla_c W$}

We now verify the bootstrap estimates \eqref{warrior:1} - \eqref{warrior:2}.  
\begin{lemma} For $n=0,\dots, 6$ and $\abs{x}\le \ell$ we have the following estimates
\begin{align} \label{warrior:1:kim}
&|W_c^{(n)}| \le M \ell^{\frac 3 4} \eps^{\frac 3 4} e^{\frac 3 4 s}\,, \\  \label{warrior:2:kim}
&|W_c^{(7)}| \le \frac M 2 \eps^{\frac 3 4} e^{\frac 3 4 s} \,. 
\end{align}
\end{lemma}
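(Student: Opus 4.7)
The principal obstacle is that the damping coefficient $D_n^{c} = (5n-1)/4 + (n+1)\beta_\tau W^{(1)}$ in equation \eqref{trek:mix} fails to be positive for $n \le 5$: since $W^{(1)}(0, s) = -1$ by constraint \eqref{e:constraints}, the term $(n+1)|W^{(1)}|$ overwhelms the constant piece. Hence a direct Gr\"onwall argument along trajectories cannot close the estimate for small $n$. The plan is therefore first to prove the $n = 7$ bound \eqref{warrior:2:kim} by transport and Gr\"onwall, and then obtain the remaining bounds \eqref{warrior:1:kim} by Taylor expansion around $x = 0$, exploiting the vanishing constraints $W_c^{(j)}(0, s) = 0$ for $j \in \{0, 1, 4\}$ obtained by differentiating \eqref{e:constraints} in the parameter $c$.

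For the $n = 7$ estimate I would apply Duhamel to \eqref{trek:mix} along a trajectory $\Phi_W^{x_0}$ originating at time $s_0$ from some $|x_0| \le \ell$. A direct computation analogous to \eqref{headphones:1} yields $D_7^{c} \ge 1/8$. The initial data vanishes identically in $|x| \le \ell$: from \eqref{guitar:or:band}, $\p_\alpha W_0 = x^2 \chi(x)$ and $\p_\beta W_0 = x^3 \chi(x)$, and $\chi \equiv 1$ on $|x| \le 1$, so both have vanishing seventh derivative throughout $|x| \le \ell$. One also verifies that backward trajectories from $(x, s)$ with $|x| \le \ell$ originate at some $|x_0| \le |x|$ at time $s_0$, since $\mathcal{V}_W$ is outward-pointing near the origin; thus the forcing is evaluated only inside $|x| \le \ell$. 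Plugging in the enhanced localized estimate \eqref{better:1}, $\sup_{|x| \le \ell} |F_{W,7}^{c}| \lesssim M \ell^{1/5} \eps^{3/4} e^{3s/4}$, Gr\"onwall yields $|W_c^{(7)}| \lesssim M \ell^{1/5} \eps^{3/4} e^{3s/4}$, which is absorbed into $\tfrac{M}{2} \eps^{3/4} e^{3s/4}$ by the smallness of $\ell$ encoded in \eqref{choice:M}.

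For $0 \le n \le 6$, I would use the Taylor expansion
\begin{equation*}
W_c^{(n)}(x, s) = \sum_{k=0}^{6-n} \frac{W_c^{(n+k)}(0, s)}{k!} x^k + \frac{x^{7-n}}{(7-n)!} W_c^{(7)}(y(x), s)
\end{equation*}
for some $y(x)$ between $0$ and $x$. Combining the vanishing of $W_c^{(j)}(0, s)$ for $j = 0, 1, 4$ with the quantitative bounds $|q_c^{(2)}| \le 4 \eps^{3/4} e^{3s/4}$ and $|q_c^{(3)}| \le 4 \eps^{1/2} e^{s/2}$ from \eqref{mako:4}--\eqref{mako:5}, $|\tilde{q}_c^{(5)}| \le \eps^{3/8} e^{s/8}$ from \eqref{hotel:motel}, and the Step 1 bound on $W_c^{(7)}$, one checks case by case that each surviving contribution acquires either a power $|x|^k$ with $k \ge 1$ (yielding smallness $\ell^k$) or a coefficient with subcritical growth relative to $\eps^{3/4} e^{3s/4}$ (using $s \ge s_0 = -\log\eps$ to convert $e^{s/2}, e^{s/8}$ factors into $\eps^{3/4} e^{3s/4}$ after paying small powers of $\eps$). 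The sum of these contributions is bounded by $M \ell^{3/4} \eps^{3/4} e^{3s/4}$.

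The one auxiliary piece needed is control of the unconstrained quantity $q_c^{(6)}(s) := W_c^{(6)}(0, s)$, which enters the $n = 5$ and $n = 6$ expansions. Evaluating \eqref{trek:mix} for $n = 6$ at $x = 0$ and using $\mathcal{V}_W(0, s) = \mu(s)$ gives the ODE $(\p_s + D_6^{c}(0, s)) q_c^{(6)} = -\mu q_c^{(7)} + F_{W,6}^{c}(0, s)$. Since $D_6^{c}(0, s) = 29/4 - 7\beta_\tau \ge 1/8$ and $q_c^{(6)}(s_0) = 0$ by the explicit form of the data, Duhamel with a localized version of the forcing estimate \eqref{Fwc.est.ult} evaluated at $x = 0$ yields $|q_c^{(6)}| \lesssim \eps^{3/4} e^{3s/4}$, which suffices. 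The main technical subtlety is ensuring the $M^{(n+2)^2}$ constants in \eqref{Fwc.est.ult} do not spoil the target $M \ell^{3/4}$ factor; this forces systematic use of the \emph{localized} forcing bounds inside $|x| \le \ell$, which are built from \eqref{e:Wtilde:bootstrap} and from the very bootstrap \eqref{warrior:1:kim} we are closing, in the standard circular bootstrap fashion.
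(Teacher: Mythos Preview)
Your approach is correct and matches the paper's strategy: close $n=7$ by Gr\"onwall using the positive damping $D_7^{c}\ge \tfrac18$ and the localized forcing bound \eqref{better:1}, then recover $n\le 6$ by Taylor expansion around $x=0$ using the constraint-induced vanishing $q_c^{(0)}=q_c^{(1)}=q_c^{(4)}=0$ and the bounds \eqref{mako:4}--\eqref{mako:5}, \eqref{hotel:motel}.

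Two minor differences are worth recording. First, the paper Taylor-expands only to \emph{first order} at each step, bounding $|W_c^{(n)}|\le |q_c^{(n)}|+\ell\sup_{|x|\le\ell}|W_c^{(n+1)}|$ and feeding in the bootstrap \eqref{warrior:1} for the next derivative; you instead expand all the way to the $W_c^{(7)}$ remainder, which is equivalent but uses the $q_c^{(k)}$ bounds for all $k\le 6$ at once. Second, the paper's written proof only displays the cases $n=0,1,2,7$ and does not address $q_c^{(6)}$, which is needed for $n=6$ (it appears with no $\ell$-prefactor in either scheme). Your ODE argument for $q_c^{(6)}$ fills this gap. The key point, which you touch on but do not make fully explicit, is that at $x=0$ the quadratic interaction terms in $F_{W,6}^{c}(0,s)$ largely vanish by the constraints, and the surviving dominant contribution $\beta_\tau W^{(5)}(0,s)\,q_c^{(2)}(s)$ is $O(1)\cdot\eps^{3/4}e^{3s/4}$ with a universal constant; merely invoking the localized bootstrap \eqref{warrior:1} would give a prefactor $\ell^{1/2}M$, which is too large to close $n=6$ against the target $M\ell^{3/4}$. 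So your claimed bound $|q_c^{(6)}|\lesssim \eps^{3/4}e^{3s/4}$ is right, but it hinges on the exact evaluation at $x=0$, not just localization to $|x|\le\ell$.
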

\begin{proof} The first inequality above follows for $n = 0$ upon Taylor expanding and noting that $W_c(0, s) = 0$ via 
\begin{align*} 
| W_c | \le \ell \sup_{|x| \le \ell} |W_c^{(1)}| \le \ell M \ell^{\frac 1 2} e^{\frac 3 4(s- s_0)}\,. 
\end{align*}

\noindent The exact same argument works for the $n = 1$ inequality. For the $n = 2$ inequality, we also Taylor expand, but must factor in the value at $x = 0$ via
\begin{align*}
|W^{(2)}_c| \le |W^{(2)}_c(0, s)| + \ell  \sup_{|x| \le \ell} |W^{(3)}| \le  4 e^{\frac 3 4(s - s_0)} + \ell M e^{\frac 3 4(s - s_0)}\,. 
\end{align*}

 Finally, for the $n = 7$ case, we directly apply Gr\"onwall to integrate which gives 
\begin{align*}
W_c^{(7)}( \Phi_W(x, s), s) = W_c^{(7)}(x, s) e^{- \int_{s_0}^s D_7^{c} \circ \Phi_W} + \int_{s_0}^s e^{- \int_{s'}^s D_7^{c} \circ \Phi_W} F_{W,7}^{c} \circ \Phi_W \ud s'\,. 
\end{align*}

\noindent We note that \eqref{headphones:1} implies that 
\begin{align*}
e^{- \int_{s_0}^s D_7^{c} \circ \Phi_W^{x_0}} \le e^{- \frac 1 8 (s - s_0)}\,.
\end{align*}

\noindent Thus, we have 
\begin{align} \n
|W^{(7)}_c \circ \Phi_W^{x_0}| \le & 2 W^{(7)}_c(x_0, s_0) e^{-\frac 1 8 (s - s_0)} + \int_{s_0}^s e^{-\frac 1 8(s - s')} \| F_{W,7}^{c} \circ \Phi_W \| \ud s' \\ \n
\le & 2  e^{-\frac 1 8(s - s_0)} + \int_{s_0}^s e^{-\frac 1 8 (s - s')} M \ell^{\frac 1 5} e^{\frac 3 4 (s' - s_0)} \ud s' \\
\le & 2 e^{-\frac 1 8 (s - s_0)} + 2M \ell^{\frac 1 5} e^{\frac 3 4 (s - s_0)}\,,\n
\end{align}
where we have invoked the enhanced localized estimate, \eqref{better:1}. 
\end{proof}

We now verify \eqref{pc:W0} - \eqref{mako:6}.
\begin{lemma} For $n=1,\dots,7$  and $\abs{x}\le \ell$ we have the following estimates
\begin{align} \label{prince:1}
|W_c| &\le \frac{M^{4}}{2} \eps^{\frac 3 4} e^{\frac 3 4 s} \,, \\ \label{prince:2}
|W^{(n)}_c \eta_{\frac{1}{20}} | &\le \frac{M^{(n+2)^2}}{2} \eps^{\frac 34} e^{\frac 3 4 s}  \,. 
\end{align}
\end{lemma}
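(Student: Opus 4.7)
The plan is to deduce both estimates directly from the warrior bounds \eqref{warrior:1:kim}--\eqref{warrior:2:kim}, since on the compact set $|x|\le\ell$ the weight $\eta_{\frac{1}{20}}(x)=(1+x^4)^{\frac{1}{20}}$ satisfies $\eta_{\frac{1}{20}}(x)\le (1+\ell^4)^{\frac{1}{20}}\le 2$ and is thus harmless, while the warrior bounds already carry a decisive smallness factor in $\ell$. For $1\le n\le 6$, warrior~1 gives $|W_c^{(n)}|\le M\ell^{3/4}\eps^{3/4}e^{3s/4}$ on $|x|\le\ell$, so
\[
|W_c^{(n)}\eta_{\frac{1}{20}}|\le 2M\ell^{3/4}\eps^{3/4}e^{3s/4}\le \tfrac{1}{2}M^{(n+2)^2}\eps^{3/4}e^{3s/4},
\]
the last inequality holding by the selection \eqref{choice:M} (already $M\ell^{3/4}\ll M^9/4$ at $n=1$). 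For $n=7$, warrior~2 gives $|W_c^{(7)}|\le \tfrac{M}{2}\eps^{3/4}e^{3s/4}$, and pairing with $\eta_{\frac{1}{20}}\le 2$ yields $|W_c^{(7)}\eta_{\frac{1}{20}}|\le M\eps^{3/4}e^{3s/4}\le \tfrac{1}{2}M^{81}\eps^{3/4}e^{3s/4}$.

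For the zeroth-order bound $|W_c|\le \tfrac{M^4}{2}\eps^{3/4}e^{3s/4}$, the only additional input I need is the identity $W_c(0,s)=0$. This I would obtain by differentiating the constraint $W(0,s)=0$ from \eqref{e:constraints} in the parameter $c\in\{\alpha,\beta\}$: the constraint is enforced for every choice of $(\alpha,\beta)\in\mathcal{B}_N$ through the modulation variables, so $\p_c$ of the constraint kills the left-hand side. Combining this with the fundamental theorem of calculus and warrior~1 applied at $n=1$ yields
\[
|W_c(x,s)|\le |x|\sup_{|y|\le|x|}|W_c^{(1)}(y,s)|\le \ell\cdot M\ell^{3/4}\eps^{3/4}e^{3s/4}=M\ell^{7/4}\eps^{3/4}e^{3s/4},
\]
which is absorbed by $\tfrac{M^4}{2}\eps^{3/4}e^{3s/4}$ via \eqref{choice:M}.

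Because the target estimates on the set $|x|\le\ell$ are strictly weaker than the warrior bounds just established, there is no genuine obstacle in this lemma; the entire proof is a matter of bookkeeping the harmless weight $\eta_{\frac{1}{20}}$ on a small ball and exploiting the constraint-induced vanishing of $W_c$ at $x=0$. Conceptually this lemma should be viewed as the \emph{local half} of verifying the global bootstraps \eqref{pc:W0}--\eqref{mako:6}; the complementary range $|x|\ge\ell$ would require a separate forward Gr\"onwall argument along the $W$-trajectories in the spirit of \eqref{3eb:1}--\eqref{3eb:3}, using the damping estimates \eqref{thor:5} and the forcing bounds from Lemma~\ref{L:N:F}, but that lies outside the statement addressed here.
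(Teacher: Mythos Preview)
Your argument is correct for the statement as literally written, and indeed on the region $|x|\le\ell$ the warrior bounds \eqref{warrior:1:kim}--\eqref{warrior:2:kim} (together with $W_c(0,s)=0$) already give strictly stronger estimates, so the lemma is immediate there. You also correctly anticipate in your final paragraph exactly what the exterior argument would look like.

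However, the paper's own proof does \emph{not} do what you do: it runs precisely the forward Gr\"onwall argument along $W$-trajectories that you describe as ``outside the statement.'' The paper rescales $Q:=e^{-\frac14(s-s_0)}W_c$, integrates along $\Phi_W^{x_0}$, invokes the damping bound \eqref{thor:4} (which is only stated for $|x_0|\ge\ell$), and feeds in the warrior bound as initial data for trajectories emanating from the boundary $|x_0|=\ell$ at some time $s_\ast\ge s_0$; the weighted $n\ge1$ case uses \eqref{thor:5} and the forcing bound \eqref{Fwc.est.ult} in the same way. In other words, the paper's proof is the \emph{exterior} half of the bootstrap closure, not the interior half.

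The discrepancy is explained by the preamble to the lemma, ``We now verify \eqref{pc:W0}--\eqref{mako:6},'' which are global-in-$x$ bootstraps. The hypothesis ``$|x|\le\ell$'' in the lemma statement is almost certainly a typo for ``$|x|\ge\ell$'' (compare the identical typo pattern in the preceding lemma for \eqref{warrior:1:kim}--\eqref{warrior:2:kim}, whose proof explicitly says ``Taylor expanding and noting that $W_c(0,s)=0$'' and is manifestly an interior argument despite the header). So your proof is a valid and efficient treatment of the interior region, and your closing paragraph is an accurate sketch of the paper's actual proof; but the content the paper places under this lemma is the exterior trajectory estimate, not the interior bookkeeping.
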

\begin{proof} Consider equation \eqref{eq.dcw.0} for $\p_c W$. First, define the rescaled quantity $Q := \p_c W e^{- \frac 1 4 (s -s_0)}$, which satisfies 
\begin{align*}
(\p_s + \beta_\tau W^{(1)}) Q  + \mathcal{V}_W \p_x Q = e^{- \frac 1 4 (s - s_0)} F^c_{W,0}
\end{align*} 
By Gr\"onwall, we have 
\begin{align} \n
|Q \circ \Phi_W^{x_0}| \le & |Q(x_0, s_\ast)| e^{- \int_{s_\ast}^s \beta_\tau W^{(1)} \circ \Phi_W^{x_0}} + \int_{s_\ast}^s e^{- \int_{s'}^s \beta_\tau W^{(1)} \circ \Phi_W^{x_0} } |e^{- \frac 1 4 (s' - s_0)} F_{W,0}^{c} \circ \Phi_W^{x_0}| \ud s' \\
\lesssim & (\| W_c(\cdot, s_0) \|_\infty + \ell^{\frac 1 2}M e^{\frac 1 2(s_\ast - s_0)}  ) M^{\frac{1}{50}} + M^{\frac{1}{50}} \int_{s_\ast}^s e^{- \frac 1 4 (s'  - s_0)} \eps^{\frac 18} \ud s' \,,\n
\end{align}
where we have invoked \eqref{thor:4} for the estimate on the damping term, and estimate \eqref{Fwc.est.ult:0} for the forcing term. Multiplying through by $e^{\frac 1 4 (s - s_0)}$ and using that $s_\ast \le s$ generates the desired bound. 

For \eqref{prince:2}, we again use Gr\"onwall to estimate 
\begin{align} \n
|\eta_{\frac{1}{20}} W_c^{(n)} \circ \Phi_W^{x_0}| \le& |W_c^{(n)}(x_0, s_\ast)| e^{- \int_{s_\ast}^s D^{c}_{n, \frac 1 5} \circ \Phi_W^{x_0}} + \int_{s_\ast}^s e^{- \int_{s'}^s D_{n, \frac 1 5}^{c} \circ \Phi_W^{x_0}} |\eta_{\frac{1}{20}} F_{W,n}^{c} \circ \Phi_W^{x_0}| \ud s' \\ \n
\lesssim & ( \| W_c^{(n)}( \cdot, s_0) \|_\infty + M e^{\frac 3 4 (s_\ast - s_0)} ) M^{\frac{1}{50}} + M^{\frac{1}{50}}  \int_{s_0}^s M^{-1} M^{(n+2)^2} e^{\frac 3 4 (s'-s_0)} \ud s' \\ 
\lesssim & M e^{\frac 3 4 (s_\ast - s_0)} M^{\frac{1}{50}} + M^{\frac{1}{50}} M^{(n+2)^2} M^{-1} e^{\frac 3 4 (s - s_0)}\,,\n
\end{align}
where we have invoked the estimate \eqref{thor:5} on the damping term, and estimate \eqref{Fwc.est.ult} to estimate the forcing term. This concludes the proof of the lemma. 

\end{proof}

\subsection{Transport estimates for $\nabla_c^2 W$}

The following verifies the bootstraps \eqref{buddy:1}. 
\begin{lemma} Let $0 \le n \le 6$. 
\begin{align*}
\| \p_{c_1 c_2} W^{(n)} \|_\infty \le \frac{ M^{(n+5)^2}}{2} \eps^{\frac 3 4} e^{\frac 3 4 s}\,. 
\end{align*}
\end{lemma}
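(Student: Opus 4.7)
The plan is to close the bootstrap \eqref{buddy:1} by Gr\"onwall along the $W$-trajectories applied to equations \eqref{HAIM:1} (for $n=0$) and \eqref{Bernie:1} (for $1\le n\le 6$). The most important structural fact is that $\p_{c_1c_2}W^{(n)}(\cdot,s_0)\equiv 0$ for every $n$ and every $x$: the initial data \eqref{guitar:or:band} is affine in $(\alpha,\beta)$, so the second mixed parameter derivative of $W_0$ vanishes identically. Consequently the entire solution is produced by the forcing $F_{W,n}^{c_1,c_2}$, with no contribution from initial data to absorb.

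Next, I would set up the transport in the weighted form used throughout Section \ref{s:wx:at:0}. Multiplying \eqref{Bernie:1} by $\eta_{\frac{1}{20}}$ and recognizing that the damping coefficient in front of $\p_{c_1c_2}W^{(n)}$ is exactly $D_n^{c}$ from \eqref{country:coffee:3} (the symmetric $\p_{c_1c_2}$ derivative contributes no new damping because $\alpha,\beta$ enter the equation only through the lower-order terms), I obtain
\begin{equation*}
(\p_s + D_{n,\frac15}^{c})\bigl(\eta_{\frac{1}{20}}W^{(n)}_{c_1c_2}\bigr) + \mathcal{V}_W\,\p_x\bigl(\eta_{\frac{1}{20}}W^{(n)}_{c_1c_2}\bigr) = \eta_{\frac{1}{20}}F^{c_1,c_2}_{W,n}\,.
\end{equation*}
Integrating along $\Phi_W^{x_0}$ and using the damping bound \eqref{thor:5}, which gives $-\int_{s_0}^{s}D_{n,\frac15}^{c}\circ\Phi_W^{x_0}\le \frac{1}{50}\log M$, one reduces the claim to a direct estimate of the integrated forcing:
\begin{equation*}
|\eta_{\frac{1}{20}}W^{(n)}_{c_1c_2}\circ\Phi_W^{x_0}(s)| \;\le\; M^{\frac{1}{50}}\int_{s_0}^{s}\|\eta_{\frac{1}{20}}F^{c_1,c_2}_{W,n}(s')\|_\infty \,ds'.
\end{equation*}
The forcing bounds \eqref{disclosure:1}-\eqref{disclosure:2} then plug in, and removing the weight $\eta_{\frac{1}{20}}$ (which contributes at most a factor $M^{\frac15}\eps^{\frac15}$ inside $B_f$ by \eqref{e:support}) gives the desired $L^\infty$ bound.

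The delicate step, which is the actual obstacle, is to produce the improved $\eps^{3/4}e^{3s/4}$ rate claimed in the statement rather than the coarser $\eps^{3/2}e^{3s/2}$ that \eqref{disclosure:2} supplies directly. The dominant contribution to $F^{c_1,c_2}_{W,n}$ is the quadratic $\sum_j W_{c_1}^{(j)}W_{c_2}^{(n+1-j)}$ from \eqref{Bernie:1}, which, estimated by \eqref{pc:W0}--\eqref{mako:6}, indeed gives $\eps^{3/2}e^{3s/2}$. To match the stated exponent I plan to exploit two additional pieces of structure: (i) for $|x|\le\ell$ replace the global bound \eqref{mako:6} by the much sharper localized bound \eqref{warrior:1}, which carries an extra factor $\ell^{1/2}$, and Taylor-expand $W_{c}^{(j)}$ near $x=0$ using that $W_c(0,s)$ and $W_c^{(1)}(0,s)$ are controlled by the modulation ODE analysis with superlinear decay; (ii) for $|x|\ge\ell$ use the trajectory estimate \eqref{e:escape:from:LA} so that $\Phi_W^{x_0}$ leaves the support of the singular contributions quickly, turning the $\eps^{3/2}e^{3s/2}$ bound into an integrable-in-$s$ expression. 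Closing these two regimes with a factor of $\frac12$ to spare, using the choice of $M$ in \eqref{choice:M}, should verify \eqref{buddy:1}. The $n=0$ case via \eqref{HAIM:1} is strictly easier because there is no $W^{(j)}$ in the transport term and the modulation term $\mathcal{M}^{c_1,c_2}$ is already bounded by $\eps^{5/8}e^{s/4}$ in the proof of \eqref{disclosure:1}, well under the target.
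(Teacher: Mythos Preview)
Your ``delicate step'' is a phantom created by a typo in the lemma statement: the exponent should be $\eps^{3/2}e^{3s/2}$, matching the bootstrap \eqref{buddy:1} that this lemma is meant to close. The paper's proof does not attempt (and does not obtain) the $\eps^{3/4}e^{3s/4}$ rate; it derives exactly $M^{(n+5)^2-1}e^{\frac32(s-s_0)} = M^{(n+5)^2-1}\eps^{3/2}e^{3s/2}$, which improves the bootstrap by the factor $M^{-1}$. Everything in the second half of your proposal (the localized bounds, the trajectory-escape argument, the Taylor expansion near $x=0$) is therefore unnecessary, and the vague plan you sketched for it would not obviously work in any case.

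The paper's argument is also simpler than your first half. It does not introduce the weight $\eta_{1/20}$ and does not use the damping estimate \eqref{thor:5} (which would require separate treatment of trajectories with $|x_0|<\ell$, as that lemma is only stated for $|x_0|\ge\ell$). Instead it uses the crude pointwise lower bound $D_n^{c}\ge -\tfrac{11}{8}$, valid for all $x$ by \eqref{eq:W1:bnd:1} and \eqref{country:coffee:3}. Since $\tfrac{11}{8}<\tfrac{3}{2}$, the forcing rate $e^{\frac32(s'-s_0)}$ from \eqref{disclosure:1}--\eqref{disclosure:2} dominates the Gr\"onwall growth and one integrates directly:
\[
|W^{(n)}_{c_1c_2}\circ\Phi_W^{x_0}(s)|\;\le\;\int_{s_0}^{s}e^{\frac{11}{8}(s-s')}\,M^{(n+5)^2-1}\eps^{3/2}e^{\frac32 s'}\,ds'\;\lesssim\;M^{(n+5)^2-1}\eps^{3/2}e^{\frac32 s}\,.
\]
Your observation that $\p_{c_1c_2}W_0\equiv 0$ because the data is affine in $(\alpha,\beta)$ is exactly what the paper uses to drop the initial-data term; that part of your proposal is correct and is the one genuinely important structural input.
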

\begin{proof} Using equation \eqref{Bernie:1}, we write via Gr\"onwall upon noting that $W_{c_1 c_2}(s_0, x) = 0$, 
\begin{align} \n
|W^{(n)}_{c_1 c_2} \circ \Phi_{W}^{x_0}| \le &\int_{s_0}^s e^{- \int_{s'}^s D^{c}_n \circ \Phi_W^{x_0}} | F_{W,n}^{c_1, c_2} \circ \Phi_W^{x_0}| \ud s' \\
 \lesssim & \int_{s_0}^s e^{\frac{11}{8} (s - s')} M^{(n+5)^2 - 1} e^{\frac 3 2 (s' - s_0)} \ud s' \lesssim M^{(n+5)^2 - 1} e^{\frac 3 2 (s - s_0)}\,,\n
\end{align}
where above we have used the definition \eqref{country:coffee:3} to produce the trivial bound 
\begin{align*}
D_n^{c} \ge - \frac{11}{8}\,,
\end{align*}
and estimate \eqref{disclosure:1} - \eqref{disclosure:2} for the forcing. 
\end{proof}

\section{Proof of main theorem}

We are now ready to establish all of the assertions in Theorem \ref{thm:general}. While the bootstrap estimates put forth in Section \ref{section:Bootstraps} have all been verified, the first task is to now establish the inductive proposition, Proposition \ref{induct:prop}. 

\subsection{Newton iteration} \label{subsection:proof}

We now prove the main theorem by designing a Newton scheme on appropriately defined maps $\mathcal{T}_N$. 
\begin{proof}[Proof of Proposition \ref{induct:prop}] First, we will define the map $\mathcal{T}_N: \mathcal{B}_N(\alpha_N, \beta_N) \subset \mathbb{R}^2 \rightarrow \mathbb{R}^2$ by
\begin{align*}
\mathcal{T}_N(\alpha, \beta) := (W^{(2)}_{\alpha, \beta}(0, s_{N+1}), W_{\alpha, \beta}^{(3)}(0, s_{N+1}))\,. 
\end{align*}
Define now the \emph{error} quantities via 
\begin{align*}
&E_{N}^{(2)} := W_{\alpha_N, \beta_N}^{(2)}(0, s_{N+1}) = \mathcal{T}_N^{(1)}(\alpha_N, \beta_N)\,, \\
&E_N^{(3)} := W_{\alpha_N, \beta_N}^{(3)}(0, s_{N+1}) = \mathcal{T}_N^{(2)}(\alpha_N, \beta_N)\,.
\end{align*}
An immediate consequence of \eqref{eq:2:3:W0} is the estimate 
\begin{align*}
|E_N^{(2)}| + |E_N^{(3)}| \le M^{25} e^{- s_{N}}.
\end{align*}

We now compute the matrix 
\begin{align*}
\nabla_{\alpha, \beta} \mathcal{T}_N = \begin{pmatrix} \p_\alpha W^{(2)}_{\alpha, \beta}(0, s_{N+1}) & \p_\beta W^{(2)}_{\alpha, \beta}(0, s_{N+1}) \\  \p_\alpha W^{(3)}_{\alpha, \beta}(0, s_{N+1}) &  \p_\beta W^{(3)}_{\alpha, \beta}(0, s_{N+1}) \end{pmatrix},
\end{align*}
which, when we evaluate at the point $(\alpha_N, \beta_N)$ produces 
\begin{align*}
\nabla_{\alpha, \beta}|_{\alpha_N, \beta_N} \mathcal{T}_N = \begin{pmatrix} \p_\alpha W^{(2)}_{\alpha_N, \beta_N}(0, s_{N+1}) & \p_\beta W^{(2)}_{\alpha_N, \beta_N}(0, s_{N+1}) \\  \p_\alpha W^{(3)}_{\alpha_N, \beta_N}(0, s_{N+1}) &  \p_\beta W^{(3)}_{\alpha_N, \beta_N}(0, s_{N+1}) \end{pmatrix}\,.
\end{align*}

The bootstrap assumptions \eqref{mako:4} - \eqref{mako:5}, coupled with the estimates on the second derivatives, \eqref{buddy:1} enable us to apply the Implicit Function Theorem on $\mathcal{T}_N$ in a neighborhood $\mathcal{B}_N(\alpha_N, \beta_N)$ of $(\alpha_N, \beta_N)$, defined in \eqref{def:BN},  to conclude that
\begin{align} \label{crickets:2}
|\alpha_{N+1} - \alpha_{N}| &\le M^{25} e^{- \frac 3 4 (s_N - s_0)} e^{-s_N} + \eps^{\frac 1 4} e^{-s_N} e^{- \frac 1 2 (s_N - s_0)}\,, \\ \label{crickets:3}
|\beta_{N+1} - \beta_{N}| &\le 2M^{25} e^{- \frac 1 2(s_N - s_0)} e^{-s_N}\,, 
\end{align}
which in particular verifies the bootstraps \eqref{def:BN}. More specifically, we have used that in the neighborhood $\mathcal{B}_N(\alpha_N, \beta_N)$, we have uniform bounds on the $(\alpha,\beta)$ Hessian  of $ \mathcal{T}_N$. Estimating $\p_{\alpha \alpha} \mathcal{T}_N$ yields
\begin{align} \n
\sup_{\alpha, \beta \in \mathcal{B}_N} |\p_{\alpha \alpha} \mathcal{T}_N | |\alpha - \alpha_N| \lesssim_M & e^{\frac 3 2 (s_{N+1} - s_0)} \Big( e^{- \frac 34 (s_N - s_0)} e^{-s_N} + \eps^{\frac 1 5} e^{-s_N} e^{- \frac 1 2(s_N - s_0)} \Big) \\
\lesssim_M & e^{- s_N}\Big( e^{\frac 3 4 (s_N - s_0)} + \eps^{\frac 1 5} e^{s_N - s_0)} \Big) \ll  \p_\alpha|_{\alpha_N, \beta_N} \mathcal{T}_N\,.\n
\end{align}
Similarly, for $\p_{\alpha \beta} \mathcal{T}_N$ we have
\begin{align} \n
\sup_{\alpha, \beta \in \mathcal{B}_N} |\p_{\alpha \beta} \mathcal{T}_N| |\alpha - \alpha_N| \lesssim_M & e^{\frac 3 2 (s_{N+1} - s_0)} \Big(  e^{- \frac 34 (s_N - s_0)} e^{-s_N} + \eps^{\frac 1 5} e^{-s_N} e^{- \frac 1 2(s_N - s_0)} \Big) \\
\lesssim_M & e^{- s_N}\Big( e^{\frac 3 4 (s_N - s_0)} + \eps^{\frac 1 5} e^{(s_N - s_0)} \Big) \ll  \p_\beta |_{\alpha_N, \beta_N}  \mathcal{T}_N\,,\n
\end{align}
and 
\begin{align} \n
\sup_{\alpha, \beta \in \mathcal{B}_N} |\p_{\alpha \beta} \mathcal{T}_N| |\beta - \beta_N| \lesssim_M e^{\frac 3 2 (s_{N+1} - s_0)} \Big( e^{-s_N} e^{- \frac 1 2(s_N - s_0)}  \Big) \ll \p_\alpha|_{\alpha_N, \beta_N}  \mathcal{T}_N\,.
\end{align}
Finally, estimating $\p_{\beta \beta} \mathcal{T}_N$ yields
\begin{align} \n
\sup_{\alpha, \beta \in \mathcal{B}_N} |\p_{\beta \beta} \mathcal{T}_N| |\beta - \beta_N| \lesssim_M e^{\frac 3 2 (s_{N+1} - s_0)} \Big( e^{-s_N} e^{- \frac 1 2(s_N - s_0)}  \Big) \ll \p_\beta|_{\alpha_N, \beta_N} \mathcal{T}_N\,.
\end{align}
\end{proof}

We can now send $N \rightarrow \infty$ to obtain our limiting profiles. To make matters precise, we define the following norm, specific to a given $s_\ast \in [s_0, \infty)$. 
\begin{align} \n
 \Big\| (W, Z, A) \Big\|_X :=  &\Big\| \| W \eta_{- \frac{1}{20}} \|_{L^\infty} \Big\|_{L^\infty(s_0, s_\ast)} + \sum_{j = 1}^6 \Big\| \| W^{(j)} \eta_{\frac 1 5} \|_{L^\infty} \Big\|_{L^\infty(s_0, s_\ast)} \\ \n
 & + \Big\| e^{\frac 3 4 s} W^{(2)}(0, s) \Big\|_{L^\infty(s_0, s_\ast)} +  \Big\| e^{ \frac 3 4 s} W^{(3)}(0, s) \Big\|_{L^\infty(s_0, s_\ast)} \\ \n
 & + \eps^{- \frac 5 4} \Big\| \| Z \|_\infty \Big\|_{L^\infty(s_0, s_\ast)} +  \eps^{- \frac 3 4}  \Big\| \| A\|_\infty \Big\|_{L^\infty(s_0, s_\ast)} \\ \label{norm:X}
 & + \sum_{j = 1}^6 \Big\| e^{\frac 5 4 s} \| Z^{(j)}  \|_{L^\infty} \Big\|_{L^\infty(s_0, s_\ast)} + \sum_{j = 1}^6 \Big\| e^{\frac 5 4 s} \| A^{(j)}  \|_{L^\infty} \Big\|_{L^\infty(s_0, s_\ast)}\,,
\end{align}
and the corresponding Banach space 
\begin{align*}
X := \text{Closure of } C^\infty([s_0, s_\ast], \mathbb{R} )^3 \text{ with respect to } \| \cdot \|_X\,. 
\end{align*}
We also define the following norms in which we measure the modulation variables
\begin{align} \n
\| (\mu, \tau, \kappa, \xi) \|_{Y} := & \eps^{- \frac 1 7} \| e^{\frac 3 4 s} \mu \|_{L^\infty(s_0, s_\ast)} + \eps^{- \frac 1 7} \| e^{\frac 3 4 s} \dot{\tau} \|_{L^\infty(s_0, s_\ast)} + \eps^{- \frac 1 8} \| \dot{\kappa} \|_{L^\infty(s_0, s_\ast)} \\
&+ \frac{1}{\kappa_0} \| \dot{\xi} \|_{L^\infty(s_0, s_\ast)}\,,\n
\end{align}
and the corresponding Banach space
\begin{align*}
Y := \text{Closure of } C^\infty([s_0, s_\ast])^4 \text{ with respect to } \| \cdot \|_Y\,. 
\end{align*}

\begin{corollary}  \label{corr:infty} There exist values $(\alpha_\infty, \beta_\infty)$ so that the data $W_0$ given according to \eqref{guitar:or:band} yields a global solution, $(W, Z, A) \in X$ and $(\mu, \tau, \kappa, \xi) \in Y$ on $s_0 \le s <\infty$ which satisfies
\begin{align} \label{X:estimate}
\| (W, Z, A) \|_X + \| (\mu, \tau, \kappa, \xi) \|_Y \lesssim_M 1\, \qquad \text{ for all } s_{\ast} \in [s_0, \infty), 
\end{align}
the constraints 
\begin{align*}
W(0, s) = 0\,, && W^{(2)}(0, s) = -1\,, && W^{(4)}(0, s) = 0\,, 
\end{align*}
the following asymptotic behavior for the second and third derivatives: 
\begin{align*}
|W^{(2)}(0, s)| \lesssim e^{- \frac 3 4 s}\,, && |W^{(3)}(0, s)| \lesssim e^{-\frac 3 4 s}\,.
\end{align*}
Finally, for the fifth derivative $W^{(5)}(0, s)$, there exists a number $\nu$ such that 
\begin{align} \label{nu:def}
W^{(5)}(0, s) \rightarrow \nu, \qquad |\nu - 120| \lesssim \eps^{\frac 78}\,. 
\end{align}
\end{corollary}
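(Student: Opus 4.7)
The plan is to iterate Proposition \ref{induct:prop} to produce a Cauchy sequence of parameters, then pass the bootstrap estimates of Section \ref{section:Bootstraps} to the limit, and finally extract the asymptotic value $\nu$ from the ODE for $q^{(5)}$.

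First I would construct the sequence $\{(\alpha_N, \beta_N)\}_{N \in \mathbb{N}}$ by applying Proposition \ref{induct:prop} repeatedly, starting from the initialization \eqref{zero:param}. The estimates \eqref{crickets:2}--\eqref{crickets:3} yield, with $s_N = -\log \eps + N$,
\begin{align*}
|\alpha_{N+1} - \alpha_N| + |\beta_{N+1} - \beta_N| \lesssim_M \eps\, e^{-\frac{3N}{4}}\,,
\end{align*}
so the series telescopes and $(\alpha_N, \beta_N) \to (\alpha_\infty, \beta_\infty)$ geometrically fast, with $|\alpha_\infty| + |\beta_\infty| \lesssim M^{30}\eps$. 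In particular $(\alpha_\infty, \beta_\infty)$ lies in the intersection $\bigcap_N \mathcal{B}_N$, so the data $W_0$ defined by \eqref{guitar:or:band} with these parameters satisfies all the initial data hypotheses \eqref{assume:1}--\eqref{blue:grass}.

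Next I would pass all the bootstraps of Section \ref{section:Bootstraps} to the limit solution $(W_{\alpha_\infty, \beta_\infty}, Z, A)$. For every fixed $N$, the solution $(W_{\alpha, \beta}, Z, A)$ depends continuously on $(\alpha, \beta)$ on the compact $s$-interval $[s_0, s_{N+1}]$, and for $(\alpha, \beta) \in \mathcal{B}_N$ all bootstraps are closed by the calculations of Sections 6--9. Taking $(\alpha, \beta) = (\alpha_\infty, \beta_\infty)$ and letting $N \to \infty$, all bounds of Subsection \ref{subsection:base} extend to $s \in [s_0, \infty)$. Repackaging these pointwise bounds into the norms defining $X$ and $Y$ gives \eqref{X:estimate}. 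The constraints $W(0,s) = 0$, $W^{(1)}(0,s) = -1$, $W^{(4)}(0,s) = 0$ hold by construction of the modulation variables $\kappa, \tau, \xi$ (cf.\ \eqref{e:constraints}). The asymptotic decay of $q^{(2)}, q^{(3)}$ is precisely the bootstrap \eqref{boot:decay}, now valid uniformly on $[s_0, \infty)$.

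The one genuinely new claim is the existence of the limit $\nu := \lim_{s \to \infty} W^{(5)}(0, s)$ and its proximity to $120$. For this I would use the ODE \eqref{calvin:1} for $\tilde{q}^{(5)}$, namely $\partial_s \tilde{q}^{(5)} = \mathcal{F}^{(5)}$, together with the pointwise forcing bound $|\mathcal{F}^{(5)}(s)| \lesssim \eps^{1/8} e^{-3s/4}$ from \eqref{e:bluebottle2} which is now valid for all $s \ge s_0$. Since this forcing is integrable on $[s_0, \infty)$, the Cauchy criterion gives
\begin{align*}
\tilde{q}^{(5)}(\infty) := \tilde{q}^{(5)}(s_0) + \int_{s_0}^{\infty} \mathcal{F}^{(5)}(s') \ud s'\,,
\end{align*}
and the bound $|\tilde{q}^{(5)}(s_0)| \lesssim_M \eps$ (following from $\hat{W}_0^{(5)}(0) = 0$ in \eqref{assume:1} together with $|\alpha_\infty| + |\beta_\infty| \lesssim M^{30}\eps$) combined with the integral estimate $\int_{s_0}^\infty \eps^{1/8} e^{-3s'/4} \ud s' \lesssim \eps^{1/8} e^{-\frac{3}{4}s_0} = \eps^{1/8} \cdot \eps^{3/4} = \eps^{7/8}$ yields $|\tilde{q}^{(5)}(\infty)| \lesssim \eps^{7/8}$. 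Setting $\nu := q^{(5)}(\infty) = \bar{W}^{(5)}(0) + \tilde{q}^{(5)}(\infty)$ and recalling $\bar{W}^{(5)}(0) = 120$ from \eqref{W5:non} gives \eqref{nu:def}. The main (and essentially only) obstacle is bookkeeping the passage to the limit in Step 2, specifically verifying that continuous dependence of the solution map holds in a topology strong enough to preserve the pointwise bootstrap bounds up to the parameter derivatives of order two; this follows however from the local well-posedness theory for \eqref{eq:Euler:polar3} in $C^8$ combined with the fact that the parameter dependence is smooth by the estimates of Sections 6--9.
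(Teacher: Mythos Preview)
Your proposal is correct and takes a genuinely different (and arguably more direct) route than the paper's own argument.

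The paper fixes a finite time $s_\ast$, considers the sequence of \emph{solutions} $\{(W_N,Z_N,A_N)\}_{N\ge \lfloor s_\ast\rfloor+1}$ with parameters $(\alpha_N,\beta_N)$, and shows this sequence is Cauchy in $X$ (and the modulation variables in $Y$) by estimating $\|W_{N+1}-W_N\|$ via the mean value theorem in $(\alpha,\beta)$, using the $\partial_c$ bootstraps \eqref{pc:W0}--\eqref{mako:6} and \eqref{mako:2}--\eqref{mako:3}. It then passes to the limit in the equation. You instead observe that the geometric estimates \eqref{crickets:2}--\eqref{crickets:3} sum to give $(\alpha_\infty,\beta_\infty)\in\bigcap_N \mathcal{B}_N$, so that the bootstrap analysis of Sections~6--9 applies \emph{directly} to the solution with parameters $(\alpha_\infty,\beta_\infty)$ on each interval $[s_0,s_{N+1}]$, and hence on $[s_0,\infty)$ by exhaustion. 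This bypasses the solution-level Cauchy argument entirely: there is no limit of solutions to take, since you are from the outset working with a single solution. Your approach requires only that the tail $\sum_{M\ge N}(|\alpha_{M+1}-\alpha_M|+|\beta_{M+1}-\beta_M|)$ fit inside the $\mathcal{B}_N$ rectangle, which it does with room to spare ($M^{25}$ versus $M^{30}$). The paper's route makes the convergence of the approximating solutions explicit, which is conceptually satisfying and would matter if one wanted rates of convergence for the scheme; your route is shorter.

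One small comment: your final paragraph about needing ``continuous dependence of the solution map \ldots\ up to the parameter derivatives of order two'' is unnecessary under your own strategy. Since $(\alpha_\infty,\beta_\infty)\in\mathcal{B}_N$ for every $N$, the bootstraps (including \eqref{boot:decay}, which is verified for all $(\alpha,\beta)\in\mathcal{B}_N$ using backward integration from $s_N$) hold for the solution with those \emph{fixed} parameters; no limiting procedure at the solution level, and hence no continuous-dependence argument, is required. The treatment of $\nu$ via \eqref{calvin:1}--\eqref{e:bluebottle2} matches the paper's.
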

\begin{proof} Fix any $s_\ast$ satisfying $s_0 \le s_\ast < \infty$, and consider the sequences  
\begin{align*}
\{ W_{\alpha_N, \beta_N}, Z_{\alpha_N, \beta_N}, A_{\alpha_N, \beta_N} \}_{N \ge \lfloor s_\ast \rfloor + 1}  &=: \{W_N, Z_N, A_N\}_{N \ge \lfloor s_\ast \rfloor + 1}\,, \\
\{ \mu_{\alpha_n, \beta_n}, \dot{\tau}_{\alpha_N, \beta_N}, \dot{\kappa}_{\alpha_N, \beta_N}, \dot{\xi}_{\alpha_N, \beta_N} \}_{N \ge \lfloor s_\ast \rfloor + 1} &=: \{ \mu_N, \dot{\tau}_N, \dot{\kappa}_N, \dot{\xi}_N  \}_{N \ge \lfloor s_\ast \rfloor + 1}\,. 
\end{align*}
Our assertion will be that these sequences are Cauchy in the spaces $X$ and $Y$, respeectively. Let now $s_0 \le s \le s_\ast$. Recall from the definition of $\mathcal{B}_N$ in \eqref{def:BN}, that 
\begin{align*}
|\alpha_{N+1} - \alpha_N| \lesssim_M e^{-s_N} e^{- \frac 1 2 (s_N - s_0)}, \qquad |\beta_{N+1} - \beta_N| \lesssim_M e^{-s_N} e^{- \frac 1 2(s_N - s_0)}\,. 
\end{align*}
Considering the first term in definition of \eqref{norm:X}, we now estimate 
\begin{align} \n
\| (W_{N+1} - W_N ) \eta_{- \frac{1}{20}} \|_{L^\infty} \lesssim_M & e^{-s_N} e^{- \frac 1 2 (s_N - s_0)}  \sup_{(\alpha, \beta) \in \mathcal{B}_N} \| \p_c W \eta_{- \frac{1}{20}} \|_{L^\infty} \\ \label{above:above}
\lesssim_M & e^{- s_N} e^{- \frac 1 2 (s_N - s_0)} e^{\frac 3 4 (s -s_0)}\,, 
\end{align}
where we have invoked the estimate \eqref{pc:W0}. Second, for $k \ge 1$, we have a nearly identical estimate using \eqref{mako:6}. Third, we estimate using \eqref{mako:4} - \eqref{mako:5}
\begin{align*} \n
e^{\frac 3 4 s} |W_{N+1}^{(2)}(0, s) - W_{N}^{(2)}(0, s)| \lesssim_M &e^{\frac 3 4 s}  e^{-s_N} e^{- \frac 1 2 (s_N - s_0)}  \sup_{(\alpha, \beta) \in \mathcal{B}_N} |\p_c W^{(2)}(0, s)|  \\
\lesssim_M & e^{\frac 3 4 s} e^{-s_N} e^{- \frac 1 2 (s_N - s_0)} e^{\frac 3 4(s - s_0)}\,.
\end{align*}
An analogous estimate applies to the fourth quantity in \eqref{norm:X}.

For the quantities in the third and fourth lines of \eqref{norm:X}, we use \eqref{Z:boot:0} - \eqref{A:boot:9}, coupled with \eqref{mako:2} - \eqref{mako:3}, in essentially the identical manner to the quantities above.  Similarly, for the quantities in $Y$, we couple the estimates \eqref{e:GW0} - \eqref{mod:sub}, with the estimates \eqref{mako:1} - \eqref{Baba:O:1}.

As $s \le s_\ast \le s_N \rightarrow \infty$, the estimates above clearly imply that $\{W_N, Z_N, A_N\}$ is a Cauchy sequence in the norm X and $\{ \mu_N, \dot{\tau}_N, \dot{\kappa}_N, \dot{\xi}_N  \}_{N \ge \lfloor s_\ast \rfloor + 1}$ form a Cauchy sequence in $Y$, upon taking supremum in $s \in [s_0, s_\ast]$. We conclude by sending $s_\ast \rightarrow \infty$. 

For the final step, we note that the norms $X$ and $Y$ are clearly strong enough to pass to the limit in the equation \eqref{basic:w} - \eqref{basic:a}. Furthermore, applying \eqref{e:bluebottle} and \eqref{e:bluebottle2} yields that
\[\nu=\lim_{s\rightarrow \infty}W^{(5)}(0, s)\,,\]
exists, and by \eqref{fifth:deriv:W:0} we have
\[\abs{\nu-120}\lesssim \eps^{\frac78}\,.\]
\end{proof}

\subsection{Consequential quantitative properties for $(w, a, z)$}\label{s:bandicoot}

We finish by providing a proof of the following consequence of our construction. 
\begin{lemma}[Holder $1/5$ Regularity] The solution $w(\theta, s)$ satisfies the following Holder $1/5$ regularity estimate uniformly in $t$ up to the shock time $T_\ast$
\begin{align} \label{reaCH:1}
\sup_{t \in [-\eps, T_\ast]} [w(\cdot, t)]_{\frac{1}{5}} \lesssim 1\,.
\end{align}
\end{lemma}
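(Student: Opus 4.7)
The plan is to observe that the H\"older $\sfrac{1}{5}$ seminorm is a self-similar invariant under the change of variables $w(\theta,t) = e^{-s/4}W(x,s) + \kappa(t)$, $\theta - \xi(t) = (\tau-t)^{5/4}x$, so the problem reduces to proving a uniform-in-$s$ H\"older $\sfrac{1}{5}$ bound on the self-similar profile $W(\cdot,s)$, which in turn follows immediately from the pointwise bootstrap \eqref{e:uniform:W1} on $W^{(1)}$.

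Concretely, I would first fix $t \in [-\eps,T_\ast)$ and arbitrary $\theta_1\ne\theta_2$ with corresponding $x_i = (\theta_i - \xi(t))/(\tau-t)^{5/4}$. Since $\kappa(t)$ does not depend on $\theta$ and $(\tau-t)^{5/4 \cdot 1/5} = (\tau-t)^{1/4} = e^{-s/4}$,
\begin{align*}
\frac{|w(\theta_1,t) - w(\theta_2,t)|}{|\theta_1-\theta_2|^{1/5}}
\;=\; \frac{e^{-s/4}\,|W(x_1,s) - W(x_2,s)|}{(\tau-t)^{1/4}\,|x_1-x_2|^{1/5}}
\;=\; \frac{|W(x_1,s) - W(x_2,s)|}{|x_1-x_2|^{1/5}}\,.
\end{align*}
Thus \eqref{reaCH:1} is equivalent to the $s$-uniform bound $[W(\cdot,s)]_{C^{\sfrac15}(\RR)} \lesssim 1$ for $s \in [-\log\eps,\infty)$.

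For this self-similar H\"older bound I would use the bootstrap $|W^{(1)}(x,s)| \le \ell\log M\,(1+x^4)^{-1/5}$ together with the fundamental theorem of calculus. For any $x_1 < x_2$,
\begin{align*}
|W(x_1,s) - W(x_2,s)| \;\le\; \int_{x_1}^{x_2} |W^{(1)}(u,s)|\,du \;\lesssim\; \int_{x_1}^{x_2}(1+u^4)^{-\sfrac{1}{5}}\,du\,.
\end{align*}
The anti-derivative of $(1+u^4)^{-1/5}$ is comparable to $\mathrm{sgn}(u)\,(1+|u|)^{1/5}$, so after splitting into the cases $0 \le x_1 \le x_2$, $x_1 \le x_2 \le 0$, and $x_1 \le 0 \le x_2$, the sub-additivity inequality $(a+b)^{1/5}\le a^{1/5}+b^{1/5}$ for $a,b\ge 0$ yields $|W(x_1,s) - W(x_2,s)| \lesssim |x_1-x_2|^{1/5}$ with a constant independent of $s$.

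Finally, since $s \to \infty$ as $t\to T_\ast$ and by Corollary \ref{corr:infty} the bootstrap estimate on $W^{(1)}$ holds globally for $s \in [-\log\eps,\infty)$, taking a supremum in $t$ yields \eqref{reaCH:1}. There is no serious obstacle here; the H\"older $\sfrac{1}{5}$ regularity is essentially encoded in the matching of the self-similar scaling exponents $(1/4, 5/4)$ with the $\eta_{-1/5}$ spatial decay of $W^{(1)}$. The only modest care required is the case analysis for the sub-additivity step and verifying that the estimate extends up to the shock time, both of which are immediate consequences of the uniform-in-$s$ validity of the bootstraps.
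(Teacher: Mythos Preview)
Your proof is correct and follows essentially the same route as the paper: both exploit the exact scaling match $(\tau-t)^{\frac14}=e^{-s/4}$ to reduce \eqref{reaCH:1} to a uniform-in-$s$ bound on $[W(\cdot,s)]_{C^{1/5}}$, then obtain the latter from the pointwise decay $|W^{(1)}|\lesssim\langle x\rangle^{-4/5}$ and the fundamental theorem of calculus. The only cosmetic difference is that you invoke the global bootstrap \eqref{e:uniform:W1} on $W^{(1)}$ directly, whereas the paper reconstructs the same bound from \eqref{e:Wtilde:1:bootstrap} on $\tilde W^{(1)}$ together with \eqref{bound:bar:W:1} on $\bar W^{(1)}$; your choice is slightly more economical.
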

\begin{proof} Due to bootstrap bounds \eqref{thurs:1}, \eqref{e:Wtilde:1:bootstrap} on $\tilde{W}$, and properties \eqref{bound:bar:W:1} on $\bar{W}$ we obtain the following on $W = \bar{W} + \tilde{W}$,  
\begin{align*}
|\p_x W(x, s)| \lesssim \langle x \rangle^{- \frac 4 5},  
\end{align*}
where the implicit constant is uniform, and in particular, independent of $s$. Using this, we write 
\begin{align} \n
[W(\cdot, x)]_{\frac 1 5} = & \sup_{(x, x')} \frac{|W(x, s) - W(x', s)|}{|x - x'|^{\frac 1 5}} = \sup_{(x, x')} \frac{1}{|x - x'|^{\frac 1 5}} |\int_{x}^{x'} \p_x W(y, s) \ud y| \\ \label{tho:1}
\lesssim & \sup_{(x, x')} \frac{1}{|x - x'|^{\frac 1 5}} \int_{x}^{x'} \langle y \rangle^{- \frac 4 5} \ud y = \sup_x \frac{1}{|x|^{\frac 1 5}} \int_0^x \langle y \rangle^{- \frac 4 5} \ud y \lesssim 1\,. 
\end{align}
Finally, we use \eqref{sec3:1} to argue as follows. Select any $(\theta, \theta') \in \mathbb{T}$. Then there exists a corresponding $(x, x')$ determined through \eqref{sec3:1} so that 
\begin{align*}
\frac{|w(\theta, t) - w(\theta', t)|}{|\theta - \theta'|^{\frac 1 5}} = \frac{|W(x, s) - W(x', s)|}{|x - x'|^{\frac 1 5}}\,.
\end{align*} 
From here, we take supremum over $\theta$ and apply estimate \eqref{tho:1} to reach \eqref{reaCH:1}.
\end{proof}

\begin{lemma} The following estimates hold for a constant $C_M$ that depends on $M$,   
\begin{align*}
\sup_{t \in [-\eps, T_\ast)} \sup_{\theta \in \mathbb{T}} |\p_\theta a(\cdot, t)| &\le C_M\,, \\
\sup_{t \in [-\eps, T_\ast)} \sup_{\theta \in \mathbb{T}} |\p_\theta z(\cdot, t)| &\le C_M\,, \\
\sup_{t \in [-\eps, T_\ast)} \sup_{\theta \in \mathbb{T}} |w(\theta, t)| &\le 2 \kappa_0\,. 
\end{align*}
\end{lemma}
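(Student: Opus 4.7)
The plan is to translate the self-similar bootstrap bounds on $(W,Z,A)$ back to physical variables $(w,z,a)$ using the transformation \eqref{sec3:2} and its spatial derivative \eqref{BUDS:2}, \eqref{whilk:miskey:1}, \eqref{whilk:miskey:4}.

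First, I would address the two easier estimates on $\p_\theta z$ and $\p_\theta a$. Since $\p_\theta z = e^{5s/4} \p_x Z$ and $\p_\theta a = e^{5s/4} \p_x A$, we obtain
\[
|\p_\theta z(\theta,t)| \le e^{\frac{5s}{4}} \|Z^{(1)}\|_{L^\infty}, \qquad |\p_\theta a(\theta,t)| \le e^{\frac{5s}{4}} \|A^{(1)}\|_{L^\infty}.
\]
Applying the bootstrap estimates \eqref{Z:boot:0} and \eqref{A:boot:9} with $n=1$ (which have now been verified in \eqref{feel:1}--\eqref{feel:2}) yields $\|Z^{(1)}\|_\infty + \|A^{(1)}\|_\infty \lesssim M^2 e^{-\frac{5s}{4}}$. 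The exponential factors cancel exactly, leaving a uniform constant $C_M \lesssim M^2$ independent of $s$ (and hence of $t \in [-\eps, T_\ast)$).

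Next I would handle the bound on $w$. From the relation $w(\theta,t) = e^{-s/4} W(x,s) + \kappa(t)$, the triangle inequality gives
\[
|w(\theta,t)| \le e^{-\frac{s}{4}} |W(x,s)| + |\kappa(t)|.
\]
The modulation bootstrap \eqref{mod:sub} gives $|\kappa(t)| \le \kappa_0 + \eps$. For the $W$ term, I would apply the global bootstrap \eqref{W:boot:0}, which yields $|W(x,s)| \le \ell \log(M)\,\eta_{1/20}(x)$. Since $W(\cdot,s)$ is compactly supported in $B_f$ by \eqref{e:support}, we may restrict to $|x| \le M\eps e^{5s/4}$, on which $\eta_{1/20}(x) \lesssim 1 + (M\eps)^{1/5} e^{s/4}$. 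Multiplying by $e^{-s/4}$ and using the parameter hierarchy from \eqref{choice:M}, this bound is $\lesssim \ell \log(M) (M\eps)^{1/5}$, which is much smaller than $\kappa_0$ provided $\eps$ is taken sufficiently small relative to $M$ and $\kappa_0$ (this is exactly the content of \eqref{est:W:good}). Combining the two contributions gives $|w(\theta,t)| \le \kappa_0 + \eps + \ell \log(M)(M\eps)^{1/5} \le 2\kappa_0$.

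There is no genuine obstacle here: all three estimates are essentially routine consequences of the bootstraps that have already been closed in the main analysis, together with the precise form of the self-similar change of variables. The only subtle point worth emphasizing is the cancellation of the $e^{5s/4}$ growth factor for the $\p_\theta z, \p_\theta a$ bounds against the $e^{-5s/4}$ decay of $\|Z^{(1)}\|_\infty, \|A^{(1)}\|_\infty$, which is precisely the scaling built into the definition of the self-similar variables and explains why these physical quantities remain uniformly bounded all the way up to $T_\ast$.
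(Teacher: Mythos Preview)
Your proposal is correct and follows essentially the same approach as the paper: both use the change of variables $\p_\theta z = e^{5s/4} Z^{(1)}$, $\p_\theta a = e^{5s/4} A^{(1)}$ together with the bootstraps \eqref{Z:boot:0}--\eqref{A:boot:9} for the first two bounds, and the relation $w = e^{-s/4}W + \kappa$ combined with \eqref{W:boot:0}, \eqref{e:support}, \eqref{mod:sub} (i.e., the computation behind \eqref{est:W:good}) for the $w$ bound. The only addition in the paper is an explicit reference to Corollary~\ref{corr:infty} to justify that the bootstraps hold globally in $s$, which you may wish to mention as well.
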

\begin{proof} This follows upon pulling back to the original coordinate system via \eqref{whilk:miskey:1} and \eqref{whilk:miskey:4} which gives 
\begin{align*}
\sup_{t} \sup_{\theta} |\p_\theta a| &= \sup_{s} \sup_{x} e^{\frac 5 4 s} |A^{(1)}| \le M^{2}\,, \\
\sup_{t} \sup_{\theta} |\p_\theta z| &= \sup_{s} \sup_{x} e^{\frac 5 4 s} |Z^{(1)}| \le M^{2}\,,
\end{align*}
upon invoking bootstraps \eqref{Z:boot:0} and \eqref{A:boot:9}, and upon invoking Corollary \ref{corr:infty} to ensure that these bootstraps are satisfied globally. 

We now arrive at the pointwise estimate for $w(\theta, t)$. For this, we use the bootstraps \eqref{W:boot:0}, \eqref{e:support}, and \eqref{mod:sub} to obtain 
\begin{align} \n
|w| \le & e^{- \frac s 4} |W| + |\kappa| \lesssim  e^{- \frac s 4} \sup_{\substack{ - \log(\eps) \le s < \infty \\ x \in B_f }} \langle x \rangle^{\frac 1 5} + |\kappa_0| + \eps \\
\lesssim & e^{- \frac s 4} (M \eps e^{\frac 5 4 s} )^{\frac 1 5} + |\kappa_0| + \eps \le 2 |\kappa_0|\,.\n
\end{align} 
 \end{proof}

We now provide a final lemma to obtain the shock dynamics of $\p_\theta w(x, t)$. 
\begin{lemma} The following asymptotic behavior is valid for $w(x, t)$,
\begin{align} \label{BUD:1}
&\lim_{t \rightarrow T_\ast} \p_\theta w(\xi(t), t) = - \frac{1}{T_\ast - t}\,.
\end{align}
\end{lemma}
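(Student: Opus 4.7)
The plan is to exploit the self-similar ansatz together with the normalization constraint $W^{(1)}(0,s) = -1$ from \eqref{e:constraints}. By \eqref{BUDS:2} we have $\p_\theta w(\theta,t) = e^{s} W^{(1)}(x,s)$, and evaluation at $\theta = \xi(t)$ forces $x = 0$ by \eqref{sec3:1}. Combined with the identity $e^{s} = (\tau(t) - t)^{-1}$, this yields the exact identity
\begin{align*}
\p_\theta w(\xi(t), t) = -\frac{1}{\tau(t) - t}\,.
\end{align*}

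Thus the only remaining task is to establish that $\tau(t) - t$ is asymptotic to $T_* - t$ as $t \to T_*$, which amounts to showing $\tau(t) \to T_*$ sufficiently rapidly. First, since the solution exists on $[-\eps, T_*)$ and $s = -\log(\tau(t) - t) \to \infty$ as $t \to T_*$ (as $W$ develops a cusp by Corollary \ref{corr:infty}), we must have $\tau(t) - t \to 0$, hence $\lim_{t \to T_*}\tau(t) = T_*$. Next, the bootstrap estimate \eqref{e:GW0} gives $|\dot\tau(t)| \lesssim \eps^{1/6} e^{-3s/4} \lesssim (\tau(t) - t)^{3/4}$, which in turn is bounded by a multiple of $(T_* - t)^{3/4}$ for $t$ close enough to $T_*$ (using $\tau(t) - t \le 2(T_* - t)$, say, once $\tau(t)$ is close to $T_*$). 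Integrating from $t$ to $T_*$ then produces the bound
\begin{align*}
|\tau(t) - T_*| \;=\; \Big| \int_t^{T_*} \dot\tau(s)\, ds \Big| \;\lesssim\; \int_t^{T_*} (T_* - s)^{3/4}\, ds \;\lesssim\; (T_* - t)^{7/4}\,.
\end{align*}

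Consequently $\tau(t) - t = (T_* - t) + O((T_* - t)^{7/4})$, so
\begin{align*}
-\frac{1}{\tau(t) - t} \;=\; -\frac{1}{T_* - t}\cdot\frac{1}{1 + O((T_* - t)^{3/4})} \;=\; -\frac{1}{T_* - t}\,\bigl(1 + O((T_* - t)^{3/4})\bigr)\,,
\end{align*}
which gives the asymptotic equivalence $\p_\theta w(\xi(t),t)(T_* - t) \to -1$ claimed in \eqref{BUD:1}. No serious obstacle is expected: the argument is essentially algebraic once the decay of $\dot\tau$ is in hand, and that decay has already been closed in the bootstrap section. The one point to verify carefully is the continuity of $\tau$ up to $T_*$ (so that $\tau(T_*) = T_*$ makes sense), which follows directly from the uniform integrability of $\dot\tau$ on $[-\eps, T_*)$ established by \eqref{e:GW0}.
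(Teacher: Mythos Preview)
Your proof is correct and follows essentially the same route as the paper: both derive the exact identity $\p_\theta w(\xi(t),t) = -\frac{1}{\tau(t)-t}$ from \eqref{BUDS:2} and the constraint $W^{(1)}(0,s)=-1$, and then reduce the claim to $\tau(t)-t \sim T_*-t$. The only difference is in the justification of the last step: the paper simply asserts $\tau(T_*)=T_*$ as the normalization that pins down $\tau$ (since only $\dot\tau$ enters the analysis), and then takes the limit; you instead \emph{derive} $\tau(T_*)=T_*$ from $s\to\infty$ and then use the decay of $\dot\tau$ to obtain the quantitative bound $|\tau(t)-T_*|\lesssim (T_*-t)^{7/4}$. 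Your extra work is not strictly needed---already the crude bound $|\tau(t)-\tau(T_*)|\le \sup|\dot\tau|\,(T_*-t)\le \eps^{1/6}(T_*-t)$ from the mean value theorem suffices---but it makes the asymptotic equivalence completely explicit. One cosmetic point: the phrase ``as $W$ develops a cusp'' is slightly off (it is $w$ that develops the cusp; $W$ stays bounded), but the underlying logic that global existence in $s$ forces $s\to\infty$ at $T_*$ is sound.
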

\begin{proof} First, \eqref{BUD:1} follows upon using \eqref{BUDS:2}, evaluating at $x = 0$, and using the constraint $W^{(1)}(s, 0) = -1$ which yields 
\begin{align} \label{pre:lim:1}
\p_\theta w(\xi(t), t) = - \frac{1}{\tau(t) - t}\,. 
\end{align}
We now note that, while $\dot{\tau}(t)$ satisfies the bootstrap \eqref{e:GW0}, $\tau(t)$ is itself uniquely defined upon enforcing 
\begin{align*}
\tau(T_\ast) = T_\ast\,. 
\end{align*} 
Thus, we may take the limit of \eqref{pre:lim:1} to get \eqref{BUD:1}.
\end{proof}

We now establish the following pointwise asymptotic stability result. 
\begin{lemma}\label{l:wombat} Let $W$ be the global solution from Corollary \ref{corr:infty} and let $\nu$ be as in \eqref{nu:def}. Then, for any fixed $x \in \mathbb{R}$, the following asymptotic behavior holds 
\begin{align} \label{pointwise:limit}
\lim_{s \rightarrow \infty} W^{(n)}(x, s) = \bar{W}^{(n)}_\nu(x),  \qquad n=0,\dots,5\,,  
\end{align}
where $\bar{W}_\nu$ is the exact, self-similar Burgers profile 
\begin{align} \label{bar:nu}
\bar{W}_\nu(x) := \left(\frac{\nu}{120}\right)^{-\frac14}\bar W\left(\left(\frac{\nu}{120}\right)^{\frac14} x\right)\,.
\end{align}
\end{lemma}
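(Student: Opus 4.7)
Begin by verifying that $\bar W_\nu$ is itself an exact self-similar Burgers profile. Writing $\bar W_\nu(x) = a \bar W(bx)$ with $a = (120/\nu)^{1/4}$ and $b = a^{-1}$, the Burgers self-similar ODE $-\tfrac14 \bar W_\nu + (\bar W_\nu + \tfrac54 x)\bar W_\nu^{(1)} = 0$ and the rescaled implicit relation $x = -\bar W_\nu - (\nu/120) \bar W_\nu^5$ follow directly from \eqref{e:implicit:eq} by substitution (the scaling symmetry of the self-similar Burgers equation requires exactly $ab = 1$ and $b^4 = \nu/120$). The corresponding Taylor coefficients at the origin are $\bar W_\nu(0) = 0$, $\bar W_\nu^{(1)}(0) = -1$, $\bar W_\nu^{(k)}(0) = 0$ for $k \in \{2,3,4\}$, and $\bar W_\nu^{(5)}(0) = \nu$. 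These exactly match the asymptotic Taylor data of the dynamical profile $W$: the constraints \eqref{e:constraints} give $W^{(n)}(0,s)$ identically for $n \in \{0,1,4\}$; the decay bootstrap \eqref{boot:decay} yields $W^{(2)}(0,s), W^{(3)}(0,s) \to 0$; and Corollary~\ref{corr:infty} supplies $W^{(5)}(0,s) \to \nu$.

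The plan is next to carry out a subsequential compactness argument. The uniform-in-$s$ bootstrap bounds on $W^{(n)}$ for $n \le 8$, combined with the equation \eqref{basic:w} to control $\partial_s W$, imply that for every $s_k \to \infty$ the family of translates $W_k(x,s) := W(x, s+s_k)$ is bounded in $C^7_x(\mathbb R) \cap C^1_s([0,1])$ on every compact set in $x$. Arzel\`a--Ascoli produces a subsequence along which $W_{k_j} \to W_\infty$ in $C^6_x \times C^0_s$ locally. Passing to the limit in \eqref{basic:w}, using the exponential decay of $\mu$, $\dot\tau$, $\dot\kappa$, and $F_W$ established in Section 8, the decay of the transport correction $G_W$ from Lemma~\ref{l:GW}, and $\beta_\tau \to 1$ from \eqref{e:1beta:bnd}, the limit $W_\infty(x,s)$ satisfies the self-similar Burgers PDE
\[
\partial_s W_\infty - \tfrac14 W_\infty + (\tfrac54 x + W_\infty)\, W_\infty^{(1)} = 0,
\]
and inherits $W_\infty^{(n)}(0,s) = \bar W_\nu^{(n)}(0)$ for $n = 0, \ldots, 5$ and every $s \in [0,1]$.

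The final and most delicate step is to identify $W_\infty \equiv \bar W_\nu$. Matching all five Taylor coefficients at the origin, combined with the rigidity of the smooth bounded self-similar Burgers profiles (classified in \cite{Fontelos}), pins down the unique branch of the implicit equation $x = -W - (\nu/120) W^5$, forcing $W_\infty(\cdot, s) = \bar W_\nu$ independently of $s$ and hence $\partial_s W_\infty \equiv 0$. Since every subsequence $\{s_k\}$ extracts a sub-subsequence producing this same limit, the full family $W(\cdot, s)$ converges to $\bar W_\nu$ in $C^5_{\mathrm{loc}}$ as $s \to \infty$, from which the claimed pointwise convergence $W^{(n)}(x,s) \to \bar W_\nu^{(n)}(x)$ for $n = 0, \ldots, 5$ follows. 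The main obstacle is precisely this identification: the limit equation is first-order with a degenerate characteristic at $x = 0$, so propagating the matching Taylor data at the origin to a global statement $W_\infty \equiv \bar W_\nu$ on all of $\mathbb R$ demands care, executed via the implicit equation together with the uniqueness features of the Fontelos classification.
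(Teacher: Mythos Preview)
Your compactness-and-identification approach is genuinely different from the paper's direct trajectory estimate, and the first two stages are sound: the uniform bootstrap bounds give $C^7_x \cap C^1_s$ precompactness on compacta, and every forcing and modulation term in \eqref{basic:w} vanishes in the limit, so any subsequential limit $W_\infty$ solves the clean self-similar Burgers PDE and inherits the five matching Taylor coefficients at the origin.

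The gap is in the identification step. The Fontelos classification concerns \emph{steady} solutions of the self-similar Burgers ODE $-\tfrac14 W + (\tfrac54 x + W)W' = 0$; it says nothing about solutions of the time-dependent PDE $\partial_s W - \tfrac14 W + (\tfrac54 x + W)W^{(1)} = 0$. Your limit $W_\infty$ is a priori only a PDE solution, and you have not shown $\partial_s W_\infty = 0$. Matching five Taylor coefficients at $x=0$ for each $s$ does not force $W_\infty(\cdot,s)$ to satisfy the ODE at that $s$, nor does the implicit relation $x = -W - (\nu/120)W^5$ apply unless you already know steadiness. What you actually need is a Liouville theorem for bounded eternal solutions of the self-similar Burgers PDE with the prescribed Taylor data at $x=0$. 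This is plausible---backward characteristics from any point approach $x=0$ as $s \to -\infty$, where the Taylor data should determine the solution---but it is a separate statement requiring its own proof and its own estimates along characteristics, not a corollary of the steady classification in \cite{Fontelos}. Your final paragraph acknowledges the degeneracy at $x=0$ but then resolves it by an appeal that does not cover the time-dependent case.

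The paper sidesteps this entirely. It writes the equation for $\tilde W_\nu = W - \bar W_\nu$ and estimates it directly along forward $W$-characteristics launched from small $|x_*|$ at a late time $s_*$: the Taylor expansion and the vanishing of the first five coefficients give $|\tilde W_\nu(x_*,s_*)| \lesssim |x_*|^6 + \delta$; the damping $1 + \bar W_\nu^{(1)} \ge 0$ in the equation for $e^{-\frac54(s-s_*)}\tilde W_\nu$ prevents growth along the trajectory; and the trajectory escapes to $|x_*|^{-1/45}$ by time $s_* - \tfrac{23}{5}\log|x_*|$, so $|\tilde W_\nu| \lesssim |x_*|^{1/4}$ on an $x$-interval expanding to all of $\mathbb R$ as $x_* \to 0$. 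This yields the pointwise convergence without any rigidity theorem for the limit equation.
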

\begin{remark}
We note that the parameter $\nu$ in \eqref{bar:nu} is directly related to the spatial rescaling invariance of Burgers' equation, listed in Section \ref{s:burgers}.
\end{remark}
\begin{proof}
Let $(W,Z,A)$ be the global solution defined in Corollary \ref{corr:infty}. First, it is easily verified that $\bar W_{\nu}$ is an exact solution to the self-similar Burgers' equation \eqref{Burger:1}, and that the first 5 Taylor coefficients of $\bar W_{\nu}$ are given by
\begin{align*}
W_{\nu}(0)=W_{\nu}^{(2)}(0)=W_{\nu}^{(3)}(0)=W_{\nu}^{(4)}(0)=0,\quad W_{\nu}^{(1)}(0)=-1\quad\mbox{and } W_{\nu}^{(5)}(0)=\nu\,.
\end{align*}
In particular, at the limit $s\rightarrow \infty$, the first 5 Taylor coefficients of $W$ and $\bar W_{\nu}$ match. Let us define the difference
\[\tilde W_{\nu}=W-\bar W_{\nu}\,.\]
Hence, by definition
\begin{align}\label{eq:Taylor:cancellation}
\lim_{s\rightarrow \infty }W_{\nu}^{(n)}(0)=0\,,
\end{align}
for all $n=0,\dots,5$.
By a similar calculation to \eqref{diff:eq0} -- although we will rearrange the terms on the left-hand-side and right-hand-side -- we obtain
\begin{align*}
&(\p_ s- \frac 1 4+ \bar W_{\nu}^{(1)}) \tilde W_{\nu} +(W+\frac54 x) \p_x \tilde W_{\nu} = -\beta_\tau e^{- \frac 3 4 s} \dot{\kappa} +  F_{ W}+((1-\beta_{\tau}W)-G_W)\p_x W:=\tilde F_\nu\,.
\end{align*}

Using \eqref{e:GW0}, \eqref{GW:transport:est} and \eqref{FW:est:1}, we have that for any fixed $x_*$ that
\begin{equation}\label{eq:tilde:Fnu:decay}
\int_{s_0}^{\infty}\abs{\tilde F_\nu (x_*,s)}\,ds<\infty\,.
\end{equation}
Now fix $\delta>0$, $x_*\in\mathbb R$ and $s_*\geq -\log \eps$. Then as a consequence of \eqref{eq:Taylor:cancellation} and \eqref{weds:1} we have
\begin{equation}\label{eq:Taylor:est}
\abs{\tilde W(x_*,s_*)}\les_M \abs{x_*}^6 + \delta\,,
\end{equation}
assuming that $s_*$ is taken sufficiently large dependent on the choice of $\delta$. Now define $\Phi$ to be the trajectory
\begin{align*}
\p_s \Phi(s) = \left(W+\frac54 x\right) \circ \Phi, \qquad \Phi(s_*) = x_*\,. 
\end{align*}
If we in addition define $q= e^{-\frac54(s-s_*)}\tilde W_\nu$, then $q\circ \Phi$ satisfies the equation
\begin{align*}
(\partial_s+1+ \bar W_{\nu}^{(1)} )(q\circ \Phi)=e^{-\frac54(s-s_*)}\tilde{F}_{\nu}\circ \Phi\,.
\end{align*}
Since $\bar W_{\nu}^{(1)} \geq -1$, then by Gr\"onwall and \eqref{eq:tilde:Fnu:decay}, it follows that
\begin{equation}\label{eq:q:phi:est}
\abs{q\circ \Phi (s)}\leq  \abs{q\circ \Phi (s_*)}+\delta
\end{equation}
for $s\geq s_*$, assuming that $s_*$ is taken to be sufficiently large, dependent on $\delta$. Combining \eqref{eq:Taylor:est} and \eqref{eq:q:phi:est} we obtain that for $s_* \leq s\leq s_*-\frac{23}{5}\log \abs{x_*}$ and assuming $\delta\leq \abs{x_*}^6$ 
\begin{equation}\label{eq:wallaby}
\abs{\tilde W_{\nu}\circ \Phi (s)}\les_M  e^{\frac54(s-s_*)}(\abs{x_*}^6 + \delta)
\les_M \abs{x_*}^{\frac{1}{4}}
\,.
\end{equation}
Let us restrict to the case $x_*>0$ and assume the lower bound
\begin{equation}\label{eq:escape:lower:bound}
\Phi\left(s_*-\frac{23}{5}\log \abs{x_*}\right)\geq \Gamma\,.
\end{equation}
In particular, by continuity, \eqref{eq:escape:lower:bound} implies that for any $x_*\leq x\leq \Gamma$, there exists an $s_*\leq s\leq (s_*-\frac{23}{5}\log \abs{x_*}$ such that $\Phi(s)=x$ and hence by \eqref{eq:wallaby}
\begin{equation*}
\abs{\tilde W_{\nu}(x,s)} \les_M \abs{x_*}^{\frac{1}{4}}\,.
\end{equation*}
By taking the limit $s_*\rightarrow \infty$, this implies
\begin{equation}\label{eq:kangaroo}
\lim_{s\rightarrow \infty }\abs{\tilde W_{\nu}(x,s)} \les_M \abs{x_*}^{\frac{1}{4}}\,,
\end{equation}
for any  $x_*\leq x\leq \Gamma$.

 It remains to prove a $x_*$ dependent lower bound  on $\Gamma$ that increases as $x_*\rightarrow 0$. First note that by \eqref{eq:W1:bnd:1} and the Fundamental Theorem of Calculus
 \begin{align*}
 W+\frac54 x\geq x\left( \frac54 -\norm{W^{(1)}}_{\infty}\right)\geq \frac29 x\,.
 \end{align*}
 Thus by Gr\"onwall $\Phi(s)\geq e^{\frac15(s-s_*)}s_*$, which implies
\begin{equation*}
\Phi\left(s_*-\frac{23}{5}\log \abs{x_*}\right)\geq \abs{x_*}^{-\frac{1}{45}}\,,
\end{equation*}
and hence we can take $\Gamma= \abs{x_*}^{-\frac{1}{45}}$. Thus by taking $x_*\rightarrow 0$, from \eqref{eq:kangaroo} we obtain
\begin{equation}\label{eq:bilby}
\lim_{s\rightarrow \infty}\abs{\tilde W_{\nu}(x,s)} =0 \,,
\end{equation}
for all $x>0$. An analogous argument yields \eqref{eq:bilby} for the case $x<0$. The case $x=0$ is trivial since $\tilde W_{\nu}(0,s)=0$ for all $s$. Thus, $W$ converges pointwise to $\bar W_{\mu}$. The proof for $n = 1,\dots,5$ works in an analogous manner. 
\end{proof}

\begin{remark} We remark that the asymptotic profile that is picked out in \eqref{pointwise:limit} is consistent with our estimates \eqref{thurs:1}. Indeed, by using estimate \eqref{nu:def}, we can estimate 
\begin{align*}
\| (\bar{W}_\nu - \bar{W}) \eta_{- \frac{1}{20}} \|_\infty \lesssim \eps^{\frac 78}\,, 
\end{align*}
which shows that $W$ can simultaneously lie in a ball of size $\eps^{\frac{3}{20}}$ within $\bar{W}$ (in the weighted norm above) and converge pointwise to $\bar{W}_\nu$. 
\end{remark}

It is now possible to prove asymptotic stability in a much stronger sense. To do so, we define the slightly weaker weighted space by first fixing a $0 < \delta \ll 1$, 
\begin{align} \label{norm:X:delta}
\| W \|_{\mathcal{X}_{-\delta}} := \| W \eta_{- \frac{1}{20} - \delta} \|_\infty + \sum_{j = 1}^5 \| W^{(j)} \eta_{\frac1 5 - \delta} \|_\infty\,. 
\end{align}

\begin{lemma} For any $\delta > 0$, 
\begin{align} \label{asy:conv:1}
\Big\| W - \bar{W}_\nu \Big\|_{\mathcal{X}_{-\delta}} \rightarrow 0\quad \text{ as } s \rightarrow \infty\,. 
\end{align}
\end{lemma}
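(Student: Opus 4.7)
The plan is to combine the uniform-in-$s$ spatial decay provided by the bootstraps with the pointwise convergence already established in Lemma \ref{l:wombat}, by splitting $\mathbb{R}$ into a compact core $\{|x|\le R\}$ and its far-field complement $\{|x|>R\}$. The strict positivity of $\delta$ is exactly what makes the tail piece small: it produces a small factor $\eta_{-\delta}$ when the weights $\eta_{-1/20-\delta}$ and $\eta_{1/5-\delta}$ occurring in $\mathcal{X}_{-\delta}$ are paired against the borderline decay rates $\eta_{1/20}$ and $\eta_{-1/5}$ that both $W$ and $\bar W_\nu$ enjoy uniformly in $s$.

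Given $\eta > 0$, I would first invoke \eqref{W:boot:0}, \eqref{e:uniform:W1}, and \eqref{weds:1} for $W$ together with \eqref{decay:bar:2} for $\bar W_\nu$ to deduce, uniformly in $s \ge s_0$, the pointwise bounds
$$ \bigl| (W - \bar W_\nu)(x,s) \bigr| \lesssim_M \eta_{1/20}(x), \qquad \bigl| (W^{(n)} - \bar W_\nu^{(n)})(x,s) \bigr| \lesssim_M M^{n^2} \eta_{-1/5}(x) \quad (1 \le n \le 5). $$
Multiplying by the weights appearing in \eqref{norm:X:delta} therefore yields contributions bounded by $C_M \eta_{-\delta}(x)$. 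Choosing $R = R(\eta, \delta, M)$ large enough and restricting to $\{|x| \ge R\}$, each summand in $\|W - \bar{W}_\nu\|_{\mathcal{X}_{-\delta}}$ restricted to the tail is bounded by $C_M R^{-4\delta} < \eta/2$.

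With $R$ now fixed, the remaining task is to prove that on the compact set $\{|x| \le R\}$, one has $W^{(n)}(\cdot, s) \to \bar W_\nu^{(n)}$ uniformly for $n = 0, 1, \ldots, 5$. Pointwise convergence for each such $n$ has been established in Lemma \ref{l:wombat}, so it suffices to upgrade this to uniform convergence via equicontinuity. I would invoke \eqref{weds:1} applied at order $n+1 \le 6$, which provides the uniform-in-$s$ bound $\|W^{(n+1)}\|_{L^\infty(|x|\le R)} \lesssim_{M,R} 1$; the mean value theorem then supplies a uniform Lipschitz bound on $W^{(n)}(\cdot, s)$ on $\{|x|\le R\}$, independent of $s$. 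A standard Arzel\`a--Ascoli argument (or equivalently, covering $[-R, R]$ by a finite $\eta'$-net and using the Lipschitz bound to interpolate between net points) upgrades the pointwise limit to uniform convergence on this compact set. Since the weights in $\mathcal{X}_{-\delta}$ are bounded on $\{|x| \le R\}$ by a constant depending only on $R$ and $\delta$, the compact-set contribution to $\|W - \bar W_\nu\|_{\mathcal X_{-\delta}}$ falls below $\eta/2$ for all $s$ sufficiently large.

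I do not anticipate any substantial obstacle: both ingredients, namely the bootstrap-based uniform decay in the far field and the uniform Lipschitz control (via one higher derivative) on compact sets, are already in hand. The only point requiring care is that the norm $\mathcal{X}_{-\delta}$ reaches up to $W^{(5)}$, so the mean value argument requires a uniform bootstrap on $W^{(6)}$; this is precisely \eqref{weds:1} with $n = 6$, which is available globally in view of Corollary \ref{corr:infty}. Combining the compact and tail estimates gives $\|W - \bar W_\nu\|_{\mathcal X_{-\delta}} < \eta$ for all $s$ sufficiently large, which establishes \eqref{asy:conv:1}.
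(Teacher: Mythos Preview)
Your proposal is correct and follows essentially the same strategy as the paper: split into a far-field region where the extra factor $\eta_{-\delta}$ against the uniform weighted bounds makes the contribution small, and a compact core where pointwise convergence from Lemma~\ref{l:wombat} is upgraded to uniform convergence via equicontinuity (a Lipschitz bound from one higher $x$-derivative). The paper carries out exactly this argument, spelling out the $\epsilon$-net interpolation explicitly for the lowest-order term and remarking that higher orders are analogous; your observation that the $n=5$ case requires the global $W^{(6)}$ bound from \eqref{weds:1} (available through Corollary~\ref{corr:infty}) makes that analogy precise.
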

\begin{proof} This is  a standard consequence of pointwise convergence (\eqref{pointwise:limit}), uniform estimates on six derivatives, guaranteed by the specification of the norm $X$, \eqref{norm:X}, and finally, the compactness afforded by the weaker weight of $\langle x \rangle^{-\delta}$ in our norm \eqref{norm:X:delta}. For the purpose of completeness, we include the argument for the lowest order part of the $X_{-\delta}$ norm, while the higher order components work in an exactly analogous fashion. 

To prove \eqref{norm:X:delta}, specifically $\| (W - \bar{W}_\nu) \eta_{- \frac{1}{20} - \delta} \|_\infty \rightarrow 0$,  we will first fix an arbitrary $\tilde{\eps} > 0$, and demonstrate the existence of $S = S(\tilde{\eps})$ large, such that $s > S$ implies $\| (W - \bar{W}_\nu) \eta_{- \frac{1}{20} - \delta} \|_\infty \le \tilde{\eps}$. 

First, there exists $X = X(\tilde{\eps}, \delta)$ so that 
\begin{align*}
\| (W - \bar{W}_\nu) \eta_{- \frac{1}{20} - \delta} \|_{L^\infty(|x| \ge X)} \le \frac{\tilde{\eps}}{10}\,,  
\end{align*} 
according to the estimate \eqref{X:estimate} on $W$ and \eqref{decay:bar:2} on $\bar{W}$ (and hence, $\bar{W}_\nu$). 

We thus restrict to the compact interval $|x| \le X$, which we now subdivide into $N = N(\tilde{\eps}, M)$ sub-intervals with centers $x_k$, $k = 0, ..., N$. $N$ will be selected according to the rule:  
\begin{align*}
(\| W^{(1)} \|_\infty + \| \bar{W}^{(1)}_\nu \|_\infty) \frac{1}{N} < \frac{\tilde{\eps}}{10}\,.
\end{align*}

By the pointwise convergence guaranteed by \eqref{pointwise:limit}, there exists an $s_k$ so that 
\begin{align*}
|W(s_k, x_k) - \bar{W}_\nu(x_k)| \le \frac{\tilde{\eps}}{10}\,.
\end{align*}
Define now $S := \max_{k} s_k$. Estimating, we have 
\begin{align} \n
|W(s, x) - \bar{W}_\nu(x)| \le & |W(s, x) - W(s, x_k)| + |W(s, x_k) - \bar{W}_\nu(x_k)| \\ \n
& + |\bar{W}_\nu(x_k) - \bar{W}_\nu(x)|  \\ \n
\le & (\| W^{(1)} \|_\infty + \| \bar{W}^{(1)}_\nu \|_\infty) |x - x_k| + \frac{\tilde{\eps}}{10} \\
\le & \frac{\tilde{\eps}}{10} +  \frac{\tilde{\eps}}{10}\,,\n
\end{align}
for $s > S$. Taking supremum over $|x| \le X$ gives the desired conclusion. 

\end{proof}

\begin{proof}[Proof of Corollary \ref{c:open}]
We note that the proof follows in a very similar manner to the proof of Corollary 4.7 of \cite{BuShVi2019}. 

By finite speed of propagation, the strict support properties imposed in Section \ref{ss:initial}, can be replaced by the condition that $(w_0,z_0,a_0)$ satisfy the conditions modulo a small perturbation in the $C^8$ topology.

The conditions \eqref{eq:W0:diff} for the cases $n=0,1$ impose no obstruction to $\check w_0$ been chosen within an open set since the conditions may be enforced by choosing $\eps$ and $\kappa_0$ appropriately (it should be noted that these two parameters are free to be chosen from an open set). In order to weaken the condition \eqref{eq:W0:diff} for the case $n=4$, we note that by a Taylor expansion
\begin{align*}
\partial_{\theta}^4 w_0(\theta)&=\partial_{\theta}^4 w_0(0)+\theta \partial_{\theta}^5 w_0(0)+\OO(\eps^{-\frac{29}{4}}\theta^2)\\
&=\partial_{\theta}^4 w_0(0)+120\eps^{-6}\theta +\theta(\partial_{\theta}^3 w_0(0)-120\eps^{-6})+\OO(\eps^{-\frac{29}{4}}\theta^2)
\, ,
\end{align*}
here implicitly we used \eqref{eq:C8:bnd} and that 
\[\norm{\partial_{\theta}^6\eps^{\frac14} \bar W\left(\eps^{-\frac54}\theta\right)}_{\infty}\les \eps^{-\frac{29} 4} \,.\]
By continuity, given $\bar \eps$, then assuming $\partial_{\theta}^4 w_0(0)$ and $\partial_{\theta}^5 w_0(0)-120\eps^{-6}$ to be sufficiently small, there exists a $\theta\in(-\bar\eps,\bar\eps)$ such that  $\partial_{\theta_0}^4 w_0(\theta)=0$. Thus, up to a coordinate translation $\theta\mapsto \theta+\theta_0$, and under the assumptions $\partial_{\theta}^4 w_0(0)$ and $\partial_{\theta}^5 w_0(0)-120\eps^{-6}$ are both sufficiently small, we can remove the assumption \eqref{eq:W0:diff} for the case $n=4$. The strict assumption \eqref{eq:W0:diff} for the case $n=5$ may be removed by applying the rescaling
\[\tilde a(\theta,t)=\mu^{-1}a(\mu \theta,t),\quad\tilde w(\theta,t)=\mu^{-1}w(\mu \theta,t),\quad \tilde z(\theta,t)=\mu^{-1}z(\mu \theta,t)\,,\] 
for $\mu$ sufficiently close to $1$. As was noted in \cite{BuShVi2019}, such a rescaling would modify the domain; however, since by finite-speed of propagation we restrict our analysis to a strict subset of the domain, such a rescaling does not impose any problem.
\end{proof}

\begin{bibdiv}
\begin{biblist}

\bib{BuShVi2019b}{article}{
      author={Buckmaster, Tristan},
      author={Shkoller, Steve},
      author={Vicol, Vlad},
       title={Formation of point shocks for 3d compressible euler},
        date={2019},
     journal={arXiv:1912.04429},
}

\bib{BuShVi2019}{article}{
      author={Buckmaster, Tristan},
      author={Shkoller, Steve},
      author={Vicol, Vlad},
       title={Formation of shocks for {2D} isentropic compressible {E}uler},
        date={2019},
     journal={arXiv:1907.03784},
}

\bib{BuShVi2019c}{article}{
      author={Buckmaster, Tristan},
      author={Shkoller, Steve},
      author={Vicol, Vlad},
       title={Shock formation and vorticity creation for 3d euler},
        date={2020},
     journal={arXiv:2006.14789},
}

\bib{Ch2007}{book}{
      author={Christodoulou, Demetrios},
       title={The formation of shocks in 3-dimensional fluids},
      series={EMS Monographs in Mathematics},
   publisher={European Mathematical Society (EMS), Z\"{u}rich},
        date={2007},
        ISBN={978-3-03719-031-9},
         url={https://doi.org/10.4171/031},
}

\bib{ChMi2014}{book}{
      author={Christodoulou, Demetrios},
      author={Miao, Shuang},
       title={Compressible flow and {E}uler's equations},
      series={Surveys of Modern Mathematics},
   publisher={International Press, Somerville, MA; Higher Education Press,
  Beijing},
        date={2014},
      volume={9},
        ISBN={978-1-57146-297-8},
}

\bib{CoGhIbMa18}{article}{
      author={Collot, C.},
      author={Ghoul, T.-E.},
      author={Ibrahim, S.},
      author={Masmoudi, N.},
       title={On singularity formation for the two dimensional unsteady
  {P}randtl's system},
        date={2018},
     journal={arXiv:1808.05967},
}

\bib{CoGhMa2018}{article}{
      author={Collot, C.},
      author={Ghoul, T.-E.},
      author={Masmoudi, N.},
       title={Singularity formation for {B}urgers equation with transverse
  viscosity},
        date={2018},
     journal={arXiv:1803.07826},
}

\bib{CoGhMa19}{article}{
      author={Collot, C.},
      author={Ghoul, T.-E.},
      author={Masmoudi, N.},
       title={Unsteady separation for the inviscid two-dimensional prandtl's
  system},
        date={2019},
     journal={arXiv:1903.08244},
}

\bib{Da2010}{book}{
      author={Dafermos, Constantine~M.},
       title={Hyperbolic conservation laws in continuum physics},
     edition={Third},
      series={Grundlehren der Mathematischen Wissenschaften [Fundamental
  Principles of Mathematical Sciences]},
   publisher={Springer-Verlag, Berlin},
        date={2010},
      volume={325},
        ISBN={978-3-642-04047-4},
         url={https://doi.org/10.1007/978-3-642-04048-1},
}

\bib{Fontelos}{book}{
      author={Eggers, J.},
      author={Fontelos, M.~A.},
       title={Singularities: formation, structure, and propagation},
      series={Cambridge Texts in Applied Mathematics},
   publisher={Cambridge University Press, Cambridge},
        date={2015},
        ISBN={978-1-107-48549-5; 978-1-107-09841-1},
         url={https://doi.org/10.1017/CBO9781316161692},
}

\bib{John1974}{article}{
      author={John, Fritz},
       title={Formation of singularities in one-dimensional nonlinear wave
  propagation},
        date={1974},
        ISSN={0010-3640},
     journal={Comm. Pure Appl. Math.},
      volume={27},
       pages={377\ndash 405},
         url={https://doi.org/10.1002/cpa.3160270307},
}

\bib{Lax1964}{article}{
      author={Lax, Peter~D.},
       title={Development of singularities of solutions of nonlinear hyperbolic
  partial differential equations},
        date={1964},
        ISSN={0022-2488},
     journal={J. Mathematical Phys.},
      volume={5},
       pages={611\ndash 613},
         url={https://doi.org/10.1063/1.1704154},
}

\bib{Li1979}{article}{
      author={Liu, T.~P.},
       title={Development of singularities in the nonlinear waves for
  quasilinear hyperbolic partial differential equations},
        date={1979},
        ISSN={0022-0396},
     journal={J. Differential Equations},
      volume={33},
      number={1},
       pages={92\ndash 111},
         url={https://mathscinet.ams.org/mathscinet-getitem?mr=540819},
}

\bib{LuSp2018}{article}{
      author={Luk, Jonathan},
      author={Speck, Jared},
       title={Shock formation in solutions to the 2{D} compressible {E}uler
  equations in the presence of non-zero vorticity},
        date={2018},
        ISSN={0020-9910},
     journal={Invent. Math.},
      volume={214},
      number={1},
       pages={1\ndash 169},
         url={https://doi.org/10.1007/s00222-018-0799-8},
}

\bib{Ma1984}{book}{
      author={Majda, A.},
       title={Compressible fluid flow and systems of conservation laws in
  several space variables},
      series={Applied Mathematical Sciences},
   publisher={Springer-Verlag, New York},
        date={1984},
      volume={53},
        ISBN={0-387-96037-6},
         url={https://doi.org/10.1007/978-1-4612-1116-7},
}

\bib{Merle96}{article}{
      author={Merle, F.},
       title={Asymptotics for {$L^2$} minimal blow-up solutions of critical
  nonlinear {S}chr\"{o}dinger equation},
        date={1996},
        ISSN={0294-1449},
     journal={Ann. Inst. H. Poincar\'{e} Anal. Non Lin\'{e}aire},
      volume={13},
      number={5},
       pages={553\ndash 565},
         url={https://mathscinet.ams.org/mathscinet-getitem?mr=1409662},
}

\bib{MeRa05}{article}{
      author={Merle, F.},
      author={Raphael, P.},
       title={The blow-up dynamic and upper bound on the blow-up rate for
  critical nonlinear {S}chr\"{o}dinger equation},
        date={2005},
        ISSN={0003-486X},
     journal={Ann. of Math. (2)},
      volume={161},
      number={1},
       pages={157\ndash 222},
         url={https://mathscinet.ams.org/mathscinet-getitem?mr=2150386},
}

\bib{MeZa97}{article}{
      author={Merle, F.},
      author={Zaag, H.},
       title={Stability of the blow-up profile for equations of the type
  {$u_t=\Delta u+|u|^{p-1}u$}},
        date={1997},
        ISSN={0012-7094},
     journal={Duke Math. J.},
      volume={86},
      number={1},
       pages={143\ndash 195},
         url={https://mathscinet.ams.org/mathscinet-getitem?mr=1427848},
}

\bib{MRRS1}{article}{
      author={Merle, Frank},
      author={Raphael, Pierre},
      author={Rodnianski, Igor},
      author={Szeftel, Jeremie},
       title={On smooth self similar solutions to the compressible euler
  equations},
        date={2019},
     journal={arXiv:1912.10998},
}

\bib{MRRS2}{article}{
      author={Merle, Frank},
      author={Raphael, Pierre},
      author={Rodnianski, Igor},
      author={Szeftel, Jeremie},
       title={On the implosion of a three dimensional compressible fluid},
        date={2019},
     journal={arXiv:1912.11009},
}

\bib{MR3086066}{article}{
      author={Merle, Frank},
      author={Rapha\"{e}l, Pierre},
      author={Szeftel, Jeremie},
       title={The instability of {B}ourgain-{W}ang solutions for the {$L^2$}
  critical {NLS}},
        date={2013},
        ISSN={0002-9327},
     journal={Amer. J. Math.},
      volume={135},
      number={4},
       pages={967\ndash 1017},
         url={https://doi.org/10.1353/ajm.2013.0033},
}

\bib{Ri1860}{article}{
      author={Riemann, Bernhard},
       title={{\"{U}}ber die {F}ortpflanzung ebener {L}uftwellen von endlicher
  {S}chwingungsweite},
        date={1860},
     journal={Abhandlungen der K\"oniglichen Gesellschaft der Wissenschaften in
  G\"ottingen},
      volume={8},
       pages={43\ndash 66},
         url={http://eudml.org/doc/135717},
}

\bib{Si1997}{article}{
      author={Sideris, Thomas~C.},
       title={Delayed singularity formation in {$2$}{D} compressible flow},
        date={1997},
        ISSN={0002-9327},
     journal={Amer. J. Math.},
      volume={119},
      number={2},
       pages={371\ndash 422},
  url={http://muse.jhu.edu/journals/american_journal_of_mathematics/v119/119.2sideris.pdf},
}

\bib{yang2020shock}{article}{
      author={Yang, Ruoxuan},
       title={Shock formation for the burgers-hilbert equation},
        date={2020},
     journal={arXiv:2006.05568},
}

\end{biblist}
\end{bibdiv}

\bibliography{euler}
\bibliographystyle{plain}

\end{document}